\let\ORIlabel\label
\let\ORIrefstepcounter\refstepcounter
	\let\label\ORIlabel 
	\let\refstepcounter\ORIrefstepcounter}
\newcommand\err{\texttt{err}}
\newcommand\eff{\texttt{eff}}
\def\O{\Omega}
\def\l{\lambda}
\renewcommand\sp{\mathop{\mathrm{Sp}}\nolimits}
\newcommand{\jump}[1]{\llbracket #1 \rrbracket}
\newcommand{\norm}[1]{\lVert#1\rVert}
\newcommand\bu{\boldsymbol{u}}
\newcommand\bv{\boldsymbol{v}}
\newcommand\bn{\boldsymbol{n}}
\def\hdel{\widehat{\delta}}
\newcommand\bF{\boldsymbol{f}}
\newcommand\bbK{\mathbb{K}}
\newcommand\bT{\boldsymbol{T}}
\def\CT{{\mathcal T}}
\newcommand\btau{\boldsymbol{\tau}}
\newcommand\bphi{\boldsymbol{\varphi}}
\renewcommand\H{\mathrm{H}}
\renewcommand\L{\mathrm{L}}
\renewcommand\O{\Omega}
\newcommand{\vertiii}[1]{{\left\vert\kern-0.25ex\left\vert\kern-0.25ex\left\vert #1 
		\right\vert\kern-0.25ex\right\vert\kern-0.25ex\right\vert}}
\renewcommand\div{\mathop{\mathrm{div}}\nolimits}
\renewcommand\sp{\mathop{\mathrm{sp}}\nolimits}
\crefname{hypothesis}{Hypothesis}{Hypotheses}
\title{A Stokes-Brinkman-type formulation for the eigenvalue problem in porous media\thanks{Submitted to the editors DATE.
\funding{The second author was partially supported by ANID-Chile through FONDECYT project 1231619.  The third author was partially supported by ANID-Chile through FONDECYT Postdoctorado project 3230302.
}}}
\author{Felipe Lepe\thanks{GIMNAP-Departamento de Matem\'atica, Universidad del B\'io - B\'io, Casilla 5-C, Concepci\'on, Chile. \email{flepe@ubiobio.cl}.}
\and Gonzalo Rivera\thanks{Departamento de Ciencias Exactas,
	Universidad de Los Lagos, Casilla 933, Osorno, Chile. \email{gonzalo.rivera@ulagos.cl}.}
\and Jesus Vellojin\thanks{GIMNAP-Departamento de Matem\'atica, Universidad del B\'io - B\'io, Casilla 5-C, Concepci\'on, Chile. \email{jvellojin@ubiobio.cl}.}}
\def\CT{{\mathcal T}}
\def\CE{\mathcal{E}}
\newcommand\eu{\texttt{e}_{\boldsymbol{u}}}
\newcommand\ep{\texttt{e}_{p}}
\begin{document}

\maketitle

\begin{abstract}
In this paper we introduce and analyze, for two and three dimensions, a finite element method to approximate the natural frequencies of a flow system 
governed by the Stokes-Brinkman equations. Here, the fluid presents the capability of being within a porous media. Taking advantage of the Stokes regularity results for the solution, and considering inf-sup stable families of finite elements, we prove convergence together with a priori and a posteriori error estimates for the eigenvalues and eigenfunctions with the aid of the compact operators theory. We report a series of numerical tests in order to confirm the developed theory.
\end{abstract}

\begin{keywords}
  porous media, fluid equations, eigenvalue problems,  a priori error estimates, a posteriori error analysis, Stokes-Brinkman equations
\end{keywords}

\begin{AMS}
  35Q35,  65N15, 65N25, 65N30, 65N50
\end{AMS}

\section{Introduction}\label{sec:intro}
Eigenvalue problems are attractive from the mathematical and computational point of view, where the applications of such  problems 
can be found in several fields, as structural engineering,  electromagnetism, flow stability, among others. The analysis of eigenvalue problems is a well stated issue, where a survey of the developed theory on this matter can be found in, for instance, in \cite{MR2652780}. Particularly, the analysis of numerical methods  flow eigenvalue problems has an important amount of contributions
available on the literature, such as \cite{MR4071826,MR4077220,Lepe2021,LRVSISC,MR4728079,MR3095260,MR2473688}

The Brinkman system is relevant in continuum mechanics and fluid dynamics, particularly in the study of porous media flow. This problem addresses the flow behavior of a fluid in a medium where viscous and inertial effects are both relevant, extending Darcy's law (which describes flow in porous media at low velocities) to regimes where these effects play a more significant role. The Brinkman problem applies to a range of physical scenarios, including fluid flow in biological tissues, industrial filtration processes, and environmental engineering. In porous media, high-velocity flows induce viscous and sometimes inertial forces, leading to deviations from simple Darcy flow. The Brinkman equation allows for modeling these effects with an added diffusion term.  We refer to \cite{anaya2016priori,Gatica2018,hannukainen2011computations,juntunen2010analysis,konno2011h,mardal2002robust} and the references therein for a more deep analysis on different techniques and numerical schemes to study this problem.

As stated in \cite{konno2011h}, the Brinkman equations are a parameter-dependent combination of the Darcy and Stokes equations. It acts as a coupling layer between a free-surface flow  and a porous Darcy flow. One of the main difficulties arising from this coupling is that typical numerical schemes fails to predict the behavior when we move from the Stokes limit to Darcy. The standard approach to modeling such problems involves coupling Darcy and Stokes flows and enforcing the Beavers–Joseph–Saffman conditions along the interface \cite{beavers1967boundary,cesmelioglu2024strongly,urquiza2008coupling}. However, with an appropriate scaling of the model, it is possible to study a complete system of equations that predicts fluid behavior in a domain with regions of varying permeability. The Stokes–Brinkman equations, which reduce to either Stokes or Darcy flow depending on the coefficients, were proposed in \cite{popov2007} as a replacement for coupled Darcy and Stokes models. A proper selection of this coefficients allows to represent both free-flow and porous regions, thus avoiding any further assumptions on the interface condition. An example of this approach is provided in \cite{williamson2019posteriori}, where the authors present a residual-based a posteriori error analysis for the Stokes–Brinkman problem discretized with Taylor–Hood finite elements, using this analysis to guide an adaptive mesh refinement process.

The aforementioned study serves as the foundation for the present paper, whose primary goal is to propose an eigenvalue formulation for the Stokes-Brinkman problem. This formulation is based on defining a specific permeability parameter that characterizes both porous and fully permeable regions of some domain. Given the increased resistance to fluid flow within Darcy-type domains, the fundamental modes of the Stokes-Brinkman system can aid in studying energy minimization problems, as the fluid naturally favors regions with lower resistance. Furthermore, by selecting sufficiently low permeability values, certain zones can be treated as obstacles. This particular flexibility in the choice of permeability parameters introduces an additional tool for modeling eigenmodes in domains with multiple porous subdomains, thereby simulating behaviors analogous to domains with internal membranes. Potential applications include models for filtration, subsurface water treatment, and similar environmental and engineering processes.
 
The purpose of this paper is to analyze a primal velocity-pressure formulation for the Stokes-Brinkman eigenvalue model, which we analyze using the compact operators theory. For this type of formulation there exists well know  inf-sup stable  finite element families to approximate the velocity and the pressure. 
We choose mini elements and Taylor-Hood to handle this Stokes-Brinkman eigenvalue problem because they are the most common alternatives to approximate numerically the spectrum of such system. For these numerical schemes, convergence and  a  priori  error estimates  are derived with the aid of the compact operators theory presented in  \cite{MR1115235,MR2652780}. A residual-based a posteriori error analysis is also provided, which under standard techniques, is proved to be reliable and efficient.  Also, we perform a series of numerical experiments to illustrate the eigenvalue problem, where the intention is, as a first step, to conclude that the families of finite elements of our scheme does not allow to introduce spurious eigenvalues on the computational spectrum.  In both two and three dimensions, we examine how varying the permeability parameter influences the spectrum and its regularity, particularly through the formation of regions with high pressure gradients. Furthermore, we provide a physical interpretation of the results, analyzing the shapes of the eigenfunctions for selected permeability values.
 
 The paper is organized in the following manner:  In Section \ref{sec:model_problem} we propose the eigenvalue problem for the Stokes-Brinkman equations. The well-posedness and spectral characterization of the model is derived by means of a general inf-sup condition. Section \ref{sec:fem} deals with the discrete problem, for which existence and uniqueness are established. In addition, the convergence of solution operators and the a priori error estimates for the eigenvalues and eigenfunctions are obtained. Section \ref{sec:apost} provides the a posteriori analysis for the problem, which relies on the inf-sup condition and the Scott-Zhang interpolant. Finally, Section \ref{sec:numerics} is devoted to report several numerical examples to test the convergence and performance of the numerical scheme, and also to study behavior of the spectrum when different permeability scenarios are considered.

\subsection{Notation}\label{sec:notation}
Let $\O$ be a subset of $\mathbb{R}^d$, for  $d\in\{2,3\}$ and  Lipschitz boundary $\partial\O$.   For $t \geq 0$ and $p \in[1, \infty]$, we denote by $\mathrm{L}^p(\O)$ the usual Lebesgue space of maps from $\O$ to $\mathbb{R}$ endowed with the norm $\|\cdot\|_{\mathrm{L}^p(\O)}$, while $\H^{t}(\O)$ denotes a Sobolev space. Vector spaces and vector-valued functions will be written in bold letters. For instance, for $t\geq 0$, we simply write $\boldsymbol{\H}^{t}(\O)$ instead of $\H^{t}(\O)^d$.  As usual, we write $|\cdot|_{t, \O}$ and $\Vert \cdot\Vert_{t,\Omega}$ to denote the seminorm and norm respectively. We define $\boldsymbol{\H}_0^1(\O)$ as the vector space of functions in $\boldsymbol{\H}^1(\O)$ with vanishing trace on $\partial \O$, and $\mathrm{L}_0^2(\O)$ as the space of $\mathrm{L}^2(\O)$ functions with vanishing mean value over $\O$. Finally, $\mathbf{0}$ denotes a generic null vector or tensor.

\section{The model problem}
\label{sec:model_problem}

Let  $\O\subset\mathbb{R}^d$ be an open bounded domain with Lipschitz boundary $\partial\O$. Let $\Gamma_1$ and $\Gamma_2$ be disjoint open subset of $\partial\O$ such that $\partial \O=\overline{\Gamma}_1\cup\overline{\Gamma}_2$. Considering a steady-state of the balance laws for linear fluid equations, the problem to be studied is given as follows (see for example \cite{williamson2019posteriori})
\begin{align}
	\bbK^{-1}\bu - \nu \Delta \bu + \nabla p & = \lambda \bu & \text{in $\Omega$},\label{eq:Brinkman1}\\
	\div\bu &= 0 & \text{in $\Omega$},\label{eq:Brinkman2}\\
	\bu &= \boldsymbol{0}& \text{on $\Gamma_1$}, \label{bc:Sigma} \\
	(\nu\nabla \bu - p\mathbb{I}) \bn   &=\boldsymbol{0} & \text{on $\Gamma_2$} \label{bc:Gamma}.
\end{align}

Here, $\bu$ is the fluid velocity, $p$ is the pressure of the fluid, $\mathbb{I}$ the identity matrix in $\mathbb{R}^{n\times n}$, and the parameter $\nu$ is the viscosity of the fluid. The vector $\bn$ represents the unit normal. We also consider $\mathbb{K}$ as a bounded, symmetric, and positive definite tensor describing the permeability properties of the domain.  If $\mathbb{K}^{-1}\rightarrow \boldsymbol{0}$, the equation is reduced to Stokes, while if $\nu=0$, we have Darcy's law. In this study, we split the domain in two media $\overline{\Omega}:=\overline{\Omega}_S\cup\overline{\Omega}_D$,  where $\Omega_S$ and $\Omega_D$ represent subdomains where there is free flow and porous media, respectively and assume  $\nu>0$. For the free-flow domain, we take $\mathbb{K}^{-1}=\boldsymbol{0}$, while $\mathbb{K}^{-1}\gg\boldsymbol{0}$ is considered in $\Omega_D$.  This allows to study the eigenmodes on domains where we have membrane-like behavior or internal filters.

To derive the weak form of the governing equations, we first define the appropriate functional spaces for the velocity and pressure. The space for the velocity is given by
$$
	\boldsymbol{\H}_{\Gamma_1}(\Omega):= \{ \bv \in \boldsymbol{\H}^1(\Omega): \bv = \boldsymbol{0} \text{ on } \Gamma_1\},
$$
whereas $\mathrm{L}^2(\Omega)$ is the space for the pressure. We note that thanks to the natural boundary condition on $\Gamma_2$,  the term $\bu\cdot\bn$  automatically adjusts itself on the outflow boundary to ensure that mass is conserved (the volume of flow entering is the same as the volume of flow exiting the domain), hence $p$ is uniquely defined (see for instance \cite[Chapter 5]{elman2014finite}).  On the other hand, if $\Gamma_2 = \emptyset$, then the pressure is defined up to a constant. For this case, we take $	\boldsymbol{\H}_{\Gamma_1}(\Omega):=\boldsymbol{\H}_0^1(\Omega)$ for the velocity space, and $\L_0^2(\Omega)$ for the pressure.

Throughout this work, we assume that the permeability tensor is positive definite for all $\bv\in\boldsymbol{\H}_{\Gamma_1}(\Omega)$. More precisely, there exist positive constants $\mathbb{K} _*,\mathbb{K}^*>0$ such that
\begin{equation*}
	\label{eq:kappa-bounds}
	0<\mathbb{K}_*\Vert\bv\Vert_{0,\O}^2 \leq (\mathbb{K}^{-1}\bv,\bv)_{0,\Omega}\leq \mathbb{K}^*\Vert\bv\Vert_{0,\O}^2.
\end{equation*}

By testing equations \eqref{eq:Brinkman1} with $\bv\in\boldsymbol{\H}_{\Gamma_1}(\Omega)$ and \eqref{eq:Brinkman2} with $q\in \mathrm{L}^2(\Omega)$, integrating by parts, and using the boundary conditions \eqref{bc:Sigma}-\eqref{bc:Gamma} we obtain the following variational problem: Find $\lambda\in\mathbb{R}$ and $(\boldsymbol{0},0)\neq(\bu,p)\in\boldsymbol{\H}_{\Gamma_1}(\Omega)\times \mathrm{L}^2(\Omega)$ such that
\begin{align*}
	\int_{\Omega} \mathbb{K}^{-1} \bu \cdot \bv + \nu \int_{\Omega} \nabla \bu :\nabla \bv - \int_{\Omega} p \div \bv & = \lambda \int_{\Omega} \bu \cdot \bv, &\forall\bv\in\boldsymbol{\H}_{\Gamma_1}(\Omega), \\
	- \int_{\Omega} \div \bu\,  q &= 0,  &\forall q\in \mathrm{L}^2(\Omega).
\end{align*}
Let us define the continuous  bilinear forms $a:\boldsymbol{\H}_{\Gamma_1}(\Omega)\times\boldsymbol{\H}_{\Gamma_1}(\Omega)\rightarrow\mathbb{R}$ and $b:\boldsymbol{\H}_{\Gamma_1}(\Omega)\times \mathrm{L}^2(\Omega)\rightarrow \mathbb{R}$ as follows:
$$
\begin{aligned}
	a(\bu,\bv)&:=\int_{\Omega} \mathbb{K}^{-1} \bu \cdot \bv + \nu \int_{\Omega} \nabla \bu :\nabla \bv,\,\,\,\forall\bu,\bv\in\boldsymbol{\H}_{\Gamma_1}(\Omega),\\
	\quad\text{and}\quad
	b(\bv,q)&:= - \int_{\Omega} q \div\bv\quad\forall\bv\in\boldsymbol{\H}_{\Gamma_1}(\Omega),\forall q\in \L^2(\Omega).
\end{aligned}
$$
Thus, the weak formulation of  system \eqref{eq:Brinkman1}--\eqref{bc:Gamma} is stated in the following manner:
\begin{problem}\label{prob:continuous}
	Find $\lambda\in\mathbb{R}$ and $(\boldsymbol{0},0)\neq(\bu,p)\in\boldsymbol{\H}_{\Gamma_1}(\Omega)\times \mathrm{L}^2(\Omega)$ such that
\begin{equation*}
	\label{eq:porous_system_continuous}
	\left\{
	\begin{aligned}
		a(\bu,\bv) + b(\bv,p) &= \lambda(\bu,\bv)_{0,\Omega},&\forall \bv\in\boldsymbol{\H}_{\Gamma_1}(\Omega),\\
		b(\bu,q)&=0,&\forall q\in \L^2(\Omega),
	\end{aligned}
	\right.
\end{equation*}
where $(\cdot,\cdot)_{0,\Omega}$ denotes the usual $\boldsymbol{\L}^2$ inner product.
\end{problem}

In the forthcoming analysis, we need a suitable weighted norm which depends on the physical parameters. With this in mind, given $(\bv,q)\in\boldsymbol{\H}_{\Gamma_1}(\Omega)\times \mathrm{L}^2(\Omega)$, we define the following norm:
\begin{equation*}
	\label{eq:weigthed-norm}
	\vertiii{(\bv,q)}^2:= \|\mathbb{K}^{-1/2} \bv\|_{0,\Omega}^2 +   \|\nu^{1/2} \nabla \bv \|_{0,\Omega}^2 + \| \div \bv \|_{0,\Omega}^2 + \Vert q\Vert_{0,\O}^2.
\end{equation*}

Let us define the kernel $\boldsymbol{\mathcal{K}}$ of $b(\cdot,\cdot)$ as follows
$$\boldsymbol{\mathcal{K}}:= \{ \bv \in \boldsymbol{\H}_{\Gamma_1}(\Omega): b(\bv, q)=0 \quad \forall q \in \mathrm{L}^2(\Omega) \} = \{ \bv \in\boldsymbol{\H}_{\Gamma_1}(\Omega): \div \bv = 0 \}. $$

Let us remark that the bilinear form $a(\cdot,\cdot)$ satisfies the following properties
\begin{align*}
	a(\bu, \bv)  &\le \max\{{\mathbb{K}^*,\nu}\}\Vert \bu\Vert_{1,\Omega}\Vert\bv\Vert_{1,\O}\quad \forall \bu,  \bv \in\boldsymbol{\H}_{\Gamma_1}(\Omega), \\
	a(\bv, \bv) &\ge \min\{{\mathbb{K}_*},\nu\}\Vert \bv\Vert_{1,\Omega}^2 \quad \forall \bv \in \boldsymbol{\mathcal{K}},
\end{align*}
whereas the form $b(\cdot,\cdot)$ satisfies  the following inf-sup condition
\begin{lemma}
There exists $\beta>0$ such that
$$
\sup_{\boldsymbol{0} \neq\bv \in \boldsymbol{\H}_{\Gamma_1}(\Omega)}\frac{b(\bv, q)}{\Vert\bv\Vert_{1,\Omega}}  \ge \beta \norm{q}_{0,\Omega}, \qquad \forall q \in \mathrm{L}^2(\Omega).
$$
\end{lemma}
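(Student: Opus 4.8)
The plan is to prove the inf-sup condition by exhibiting, for each $q$, an explicit velocity field that realizes the supremum, which reduces the estimate to the surjectivity of the divergence operator with a bounded right inverse. Concretely, I would first recall the standard equivalence: the claimed bound holds with constant $\beta = 1/C$ as soon as, for every $q$ in the pressure space, there exists $\bv \in \boldsymbol{\H}_{\Gamma_1}(\Omega)$ satisfying $\div \bv = -q$ together with $\Vert \bv \Vert_{1,\Omega} \le C \norm{q}_{0,\Omega}$. Indeed, for such a $\bv$ the sign convention of $b$ gives $b(\bv,q) = -\int_\Omega q\,\div\bv = \norm{q}_{0,\Omega}^2$, and dividing by $\Vert\bv\Vert_{1,\Omega} \le C\norm{q}_{0,\Omega}$ produces the lower bound. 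Thus the whole argument rests on constructing this bounded right inverse of the divergence.

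The construction splits according to whether $\Gamma_2$ is empty. When $\Gamma_2 = \emptyset$, the velocity space is $\boldsymbol{\H}_0^1(\Omega)$ and the pressure space is $\L_0^2(\Omega)$, and the required surjectivity of $\div \colon \boldsymbol{\H}_0^1(\Omega) \to \L_0^2(\Omega)$ with a bounded right inverse is the classical Bogovskii/Ne\v{c}as result for Lipschitz domains, which I would simply invoke from the standard references on mixed finite elements. When $\Gamma_2 \neq \emptyset$, the pressure space is the full $\L^2(\Omega)$, so I would decompose $q = q_0 + c$ into its $\L_0^2$ part $q_0$ and its mean value $c$. The zero-mean part is handled by the Dirichlet result, yielding $\bv_0 \in \boldsymbol{\H}_0^1(\Omega) \subset \boldsymbol{\H}_{\Gamma_1}(\Omega)$ with $\div\bv_0 = -q_0$; the constant part is absorbed by a fixed field $\bw \in \boldsymbol{\H}_{\Gamma_1}(\Omega)$ with $\int_\Omega \div\bw \neq 0$, which exists precisely because $\Gamma_2$ carries a nonzero portion of the flux $\int_{\Gamma_2} \bw\cdot\bn$. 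Scaling $\bw$ appropriately and adding it to $\bv_0$ gives the right inverse on all of $\L^2(\Omega)$.

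I expect the main obstacle to be the surjectivity of the divergence with a controlled right inverse in the pure Dirichlet case, since this is the genuinely nontrivial analytic ingredient (the Bogovskii construction, or an equivalent de Rham / closed-range argument); everything else is bookkeeping. In the mixed case a secondary point to check carefully is that the constant-pressure mode is actually controlled, i.e. that the flux field $\bw$ can be chosen with norm independent of $q$ and that the two contributions combine with a uniform constant; this hinges on $\Gamma_2$ having positive surface measure, which is guaranteed by the standing assumption $\partial\Omega = \overline{\Gamma}_1 \cup \overline{\Gamma}_2$ with $\Gamma_2$ a genuine nonempty open piece of the boundary.
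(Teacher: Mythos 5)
Your proposal is correct and follows essentially the same route as the paper, whose entire proof is the citation to \cite{MR2050138} (Remark 4.8 and Lemma 4.9): the underlying argument there is precisely the bounded right inverse of the divergence (Bogovskii/Ne\v{c}as) on $\L_0^2(\Omega)$, combined with control of the constant pressure mode through the flux across $\Gamma_2$, exactly as you outline. The one detail to tighten — which you already flag — is that $\bv_0+\alpha\bw$ is an exact right inverse only after correcting $\bw$ by a Bogovskii field so that $\div\bw$ is a nonzero \emph{constant} (or, alternatively, one skips exactness and absorbs the cross term $\int_\Omega q_0\,\div\bw$ by Young's inequality with a small fixed scaling); either fix is routine.
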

\begin{proof}
Since we have a free-flow boundary condition, the proofs follows  by following the same arguments as in \cite[Remark 4.8 and Lemma 4.9]{MR2050138}.
\end{proof}

Now we define the following bilinear form
$$
\mathcal{A}((\bu,p),(\bv,q)):= a(\bu,\bv) + b(\bv,p) + b(\bu,q),\quad\forall\bu,\bv\in\boldsymbol{\H}_{\Gamma_1}(\Omega),\,\,\,\, \,\forall p,q\in \L^2(\O),
$$
which allows to rewrite Problem \ref{prob:continuous}  in the following manner:
\begin{problem}\label{prob:continuous-A}
	Find $\lambda\in\mathbb{R}$ and $(\boldsymbol{0},0)\neq(\bu,p)\in\boldsymbol{\H}_{\Gamma_1}(\Omega)\times \mathrm{L}^2(\Omega)$ such that
	\begin{equation*}
		\label{eq:porous_system_continuous-A}
			\mathcal{A}((\bu,p),(\bv,q))= \lambda(\bu,\bv)_{0,\Omega},\qquad\forall (\bv,q)\in\boldsymbol{\H}_{\Gamma_1}(\Omega)\times \L^2(\Omega).
	\end{equation*}
	\end{problem}
The following result establish a general inf-sup condition for the bilinear form $\mathcal{A}$.

\begin{lemma}
	\label{lemma:elliptic}
	For all $(\boldsymbol{0},0)\neq (\bu,p)\in\boldsymbol{\H}_{\Gamma_1}(\Omega)\times \L^2(\Omega)$, there exists $(\bv,q)\in\boldsymbol{\H}_{\Gamma_1}(\Omega)\times \L^2(\Omega)$ with $\vertiii{(\bv,q)}\leq C_1\vertiii{(\bu,p)}$ such that
	$$
	\mathcal{A}((\bu,p),(\bv,q))\geq C_2 \vertiii{(\bu,p)}^2,
	$$
	where $C_1$, $C_2$ are constants depending on the physical parameters.
\end{lemma}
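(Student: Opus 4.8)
The plan is to establish the global inf-sup condition for $\mathcal{A}$ by the classical strategy for saddle-point problems: exploit the coercivity of $a(\cdot,\cdot)$ on the kernel $\boldsymbol{\mathcal{K}}$ together with the inf-sup condition on $b(\cdot,\cdot)$, and combine the two by testing against a carefully chosen pair $(\bv,q)$. Given $(\bu,p)$, the first natural choice is the diagonal test pair $(\bv,q)=(\bu,-p)$, for which the mixed terms cancel and one is left with $\mathcal{A}((\bu,p),(\bu,-p))=a(\bu,\bu)$. This controls the velocity part but says nothing about $\|p\|_{0,\Omega}$, so a second test function is needed to recover pressure control.

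\textbf{Recovering the pressure.} The standard device is to invoke the inf-sup condition (the Lemma above): there exists $\bw\in\boldsymbol{\H}_{\Gamma_1}(\Omega)$, which I can normalize so that $\|\bw\|_{1,\Omega}\leq C\|p\|_{0,\Omega}$, with $b(\bw,p)\geq \beta\|p\|_{0,\Omega}^2$. Testing $\mathcal{A}$ against $(-\bw,0)$ produces $-a(\bu,\bw)-b(\bw,p)$; using continuity of $a$ and Young's inequality to absorb $a(\bu,\bw)$ into a small multiple of $\|\bu\|_{1,\Omega}^2$ plus a large multiple of $\|\bw\|_{1,\Omega}^2\lesssim\|p\|_{0,\Omega}^2$, the dominant $-b(\bw,p)=\beta\|p\|_{0,\Omega}^2$ term survives with a positive coefficient.

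\textbf{Combining.} I would then set $(\bv,q)=(\bu-\delta\bw,\,-p)$ for a sufficiently small parameter $\delta>0$. The contribution $a(\bu,\bu)$ from the diagonal part controls $\|\bu\|_{1,\Omega}^2$ (hence the $\mathbb{K}^{-1/2}$, $\nu^{1/2}\nabla$, and $\div$ pieces of the triple norm, the last by continuity of the divergence), while the $\delta$-weighted part delivers $\delta\beta\|p\|_{0,\Omega}^2$ minus cross terms that are absorbed for $\delta$ small. Choosing $\delta$ to balance the absorbed cross terms yields $\mathcal{A}((\bu,p),(\bv,q))\geq C_2\vertiii{(\bu,p)}^2$, and the bound $\vertiii{(\bv,q)}\leq C_1\vertiii{(\bu,p)}$ follows from $\|\bw\|_{1,\Omega}\lesssim\|p\|_{0,\Omega}$ together with the equivalence of $\|\cdot\|_{1,\Omega}$ and the velocity part of $\vertiii{\cdot}$ on $\boldsymbol{\H}_{\Gamma_1}(\Omega)$.

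\textbf{Main obstacle.} The delicate point is the bookkeeping of constants: the coercivity constant $\min\{\mathbb{K}_*,\nu\}$ lives only on the kernel $\boldsymbol{\mathcal{K}}$, whereas the test velocity $\bu-\delta\bw$ need not be divergence-free, so I must either argue that $a(\bu,\bu)$ is coercive on all of $\boldsymbol{\H}_{\Gamma_1}(\Omega)$ (which in fact holds here because the full $a$ includes the $\nu\|\nabla\bv\|^2$ term plus $\mathbb{K}^{-1}$, giving $H^1$-coercivity without needing $\div\bv=0$) or restrict the coercivity argument appropriately. Tracking how $C_1,C_2$ degenerate in the Stokes limit $\mathbb{K}^{-1}\to\boldsymbol{0}$ versus the Darcy limit, and confirming that the $\|\div\bv\|_{0,\Omega}^2$ term in $\vertiii{\cdot}$ is genuinely controlled (via $\|\div\bv\|_{0,\Omega}\leq\sqrt{d}\,\|\nabla\bv\|_{0,\Omega}$), is where the care is required.
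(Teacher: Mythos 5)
Your proposal is essentially the argument the paper itself relies on: the paper's proof of Lemma~\ref{lemma:elliptic} is a one-line appeal to \cite[Lemma 4.3]{Paul2005347}, which is precisely this classical saddle-point stability construction --- coercivity of $a(\cdot,\cdot)$ on all of $\boldsymbol{\H}_{\Gamma_1}(\Omega)$ (no kernel restriction needed, as you correctly observe, since $a(\bv,\bv)=\|\mathbb{K}^{-1/2}\bv\|_{0,\Omega}^2+\nu\|\nabla\bv\|_{0,\Omega}^2$ and $\|\div\bv\|_{0,\Omega}\leq\sqrt{d}\,\|\nabla\bv\|_{0,\Omega}$), the inf-sup condition on $b(\cdot,\cdot)$ to produce $\bw$ with $\|\bw\|_{1,\Omega}\lesssim\|p\|_{0,\Omega}$ controlling the pressure, and a combined test pair with a small parameter $\delta$ fixed by Young's inequality. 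One sign needs repair: since $\mathcal{A}((\bu,p),(\bv,q))=a(\bu,\bv)+b(\bv,p)+b(\bu,q)$, your choice $(\bv,q)=(\bu-\delta\bw,-p)$ gives $a(\bu,\bu)-\delta a(\bu,\bw)-\delta b(\bw,p)$, so with your normalization $b(\bw,p)\geq\beta\|p\|_{0,\Omega}^2$ the pressure term enters with the \emph{wrong} sign (your claim that $-b(\bw,p)=\beta\|p\|_{0,\Omega}^2$ contradicts that normalization); taking $(\bv,q)=(\bu+\delta\bw,-p)$, or equivalently flipping $\bw$, restores $+\delta\beta\|p\|_{0,\Omega}^2$, after which your absorption argument and the bound $\vertiii{(\bv,q)}\leq C_1\vertiii{(\bu,p)}$ go through exactly as you describe.
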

\begin{proof}
The proof is followed by the same arguments as \cite[Lemma 4.3]{Paul2005347}
	\end{proof}

For the analysis of the eigenvalue problem, we introduce the so-called solution operators. These operators,  which we denote by $\bT$ and  $\boldsymbol{B}$,  are defined by 
\begin{align*}
\bT&:\boldsymbol{\L}^2(\O)\rightarrow\boldsymbol{\H}_{\Gamma_1}(\Omega),\,\,\,\bF\mapsto\bT\bF:=\widetilde{\bu}, \\
\boldsymbol{B}&:\boldsymbol{\L}^2(\O)\rightarrow\boldsymbol{\L}^2(\O),\,\,\,\bF\mapsto\boldsymbol{B}\bF:=\widetilde{p}, 
\end{align*}
where the pair $(\widetilde{\bu},\widetilde{p})\in\boldsymbol{\H}_{\Gamma_1}(\Omega)\times\L^2(\O)$ is the unique solution of the following source problem
\begin{equation} 
	\label{eq:source}
	\mathcal{A}((\widetilde{\bu},\widetilde{p}),(\bv,q))=(\bF,\bv)_{0,\O},\qquad \forall(\bv,q)\in \boldsymbol{\H}_{\Gamma_1}(\Omega)\times \L^2(\Omega),\\
\end{equation}
which is well posed thanks to Lemma \ref{lemma:elliptic}. 
Moreover, Lemma \ref{lemma:elliptic} together with  the Bab\v uska-Brezzi theory reveal that  operators $\bT$  and  $\boldsymbol{B}$  are well defined and bounded. 

Let $\mu$ be a real number such that  $\mu\neq 0$. Notice that $(\mu,\boldsymbol{u})\in \mathbb{R}\times\boldsymbol{\H}_{\Gamma_1}(\Omega)$ is an eigenpair of $\bT$ if and only if  there exists $p\in \L^2(\Omega)$ such that,  $(\l,(\boldsymbol{u},p))$ solves Problem \ref{prob:continuous-A}  with $\mu:=1/\l$.

Using the fact that the term $(\mathbb{K}^{-1}\bu,\bv)_{0,\O}$ is well defined, and taking advantage of the well known regularity results for the Stokes system   (see \cite{MR975121,MR1600081} for instance), we have  the following additional regularity result for the solution of the source problem \eqref{eq:source} and consequently, for the generalized eigenfunctions of $\bT$.
\begin{theorem}\label{th:regularidadfuente}
There exists $s>0$  that for all $\boldsymbol{f} \in \boldsymbol{\L}^2(\O)$, the solution $(\widetilde{\bu},\widetilde{p})\in\boldsymbol{\H}_{\Gamma_1}(\Omega)\times\L^2(\O)$ of problem \eqref{eq:source}, satisfies for the velocity $\widetilde{\bu}\in  \boldsymbol{\H}^{1+s}(\Omega)$, for the pressure $\widetilde{p}\in \H^s(\Omega)$, and
 \begin{equation*}
\|\widetilde{\bu}\|_{1+s,\O}+\|\widetilde{p}\|_{s,\O}\leq C_{\mathbb{K},\nu} \|\boldsymbol{f}\|_{0,\O},
 \end{equation*}
 where $C_{\mathbb{K},\nu}>0$ is a constant depending on the permeability and the viscosity.
\end{theorem}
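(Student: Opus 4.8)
The plan is to reduce the Stokes--Brinkman source problem to a pure Stokes problem with a modified right-hand side and then invoke the classical Stokes regularity theory. Writing out the strong form associated with \eqref{eq:source}, the pair $(\widetilde{\bu},\widetilde{p})$ satisfies $\bbK^{-1}\widetilde{\bu}-\nu\Delta\widetilde{\bu}+\nabla\widetilde{p}=\bF$ together with $\div\widetilde{\bu}=0$ and the mixed boundary conditions \eqref{bc:Sigma}--\eqref{bc:Gamma}. The key observation is that the zeroth-order term can be absorbed into the data: setting $\bg:=\bF-\bbK^{-1}\widetilde{\bu}$, the pair solves the Stokes system $-\nu\Delta\widetilde{\bu}+\nabla\widetilde{p}=\bg$, $\div\widetilde{\bu}=0$ with the same boundary conditions.

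First I would verify that $\bg\in\boldsymbol{\L}^2(\O)$ with a norm controlled by $\|\bF\|_{0,\O}$. Since $\bbK^{-1}$ is a symmetric positive definite tensor field with eigenvalues bounded above by $\bbK^*$, we have the pointwise (hence $\L^2$) bound $\|\bbK^{-1}\bv\|_{0,\O}\le\bbK^*\|\bv\|_{0,\O}$. The well-posedness of \eqref{eq:source} guaranteed by Lemma \ref{lemma:elliptic} yields $\vertiii{(\widetilde{\bu},\widetilde{p})}\le C\|\bF\|_{0,\O}$ and, in particular, $\|\widetilde{\bu}\|_{0,\O}\le C_{\bbK,\nu}\|\bF\|_{0,\O}$. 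The triangle inequality then gives $\|\bg\|_{0,\O}\le\|\bF\|_{0,\O}+\bbK^*\|\widetilde{\bu}\|_{0,\O}\le C_{\bbK,\nu}\|\bF\|_{0,\O}$.

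Next I would apply the Stokes regularity results of \cite{MR975121,MR1600081} to the problem with right-hand side $\bg\in\boldsymbol{\L}^2(\O)$. These provide an exponent $s>0$, dictated by the geometry of $\O$ (in particular by the opening angles at reentrant corners and at the points where the boundary condition changes type along $\overline{\Gamma}_1\cap\overline{\Gamma}_2$), such that $\widetilde{\bu}\in\boldsymbol{\H}^{1+s}(\Omega)$ and $\widetilde{p}\in\H^s(\Omega)$, with $\|\widetilde{\bu}\|_{1+s,\O}+\|\widetilde{p}\|_{s,\O}\le C\|\bg\|_{0,\O}$. Chaining this with the bound on $\|\bg\|_{0,\O}$ from the previous step closes the estimate with a constant $C_{\bbK,\nu}$ depending on the permeability and the viscosity.

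I expect the main obstacle to be selecting and correctly invoking a Stokes regularity theorem that accommodates the mixed Dirichlet/traction boundary conditions and the fact that the coefficient $\bbK^{-1}$ is merely bounded and possibly discontinuous across the interface between $\Omega_S$ and $\Omega_D$. Because $\bbK^{-1}\widetilde{\bu}$ is only guaranteed to lie in $\boldsymbol{\L}^2(\O)$, no bootstrapping beyond the $\boldsymbol{\L}^2$-data Stokes regularity is available, so the attainable $s$ is exactly the shift index of the Stokes operator for this mixed boundary value problem; the essential point to secure is that this index is strictly positive for the admissible domain configurations.
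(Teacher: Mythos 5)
Your proposal is correct and follows essentially the same route as the paper, which proves the theorem precisely by noting that the term $(\mathbb{K}^{-1}\widetilde{\bu},\bv)_{0,\O}$ is well defined (so the zeroth-order term can be absorbed into an $\boldsymbol{\L}^2$ right-hand side) and then appealing to the classical Stokes regularity results of \cite{MR975121,MR1600081}. Your write-up merely makes explicit the intermediate details the paper leaves tacit, namely the bound $\|\mathbb{K}^{-1}\widetilde{\bu}\|_{0,\O}\leq \mathbb{K}^{*}\|\widetilde{\bu}\|_{0,\O}\leq C_{\mathbb{K},\nu}\|\boldsymbol{f}\|_{0,\O}$ coming from the well-posedness guaranteed by Lemma \ref{lemma:elliptic}.
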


It is important to note  that the estimate on Theorem \ref{th:regularidadfuente} holds for the source problem. On the other hand, for the eigenfunctions we have that there exists a positive constant $C$ depending on the physical parameters and the eigenvalue $\lambda$  such that 
 \begin{equation}\label{eq:reg_eigenfunction}
\|\bu\|_{1+r,\O}+\|p\|_{r,\O}\leq C \|\bu\|_{0,\O},
 \end{equation}
 where $r>0$.
Hence, because of the compact inclusion $\boldsymbol{\H}^{1+s}(\O)\hookrightarrow\boldsymbol{ \L}^2(\O)$, we 
conclude that $\bT$ is a compact operator and consequently, the following  spectral characterization of $\bT$ holds.
\begin{lemma}(Spectral Characterization of $\bT$).
The spectrum of $\bT$ is such that $\sp(\bT)=\{0\}\cup\{\mu_{k}\}_{k\in{N}}$ where $\{\mu_{k}\}_{k\in\mathbf{N}}$ is a sequence of real  eigenvalues that converge to zero, according to their respective multiplicities. 
\end{lemma}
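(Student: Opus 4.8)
The plan is to view $\bT$ as a linear operator on the Hilbert space $\boldsymbol{\L}^2(\O)$ (composing the solution map with the compact embedding $\boldsymbol{\H}_{\Gamma_1}(\Omega)\hookrightarrow\boldsymbol{\L}^2(\O)$) and to establish that it is both \emph{compact} and \emph{self-adjoint}; the claimed characterization is then exactly the spectral theorem for compact self-adjoint operators on a separable Hilbert space, as found in \cite{MR1115235,MR2652780}. Compactness has already been secured: Theorem \ref{th:regularidadfuente} gives $\bT\bF\in\boldsymbol{\H}^{1+s}(\O)$ with $\|\bT\bF\|_{1+s,\O}\le C\|\bF\|_{0,\O}$, and the compact inclusion $\boldsymbol{\H}^{1+s}(\O)\hookrightarrow\boldsymbol{\L}^2(\O)$ then shows that $\bT$ maps bounded sets into relatively compact ones. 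Hence only self-adjointness remains to be verified.

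To prove self-adjointness, I would first record that the global bilinear form is symmetric, i.e. $\mathcal{A}((\bu,p),(\bv,q))=\mathcal{A}((\bv,q),(\bu,p))$ for all pairs; this is immediate because $a(\cdot,\cdot)$ is symmetric by inspection, while the two coupling contributions $b(\bv,p)$ and $b(\bu,q)$ merely swap roles under the interchange $(\bu,p)\leftrightarrow(\bv,q)$. Given $\bF,\bg\in\boldsymbol{\L}^2(\O)$, I would then test the source problem \eqref{eq:source} with data $\bF$ against the pair $(\bT\bg,\boldsymbol{B}\bg)$, and the source problem with data $\bg$ against $(\bT\bF,\boldsymbol{B}\bF)$, obtaining
\[
\mathcal{A}\big((\bT\bF,\boldsymbol{B}\bF),(\bT\bg,\boldsymbol{B}\bg)\big)=(\bF,\bT\bg)_{0,\O},\qquad
\mathcal{A}\big((\bT\bg,\boldsymbol{B}\bg),(\bT\bF,\boldsymbol{B}\bF)\big)=(\bg,\bT\bF)_{0,\O}.
\]
The symmetry of $\mathcal{A}$ equates the two left-hand sides, whence $(\bT\bF,\bg)_{0,\O}=(\bF,\bT\bg)_{0,\O}$, which is precisely self-adjointness of $\bT$ in $\boldsymbol{\L}^2(\O)$. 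As a byproduct, taking $\bg=\bF$ and using that the source solution satisfies $\div(\bT\bF)=0$ (so the coupling term vanishes) yields $(\bT\bF,\bF)_{0,\O}=a(\bT\bF,\bT\bF)\ge 0$, so $\bT$ is in fact positive semidefinite, which is consistent with the eigenvalues $\mu_k=1/\lambda$ being positive.

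Finally, I would invoke the spectral theorem: a compact self-adjoint operator on an infinite-dimensional Hilbert space has real spectrum consisting of $\{0\}$ together with an at most countable set of nonzero real eigenvalues of finite multiplicity whose only possible accumulation point is $0$; the value $0$ necessarily belongs to $\sp(\bT)$ since $\boldsymbol{\L}^2(\O)$ is infinite-dimensional. Listing the nonzero eigenvalues with their multiplicities as $\{\mu_k\}_{k\in\mathbf{N}}$ gives exactly the asserted decomposition $\sp(\bT)=\{0\}\cup\{\mu_k\}_{k\in\mathbf{N}}$ with $\mu_k\to 0$. The only step demanding care is the self-adjointness argument, and specifically the bookkeeping of the pressure and coupling terms inside $\mathcal{A}$; once its symmetry is noted the computation closes at once, so I do not expect any serious obstacle beyond that verification.
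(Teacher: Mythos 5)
Your proposal is correct and takes essentially the same route as the paper: compactness of $\bT$ follows from the extra regularity of Theorem \ref{th:regularidadfuente} together with the compact inclusion $\boldsymbol{\H}^{1+s}(\O)\hookrightarrow\boldsymbol{\L}^2(\O)$, and the characterization is then read off from the spectral theory of compact operators. The only difference is that you explicitly verify self-adjointness (and positivity) of $\bT$ via the symmetry of $\mathcal{A}$ and the vanishing of the coupling term on divergence-free solutions, a step the paper leaves implicit when asserting that the eigenvalues are real.
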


\section{Finite element scheme}
\label{sec:fem}
In this section our aim is to describe a finite element discretization of problem \eqref{prob:continuous}. To do this task, we will introduce two families of inf-sup stable finite elements for the Stokes problem which trivially are also valid for the Brinkman load problem. Clearly,  the  properties of these spaces also hold for the eigenvalue problem. We begin by introducing some preliminary definitions and notations to perform the analysis. 


Let $\mathcal{T}_h=\{T\}$ be a conforming partition  of $\overline{\O}$ into closed simplices $T$ with size $h_T=\text{diam}(T)$. Define $h:=\max_{T\in\mathcal{T}_h}h_T$. Let us denote by $\boldsymbol{V}_h$ and $\mathcal{P}_h$ the finite element spaces that approximate the velocity field and the pressure, respectively. In particular, the study is focused in the following two elections:
\begin{itemize}
\item[(a)] The mini element \cite[Section 4.2.4]{MR2050138}: Here,
\begin{align*}
&\boldsymbol{V}_h=\{\bv_{h}\in\boldsymbol{C}(\overline{\O})\ :\ \bv_{h}|_T\in[\mathbb{P}_1(T)\oplus\mathbb{B}(T)]^{d} \ \forall \ T\in\mathcal{T}_h\}\cap\boldsymbol{\H}_{\Gamma_1}(\Omega),\\
&\mathcal{P}_h=\{ q_{h}\in C(\overline{\O})\ :\ q_{h}|_T\in\mathbb{P}_1(T) \ \forall \ T\in\mathcal{T}_h \}\cap \L^{2}(\O),
\end{align*}
where $\mathbb{B}(T)$ denotes the space spanned by local bubble functions.
\item[(b)] The lowest order Taylor--Hood element \cite[Section 4.2.5]{MR2050138}: In this case,
\begin{align*}
&\boldsymbol{V}_h=\{\bv_{h}\in\boldsymbol{C}(\overline{\O})\ :\ \bv_h|_T\in\mathbb{P}_2(T)^{d} \ \forall \ T\in\mathcal{T}_h\}\cap\boldsymbol{\H}_{\Gamma_1}(\Omega),\\
&\mathcal{P}_h=\{ q_{h}\in C(\overline{\O})\ :\ q_{h}|_T\in\mathbb{P}_1(T) \ \forall\ T\in\mathcal{T} \}\cap \L^{2}(\O).
\end{align*}
\end{itemize}

Let us remark that  from now and on, the discrete analysis will be performed in a general manner, in the sense that indistinctively we are considering both the mini element or the Taylor-Hood family.  If some difference must be claimed, we will point it out when is necessary. 

With the aforementioned discrete spaces at hand, the discretization of Problem \ref{prob:continuous} reads as follows:

\begin{problem}\label{prob:discrete}
	Find $\lambda_h\in\mathbb{R}$ and $(\boldsymbol{0},0)\neq(\bu_h,p_h)\in\boldsymbol{V}_h\times \mathcal{P}_h$ such that
\begin{equation*}
	\label{eq:porous_system_discrete}
	\left\{
	\begin{aligned}
		a(\bu_h,\bv_h) + b(\bv_h,p_h) &= \lambda_h(\bu_h,\bv_h)_{0,\Omega},&\forall \bv_h\in\boldsymbol{V}_h,\\
		b(\bu_h,q_h)&=0,&\forall q_h\in \mathcal{P}_h.
	\end{aligned}
	\right.
\end{equation*}
\end{problem}
Let $\boldsymbol{\mathcal{K}}_h$ be the discrete kernel of bilinear form  $b(\cdot,\cdot)$ defined by
$$\boldsymbol{\mathcal{K}}_h:= \{ \bv_h \in \boldsymbol{V}_h: b(\bv_h, q_h)=0 \quad \forall q_h \in \mathcal{P}_h \}. $$
For the elements on $\boldsymbol{\mathcal{K}}_h$, bilinear form  $a(\cdot,\cdot)$ satisfies
\begin{align*}
	a(\bv_h, \bv_h) &\ge \min\{{\mathbb{K}_*},\nu\}\Vert \bv_h\Vert_{1,\Omega}^2 \quad \forall \bv \in \boldsymbol{\mathcal{K}_h}.
\end{align*}
On the other hand, since mini element and Taylor-Hood are  inf-sup stable spaces, we have the there exists a constant  $\widehat{\beta}>0$ such that
$$
\sup_{\boldsymbol{0} \neq\bv_h \in \boldsymbol{V}_h}\frac{b(\bv_h, q_h)}{\Vert\bv_h\Vert_{1,\Omega}}  \ge \widehat{\beta} \norm{q_h}_{0,\Omega}, \qquad \forall q \in \mathcal{P}_h.
$$

Let us  introduce the discrete solution operators $\bT_h$ and $\boldsymbol{B}_h$ defined as follows
\begin{align*}\label{eq:operador_solucion_u_h}
	\bT_h&:\boldsymbol{\L}^2(\O)\rightarrow \boldsymbol{V}_h,\quad\boldsymbol{f}\mapsto \bT_h\boldsymbol{f}:=\widetilde{\bu}_h, \\
	\boldsymbol{B}_h&:\boldsymbol{\L}^2(\O)\rightarrow\mathcal{P}_h,\,\,\,\bF\mapsto\boldsymbol{B}_h\bF:=\widetilde{p}_h, 
\end{align*}
where the pair  $(\widetilde{\bu}_h, \widetilde{p}_h)\in\boldsymbol{V}_h\times \mathcal{P}_h$ is the unique solution of the following discrete source  problem
\begin{equation*}\label{def:brink_system_weak_disc_source}
	\left\{
	\begin{array}{rcll}
a(\widetilde{\bu}_h,\bv_h) + b(\bv_h,\widetilde{p}_h)&=& (\boldsymbol{f},\bv_h)_{0,\O}&\forall \bv_h\in\boldsymbol{V}_h,\\
b(\widetilde{\bu}_h,q_h)&=&0&\forall q_h\in\mathcal{P}_h.
\end{array}
	\right.
\end{equation*}

Moreover, if $\mathcal{L}_h$ represents the Lagrange interpolator operator and $\mathcal{P}_h$ represents the $\L^2$-projection,   the solutions of the continuous and discrete problems satisfy
the following error estimate
\begin{multline*}
\vertiii{(\widetilde{\bu}-\widetilde{\bu}_h,\widetilde{p}-\widetilde{p}_h)}^2\leq C\inf_{(\bv,q)\in\boldsymbol{V}_h\times\mathcal{P}_h}\vertiii{(\widetilde{\bu}-\bv,\widetilde{p}-q)}^2\\
\leq C\vertiii{(\widetilde{\bu}-\mathcal{L}_h\widetilde{\bu},\widetilde{p}-\mathcal{P}_h\widetilde{p})}^2
=\|\mathbb{K}^{1/2}(\widetilde{\bu}-\mathcal{L}_h\widetilde{\bu})\|_{0,\O}^2+\|\nu^{-1/2}\nabla(\widetilde{\bu}-\mathcal{L}_h\widetilde{\bu})\|_{0,\O}^2\\
+\|\div(\widetilde{\bu}-\mathcal{L}_h\widetilde{\bu})\|_{0,\O}^2
+\|\widetilde{p}-\mathcal{P}_h\widetilde{p}\|_{0,\O}^2\\
 \leq h^{2\min\{s,k\}}\max\{\mathbb{K}^*,\nu^*,1 \}\left(\|\widetilde{\bu}\|_{1+s,\O}^2+\|\widetilde{p}\|_{s,\O}^{2} \right).
\end{multline*}
Then, from Theorem \ref{th:regularidadfuente} we have 
\begin{equation}
\label{eq:error_triple}
\vertiii{(\widetilde{\bu}-\widetilde{\bu}_h,\widetilde{p}-\widetilde{p}_h)}\leq C_{\mathbb{K},\nu}h^{\min\{s,k\}}\left(\|\widetilde{\bu}\|_{1+s,\O}+\|\widetilde{p}\|_{s,\O}\right) \leq C_{\mathbb{K},\nu} h^{\min\{s,k\}}\Vert \boldsymbol{f}\Vert_{0,\O},
\end{equation}
where $k\in\{1,2\}$ depending on whether mini-element or Taylor-Hood family is being used.

Now we have all the necessary ingredients to guarantee the convergence in norm of  $\bT_h$ to $\bT$ as $h\rightarrow 0$.  
The following result establishes this convergence in the $\H^1$ and $\L^2$ norms.

\begin{theorem}\label{thm:erroroperator}
Let $\boldsymbol{f}\in \boldsymbol{\L}^2(\O)$ be such  that $\widetilde{\bu}:=\bT\boldsymbol{f}$, $\widetilde{p}:=\boldsymbol{B}\boldsymbol{f}$,  $\widetilde{\bu}_h:=\bT_h\boldsymbol{f}$ and  $\widetilde{p}_h:=\boldsymbol{B}_h\boldsymbol{f}$. Then, the follwing estimates hold
\begin{equation}
\label{eq:convergence_norms}
\|(\bT-\bT_h)\boldsymbol{f}\|_{1,\O}\leq C_{\mathbb{K},\nu}h^{\min\{s,k\}}\|\boldsymbol{f}\|_{0,\O},
\end{equation}
and
\begin{equation}
\label{eq:convergence_norms2}
 \|(\bT-\bT_h)\boldsymbol{f}\|_{0,\O}\leq C_{\mathbb{K},\nu}h^{2{\min\{s,k\}}}\|\boldsymbol{f}\|_{0,\O},
\end{equation}
where $s>0$ and $C_{\mathbb{K},\nu}$ are the constants involved in Theorem \ref{th:regularidadfuente}  and $k\in\{1,2\}$, depending on whether mini-element or Taylor-Hood family is being used.
\end{theorem}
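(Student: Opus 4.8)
The plan is to obtain the $\boldsymbol{\H}^1$ bound \eqref{eq:convergence_norms} directly from the energy estimate \eqref{eq:error_triple} already established, and to derive the $\boldsymbol{\L}^2$ bound \eqref{eq:convergence_norms2} through a duality (Aubin--Nitsche) argument that exploits the symmetry of $\mathcal{A}$ and the elliptic regularity of Theorem \ref{th:regularidadfuente}. Throughout I write $\be:=(\bT-\bT_h)\boldsymbol{f}=\widetilde{\bu}-\widetilde{\bu}_h$ and $e_p:=\widetilde{p}-\widetilde{p}_h$.

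First I would dispose of the $\boldsymbol{\H}^1$ estimate. Since $\nu>0$, the weighted norm $\vertiii{\cdot}$ controls the $\boldsymbol{\H}^1$ seminorm via its $\|\nu^{1/2}\nabla\cdot\|_{0,\O}$ term, and a Poincaré inequality on $\boldsymbol{\H}_{\Gamma_1}(\Omega)$ upgrades this to the full norm, so that $\|\be\|_{1,\O}\le C_{\mathbb{K},\nu}\vertiii{(\be,e_p)}$. Combining this with \eqref{eq:error_triple} (applied to the source data $\boldsymbol{f}$) gives \eqref{eq:convergence_norms} at once.

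For the $\boldsymbol{\L}^2$ estimate I would first record Galerkin orthogonality: since the method is conforming and the continuous and discrete source problems share the right-hand side $(\boldsymbol{f},\cdot)_{0,\O}$, subtracting them yields $\mathcal{A}((\be,e_p),(\bv_h,q_h))=0$ for all $(\bv_h,q_h)\in\boldsymbol{V}_h\times\mathcal{P}_h$. Next I would introduce the dual solution $(\bphi,\xi)\in\boldsymbol{\H}_{\Gamma_1}(\Omega)\times\L^2(\Omega)$ of $\mathcal{A}((\bv,q),(\bphi,\xi))=(\be,\bv)_{0,\O}$; because $\mathcal{A}$ is symmetric this is exactly the source problem \eqref{eq:source} with data $\be\in\boldsymbol{\L}^2(\O)$, so it is well posed by Lemma \ref{lemma:elliptic}, and Theorem \ref{th:regularidadfuente} furnishes $\|\bphi\|_{1+s,\O}+\|\xi\|_{s,\O}\le C_{\mathbb{K},\nu}\|\be\|_{0,\O}$. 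Testing the dual problem with $(\bv,q)=(\be,e_p)$ gives $\|\be\|_{0,\O}^2=\mathcal{A}((\be,e_p),(\bphi,\xi))$; subtracting the Lagrange interpolant $\bphi_h$ and the $\L^2$-projection $\xi_h$ of $(\bphi,\xi)$ through Galerkin orthogonality and using the boundedness of $\mathcal{A}$ in $\vertiii{\cdot}$ then yields
\[
\|\be\|_{0,\O}^2\le C\,\vertiii{(\be,e_p)}\,\vertiii{(\bphi-\bphi_h,\xi-\xi_h)}\le C_{\mathbb{K},\nu}\,h^{\min\{s,k\}}\vertiii{(\be,e_p)}\,\|\be\|_{0,\O}.
\]
Cancelling one factor $\|\be\|_{0,\O}$ and inserting \eqref{eq:error_triple} for $\vertiii{(\be,e_p)}$ produces the doubled exponent $h^{2\min\{s,k\}}$ of \eqref{eq:convergence_norms2}.

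I expect the duality step to be the only real obstacle: one must make sure the dual problem genuinely inherits the $\boldsymbol{\H}^{1+s}$/$\H^s$ regularity with a constant controlled by $\|\be\|_{0,\O}$ --- which is precisely where the symmetry of $\mathcal{A}$ (reducing the dual to a source problem) and Theorem \ref{th:regularidadfuente} are both essential --- and that the interpolation and projection of $(\bphi,\xi)$ attain the same rate $h^{\min\{s,k\}}$ in the weighted norm as was used for the primal solution. The remaining ingredients (continuity of $\mathcal{A}$, the Poincaré step, and the standard approximation estimates of \eqref{eq:error_triple}) are routine.
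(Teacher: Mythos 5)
Your proposal is correct and is essentially the paper's own proof: the $\boldsymbol{\H}^1$ bound is read off from \eqref{eq:error_triple} together with Theorem \ref{th:regularidadfuente}, and your dual problem is exactly the paper's auxiliary problem \eqref{eq:auxiliar_problem} written compactly via the symmetry of $\mathcal{A}$, with your identity $\|\widetilde{\bu}-\widetilde{\bu}_h\|_{0,\O}^2=\mathcal{A}((\widetilde{\bu}-\widetilde{\bu}_h,\widetilde{p}-\widetilde{p}_h),(\bphi-\bphi_h,\xi-\xi_h))$ coinciding, term by term, with the paper's componentwise manipulation that uses $b(\bphi,\widetilde{p}-\widetilde{p}_h)=0$ and Galerkin orthogonality. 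The only adjustment needed is to take $\bphi_h$ as the Scott--Zhang interpolant (as the paper does) rather than the Lagrange interpolant, since $\bphi\in\boldsymbol{\H}^{1+s}(\O)$ need not be continuous in three dimensions when $s\leq 1/2$, so the nodal interpolant may be undefined.
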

\begin{proof}
The estimate of $\|(\bT-\bT_h)\boldsymbol{f}\|_{1,\O}$ in \eqref{eq:convergence_norms} is a consequence of  \eqref{eq:error_triple} and the regularity of Theorem \ref{th:regularidadfuente}. To prove the convergence in $\boldsymbol{\L}^2$ norm \eqref{eq:convergence_norms2}, we use a duality argument. Let us introduce the following auxiliary problem: 
find  $(\bphi,\psi)\in \boldsymbol{\H}_{\Gamma_1}(\Omega)\times \L^2(\Omega)$ such that
\begin{equation}
	\label{eq:auxiliar_problem}
	\left\{
	\begin{aligned}
		a(\bv,\bphi) + b(\bv,\psi) &= (\widetilde{\bu}-\widetilde{\bu}_h,\bv)_{0,\Omega},& \qquad \forall \bv\in\boldsymbol{\H}_{\Gamma_1}(\Omega),\\
		b(\bphi,q)&=0,&\forall q\in  \L^2(\Omega).
	\end{aligned}
	\right.
\end{equation}

Clearly \eqref{eq:auxiliar_problem} is well posed and its solution satisfies the following dependency on the data
\begin{equation}\label{eq_estimateaux}
\|\bphi\|_{1+s,\O}+\|\psi\|_{s,\O}\leq C_{\mathbb{K},\nu}\|\widetilde{\bu}-\widetilde{\bu}_h\|_{0,\O}.
\end{equation}
On the other hand, simple manipulations yield to the following identity
\begin{multline*}
\|\widetilde{\bu}-\widetilde{\bu}_h\|_{0,\O}^2=a(\widetilde{\bu}-\widetilde{\bu}_h,\bphi-\bphi_h)+b(\widetilde{\bu}-\widetilde{\bu}_h,\psi-\psi_h)+a(\widetilde{\bu}-\widetilde{\bu}_h,\bphi_h)+b(\widetilde{\bu}-\widetilde{\bu}_h,\psi_h)\\
=a(\widetilde{\bu}-\widetilde{\bu}_h,\bphi-\bphi_h)+b(\widetilde{\bu}-\widetilde{\bu}_h,\psi-\psi_h)-b(\bphi_h,\widetilde{p}-\widetilde{p}_h)\\
=a(\widetilde{\bu}-\widetilde{\bu}_h,\bphi-\bphi_h)+b(\widetilde{\bu}-\widetilde{\bu}_h,\psi-\psi_h)+b(\bphi-\bphi_h,\widetilde{p}-\widetilde{p}_h),
\end{multline*}
where for the last equality we have used that $b(\bphi,\widetilde{p}-\widetilde{p}_h)=0$. Hence,  taking $\bphi_h$ as the Scott-Zhang interpolant of $\bphi$,  $\psi_h$ as the $\L^2$-orthogonal projection of $\psi$ onto 
$\mathbb{P}_1$, applying Cauchy-Schwarz inequality,  approximation properties of the interpolator and the projector, estimate \eqref{eq_estimateaux}, multiplying and dividing by $\sqrt{\nu}$, and using the $ \boldsymbol{\H}^1$-error norm already proved in  \eqref{eq:convergence_norms}, we obtain 
$$\|(\bT-\bT_h)\boldsymbol{f}\|_{0,\O}=\|\widetilde{\bu}-\widetilde{\bu}_h\|_{0,\O}\leq C_{\mathbb{K},\nu}h^{2\min\{s,k\}}\|\boldsymbol{f}\|_{0,\O},$$
 where  $k\in\{1,2\}$, depending on whether mini-element or Taylor-Hood family is being used.
This concludes the proof
\end{proof}

With Theorem \ref{thm:erroroperator} at hand, and invoking the  well established theory of  \cite[Chapter IV]{MR0203473} and \cite[Theorem 9.1]{MR2652780}, we are in position  to conclude that  our numerical methods does not introduce spurious eigenvalues. This is stated in the following theorem.
\begin{theorem}
	\label{thm:spurious_free}
	Let $V\subset\mathbb{C}$ be an open set containing $\sp(\bT)$. Then, there exists $h_0>0$ such that $\sp(\bT_h)\subset V$ for all $h<h_0$.
\end{theorem}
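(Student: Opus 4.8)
The plan is to deduce the result from the operator-norm convergence $\bT_h\to\bT$ on $\boldsymbol{\L}^2(\O)$ already established in Theorem~\ref{thm:erroroperator}, by a standard resolvent perturbation argument (this is precisely the content of \cite[Chapter IV]{MR0203473} and \cite[Theorem 9.1]{MR2652780}). First I would observe that the $\boldsymbol{\L}^2$ estimate \eqref{eq:convergence_norms2} is uniform over $\boldsymbol{f}$, so it upgrades to a bound on the operator norm:
$$
\|\bT-\bT_h\|_{\mathcal{L}(\boldsymbol{\L}^2(\O))}\leq C_{\mathbb{K},\nu}\,h^{2\min\{s,k\}},
$$
which tends to $0$ as $h\to 0$. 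In particular $\|\bT_h\|\leq\|\bT\|+1$ for all $h$ sufficiently small, so every spectrum $\sp(\bT_h)$ is contained in a fixed closed disk $\overline{B(0,M)}\subset\mathbb{C}$ with $M$ independent of $h$.

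Next I would reduce the statement to a compact region. Set $F:=(\mathbb{C}\setminus V)\cap\overline{B(0,M)}$, which is compact, and which is disjoint from $\sp(\bT)$ since $V\supset\sp(\bT)$. Because $\bT$ is compact, for every $z\in F$ the operator $z\bI-\bT$ is boundedly invertible and $z\mapsto(z\bI-\bT)^{-1}$ is continuous on the resolvent set; compactness of $F$ then yields a finite uniform bound
$$
M_R:=\sup_{z\in F}\,\|(z\bI-\bT)^{-1}\|_{\mathcal{L}(\boldsymbol{\L}^2(\O))}<\infty.
$$

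The key algebraic step is the factorization, valid for every $z\in F$,
$$
z\bI-\bT_h=(z\bI-\bT)\bigl(\bI-(z\bI-\bT)^{-1}(\bT_h-\bT)\bigr).
$$
Choosing $h_0$ so that $\|\bT-\bT_h\|_{\mathcal{L}(\boldsymbol{\L}^2(\O))}<1/M_R$ for all $h<h_0$ makes $\|(z\bI-\bT)^{-1}(\bT_h-\bT)\|<1$ \emph{uniformly} in $z\in F$, so the second factor is invertible by a Neumann series, and hence so is $z\bI-\bT_h$. Therefore $F\cap\sp(\bT_h)=\emptyset$; combined with $\sp(\bT_h)\subset\overline{B(0,M)}$ this forces $\sp(\bT_h)\subset V$ for every $h<h_0$.

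The only genuinely delicate point is the reduction to a compact region: without the a priori uniform bound $\|\bT_h\|\leq\|\bT\|+1$, the set $\mathbb{C}\setminus V$ need not be compact and the supremum defining $M_R$ could fail to be finite, so securing a uniform resolvent bound on a \emph{fixed} compact set is where the argument really rests. Everything after that is the routine Neumann-series perturbation; alternatively, one may simply invoke \cite[Theorem 9.1]{MR2652780}, whose hypothesis is exactly the norm convergence furnished by Theorem~\ref{thm:erroroperator}.
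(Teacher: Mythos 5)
Your proof is correct, and it takes the same route as the paper: the paper offers no written proof, instead invoking the norm convergence of Theorem~\ref{thm:erroroperator} together with \cite[Chapter IV]{MR0203473} and \cite[Theorem 9.1]{MR2652780}, whose content is precisely the uniform-resolvent-bound-plus-Neumann-series argument you spell out (including the reduction to the compact set $F$ via the uniform bound on $\|\bT_h\|$). Your explicit treatment of why $F$ can be taken compact is a careful unpacking of the cited classical theory rather than a deviation from it.
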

 We first recall the definition of spectral projectors. Let $\mu$ be a nonzero isolated eigenvalue of $\bT$ with algebraic multiplicity $m$ and let $\Gamma$
be a disk of the complex plane centered in $\mu$, such that $\mu$ is the only eigenvalue of $\bT$ lying in $\Gamma$ and $\partial\Gamma\cap\sp(\bT)=\emptyset$. With these considerations at hand, we define the spectral projection of $\boldsymbol{E}$ associated to $\bT$ as follows:
\begin{itemize}
\item The spectral projector of $\bT$ associated to $\mu$ is $\displaystyle \boldsymbol{E}:=\frac{1}{2\pi i}\int_{\partial\Gamma} (z\boldsymbol{I}-\bT)^{-1}\,dz;$
\end{itemize}
where $\boldsymbol{I}$ represents the identity operator. Let us remark that $\boldsymbol{E}$ is  the projection onto the generalized eigenspace $R(\boldsymbol{E})$.
A consequence of Theorem \ref{thm:erroroperator} is that there exist $m$ eigenvalues, which lie in $\Gamma$, namely $\mu_{h,1},\ldots,\mu_{h,m}$, repeated according their respective multiplicities, that converge to $\mu$ as $h$ goes to zero. With this result at hand, we introduce the following spectral projection
\begin{equation*}
\boldsymbol{E}_h:=\frac{1}{2\pi i}\int_{\partial\Gamma} (z\boldsymbol{I}-\bT_h)^{-1}\,dz,
\end{equation*}
which is a projection onto the discrete invariant subspace $R(\boldsymbol{E}_h)$ of $\bT$, spanned by the generalized eigenvector of $\bT_h$ corresponding to 
 $\mu_{h,1},\ldots,\mu_{h,m}$.

Now we recall the definition of the \textit{gap} $\hdel(\cdot,\cdot)$ between two closed
subspaces $\mathfrak{X}$ and $\mathfrak{Y}$ of $\boldsymbol{\L}^2(\O)$:
$$
\hdel(\mathfrak{X},\mathfrak{Y})
:=\max\big\{\delta(\mathfrak{X},\mathfrak{Y}),\delta(\mathfrak{Y},\mathfrak{X})\big\}, \text{ where } \delta(\mathfrak{X},\mathfrak{Y})
:=\sup_{\underset{\left\|\boldsymbol{x}\right\|_{0,\O}=1}{\boldsymbol{x}\in\mathfrak{X}}}
\left(\inf_{\boldsymbol{y}\in\mathfrak{Y}}\left\|\boldsymbol{x}-\boldsymbol{y}\right\|_{0,\O}\right).
$$
With these definitions at hand, we derive the following error estimates for eigenfunctions and eigenvalues by following the results
from \cite[Theorem 13.8 and 13.10]{MR2652780}
\begin{theorem}
\label{thm:errors1}
The following estimates hold
\begin{equation*}
\hdel(R(\boldsymbol{E}),R(\boldsymbol{E}_h))\leq C  h^{\min\{r,k\}}\quad\text{and}\quad
|\mu-\mu_h|\leq C h^{\min\{r,k\}},
\end{equation*}
 where $r>0$ is the same as in \eqref{eq:reg_eigenfunction} and  $k\in\{1,2\}$, depending on whether mini-element or Taylor-Hood family is being used.

\end{theorem}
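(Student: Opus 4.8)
The plan is to reduce both estimates to the abstract spectral approximation theory of Babu\v ska--Osborn, in the form stated in \cite[Theorems 13.8 and 13.10]{MR2652780}. All the convergence information is condensed into the single quantity
\[
\gamma_h:=\|(\bT-\bT_h)|_{R(\boldsymbol{E})}\|_{\mathcal{L}(\boldsymbol{\L}^2(\O))},
\]
the norm of the operator difference restricted to the invariant subspace $R(\boldsymbol{E})$. Indeed, \cite[Theorem 13.8]{MR2652780} yields $\hdel(R(\boldsymbol{E}),R(\boldsymbol{E}_h))\leq C\gamma_h$, while \cite[Theorem 13.10]{MR2652780} yields $|\mu-\mu_h|\leq C\gamma_h$; hence it suffices to establish $\gamma_h\leq Ch^{\min\{r,k\}}$, and both stated bounds follow at once.

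Before estimating $\gamma_h$ I would record that, since $a(\cdot,\cdot)$ is symmetric and the two copies of $b(\cdot,\cdot)$ enter $\mathcal{A}$ symmetrically, the form $\mathcal{A}$ is symmetric, and a standard testing argument shows that $\bT$ is self-adjoint in the $\boldsymbol{\L}^2(\O)$ inner product. Consequently $\mu$ has ascent one and every element of $R(\boldsymbol{E})$ is a genuine eigenfunction, so the elevated regularity \eqref{eq:reg_eigenfunction} applies to all of $R(\boldsymbol{E})$. This is precisely what allows the source exponent $s$ of Theorem \ref{th:regularidadfuente} to be upgraded to the eigenfunction exponent $r$. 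Concretely, for $\bu\in R(\boldsymbol{E})$ one has $\bT\bu=\mu\bu$, so the solution of the source problem with data $\bu$ is $(\mu\bu,\mu p)$, which by \eqref{eq:reg_eigenfunction} satisfies $\|\bu\|_{1+r,\O}+\|p\|_{r,\O}\leq C\|\bu\|_{0,\O}$. Re-running the C\'ea/interpolation estimate \eqref{eq:error_triple} with this data, but measuring the interpolation and projection errors in the sharper $\boldsymbol{\H}^{1+r}\times\H^{r}$ norms, gives
\[
\|(\bT-\bT_h)\bu\|_{0,\O}\leq\|(\bT-\bT_h)\bu\|_{1,\O}\leq Ch^{\min\{r,k\}}\big(\|\bu\|_{1+r,\O}+\|p\|_{r,\O}\big)\leq Ch^{\min\{r,k\}}\|\bu\|_{0,\O}.
\]
Since $R(\boldsymbol{E})$ is finite dimensional, taking the supremum over its unit sphere produces $\gamma_h\leq Ch^{\min\{r,k\}}$, which closes the argument.

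The main obstacle is not the abstract machinery but the bookkeeping that justifies replacing the source exponent $s$ by the eigenfunction exponent $r$ in the bound for $\gamma_h$. This requires reusing the proof of Theorem \ref{thm:erroroperator} with an eigenfunction (and its associated pressure) as right-hand side and invoking \eqref{eq:reg_eigenfunction} in place of Theorem \ref{th:regularidadfuente}. The self-adjointness of $\bT$ is the secondary point that keeps this clean: it guarantees ascent one, so that $R(\boldsymbol{E})$ contains no Jordan chains and no ascent-dependent exponent appears in the eigenvalue estimate of \cite[Theorem 13.10]{MR2652780}.
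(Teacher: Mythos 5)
Your proposal is correct and follows essentially the same route as the paper, which proves Theorem \ref{thm:errors1} simply by invoking \cite[Theorems 13.8 and 13.10]{MR2652780} together with the eigenfunction regularity \eqref{eq:reg_eigenfunction}; your bound $\gamma_h=\|(\bT-\bT_h)|_{R(\boldsymbol{E})}\|\leq Ch^{\min\{r,k\}}$, obtained by re-running the interpolation estimate of \eqref{eq:error_triple} with eigenfunction data $(\mu\bu,\mu p)$, is exactly the step the paper leaves implicit. Your observations that $\mathcal{A}$ is symmetric, hence $\bT$ is self-adjoint on $\boldsymbol{\L}^2(\O)$ with ascent one, are likewise consistent with (and make explicit) what the paper tacitly uses.
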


The error estimate for the eigenvalue $\mu\in (0,1)$ of $\bT$ leads to an analogous
estimate for the approximation of the eigenvalue $\l= \dfrac{1}{\mu}$ of Problem \ref{prob:continuous} by means of the discrete eigenvalues $\l_{h,i} := \dfrac{1}{\mu_{h,i}}$, $1 \leq i\leq  m$, of Problem \ref{prob:discrete}. 
We end this section proving error estimates for the eigenfunctions and eigenvalues.
\begin{lemma}
If $(\l_i,(\bu,p))$ is a solution of Problem \ref{prob:continuous}, then there exists \\
$(\l_{h,i},(\bu_{h},p_h))$ satisfying Problem \ref{prob:discrete}, with $\|\bu\|_{0,\O}=\|\bu_h\|_{0,\O}=1$ such that
\begin{align}\label{eq:1estimate}
\|\bu-\bu_{h}\|_{0,\O}&\leq C h^{2\min\{r,k\}},\\\label{eq:2estimate}
\|\bu-\bu_{h}\|_{1,\O}&\leq C h^{\min\{r,k\}},\\\label{eq:3estimate}
|\l_i-\l_{h,i}|&\leq C h^{2\min\{r,k\}},
\end{align}
where $r>0$ and $C$ is a constant depending on the physical constants and the corresponding eigenvalue given in \eqref{eq:reg_eigenfunction}  and $k\in\{1,2\}$, depending on whether mini-element or Taylor-Hood family is being used.
\end{lemma}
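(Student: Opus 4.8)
The plan is to leverage the symmetric structure of $\mathcal{A}$ together with the abstract estimates of Theorem~\ref{thm:errors1}. The key observation is that $\bT$ is self-adjoint for $(\cdot,\cdot)_{0,\O}$: for $\bF,\bg\in\boldsymbol{\L}^2(\O)$, testing the source problem for $\bg$ with $(\bT\bF,\boldsymbol{B}\bF)$ and using that $\mathcal{A}$ is symmetric gives $(\bT\bF,\bg)_{0,\O}=\mathcal{A}((\bT\bg,\boldsymbol{B}\bg),(\bT\bF,\boldsymbol{B}\bF))=(\bF,\bT\bg)_{0,\O}$. This self-adjointness is what upgrades the single-rate bounds of Theorem~\ref{thm:errors1} to the double-rate estimates \eqref{eq:1estimate} and \eqref{eq:3estimate}. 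Throughout I write $\be:=\bu-\bu_h$ with $\bu_h:=\boldsymbol{E}_h\bu/\|\boldsymbol{E}_h\bu\|_{0,\O}$ the normalized projection of the unit eigenfunction onto $R(\boldsymbol{E}_h)$, and recall $\mu=1/\lambda_i$, $\mu_{h,i}=1/\lambda_{h,i}$. The $\boldsymbol{\H}^1$ estimate \eqref{eq:2estimate} then follows at once from Theorem~\ref{thm:errors1}: the gap bound $\hdel(R(\boldsymbol{E}),R(\boldsymbol{E}_h))\le Ch^{\min\{r,k\}}$ selects such a $\bu_h$, and combining it with the $\boldsymbol{\H}^1$ operator convergence \eqref{eq:convergence_norms} — valid for the eigenfunction thanks to the regularity \eqref{eq:reg_eigenfunction} — controls $\|\be\|_{1,\O}\le Ch^{\min\{r,k\}}$.

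The double-rate $\boldsymbol{\L}^2$ estimate \eqref{eq:1estimate} is the main obstacle. A direct Aubin--Nitsche duality fails here: setting up the dual source problem with data $\be$ and inserting the eigenrelations, the leading contribution $\lambda_i(\be,\bT\be)_{0,\O}=\tfrac{1}{2}\lambda_i(\mu+\mu_{h,i})\|\be\|_{0,\O}^2+\text{(higher order)}$ coincides with the left-hand side $\|\be\|_{0,\O}^2$ up to a relative factor $1+O(h^{\min\{r,k\}})$, so the leading orders cancel and one is left with a vacuous identity. Instead, I would estimate $\|\be\|_{0,\O}$ through the resolvent representation of the spectral projectors,
\[
(\boldsymbol{E}-\boldsymbol{E}_h)\bu=\frac{1}{2\pi i}\int_{\partial\Gamma}(z\boldsymbol{I}-\bT_h)^{-1}(\bT-\bT_h)(z\boldsymbol{I}-\bT)^{-1}\bu\,dz .
\]
Using $(z\boldsymbol{I}-\bT)^{-1}\bu=(z-\mu)^{-1}\bu$ on $\partial\Gamma$ and the uniform boundedness of $(z\boldsymbol{I}-\bT_h)^{-1}$ there — guaranteed by the norm convergence $\bT_h\to\bT$ of Theorem~\ref{thm:erroroperator} and the spurious-free property of Theorem~\ref{thm:spurious_free} — the $\boldsymbol{\L}^2$ error reduces to $\|(\bT-\bT_h)\bu\|_{0,\O}$. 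The $\boldsymbol{\L}^2$ operator convergence \eqref{eq:convergence_norms2}, applied to the eigenfunction $\bu$ and again invoking \eqref{eq:reg_eigenfunction}, delivers $\|(\bT-\bT_h)\bu\|_{0,\O}\le Ch^{2\min\{r,k\}}$, whence \eqref{eq:1estimate} after normalization.

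Finally, for the eigenvalue estimate \eqref{eq:3estimate} I would use the Rayleigh-quotient identity afforded by symmetry. Since $\bu\in\boldsymbol{\mathcal{K}}$ and $\bu_h\in\boldsymbol{\mathcal{K}}_h$, testing the two eigenproblems with the eigenfunctions themselves makes the pressure terms drop and yields $a(\bu,\bu)=\lambda_i$ and $a(\bu_h,\bu_h)=\lambda_{h,i}$ (recall $\|\bu\|_{0,\O}=\|\bu_h\|_{0,\O}=1$). Expanding $a(\be,\be)-\lambda_i\|\be\|_{0,\O}^2$ and using $a(\bu,\bu_h)-\lambda_i(\bu,\bu_h)_{0,\O}=-b(\bu_h,p)$ from the continuous problem then gives
\[
\lambda_{h,i}-\lambda_i=a(\be,\be)-\lambda_i\|\be\|_{0,\O}^2-2\,b(\bu_h,p).
\]
Here $a(\be,\be)\le C\|\be\|_{1,\O}^2\le Ch^{2\min\{r,k\}}$ by \eqref{eq:2estimate}; the term $\lambda_i\|\be\|_{0,\O}^2$ is of order $h^{4\min\{r,k\}}$ by \eqref{eq:1estimate}; and the consistency term is rewritten, for any $q_h\in\mathcal{P}_h$, as $b(\bu_h,p)=b(\bu_h-\bu,p-q_h)$ using $b(\bu,p-q_h)=0$ and $b(\bu_h,q_h)=0$, which is bounded by $C\|\be\|_{1,\O}\,\|p-q_h\|_{0,\O}\le Ch^{2\min\{r,k\}}$ via the best pressure approximation and \eqref{eq:reg_eigenfunction}. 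Collecting the three bounds gives \eqref{eq:3estimate}. The delicate point throughout is the self-adjoint cancellation in the $\boldsymbol{\L}^2$ estimate, which is precisely what forces the resolvent argument in place of a plain duality.
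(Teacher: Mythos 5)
Your proposal is correct, and it follows the same overall skeleton as the paper---spectral projector machinery for the eigenfunction bounds, a symmetric algebraic identity for the eigenvalue bound---but it differs in two substantive ways worth recording. First, for \eqref{eq:1estimate} the paper only cites \cite[Lemmas 3.5 and 3.6]{MR4728079} and asserts, loosely, that the estimate ``is a consequence of Theorem \ref{thm:errors1}''; as you implicitly note, Theorem \ref{thm:errors1} by itself only gives the single rate $h^{\min\{r,k\}}$, and your resolvent representation $(\boldsymbol{E}-\boldsymbol{E}_h)\bu=\frac{1}{2\pi i}\int_{\partial\Gamma}(z\boldsymbol{I}-\bT_h)^{-1}(\bT-\bT_h)(z\boldsymbol{I}-\bT)^{-1}\bu\,dz$, combined with $(z\boldsymbol{I}-\bT)^{-1}\bu=(z-\mu)^{-1}\bu$ and the $\boldsymbol{\L}^2$ operator bound \eqref{eq:convergence_norms2}, is exactly the suppressed mechanism behind the cited lemmas that upgrades it to the double rate; here you are filling in what the paper delegates to a reference. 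Second, for \eqref{eq:3estimate} your Rayleigh-quotient identity is the paper's identity in disguise: since $b(\bu,q)=0$ for every $q\in\L^2(\O)$ and $b(\bu_h,p_h)=0$, one has $\mathcal{A}((\bu-\bu_h,p-p_h),(\bu-\bu_h,p-p_h))=a(\bu-\bu_h,\bu-\bu_h)-2b(\bu_h,p)$, so your formula $\lambda_{h,i}-\lambda_i=a(\bu-\bu_h,\bu-\bu_h)-\lambda_i\Vert\bu-\bu_h\Vert_{0,\O}^2-2b(\bu_h,p)$ coincides (modulo the evident sign typos in the paper's displayed identity) with $\mathcal{A}((\bu-\bu_h,p-p_h),(\bu-\bu_h,p-p_h))-\lambda_i\Vert\bu-\bu_h\Vert_{0,\O}^2=(\lambda_{h,i}-\lambda_i)\Vert\bu_h\Vert_{0,\O}^2$. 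The genuine divergence is in how the identity is closed: the paper bounds $\vert\mathcal{A}(\cdot,\cdot)\vert$ by $\vertiii{(\bu-\bu_h,p-p_h)}^2\le Ch^{2\min\{r,k\}}$, which presupposes an $O(h^{\min\{r,k\}})$ bound on the eigen-pressure error $\Vert p-p_h\Vert_{0,\O}$ (never proved explicitly, again inherited from the cited lemmas) and in exchange yields Remark \ref{remarkautovalores}, which is reused in the a posteriori analysis; you instead bound the consistency term via $b(\bu_h,p)=b(\bu_h-\bu,p-q_h)\le C\Vert\bu-\bu_h\Vert_{1,\O}\Vert p-q_h\Vert_{0,\O}$ and pressure best approximation, so your argument needs only \eqref{eq:2estimate}, \eqref{eq:1estimate} and \eqref{eq:reg_eigenfunction}---more self-contained, since no estimate on the discrete eigen-pressure is required, at the mild cost of not producing Remark \ref{remarkautovalores} as a by-product. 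One caveat you share with the paper: $\bu_h:=\boldsymbol{E}_h\bu/\Vert\boldsymbol{E}_h\bu\Vert_{0,\O}$ is a genuine discrete eigenfunction (so that $a(\bu_h,\bu_h)=\lambda_{h,i}$ holds as you use it) only when $\mu$ is simple or all discrete eigenvalues inside $\Gamma$ coincide; for true multiplicity $m>1$ one must work with the invariant subspace and weighted means of the $\lambda_{h,j}$, or assume simplicity, as the paper effectively does elsewhere.
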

\begin{proof}
In order to prove the first estimate, we proceed in a similar way as in \cite[Lemma 3.5 and Lemma 3.6]{MR4728079}, therefore the details are omitted. Estimate \eqref{eq:1estimate}  is a consequence of Theorem \ref{thm:errors1}. Now for the estimate \eqref{eq:3estimate} let us consider the following well known algebraic identity
\begin{equation*}
\mathcal{A}((\bu-\bu_h,p-p_h),(\bu-\bu_h,p-p_h))+\lambda (\bu-\bu_h,\bu-\bu_h)=(\lambda-\lambda_h)(\bu_h,\bu_h)_{0,\O}.
\end{equation*}

Now, taking modulus on the above identity and using triangle inequality, Cauchy-Schwarz inequality, and Young's inequality, we have
\begin{multline*}
\left| \mathcal{A}((\bu-\bu_h,p-p_h),(\bu-\bu_h,p-p_h))\right|\\ \leq
 \|\mathbb{K}^{-1/2}(\bu-\bu_h)\|_{0,\O}^2+\|\nu^{1/2}\nabla(\bu-\bu_h)\|_{0,\O}^2+\|p-p_h\|_{0,\O}^2+\|\div(\bu-\bu_h)\|_{0,\O}^2\\
=\vertiii{(\bu-\bu_h,p-p_h)}^2 \leq C h^{2\min\{r,k\}}\Vert \boldsymbol{u}\Vert_{0,\O},
\end{multline*}
where the constant $C>0$ depends on the eigenvalue. Then, using that $\|\bu_h\|_{0,\O}=1$, we obtain
$$|\lambda-\lambda_h|\leq Ch^{2\min\{r,k\}}\Vert \boldsymbol{u}\Vert_{0,\O}.$$
This concludes the proof.
\end{proof}
\begin{remark}\label{remarkautovalores}
An important consequence from the previous proof is the following:  if $(\l_i,(\bu,p))$ is a solution of Problem \ref{prob:continuous}  and  $(\l_{h,i},(\bu_{h},p_h))$ is solution of Problem \ref{prob:discrete}, with $\|\bu\|_{0,\O}=\|\bu_h\|_{0,\O}=1$, then
$$|\l_i-\l_{h,i}|\leq \vertiii{(\bu-\bu_h,p-p_h)}^2.$$
\end{remark}

\section{A posteriori analysis}
\label{sec:apost}
In what follows, we focus our attention on an a posteriori error analysis. With this goal in mind, and by means of the finite elements previously introduced for the a priori analysis, we introduce a residual-base estimator which results to be fully computable. Let us remark that, for the forthcoming analysis, we consider eigenvalues with simple multiplicity, and their associated eigenfunctions. Lets us begin with some definitions.  For any element $T\in \CT_h$, we denote by $\CE_{T}$ the set of faces/edges of $T$
and 
$$\CE_h:=\bigcup_{T\in\CT_h}\CE_{T}.$$
We consider the decomposition $\CE_h:=\CE_{\O}\cup\CE_{\partial\O}$,
where  $\CE_{\partial\O}:=\{\ell\in \CE_h:\ell\subset \partial\O\}$
and $\CE_{\O}:=\CE\backslash\CE_{\partial \O}$.
For each inner face/edge $\ell\in \CE_{\O}$ and for any  sufficiently smooth  tensor
$\btau$, we define the jump on $\ell$ by $\jump{\btau}:=\btau\vert_T\boldsymbol{n}_{T}+\btau\vert_{T'}\boldsymbol{n}_{T'} ,$
where $T$ and $T'$ are  the two elements in $\CT_{h}$  sharing the
face/edge $\ell$ and $\boldsymbol{n}_{T}$ and $\boldsymbol{n}_{T'}$ are the respective outer unit normal vectors.

\subsection{Local and global indicators}
For an element $T\in\CT_h$ we introduce the following local error indicator:
\begin{multline*}
\eta_T^2:=h_T^2\|\lambda_h\bu_h+\nu\Delta \bu_{h}-\mathbb{K}^{-1}\bu_h-\nabla p_h\|_{0,T}^2\\
+\|\div\bu_h\|_{0,T}^2+\frac{h_e}{2}\sum_{e\in\CT_h}\|\jump{(\nu\nabla\bu_h-p_h\mathbb{I})\boldsymbol{n}}\|_{0,e}^2,
\end{multline*}
and the corresponding global estimator
\begin{equation}\label{eq_est_global}
\eta:=\left(\sum_{T\in\CT_h}\eta_T^2 \right)^{1/2}.
\end{equation}

We define  the errors between the continuous and discrete eigenfunctions by 
$$
\eu:=\bu - \bu_{h}, \quad \ep:=p - p_h.$$

Now, our task is to demonstrate that our estimator is reliable and efficient. Let us begin with the reliability analysis.
\subsection{Reliability}
The goal of this section is to derive a global reliability bound for our proposed estimator defined in \eqref{eq_est_global}. This is contained in the following result.
\begin{theorem}
\label{thrm:gonzalo1}
There exists a constant $C > 0$ independent of $h$ such that
$$
\vertiii{({\bu}-{\bu}_h,{p}-{p}_h)}\leq C \left( \eta+ \dfrac{1}{\sqrt{\nu}}|\l-\l_h|+\dfrac{|\l_h| }{\sqrt{\nu}}\|\bu-\bu_h\|_{0,\O}\right).
$$
\end{theorem}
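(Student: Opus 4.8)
The plan is to combine the general inf-sup condition of Lemma \ref{lemma:elliptic} with a residual identity derived from the continuous and discrete eigenvalue equations. First I would apply Lemma \ref{lemma:elliptic} to the error pair $(\eu,\ep)=(\bu-\bu_h,p-p_h)$: this yields a test pair $(\bv,q)\in\boldsymbol{\H}_{\Gamma_1}(\Omega)\times\L^2(\Omega)$ with $\vertiii{(\bv,q)}\leq C_1\vertiii{(\eu,\ep)}$ and $\mathcal{A}((\eu,\ep),(\bv,q))\geq C_2\vertiii{(\eu,\ep)}^2$. Since every term I will produce carries a factor $\vertiii{(\bv,q)}\leq C_1\vertiii{(\eu,\ep)}$, dividing by $\vertiii{(\eu,\ep)}$ at the end reduces the whole statement to bounding the residual functional $(\bv,q)\mapsto\mathcal{A}((\eu,\ep),(\bv,q))$.

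Next I would build the residual identity. Subtracting the discrete equation of Problem \ref{prob:discrete} from the continuous one of Problem \ref{prob:continuous-A}, and inserting a Scott-Zhang quasi-interpolant $(\bv_h,q_h)\in\boldsymbol{V}_h\times\mathcal{P}_h$ of $(\bv,q)$ so that the discrete equation $\mathcal{A}((\bu_h,p_h),(\bv_h,q_h))=\lambda_h(\bu_h,\bv_h)_{0,\O}$ can be used, I obtain
\begin{multline*}
\mathcal{A}((\eu,\ep),(\bv,q))=\big(\lambda\bu-\lambda_h\bu_h,\bv\big)_{0,\O}\\
+\lambda_h(\bu_h,\bv-\bv_h)_{0,\O}-\mathcal{A}((\bu_h,p_h),(\bv-\bv_h,q-q_h)).
\end{multline*}
The crucial algebraic step is the splitting $\lambda\bu-\lambda_h\bu_h=\lambda_h\eu+(\lambda-\lambda_h)\bu$, which isolates exactly the two non-residual contributions $\lambda_h(\eu,\bv)_{0,\O}$ and $(\lambda-\lambda_h)(\bu,\bv)_{0,\O}$. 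I then integrate the last term by parts element by element: the volume pieces combine with $\lambda_h(\bu_h,\bv-\bv_h)_{0,\O}$ into $\sum_T\int_T(\lambda_h\bu_h+\nu\Delta\bu_h-\mathbb{K}^{-1}\bu_h-\nabla p_h)\cdot(\bv-\bv_h)$, the interelement boundary pieces assemble into $\sum_e\int_e\jump{(\nu\nabla\bu_h-p_h\mathbb{I})\bn}\cdot(\bv-\bv_h)$, and the pressure-test piece gives $\int_\O q\,\div\bu_h$ (the discrete mass equation $b(\bu_h,q_h)=0$ removes the interpolated part). On $\Gamma_1$ the test functions vanish, while on $\Gamma_2$ the contribution is accounted for by the natural condition \eqref{bc:Gamma}; these three pieces are precisely the terms of $\eta_T$.

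Finally I would estimate term by term. The volume and jump residuals are handled with Cauchy-Schwarz together with the standard Scott-Zhang approximation and trace bounds $\|\bv-\bv_h\|_{0,T}\leq C h_T\|\bv\|_{1,\omega_T}$ and $\|\bv-\bv_h\|_{0,e}\leq C h_e^{1/2}\|\bv\|_{1,\omega_e}$ and the finite overlap of patches, which after a second Cauchy-Schwarz over the mesh yields $C\,\eta\,\|\bv\|_{1,\O}$; the term $\int_\O q\,\div\bu_h$ is bounded by $\|q\|_{0,\O}(\sum_T\|\div\bu_h\|_{0,T}^2)^{1/2}\leq\eta\,\vertiii{(\bv,q)}$. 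For the remaining two terms I use $|\lambda_h(\eu,\bv)_{0,\O}|\leq|\lambda_h|\|\eu\|_{0,\O}\|\bv\|_{0,\O}$ and $|(\lambda-\lambda_h)(\bu,\bv)_{0,\O}|\leq|\lambda-\lambda_h|\|\bu\|_{0,\O}\|\bv\|_{0,\O}$ with the normalization $\|\bu\|_{0,\O}=1$. The factor $\nu^{-1/2}$ in the statement then emerges from the Friedrichs inequality on $\boldsymbol{\H}_{\Gamma_1}(\Omega)$ (valid since $\Gamma_1$ has positive surface measure, or $\Gamma_2=\emptyset$), which gives $\|\bv\|_{0,\O}\leq C\|\nabla\bv\|_{0,\O}\leq C\nu^{-1/2}\vertiii{(\bv,q)}$ and likewise $\|\bv\|_{1,\O}\leq C\nu^{-1/2}\vertiii{(\bv,q)}$, the $\nu$-dependence of the $\eta$ terms being absorbed into $C$. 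Collecting the bounds and dividing by $\vertiii{(\eu,\ep)}$ gives the claim.

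The main obstacle I anticipate is bookkeeping rather than conceptual: carrying out the element-wise integration by parts and the reassembly of interior jumps with correct signs (and the consistent treatment of the natural boundary on $\Gamma_2$), and selecting the splitting of $\lambda\bu-\lambda_h\bu_h$ so that the Friedrichs inequality delivers precisely the $\nu^{-1/2}|\lambda-\lambda_h|$ and $\nu^{-1/2}|\lambda_h|\,\|\eu\|_{0,\O}$ structure rather than a coarser bound.
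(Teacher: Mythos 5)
Your proposal is correct and follows the paper's own proof essentially step for step: the same application of Lemma~\ref{lemma:elliptic} to the error pair, the same Scott--Zhang interpolant combined with the splitting $\lambda\bu-\lambda_h\bu_h=(\lambda-\lambda_h)\bu+\lambda_h(\bu-\bu_h)$, the same elementwise integration by parts yielding the volume residual, the interior jumps of $(\nu\nabla\bu_h-p_h\mathbb{I})\bn$, and the $\int_\O q\,\div\bu_h$ term, followed by the same Cauchy--Schwarz and interpolation estimates and the final division by $\vertiii{(\eu,\ep)}$. The only difference is cosmetic: you make explicit, via the Friedrichs inequality, how the $\nu^{-1/2}$ factors emerge from the weighted norm, a point the paper leaves implicit.
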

\begin{proof}
Let $(\l,(\bu,p))$ be the  solution of Problem \ref{prob:continuous} and $(\l_h,(\bu_h,p_h))$ be its finite element approximation given as the solution of Problem \ref{prob:discrete}. Then we have:
\begin{equation*}
\mathcal{A}((\eu,\ep),(\bv_h,0))=(\l \bu-\l_h\bu_h,\bv_h)_{0,\O},
\end{equation*}
where $\bv_h$ is the Scott-Zhang  interpolant of $\bv$.
Let 
$\vertiii{(\bv,q)}\leq C_1\vertiii{(\eu,\ep)}$ where $C_1>0$ is the constant involved in Lemma \ref{lemma:elliptic}.  Then, from the previous estimate and using integration by parts, it follows that  
 \begin{multline*}
C_2\vertiii{(\eu,\ep)}^2\leq  \mathcal{A}((\eu,\ep),(\bv,q))
=\mathcal{A}((\eu,\ep),(\bv-\bv_h,q))+\mathcal{A}((\eu,\ep),(\bv_h,0))\\
=\l(\bu,\bv)_{0,\O}-\l_h(\bu_h,\bv_h)_{0,\O}-\mathcal{A}((\bu_h,p_h),(\bv-\bv_h,q))\\
=\l_h(\bu_h,\bv_h-\bv)_{0,\O}+(\l-\l_h)(\bu,\bv)_{0,\O}+\l_h(\bu-\bu_h,\bv)_{0,\O}-\mathcal{A}((\bu_h,p_h),(\bv-\bv_h,q))\\
= (\l-\l_h)(\bu,\bv)_{0,\O}+\l_h(\bu-\bu_h,\bv)_{0,\O}+\int_{\O}\div\bu_h q+\l_h\int_\O\bu_h\cdot(\bv_h-\bv)\\
 +\sum_{T\in\mathcal{T}_h}\left(\int_T\div(\bv-\bv_h)p_h-\int_{T} \mathbb{K}^{-1} \bu_h \cdot (\bv-\bv_h )- \nu \int_{T} \nabla \bu_h :\nabla(\bv-\bv_h) \right)\\
 = (\l-\l_h)(\bu,\bv)_{0,\O}+\l_h(\bu-\bu_h,\bv)_{0,\O}+\int_{\O}\div\bu_h q\\
 +\sum_{T\in\mathcal{T}_h}\left(\int_T\left(\l_h\bu_h+\nu\Delta\bu_h-\mathbb{K}^{-1} \bu_h-\nabla p_h\right)\cdot(\bv-\bv_h)\right)\\
 -\sum_{T\in\mathcal{T}_h}\int_{\partial T}\left(\nu\nabla \bu_h-p_h\mathbb{I}\right)\boldsymbol{n}\cdot(\bv-\bv_h)\\
 \leq C \left( \eta+ \dfrac{1}{\sqrt{\nu}}|\l-\l_h|+\dfrac{|\l_h| }{\sqrt{\nu}}\|\bu-\bu_h\|_{0,\O}\right)\vertiii{({\bv},{q})},
 \end{multline*}
 where for the last estimate we have used the approximation properties of the Scott-Zhang interpolator.
Hence, the proof is concluded  using the fact that  $\vertiii{(\bv,q)}\leq C_1\vertiii{(\eu,\ep)}$.
\end{proof}

Finally, we also have an upper bound for the eigenvalue approximation.
\begin{corollary}
There exists a constant $C>0$ independent of $h$ such that
$$|\l-\l_{h}|\leq  \left( \eta+ \dfrac{1}{\sqrt{\nu}}|\l-\l_h|+\dfrac{|\l_h| }{\sqrt{\nu}}\|\bu-\bu_h\|_{0,\O}\right)^2.$$
\end{corollary}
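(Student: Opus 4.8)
The plan is to obtain this bound as an immediate consequence of the two results already established, namely Remark~\ref{remarkautovalores} and the reliability estimate of Theorem~\ref{thrm:gonzalo1}. Since both are stated for eigenvalues of simple multiplicity with the corresponding normalization $\|\bu\|_{0,\O}=\|\bu_h\|_{0,\O}=1$, the hypotheses match exactly and no new machinery is needed; the entire argument is a chaining of these two inequalities.

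First I would invoke Remark~\ref{remarkautovalores}, which provides the algebraic identity-based bound
$$
|\l-\l_{h}|\leq \vertiii{(\bu-\bu_h,p-p_h)}^2.
$$
This reduces the estimation of the eigenvalue error to controlling the triple norm of the eigenfunction error, which is precisely the quantity bounded by the reliability theorem. The key observation is that the eigenvalue error converges at twice the rate of the eigenfunction error, so squaring the eigenfunction bound is exactly what yields the eigenvalue bound.

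Next I would apply Theorem~\ref{thrm:gonzalo1}, which gives
$$
\vertiii{(\bu-\bu_h,p-p_h)}\leq C\left( \eta+ \frac{1}{\sqrt{\nu}}|\l-\l_h|+\frac{|\l_h| }{\sqrt{\nu}}\|\bu-\bu_h\|_{0,\O}\right),
$$
with $C>0$ independent of $h$. Squaring both sides of this reliability bound and substituting it into the inequality from Remark~\ref{remarkautovalores} produces
$$
|\l-\l_{h}|\leq C^2\left( \eta+ \frac{1}{\sqrt{\nu}}|\l-\l_h|+\frac{|\l_h| }{\sqrt{\nu}}\|\bu-\bu_h\|_{0,\O}\right)^2,
$$
which is the claimed estimate with constant $C^2$, still independent of $h$.

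There is no genuine obstacle in this proof: all the analytical effort was expended in proving the reliability bound of Theorem~\ref{thrm:gonzalo1}, and the quadratic relationship between eigenvalue and eigenfunction errors was already isolated in Remark~\ref{remarkautovalores}. The only point requiring mild care is bookkeeping the constant, since squaring the reliability constant $C$ yields the constant $C^2$ appearing in the corollary; this is harmless as it remains mesh-independent.
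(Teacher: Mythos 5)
Your proposal is correct and follows exactly the paper's own proof, which simply declares the corollary a direct consequence of Remark~\ref{remarkautovalores} and Theorem~\ref{thrm:gonzalo1}; your write-up merely makes the chaining of the two inequalities explicit. The only minor point worth noting is that the paper's displayed inequality omits the multiplicative constant on the right-hand side (your $C^2$), an apparent typo that your constant bookkeeping correctly restores.
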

\begin{proof}
The result is a direct consequence of Remark \ref{remarkautovalores}  and the previous theorem.
\end{proof}

%

\subsection{Efficiency}
The efficiency analysis follows the same arguments reported in, for instance \cite{MR4728079}, which consists in arguments based in localization of bubble functions, as in \cite{MR3059294}. Hence, the following result can be derived for our context.
\begin{lemma}(Efficiency) The following estimate holds 
$$\eta\leq C(\|\bu-\bu_h\|_{1,\O}^2+\|p-p_h\|_{0,\O}^2+h.o.t),$$
where the constant $C>0$ is independent of mesh size and the discrete solution.
\end{lemma}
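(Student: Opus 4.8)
The plan is to follow the classical bubble-function (Verf\"urth-type) localization argument, bounding each of the three contributions to $\eta_T^2$ separately by the local energy error of the eigenpair plus data oscillation, and then summing over $T\in\CT_h$. The decisive structural fact we exploit throughout is that the exact eigenfunction $(\bu,p)$ satisfies the strong form \eqref{eq:Brinkman1}--\eqref{eq:Brinkman2} pointwise, so that $\lambda\bu+\nu\Delta\bu-\bbK^{-1}\bu-\nabla p=\boldsymbol{0}$ in each $T$, $\div\bu=0$ in $\O$, and the normal stress $(\nu\nabla\bu-p\mathbb{I})\bn$ is continuous across interior faces. Each interior, divergence, and jump residual of the discrete solution can therefore be rewritten as a quantity involving only $\eu$, $\ep$ and $\lambda-\lambda_h$, which is what produces the right-hand side of the claimed bound.

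First I would treat the interior (element) residual $R_T:=\lambda_h\bu_h+\nu\Delta\bu_h-\bbK^{-1}\bu_h-\nabla p_h$. Subtracting the vanishing continuous expression yields, on each $T$, the identity $R_T=(\lambda_h-\lambda)\bu_h-\lambda\eu-\nu\Delta\eu+\bbK^{-1}\eu+\nabla\ep$. Let $R_T^h$ be a polynomial approximation of $R_T$ on $T$ and let $b_T$ be the interior bubble supported on $T$; testing with $b_T R_T^h$, using norm equivalence on the finite-dimensional polynomial space together with standard scaling and inverse estimates, and integrating by parts the term $\int_T\nu\Delta\eu\cdot(b_TR_T^h)=-\int_T\nu\nabla\eu:\nabla(b_TR_T^h)$, one obtains $h_T\|R_T^h\|_{0,T}\lesssim \|\eu\|_{1,T}+\|\ep\|_{0,T}+h_T|\lambda-\lambda_h|\,\|\bu_h\|_{0,T}+h_T\|\eu\|_{0,T}$. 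The defect $\|R_T-R_T^h\|_{0,T}$ is absorbed into the higher order term, the oscillation stemming essentially from the non-polynomial data $\bbK^{-1}\bu_h$. The divergence term is immediate: since $\div\bu=0$, one has $\|\div\bu_h\|_{0,T}=\|\div(\bu_h-\bu)\|_{0,T}\le\|\eu\|_{1,T}$, with no oscillation.

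The edge/face jump term is handled by the companion face-bubble construction. Because $(\nu\nabla\bu-p\mathbb{I})\bn$ is continuous across any interior face $\ell\in\CE_\O$, the jump $\jump{(\nu\nabla\bu_h-p_h\mathbb{I})\bn}$ equals $\jump{(\nu\nabla(\bu_h-\bu)-(p_h-p)\mathbb{I})\bn}$. Letting $b_\ell$ be the face bubble supported on the patch $\omega_\ell=T\cup T'$ sharing $\ell$ and testing the jump (or a polynomial approximation of it) against an extension of $b_\ell$ times that polynomial, an integration by parts on $\omega_\ell$ transfers the boundary contribution into a volume integral over $\omega_\ell$; bounding this with scaling/inverse estimates and reusing the already-derived interior estimate gives $h_\ell^{1/2}\|\jump{(\nu\nabla\bu_h-p_h\mathbb{I})\bn}\|_{0,\ell}\lesssim \|\eu\|_{1,\omega_\ell}+\|\ep\|_{0,\omega_\ell}+\mathrm{h.o.t.}$ Summing the three bounds over all $T\in\CT_h$ and $\ell\in\CE_\O$, and using that the patches $\omega_\ell$ overlap with finite (mesh-regularity controlled) multiplicity, yields $\eta^2\lesssim \|\eu\|_{1,\O}^2+\|\ep\|_{0,\O}^2+\mathrm{h.o.t.}$, which is the assertion.

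The step I expect to be the main obstacle is the jump term: it requires combining the face-bubble integration by parts with the interior residual bound while keeping the powers of $h_\ell$ and the physical weights $\nu$ and $\bbK$ tracked uniformly, so that the constant $C$ is genuinely independent of the mesh and of the (possibly strongly varying) permeability. A secondary care point is the mini-element case, where $\Delta\bu_h$ retains a nonzero bubble contribution and hence $R_T$ is not piecewise polynomial, so the oscillation term must be defined accordingly; for Taylor--Hood both $\nabla p_h$ and $\Delta\bu_h$ are piecewise constant and the oscillation reduces to the $\bbK^{-1}\bu_h$ contribution alone.
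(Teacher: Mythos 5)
Your proposal is correct and follows essentially the same route as the paper: the paper's own ``proof'' is simply a citation to the standard Verf\"urth-type bubble-function localization arguments (as in \cite{MR4728079,MR3059294}), which is precisely the element-bubble/face-bubble argument you execute in detail, including the rewriting of each residual via the strong form of the continuous eigenpair. One remark worth recording: the lemma as printed is dimensionally inconsistent ($\eta$ on the left against squared norms on the right), and the bound you actually derive, $\eta^2\leq C\bigl(\|\eu\|_{1,\O}^2+\|\ep\|_{0,\O}^2+\mathrm{h.o.t.}\bigr)$, is the homogeneous form that the cited references yield and that the statement presumably intends.
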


\section{Numerical experiments}
\label{sec:numerics}
This section is dedicated to conducting various numerical experiments to assess the performance of the scheme across different geometries and physical configurations. The implementations are using the DOLFINx software \cite{barrata2023dolfinx,scroggs2022basix}, where the SLEPc eigensolver \cite{hernandez2005slepc} and the MUMPS linear solver are employed to solve the resulting generalized eigenvalue problem. Meshes are generated using GMSH \cite{geuzaine2009gmsh} and the built-in generic meshes provided by DOLFINx. The convergence rates for each eigenvalue are determined using least-square fitting and highly refined meshes. 

An important fact of the proposed formulation is that it relies on the use of well-known inf-sup stable elements. Moreover, as the permeability parameter  $\mathbb{K}$ ranges in bounded parameters throughout the domain, the computational complexity remains the same as a usual Stokes eigenvalue problem with the same elements (see for example \cite[Section 5.4]{LRVSISC} for a benchmark).

In what follows, we denote the mesh resolution by $N$, which is connected to the mesh-size $h$ through the relation $h\sim N^{-1}$. We also denote the number of degrees of freedom by $\texttt{dof}$. The relation between $\texttt{dof}$ and the mesh size is given by $h\sim\texttt{dof}^{-1/d}$, with $d\in\{2,3\}$. 

Let us define $\err(\lambda_i)$ as the error on the $i$-th eigenvalue, with
$$
\err(\lambda_i):={\vert \lambda_{h,i}-\lambda_{i}\vert},
$$
where $\lambda_i$ is the extrapolated value. Similarly, the effectivity indexes with respect to $\eta$ and the eigenvalue $\lambda_{h,i}$ is defined by
$$
\eff(\lambda_i):=\frac{\err(\lambda_i)}{\eta^2}.$$

In order to apply the adaptive finite element method, we shall generate a sequence of nested conforming triangulations using the loop
\begin{center}
	\textrm{solve $\rightarrow$ estimate $\rightarrow$ mark $\rightarrow$ refine,} 
\end{center}
based on \cite{verfuhrt1996}:
\begin{enumerate}
	\item Set an initial mesh $\CT_{h}$.
	\item Solve Problem \eqref{prob:discrete} in the actual mesh to obtain $\lambda_{h,i}$ and $(\bu_{h},p_h)$. 	
	\item Compute $\eta_T$ for each $T\in\CT_{h}$ using the eigenfunctions $(\bu_{h},p_h)$. 
	\item Use the maximum marking strategy to refine each $T'\in \CT_{h}$ whose indicator $\eta_{T'}$ satisfies
	$$
	\eta_{T'}\geq 0.5\max\{\eta_{T}\,:\,T\in\CT_{h} \},
	$$
	\item Set $\CT_{h}$ as the actual mesh and go to step 2.
\end{enumerate}

The refinement algorithm is the one implemented by DOLFINx through the command \texttt{refine}, which implements Plaza and Carey's algorithms for 2D and 3D geometries. The algorithms use local refinement of simplicial grids based on the skeleton with local facet marking information. 

It is important to note that although theoretically we have assumed that the permeability tensor is positive definite, in the numerical experiments we will assume that in some parts of the domain it is $\boldsymbol{0}$. Consequently, the behavior of the eigenfunctions in these regions aligns with that of the Stokes problem.

For simplicity, we have taken $\nu=1$ in all the experiments. We will consider several definitions for $\mathbb{K}$ in order to study the physical behavior of the eigenvalues, as well as their convergence rate. The division of $\Omega$ into subdomains is such that there is mesh conformity between the regions, i.e, the subregions are delimited exactly by the facets of the domain. By the nature of the problem, the choice of a porous subdomain is known to be very general. Therefore, the experiments below will be focused on basic cases in order to provide a study on the behavior of eigenfunctions on porous media that can be compared in some sense with the existing literature. For full Dirichlet boundary conditions ($\Gamma_2=\emptyset$), we have that $p_h\in P_h^0$, with
$$\mathcal{P}_h^0=\{ q_{h}\in C(\overline{\O})\ :\ q_{h}|_T\in\mathbb{P}_1(T) \ \forall\ T\in\mathcal{T} \}\cap \L_0^{2}(\O),$$
which means that the condition $\int_{\O} p_h=0$ must be satisfied. The theory with the spaces $\L_0^2(\Omega)$ and $\mathcal{P}_h^0$ is exactly the same as in Sections \ref{sec:model_problem}--\ref{sec:fem}. The zero integral mean condition is implemented in Problem \eqref{prob:discrete} by introducing a real Lagrange multiplier. This is achieved by using the \texttt{scifem} package along FEniCSx.

\subsection{Square domain with internal porous region} 
\label{subsec:test-cuadrado}
In this experiment we consider the unit square with boundary conditions $\bu=\boldsymbol{0}$ over the whole boundary, The domain is such that there is a heterogeneous distribution of permeability given by
$$
\mathbb{K}^{-1}=\left\{
\begin{aligned}
	&\kappa\mathbb{I},&\text{if } (x,y)\in\Omega_D,\\
	&\boldsymbol{0}, & \text{if } (x,y)\in\Omega_S,
\end{aligned}
\right.
$$ 
where $\Omega:=(0,1)^2$, $\Omega_D:=(3/8,5/8)^2$ and $\Omega_S:=\Omega\backslash\Omega_D$. An example of the geometry used for this experiment is depicted in Figure \ref{fig:square-domain}. The permeability factor of the porous subdomain is taken as $\kappa=10^j, j=-8,-7,...,-1,0,1,2,...,7,8$. The choice for $\kappa$ allows to represent a wide range of possible porous conditions on $\Omega_D$, while the permeability chosen for the rest of the domain defines a zone with a Stokes-like behavior.  The mesh level on this mesh is such that the $h\approx 1 /N$. 
\begin{figure}[!hbt]\centering
	\includegraphics[scale=0.15]{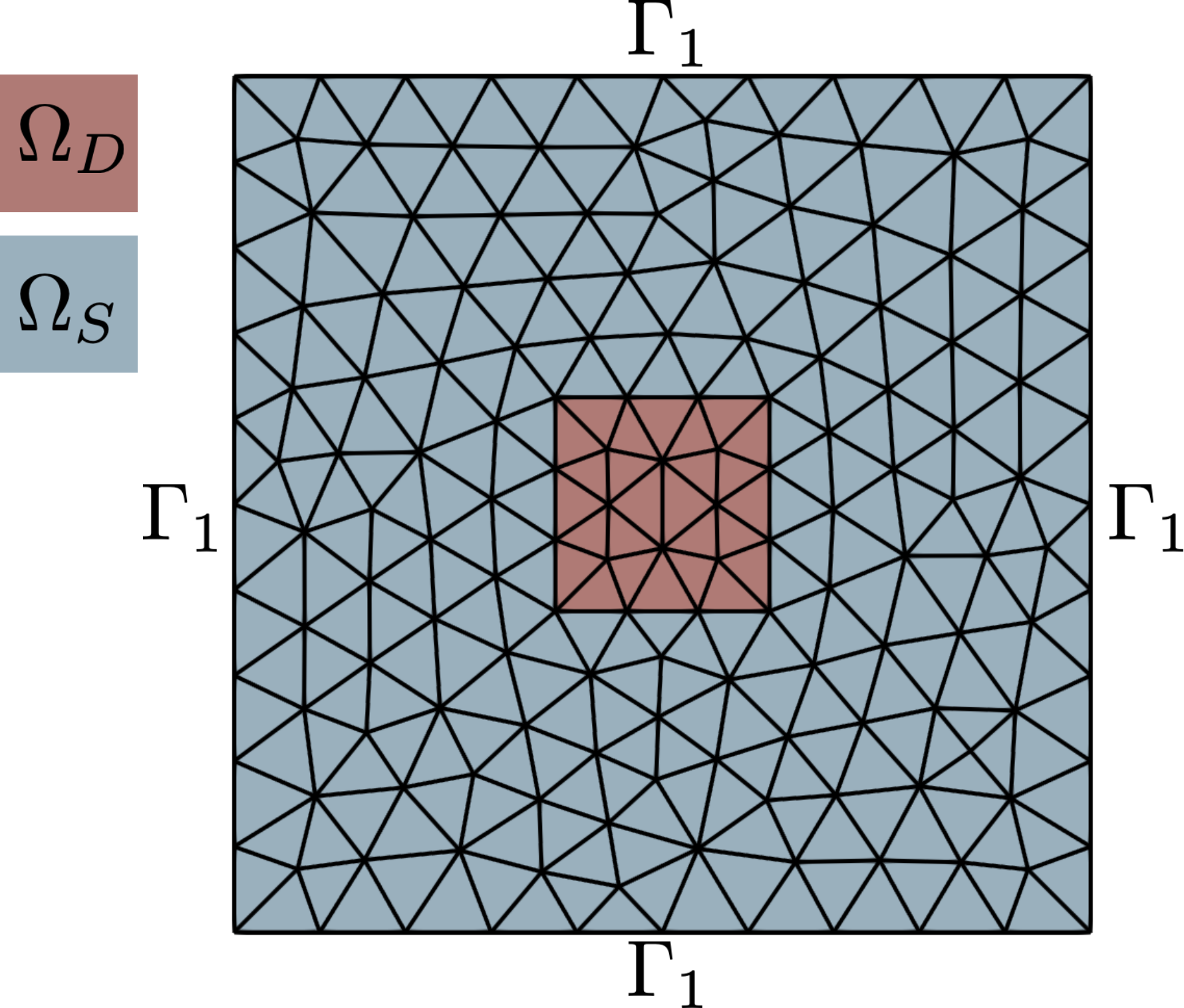}
	\caption{Test \ref{subsec:test-cuadrado}. Sample domain of a square with heterogeneous permeability parameter $\mathbb{K}$ and $N=10$.}
	\label{fig:square-domain}
\end{figure}

The convergence results for the first five computed eigenvalues with  values of $\kappa$ given by  $\kappa\in\{10^{-8},10^3,10^5,10^8\}$, using mini and Taylor-Hood elements, are given in Tables \ref{table-square2D-Stokes}-\ref{table-square2D-TH}. In Table \ref{table-square2D-Stokes} we observe that for $\mathbb{K}^{-1}\vert_{\Omega_D}=10^{-8}\mathbb{I}$, i.e., almost a full homogeneous full permeability, the eigenvalues are similar to those of Stokes (see \cite[Section 5]{turk2016stabilized}) with optimal rates for each element. As we start to decrease the permeability parameter, Tables \ref{table-square2D-MINI}-\ref{table-square2D-TH}  shows a considerable decrease of the convergence rate. For $\mathbb{K}^{-1}=10^5\mathbb{I}$ and onwards, we start to reach a computational impermeable limit, on which the regularity of the eigenfunction is affected because of the re-entrant corner generated on the subdomain. A further exploration of the spectrum is portrayed in Figure \ref{fig:convergence-evolution-square} where we have plotted the evolution of the first five eigenvalues as we change the permeability parameter. We observe that the first eigenvalue is the less affected by these changes. Also, note  that for low permeability values we have obtained eigenvalues similar to those of the Stokes eigenvalue problem with homogeneous boundary conditions in $\Gamma_1$, while for high permeability we have similar eigenvalues to those of Stokes on a unit square domain \cite[Section 5]{turk2016stabilized}. Samples of the first computed eigenfunctions for different permeabilities are depicted in Figure \ref{fig:square2D-uh1-allcases}. It is evident how a low permeability zone (obstacle) start to appear for small values of $\mathbb{K}$ on $\Omega_D$, with increasing gradients of pressures along $\partial\Omega_D$.

We finish this experiment by studying the a posteriori estimator on the case $\mathbb{K}^{-1}=10^5\mathbb{I}$ for the single multiplicity eigenvalues $\lambda_{h,1},\lambda_{h,2}$ and $\lambda_{h,5}$. A total of 15 adaptive iterations were performed with mini elements. The error and efficiency results are given on Figure \ref{fig:error-eff-square2D}. 
The slopes for the adaptive error lines behave like  $\mathcal{O}(\texttt{dof}^{-1.07})$, $\mathcal{O}(\texttt{dof}^{-1.01})$ and $\mathcal{O}(\texttt{dof}^{-1.0})$ for $\lambda_{h,1}, \lambda_{h,2}$ and $\lambda_{h,5}$, respectively. The effectivity indexes remains bounded above and beyond away from zero. Finally, sample meshes on the 14th iteration and the corresponding eigenfunctions are given in Figure \ref{fig:square2D-adaptive}.

\begin{table}[hbt!]
	\centering 
	{\footnotesize
		\begin{center}
			\caption{Test \ref{subsec:test-cuadrado}.Convergence behavior of the first five computed eigenvalues on the square domain for $\mathbb{K}^{-1} \vert_{\Omega_D} = 10^{-8} \mathbb{I}$.}
			\begin{tabular}{|c c c c |c| c| c|}
				\hline
				\hline
				$N=30$             &  $N=60$         &   $N=90$         & $N=120$ & Order & $\lambda_{\text{extr}}$ &\cite{turk2016stabilized} \\
				\hline
				\multicolumn{7}{|c|}{mini-element}\\
				\hline
				52.5163  &    52.3896  &    52.3641  &    52.3558  & 2.05 &    52.3448 & 52.3447 \\
				92.6447  &    92.2622  &    92.1841  &    92.1587  & 2.03 &    92.1243 & 92.1245 \\
				92.6472  &    92.2627  &    92.1843  &    92.1588  & 2.03 &    92.1241 &  92.1246  \\
				129.2619  &   128.4861  &   128.3276  &   128.2778  & 2.04 &   128.2085 & 128.2100  \\
				155.5077  &   154.4997  &   154.2890  &   154.2190  & 1.98 &   154.1206& 154.1260  \\
				\hline
				\multicolumn{7}{|c|}{Taylor-Hood}\\
				\hline
				52.3449  &    52.3447  &    52.3447  &    52.3447  & 4.19 &    52.3447  & 52.3447 \\
				92.1252  &    92.1244  &    92.1244  &    92.1244  & 4.16 &    92.1244 & 92.1245 \\
				92.1252  &    92.1245  &    92.1244  &    92.1244  & 4.05 &    92.1244 &  92.1246  \\
				128.2119  &   128.2097  &   128.2096  &   128.2096  & 4.13 &   128.2096 & 128.2100  \\
				154.1287  &   154.1257  &   154.1255  &   154.1255  & 4.02 &   154.1255  & 154.1260  \\
				\hline
				\hline             
			\end{tabular}
	\end{center}}
	\smallskip
	\label{table-square2D-Stokes}
\end{table}

\begin{table}[hbt!]
	\centering 
	{\footnotesize
		\begin{center}
			\caption{Test \ref{subsec:test-cuadrado}. Convergence behavior of the first five computed eigenvalues on the square domain using mini elements for various choices of $\mathbb{K}$.}
			\begin{tabular}{|c c c c |c| c|}
				\hline
				\hline
				$N=30$             &  $N=60$         &   $N=90$         & $N=120$ & Order & $\lambda_{\text{extr}}$ \\
				\hline
								\multicolumn{6}{|c|}{$\mathbb{K}^{-1}\vert_{\Omega_D}=10^{3}\mathbb{I}$}\\
								\hline
				  65.7509  &    65.4708  &    65.4108  &    65.3919  & 1.96 &    65.3633  \\
				  169.4892  &   168.2092  &   167.9457  &   167.8624  & 2.02 &   167.7440  \\
				  184.6488  &   183.1882  &   182.8878  &   182.7918  & 2.02 &   182.6572  \\
				  184.6618  &   183.1907  &   182.8884  &   182.7923  & 2.02 &   182.6564  \\
				  207.0402  &   205.1323  &   204.7262  &   204.5921  & 1.96 &   204.3997  \\
				\hline
				\multicolumn{6}{|c|}{$\mathbb{K}^{-1}\vert_{\Omega_D}=10^{5}\mathbb{I}$}\\
				\hline	
				
				  75.5617  &    74.8817  &    74.6664  &    74.5898  & 1.66 &    74.4516  \\
				  220.2530  &   216.1858  &   215.1133  &   214.7567  & 1.63 &   214.0401  \\
				  227.8248  &   223.9023  &   222.8955  &   222.5654  & 1.68 &   221.9303  \\
				  227.8493  &   223.9183  &   222.9108  &   222.5731  & 1.67 &   221.9274  \\
				  240.5004  &   236.6053  &   235.6493  &   235.3368  & 1.75 &   234.7770  \\

				\hline
				\multicolumn{6}{|c|}{$\mathbb{K}^{-1}\vert_{\Omega_D}=10^{8}\mathbb{I}$}\\
				\hline
				
				   76.6489  &    76.1506  &    75.9985  &    75.9427  & 1.38 &    75.8082  \\
				   225.2753  &   221.7454  &   220.7918  &   220.4577  & 1.58 &   219.7861  \\
				   232.1130  &   228.6457  &   227.7560  &   227.4332  & 1.64 &   226.8420  \\
				   232.1330  &   228.6763  &   227.7563  &   227.4487  & 1.62 &   226.8304  \\
				   244.0683  &   240.5808  &   239.6952  &   239.3881  & 1.67 &   238.8185  \\

				\hline
				\hline             
			\end{tabular}
	\end{center}}
	\smallskip
	
	\label{table-square2D-MINI}
\end{table}

\begin{table}[hbt!]
	\centering 
	{\footnotesize
		\begin{center}
			\caption{Test \ref{subsec:test-cuadrado}. Convergence behavior of the first five computed eigenvalues on the square domain using Taylor-Hood elements for various choices of $\mathbb{K}^{-1}$. }
			\begin{tabular}{|c c c c |c| c|}
				\hline
				\hline
				$N=30$             &  $N=60$         &   $N=90$         & $N=120$ & Order & $\lambda_{\text{extr}}$ \\
				\hline
				\multicolumn{6}{|c|}{$\mathbb{K}^{-1}\vert_{\Omega_D}=10^{3}\mathbb{I}$}\\
				\hline	
				   65.3662  &    65.3659  &    65.3658  &    65.3658  & 3.52 &    65.3658  \\
				   167.7502  &   167.7483  &   167.7481  &   167.7481  & 3.32 &   167.7480  \\
				   182.6633  &   182.6608  &   182.6605  &   182.6605  & 3.62 &   182.6605  \\
				   182.6634  &   182.6608  &   182.6606  &   182.6605  & 3.32 &   182.6605  \\
				   204.4187  &   204.4122  &   204.4118  &   204.4117  & 3.99 &   204.4117  \\

				\hline
				\multicolumn{6}{|c|}{$\mathbb{K}^{-1}\vert_{\Omega_D}=10^{5}\mathbb{I}$}\\
				\hline	
				
				   74.5266  &    74.4713  &    74.4567  &    74.4537  & 1.78 &    74.4455  \\
				   214.1172  &   214.1604  &   214.1703  &   214.1736  & 1.85 &   214.1789  \\
				   221.9733  &   222.0154  &   222.0271  &   222.0314  & 1.52 &   222.0403  \\
				   221.9793  &   222.0238  &   222.0312  &   222.0329  & 1.38 &   222.0389  \\
				   234.7379  &   234.8175  &   234.8396  &   234.8459  & 1.61 &   234.8608  \\

				\hline
				\multicolumn{6}{|c|}{$\mathbb{K}^{-1}\vert_{\Omega_D}=10^{8}\mathbb{I}$}\\
				\hline
				
				        75.5378  &    75.6628  &    75.7119  &    75.7314  & 1.47 &    75.7711  \\
				     218.3013  &   219.0216  &   219.2751  &   219.3717  & 1.45 &   219.5799  \\
				     225.5422  &   226.1622  &   226.3920  &   226.4724  & 1.48 &   226.6518  \\
				     225.5437  &   226.1838  &   226.3923  &   226.4781  & 1.45 &   226.6520  \\
				     237.6175  &   238.2001  &   238.4031  &   238.4801  & 1.44 &   238.6483  \\
				\hline
				\hline             
			\end{tabular}
	\end{center}}
	\smallskip
	\label{table-square2D-TH}
\end{table}

\begin{figure}[!hbt]\centering
	\begin{minipage}{0.32\linewidth}\centering
		{\footnotesize $\bu_{h,1},\mathbb{K}^{-1}\vert_{\Omega_D}=10^{-8}\mathbb{I}$}\\
		\includegraphics[scale=0.12,trim=23.2cm 5cm 23.2cm 5cm,clip]{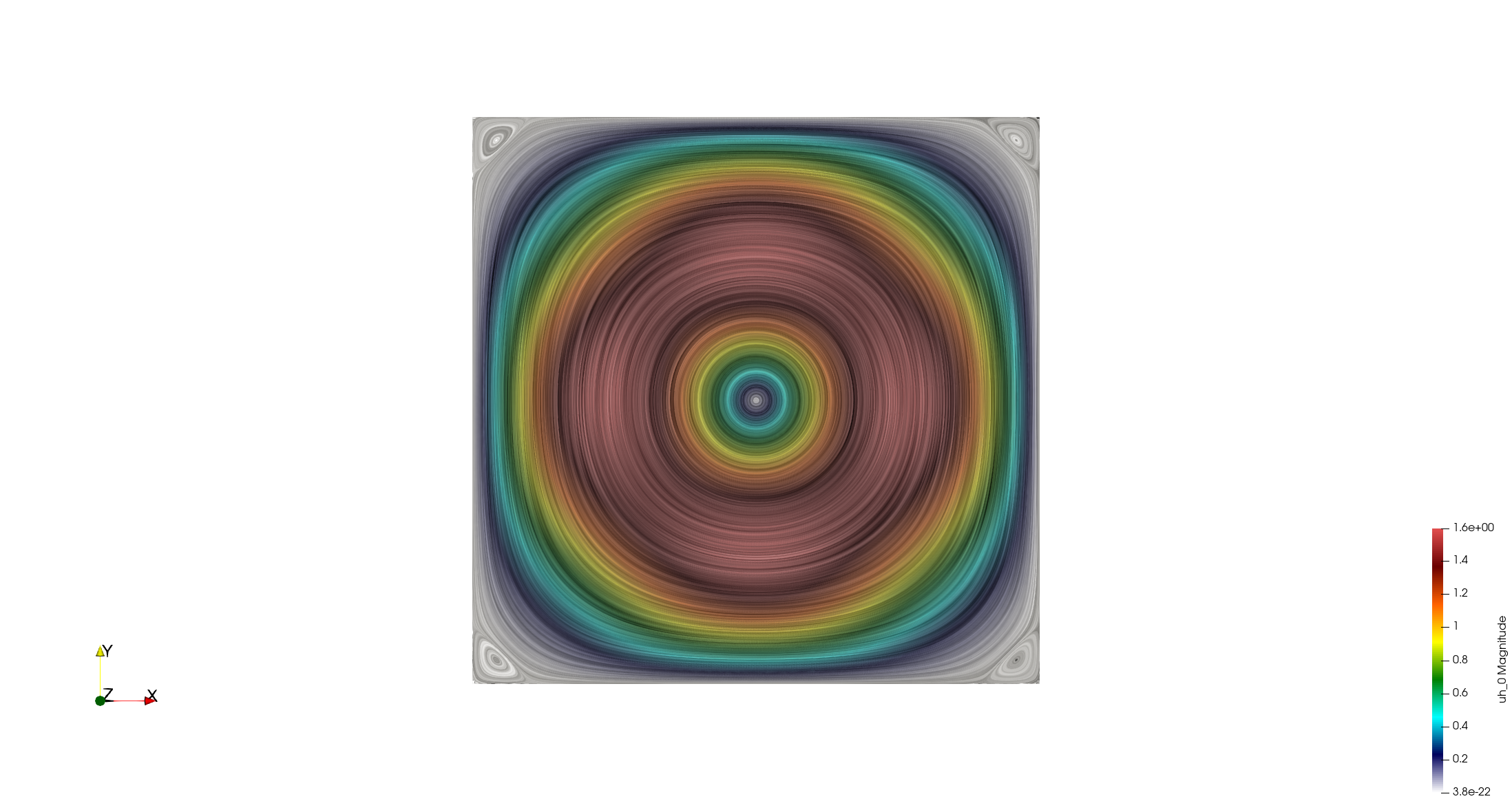}
	\end{minipage}
	\begin{minipage}{0.32\linewidth}\centering
		{\footnotesize $\bu_{h,1},\mathbb{K}^{-1}\vert_{\Omega_D}=10^{3}\mathbb{I}$}\\
		\includegraphics[scale=0.12,trim=23.2cm 5cm 23.2cm 5cm,clip]{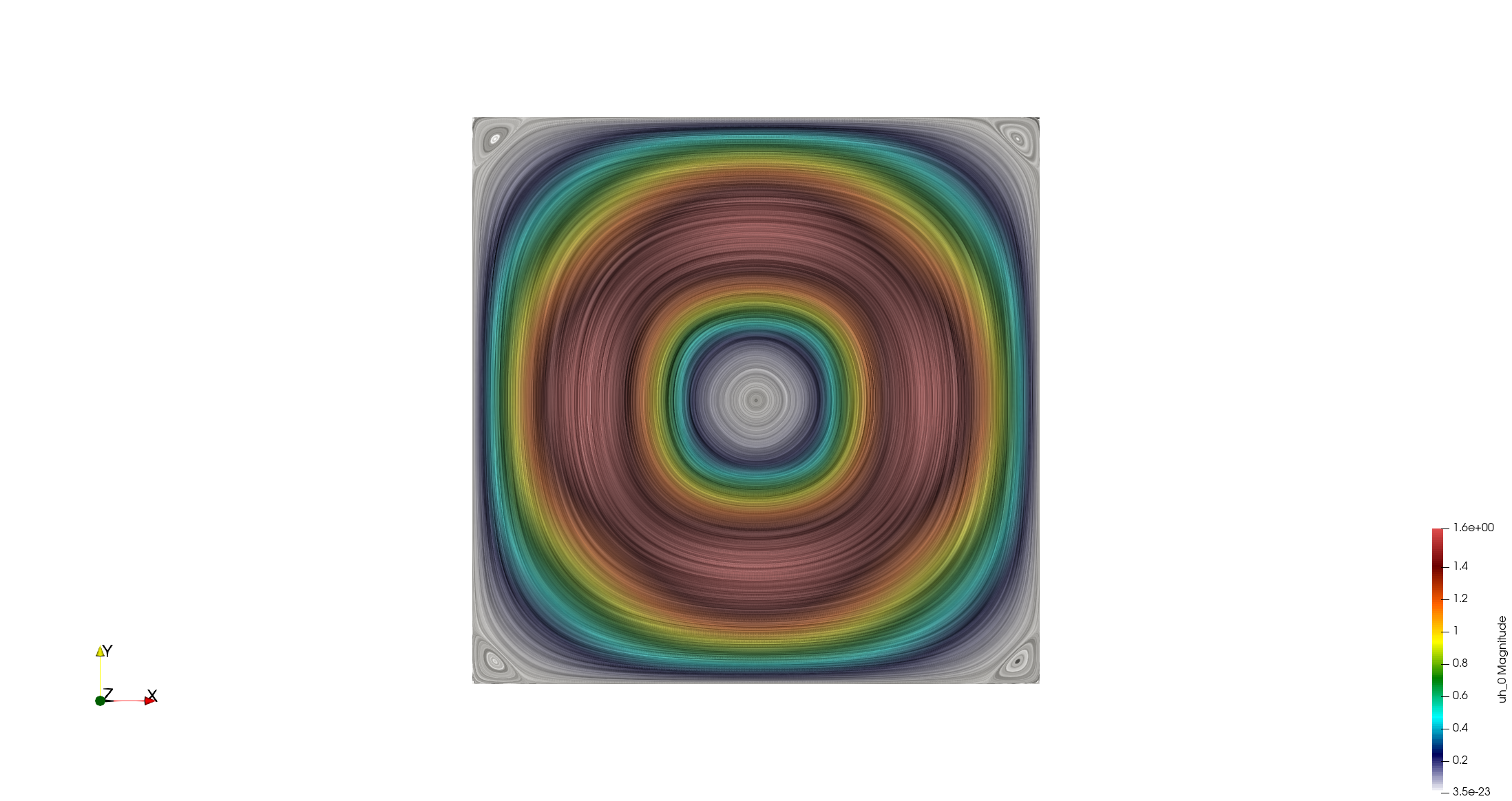}
	\end{minipage}
	\begin{minipage}{0.32\linewidth}\centering
		{\footnotesize $\bu_{h,1},\mathbb{K}^{-1}\vert_{\Omega_D}=10^{8}\mathbb{I}$}\\
		\includegraphics[scale=0.12,trim=23.2cm 5cm 23.2cm 5cm,clip]{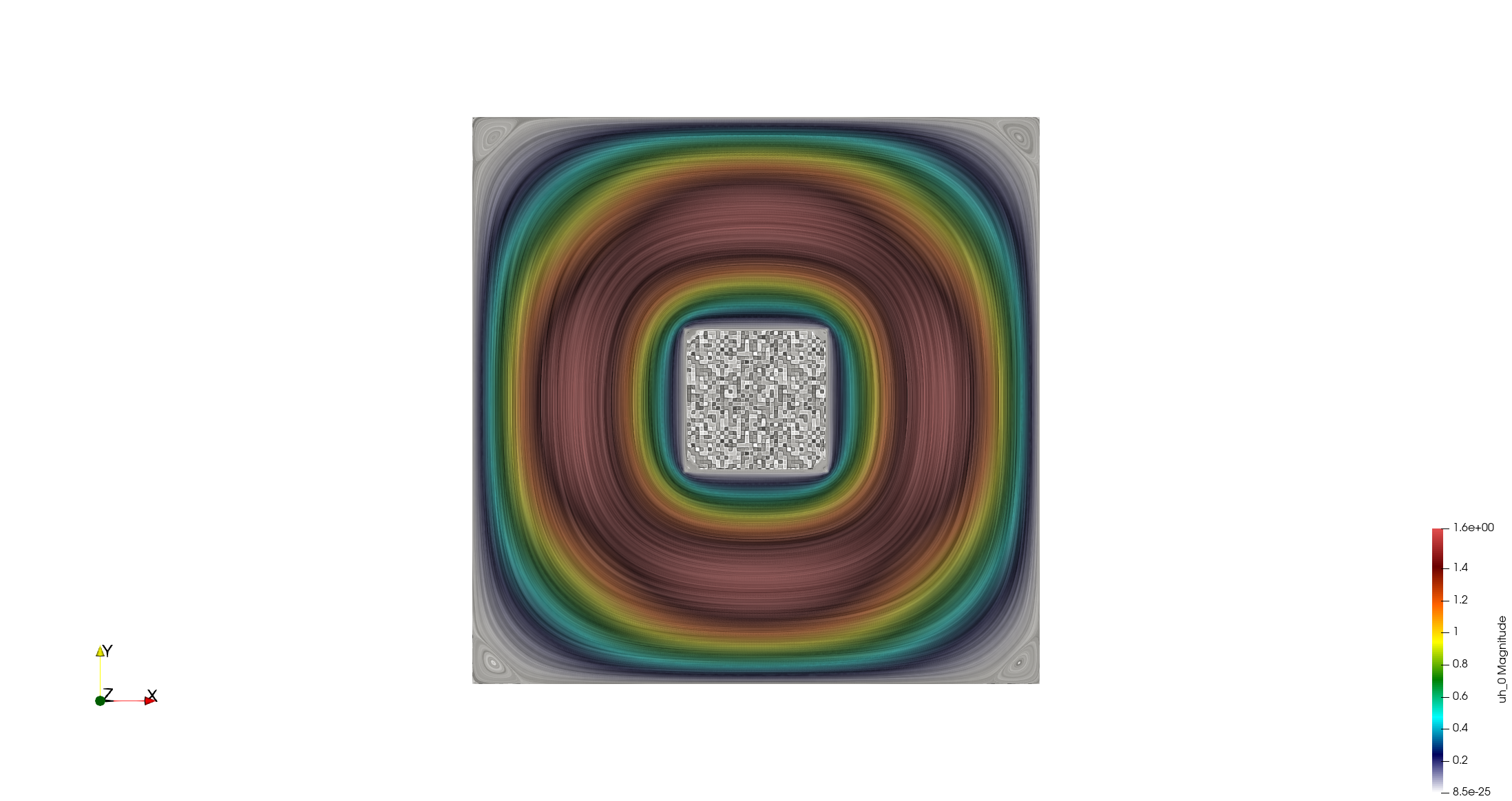}
	\end{minipage}\\
	\begin{minipage}{0.32\linewidth}\centering
		{\footnotesize $p_{h,1},\mathbb{K}^{-1}\vert_{\Omega_D}=10^{-8}\mathbb{I}$}\\
		\includegraphics[scale=0.12,trim=23.2cm 5cm 23.2cm 5cm,clip]{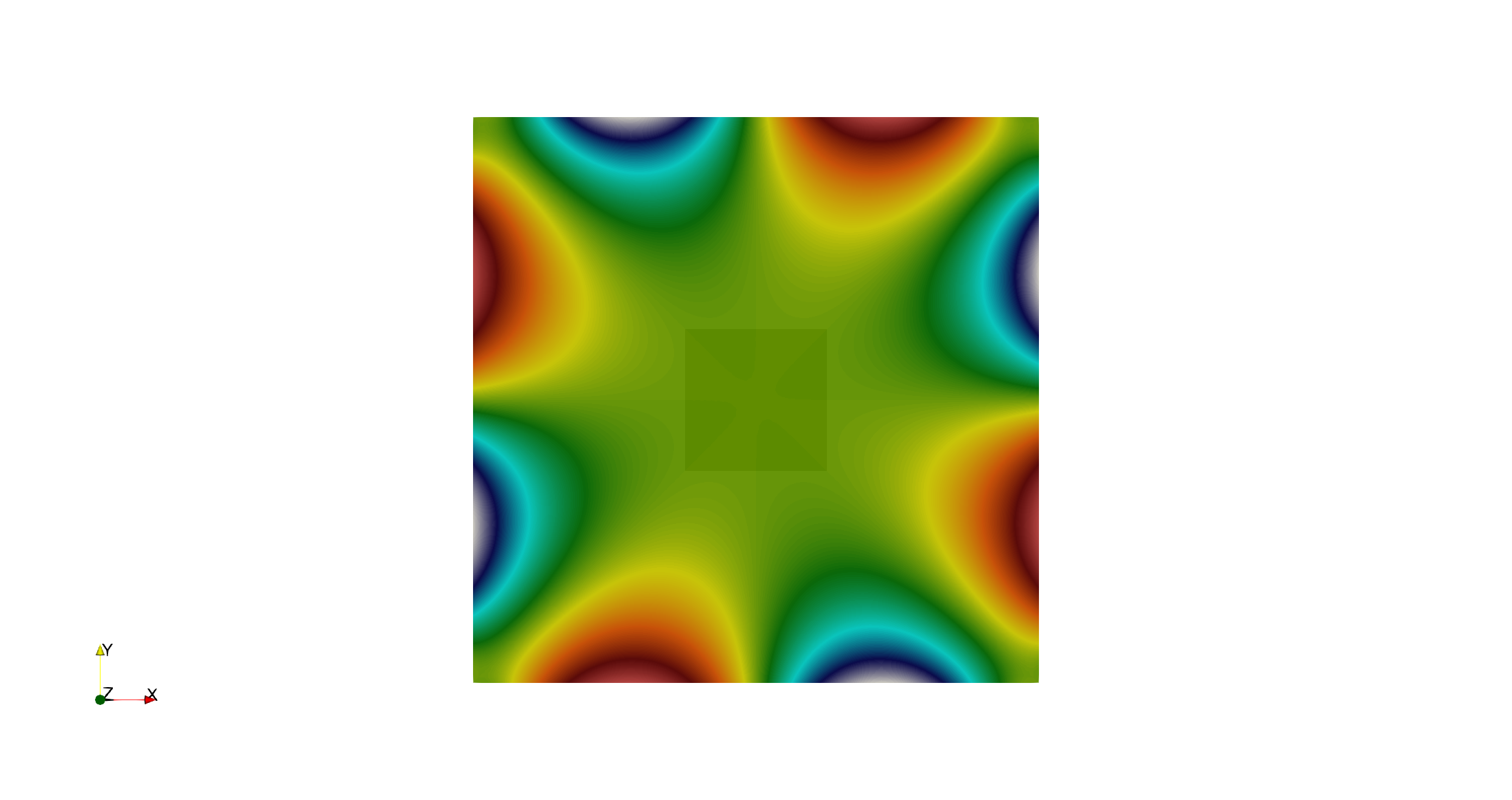}
	\end{minipage}
	\begin{minipage}{0.32\linewidth}\centering
		{\footnotesize $p_{h,1}, \mathbb{K}^{-1}\vert_{\Omega_D}=10^{3}\mathbb{I}$}\\
		\includegraphics[scale=0.12,trim=23.2cm 5cm 23.2cm 5cm,clip]{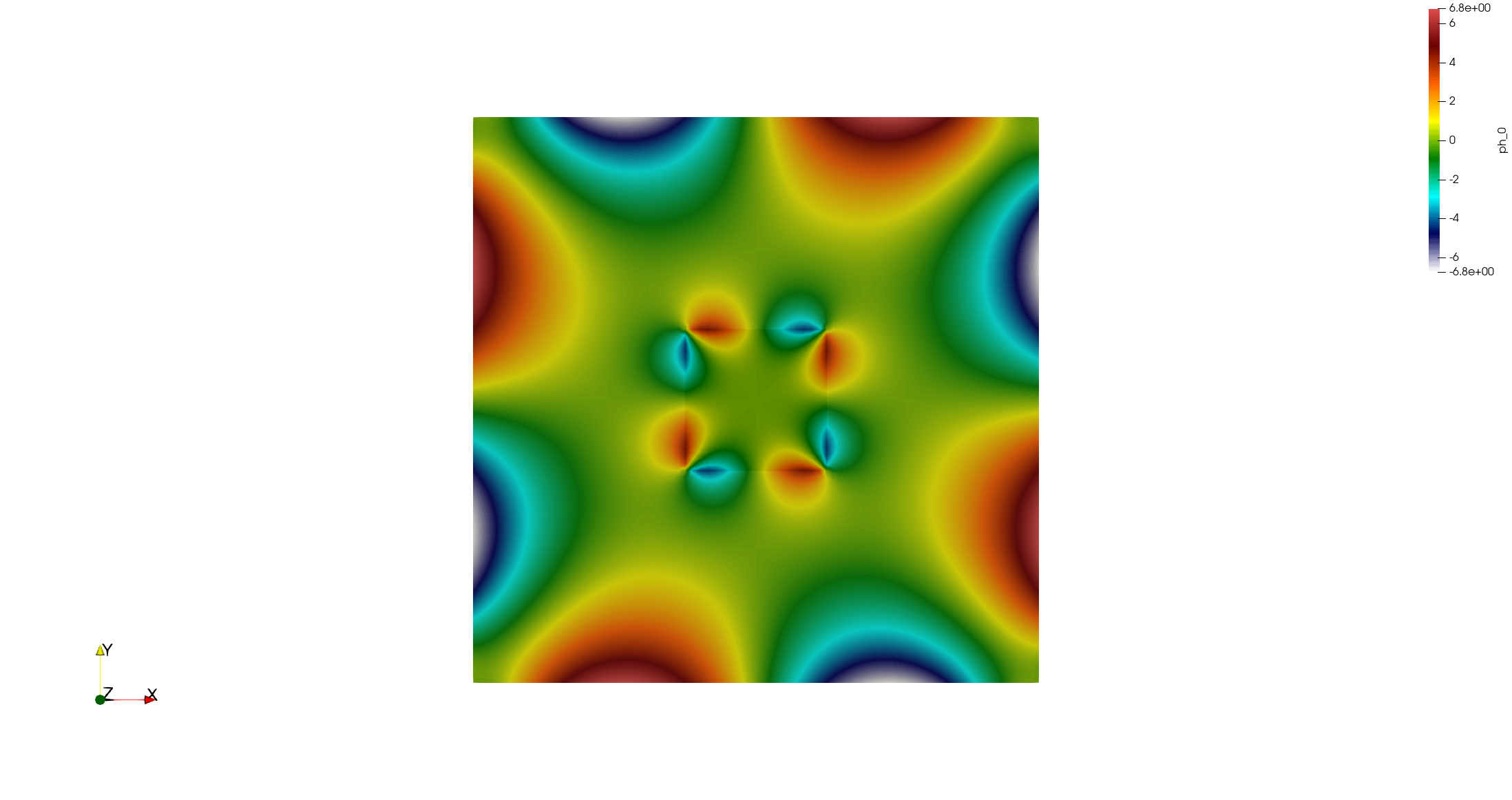}
	\end{minipage}
	\begin{minipage}{0.32\linewidth}\centering
		{\footnotesize $p_{h,1}, \mathbb{K}^{-1}\vert_{\Omega_D}=10^{8}\mathbb{I}$}\\
		\includegraphics[scale=0.12,trim=23.2cm 5cm 23.2cm 5cm,clip]{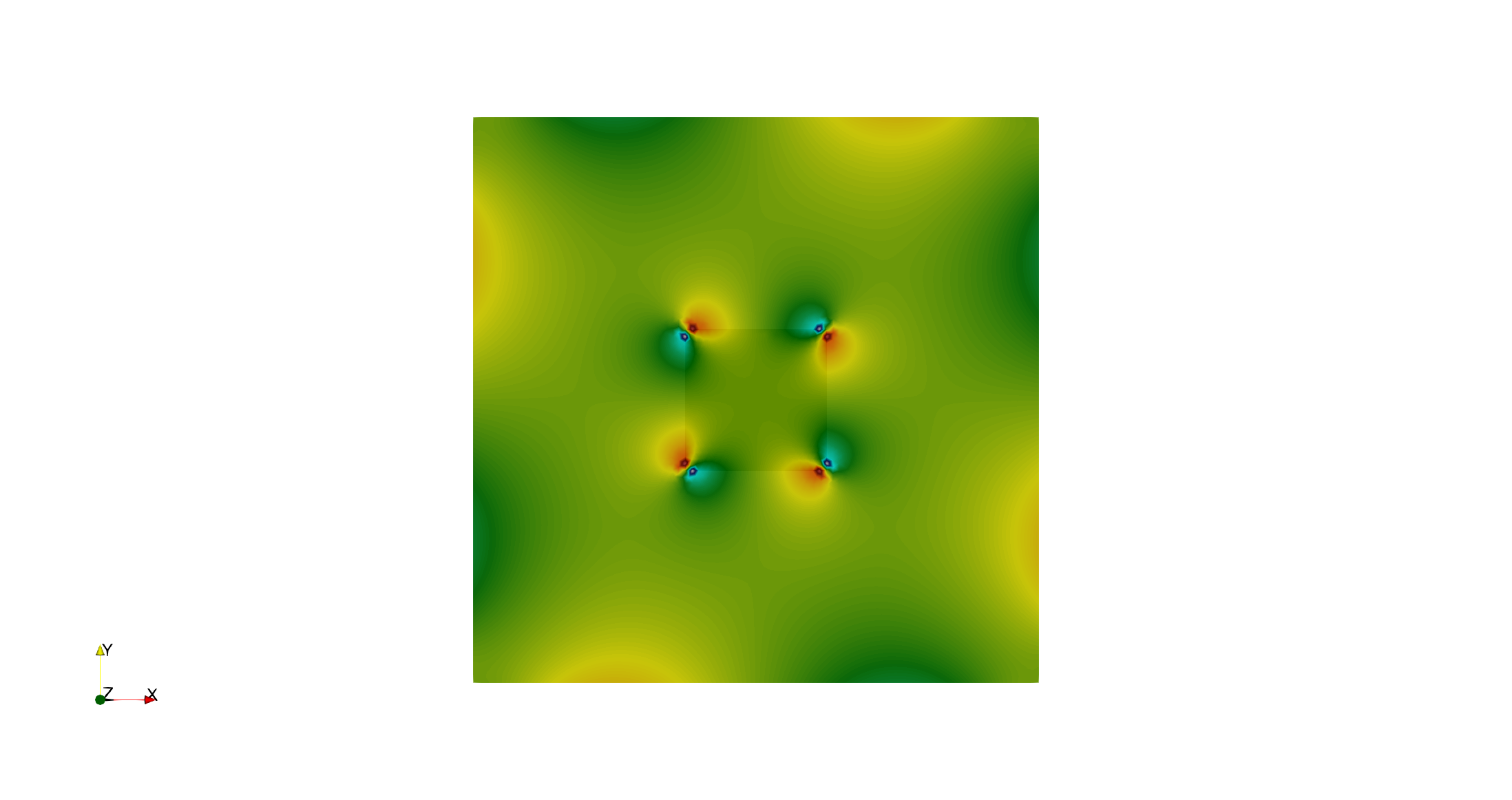}
	\end{minipage}\\
	\caption{Test \ref{subsec:test-cuadrado}. Comparison of the velocity streamlines, together with their corresponding pressure surface plots for the first eigenvale and different choices of $\mathbb{K}$. The skeleton of the geometry has been slightly superimposed on the pressure graph for better representation. }
	\label{fig:square2D-uh1-allcases}
\end{figure}

\begin{figure}[!hbt]\centering
\includegraphics[scale=0.43, trim=0cm 4cm 1.8cm 5cm,clip]{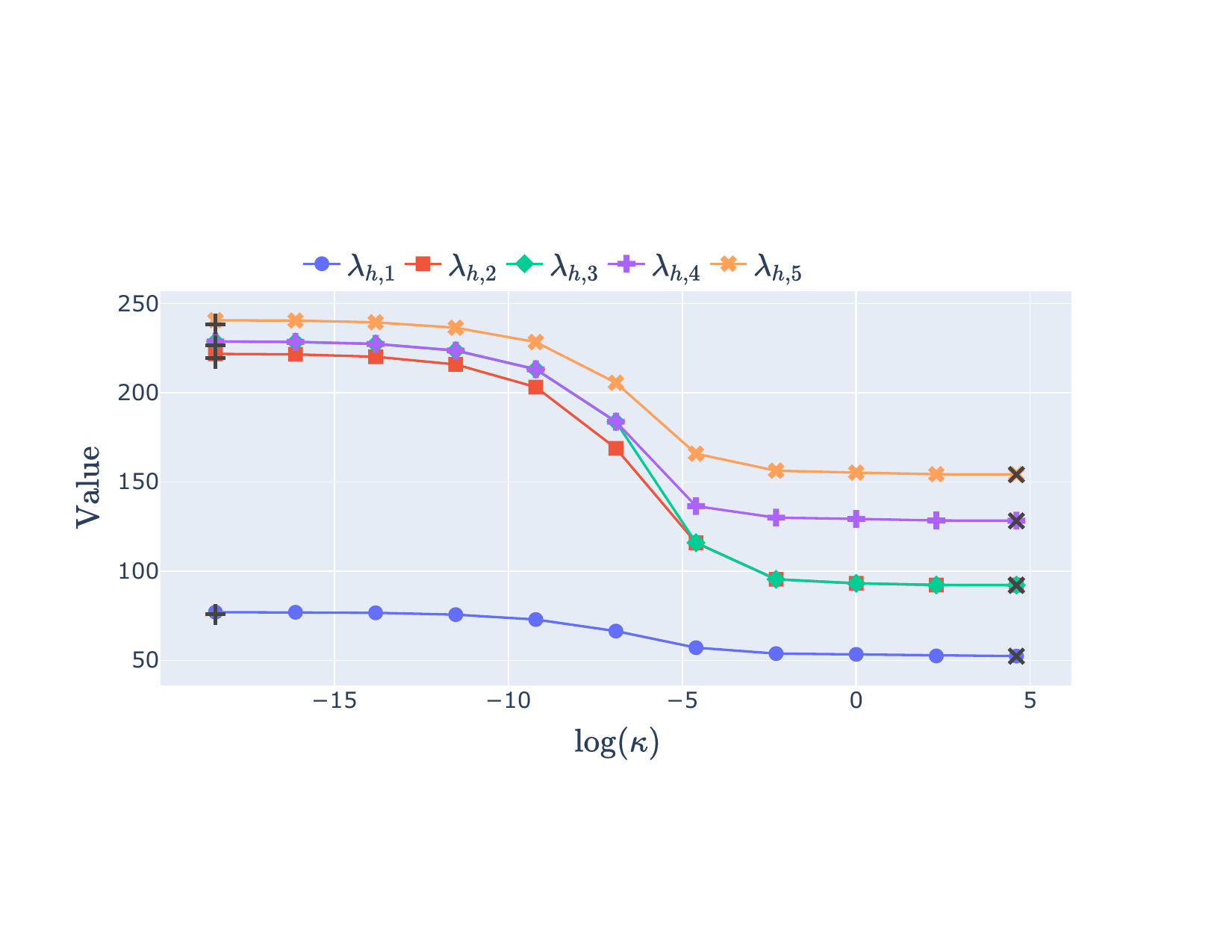}
\caption{Test \ref{subsec:test-cuadrado}. Spectrum changes on the first five computed eigenvalues for different values of the permeability parameter on $\Omega_D$ with $N=120$ compared with the Stokes eigenvalues on the square domain (\texttt{x} marker) and on the domain $\Omega\backslash\Omega_D$ ($\texttt{+}$  marker) with homogeneous boundary conditions.}
	\label{fig:convergence-evolution-square}
\end{figure}

\begin{figure}[!hbt]\centering
	\begin{minipage}{0.59\linewidth}\centering
		\includegraphics[scale=0.37,trim=0cm 0cm 2cm 2cm,clip]{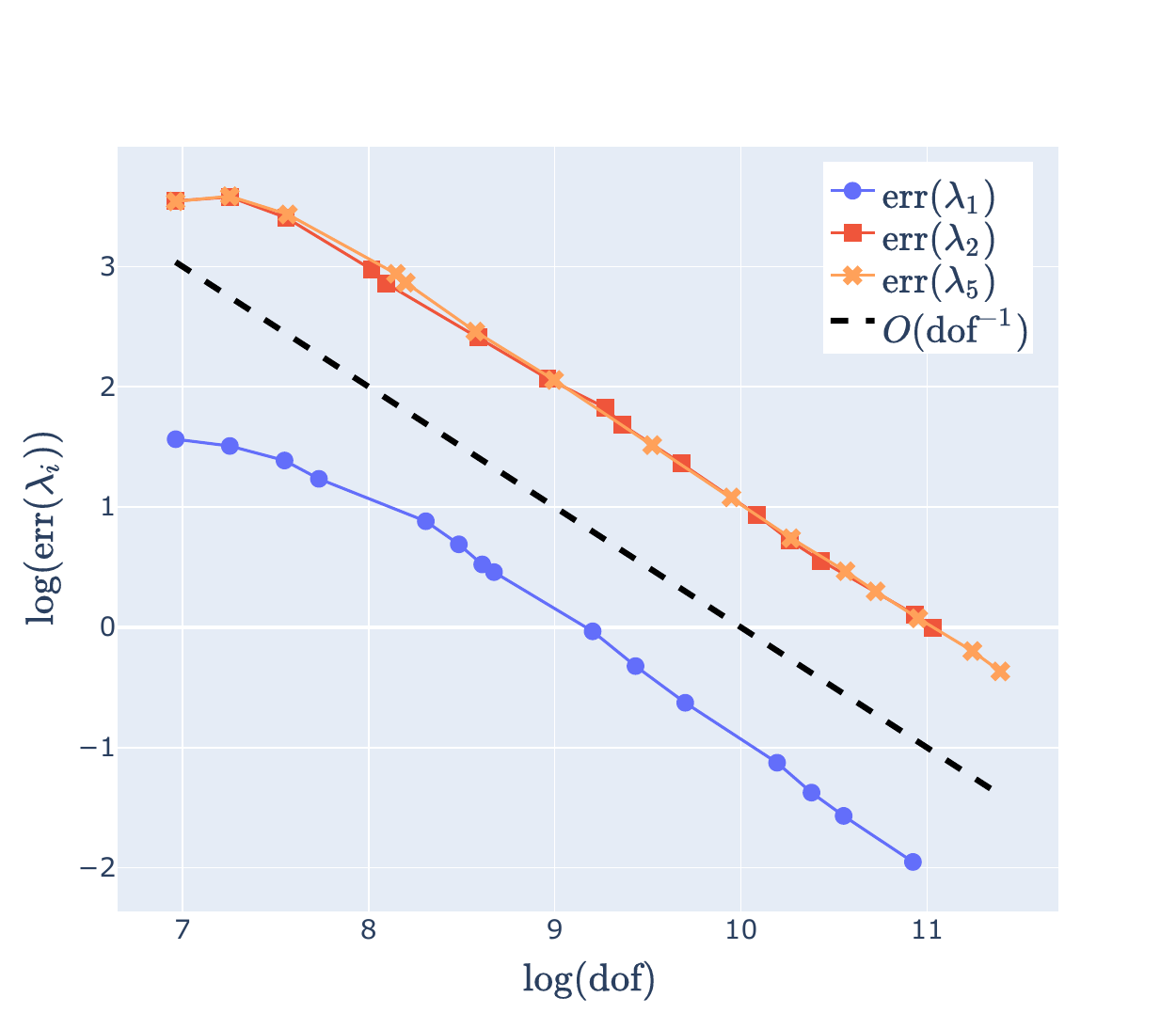}
	\end{minipage}
	\begin{minipage}{0.40\linewidth}\centering
		\includegraphics[scale=0.37,trim=0cm 0cm 2cm 2cm,clip]{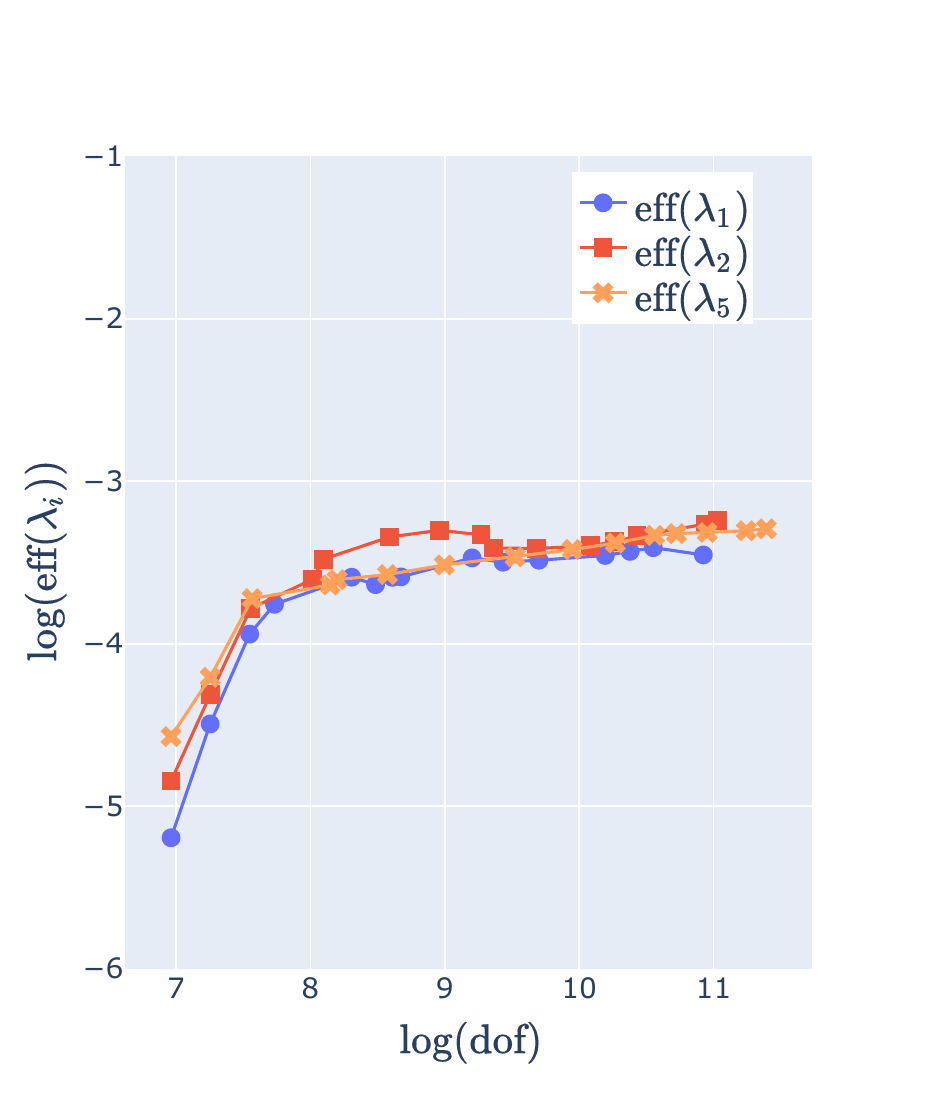}
	\end{minipage}
	\caption{Test \ref{subsec:test-cuadrado}. Error history in the adaptive refinements with mini elements for the first, second and fifth eigenvalue (left) together with their corresponding effectivity indexes (right).}
	\label{fig:error-eff-square2D}
\end{figure}

\begin{figure}[!hbt]\centering
	\begin{minipage}{0.32\linewidth}\centering
		{\footnotesize $\bu_{h,1},\mathbb{K}^{-1}\vert_{\Omega_D}=10^{5}\mathbb{I}$}\\
		\includegraphics[scale=0.12,trim=23.2cm 5cm 23.2cm 5cm,clip]{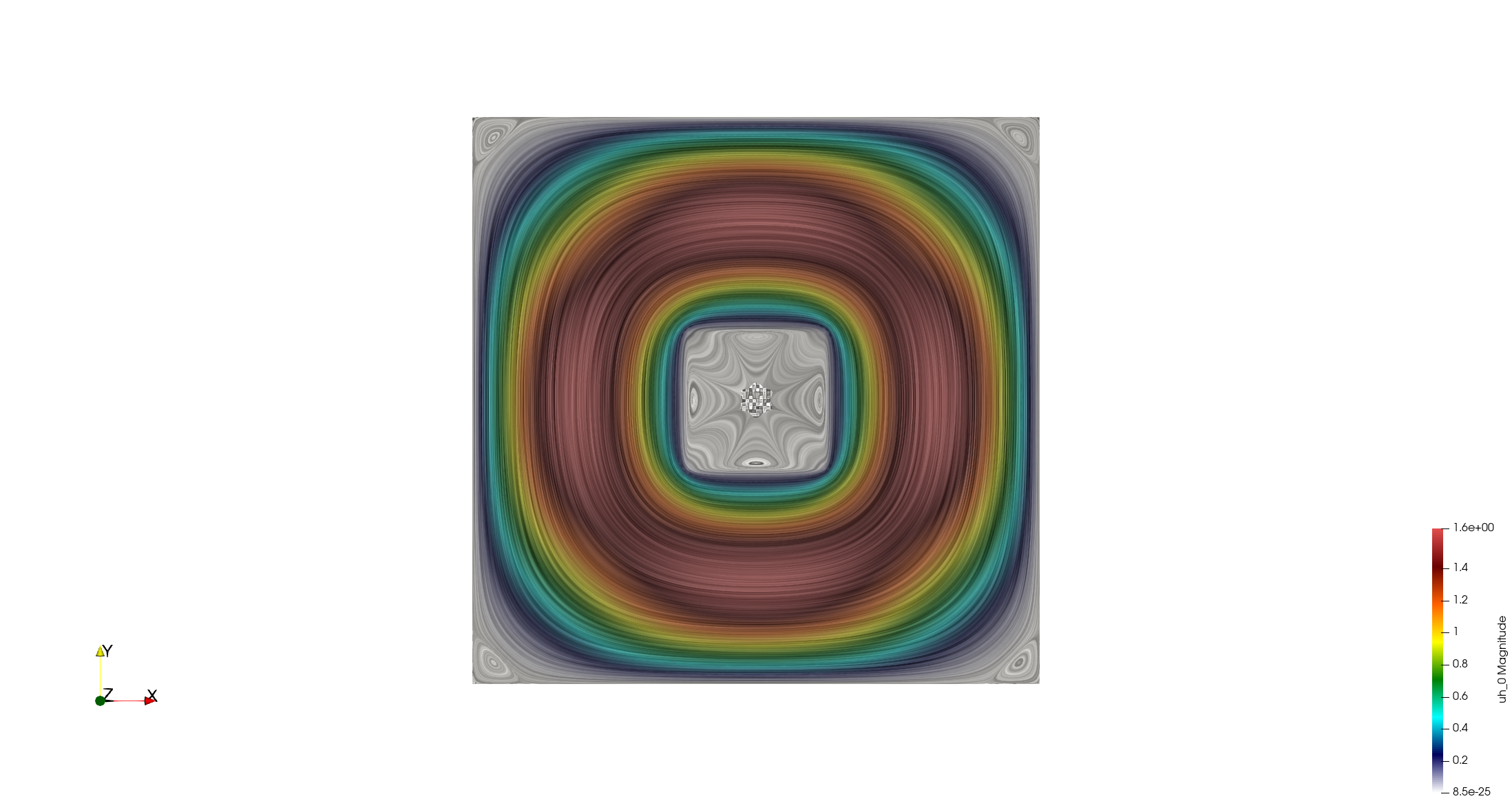}
	\end{minipage}
	\begin{minipage}{0.32\linewidth}\centering
		{\footnotesize $\bu_{h,2},\mathbb{K}^{-1}\vert_{\Omega_D}=10^{5}\mathbb{I}$}\\
		\includegraphics[scale=0.12,trim=23.2cm 5cm 23.2cm 5cm,clip]{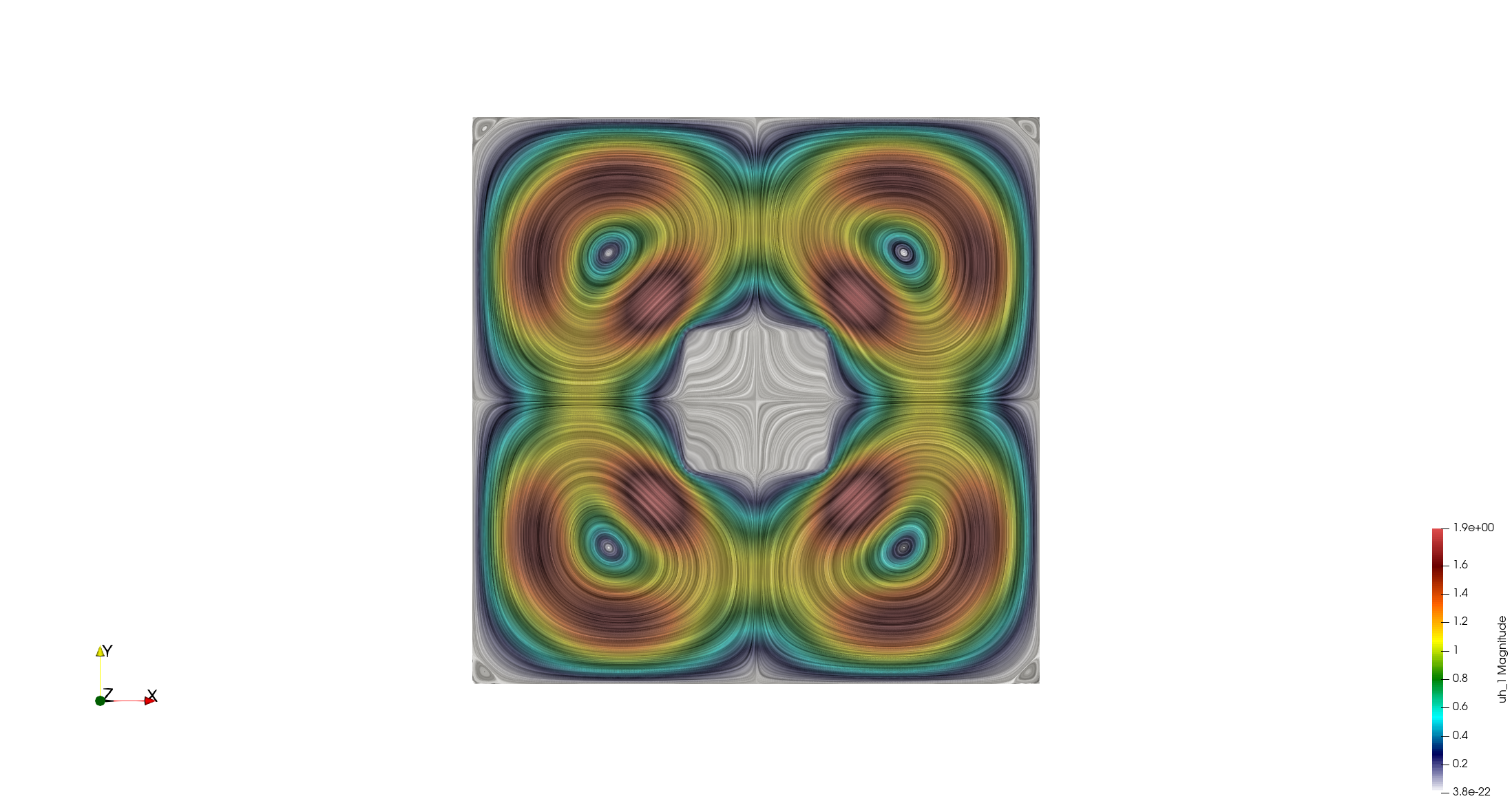}
	\end{minipage}
	\begin{minipage}{0.32\linewidth}\centering
		{\footnotesize $\bu_{h,5},\mathbb{K}^{-1}\vert_{\Omega_D}=10^{5}\mathbb{I}$}\\
		\includegraphics[scale=0.12,trim=23.2cm 5cm 23.2cm 5cm,clip]{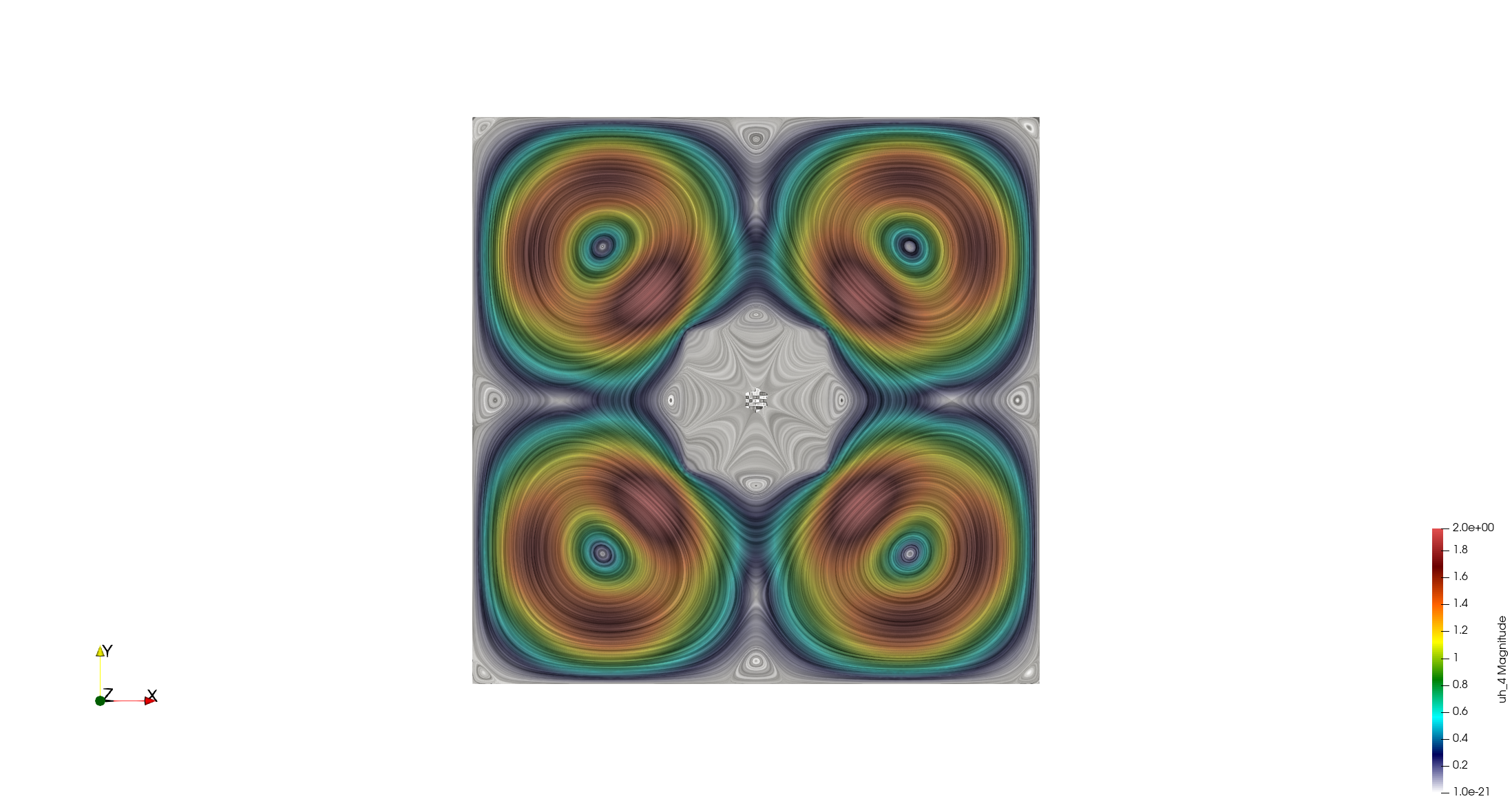}
	\end{minipage}\\
	\begin{minipage}{0.32\linewidth}\centering
		{\footnotesize $p_{h,1}, \mathbb{K}^{-1}\vert_{\Omega_D}=10^{5}\mathbb{I}$}\\
		\includegraphics[scale=0.12,trim=23.2cm 5cm 23.2cm 5cm,clip]{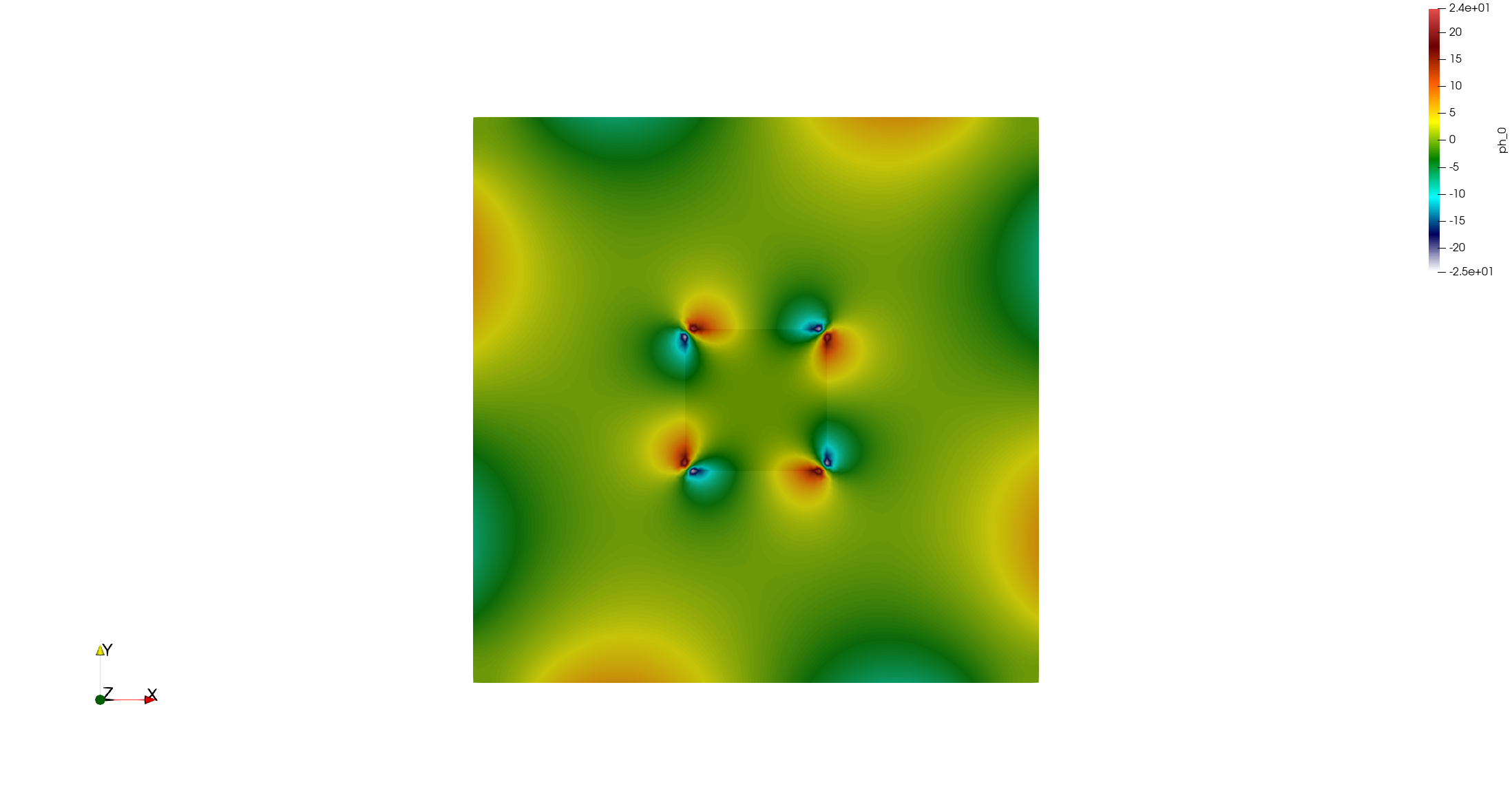}
	\end{minipage}
	\begin{minipage}{0.32\linewidth}\centering
		{\footnotesize $p_{h,2}, \mathbb{K}^{-1}\vert_{\Omega_D}=10^{5}\mathbb{I}$}\\	
		\includegraphics[scale=0.12,trim=23.2cm 5cm 23.2cm 5cm,clip]{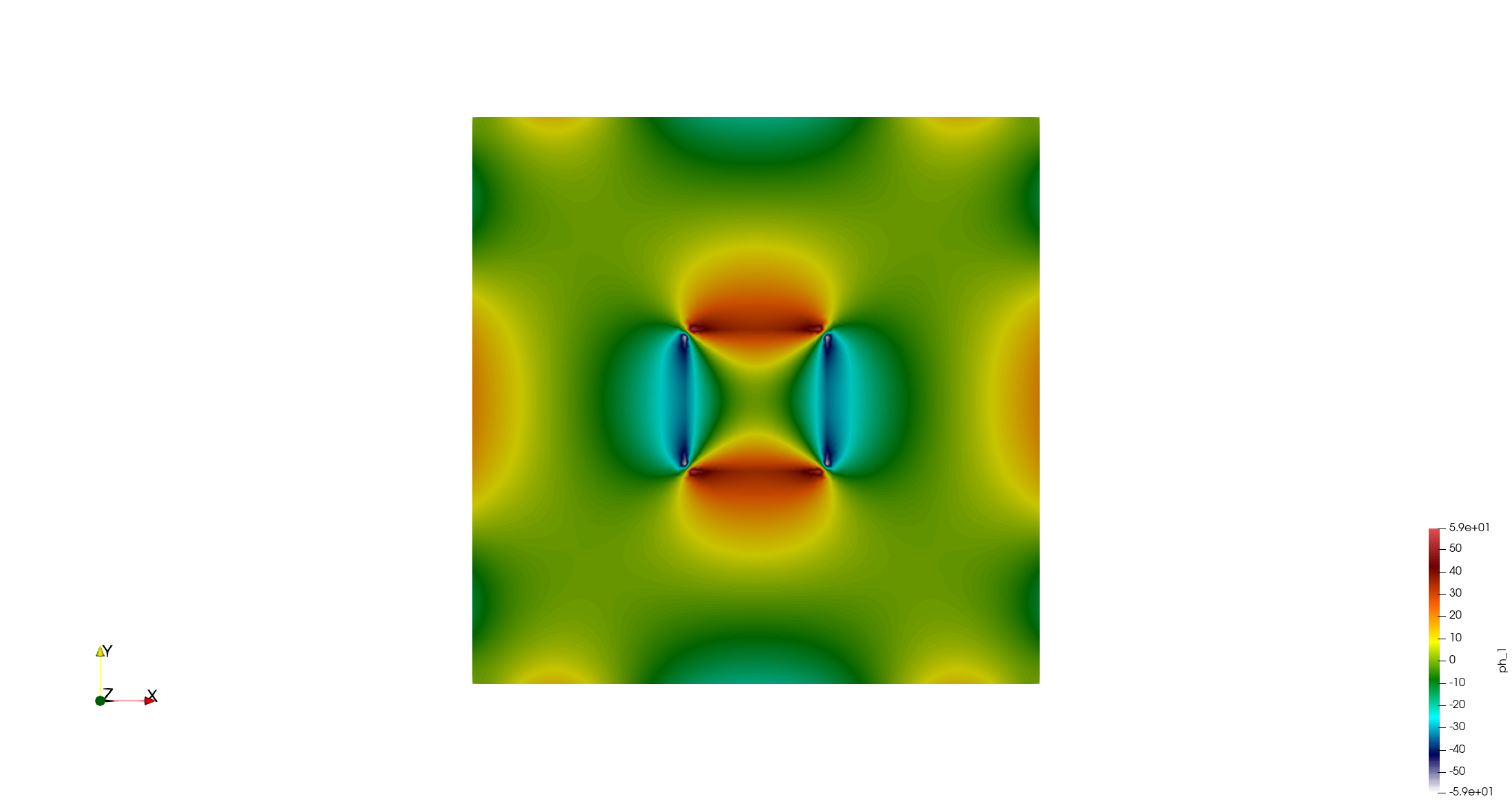}
	\end{minipage}
	\begin{minipage}{0.32\linewidth}\centering
		{\footnotesize $p_{h,5}, \mathbb{K}^{-1}\vert_{\Omega_D}=10^{5}\mathbb{I}$}\\
		\includegraphics[scale=0.12,trim=23.2cm 5cm 23.2cm 5cm,clip]{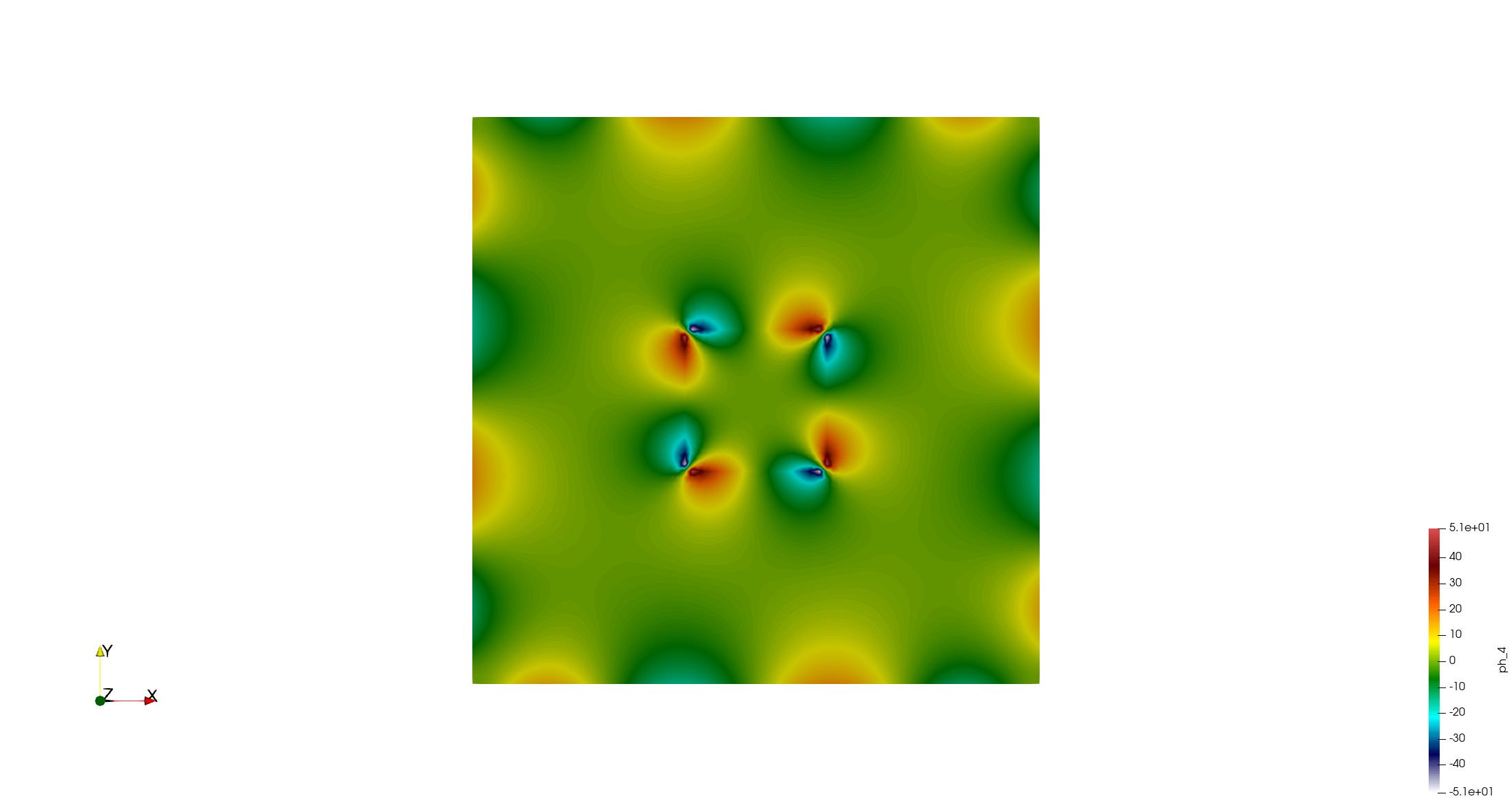}
	\end{minipage}\\
	\begin{minipage}{0.32\linewidth}\centering
		{\footnotesize $\mathbb{K}^{-1}\vert_{\Omega_D}=10^{5}\mathbb{I}, \texttt{dof}=38245$}\\
		\includegraphics[scale=0.12,trim=23.2cm 5cm 23.2cm 5cm,clip]{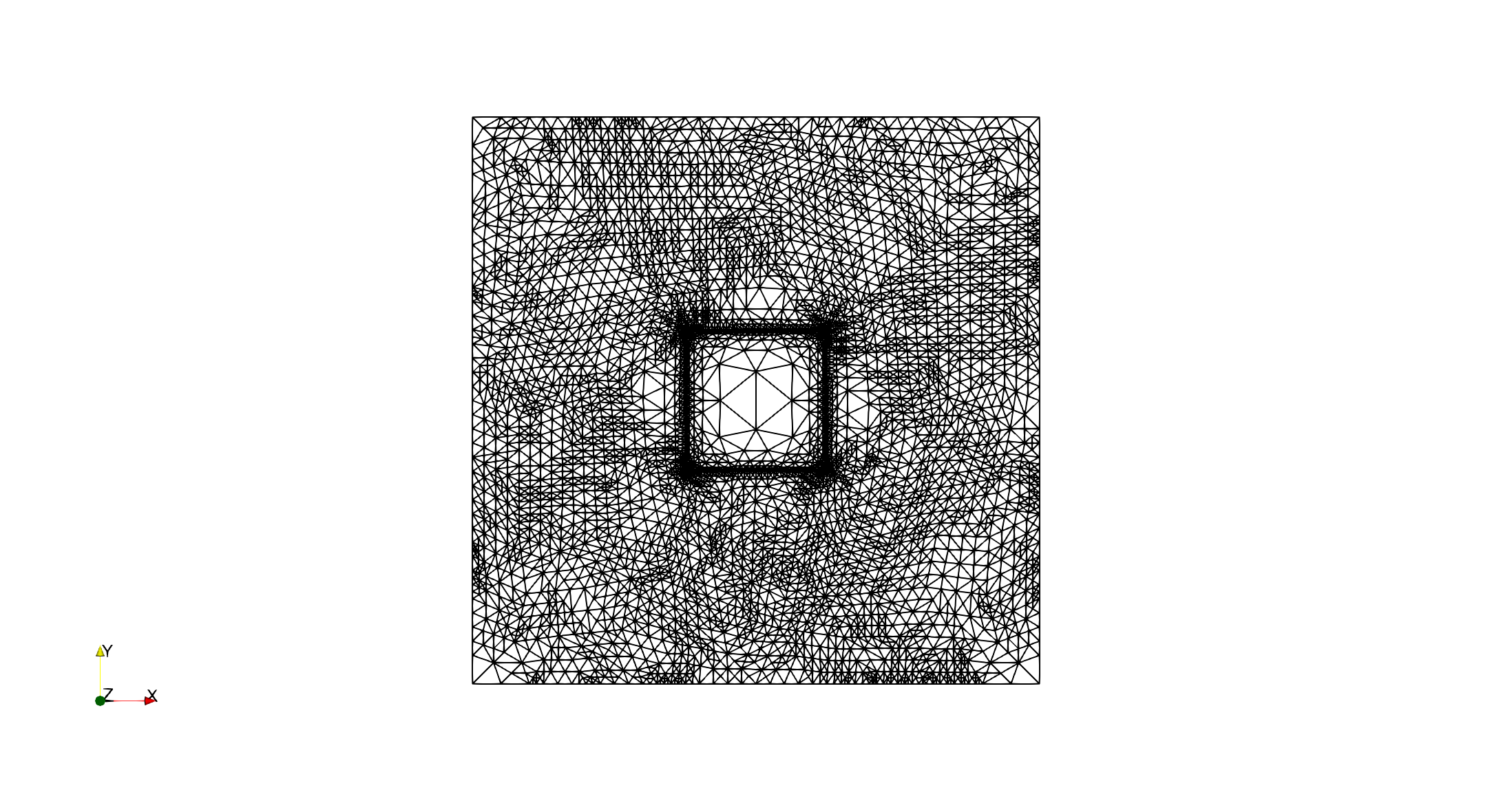}
	\end{minipage}
	\begin{minipage}{0.32\linewidth}\centering
		{\footnotesize $\mathbb{K}^{-1}\vert_{\Omega_D}=10^{5}\mathbb{I},\texttt{dof}=56153$}\\	
		\includegraphics[scale=0.12,trim=23.2cm 5cm 23.2cm 5cm,clip]{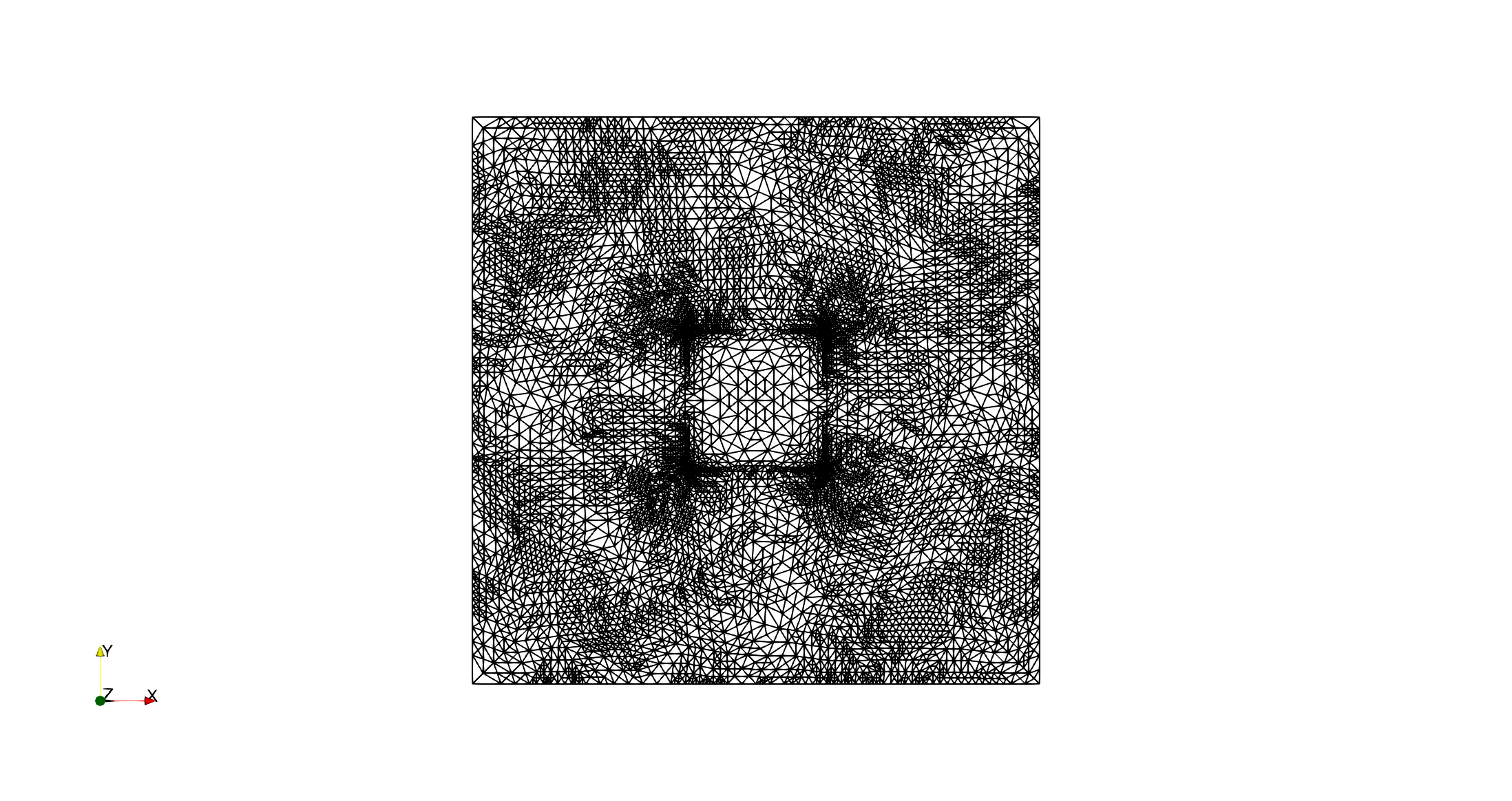}
	\end{minipage}
	\begin{minipage}{0.32\linewidth}\centering
		{\footnotesize $\mathbb{K}^{-1}\vert_{\Omega_D}=10^{5}\mathbb{I}, \texttt{dof}=76375$}\\
		\includegraphics[scale=0.12,trim=23.2cm 5cm 23.2cm 5cm,clip]{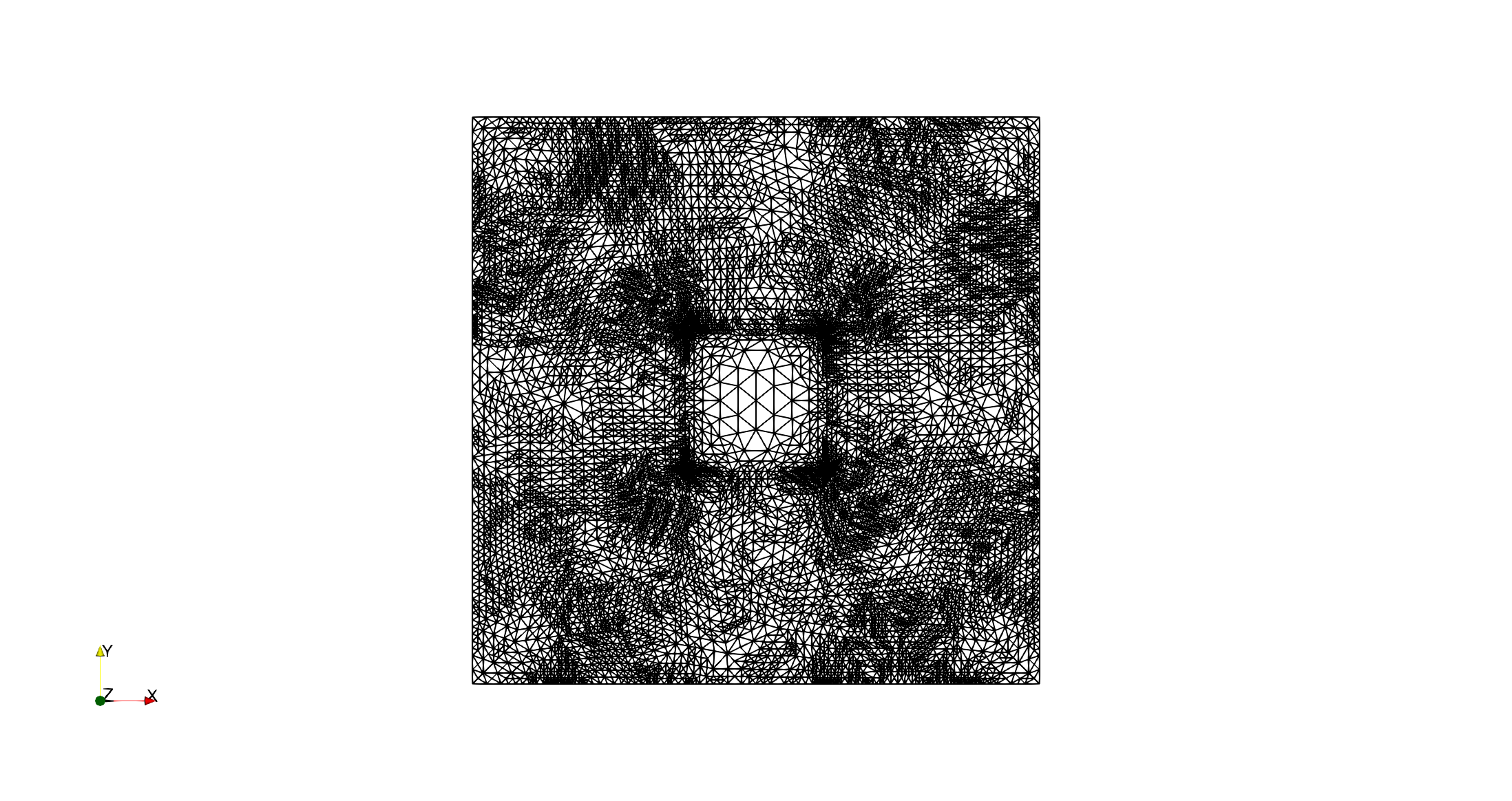}
	\end{minipage}\\
	\caption{Test \ref{subsec:test-cuadrado}. Comparison of the computed eigenfunctions for the first, second and fifth eigenvalue together with their corresponding adaptive meshes on the 14th iteration.}
	\label{fig:square2D-adaptive}
\end{figure}

\subsection{2D Lshaped porous domain with mixed boundary conditions}\label{subsec:lshape2D}
In this experiment we aim to test the scheme when solving the eigenvalue problem on a two-dimensional domain with a singularity and with different permeability zones. To this end we consider the Lshape domain given by $\Omega:=(0,1)^2\backslash\left((0.5,0)\times(1,0.5)\right)$ with chessboard-like configuration for $\Omega_S$ and $\Omega_D$. A sample of the meshed geometry is depicted in Figure \ref{fig:lshape-sample-domain}. Non-slip and \textit{do-nothing} boundary conditions are assumed on $\Gamma_1$ and  $\Gamma_2$, respectively. We take $\mathbb{K}$ such that $\mathbb{K}^{-1}=\boldsymbol{0}$ on $\Omega_S$, while $\mathbb{K}^{-1}=10^3\mathbb{I}$ on $\Omega_D$.

Convergence results for uniform refinements in the first four eigenvalues are given in Table \ref{table-lshape2D-TH-MINI}. Because of the singularity, we expect that the first eigenvalue converge with suboptimal rate. However, we also note that the fourth eigenvalue is singular. The second and third eigenvalues are regular up to $\mathcal{O}(h^{3.5})$ approximately. Streamlines of the velocity and surface plots of pressure for the computed eigenfunctions are portrayed in Figure \ref{fig:lshape2d:eigenmodes}. Note that the fluid tends to move in the zones that minimize the energy of the system. That is, it tends to moves in the Stokes zones, where there is less resistance to movement.

Focusing our attention on the two singular eigenmodes. Note that high pressure gradients are present on Figure \ref{fig:lshape2d:eigenmodes}.  When performing adaptive refinements, we present the error and efficiency results, for mini and Taylor-Hood elements, on Figure \ref{fig:error-eff-lshape2d}. For mini elements we have performed 15 adaptive iterations, while 20 iterations were considered for Taylor-Hood. Here we observe that optimal rates are attained in comparison with uniform refinements. Also, the estimator stays properly bounded. Some oscilation in the first iterations were observed with Taylor-Hood elements, but keeping the error curves always below of those of mini elements. When observing the adaptive meshes, depicted in Figures \ref{fig:lshape2d-meshes}-\ref{fig:lshape2d-meshes_TH}, we note that the estimator detects the singularity, as well as the pressure gradients between Stokes and Darcy subdomains, and refines accordingly. This is clearly observable in the first eigenmode. Also, we note that a considerable lower of elements are marked and refined to achieve the optimal rate $\mathcal{O}(h^4)$ with Taylor-Hood elements in the same number of iterations.

\begin{figure}[!hbt]\centering
	\includegraphics[scale=0.15]{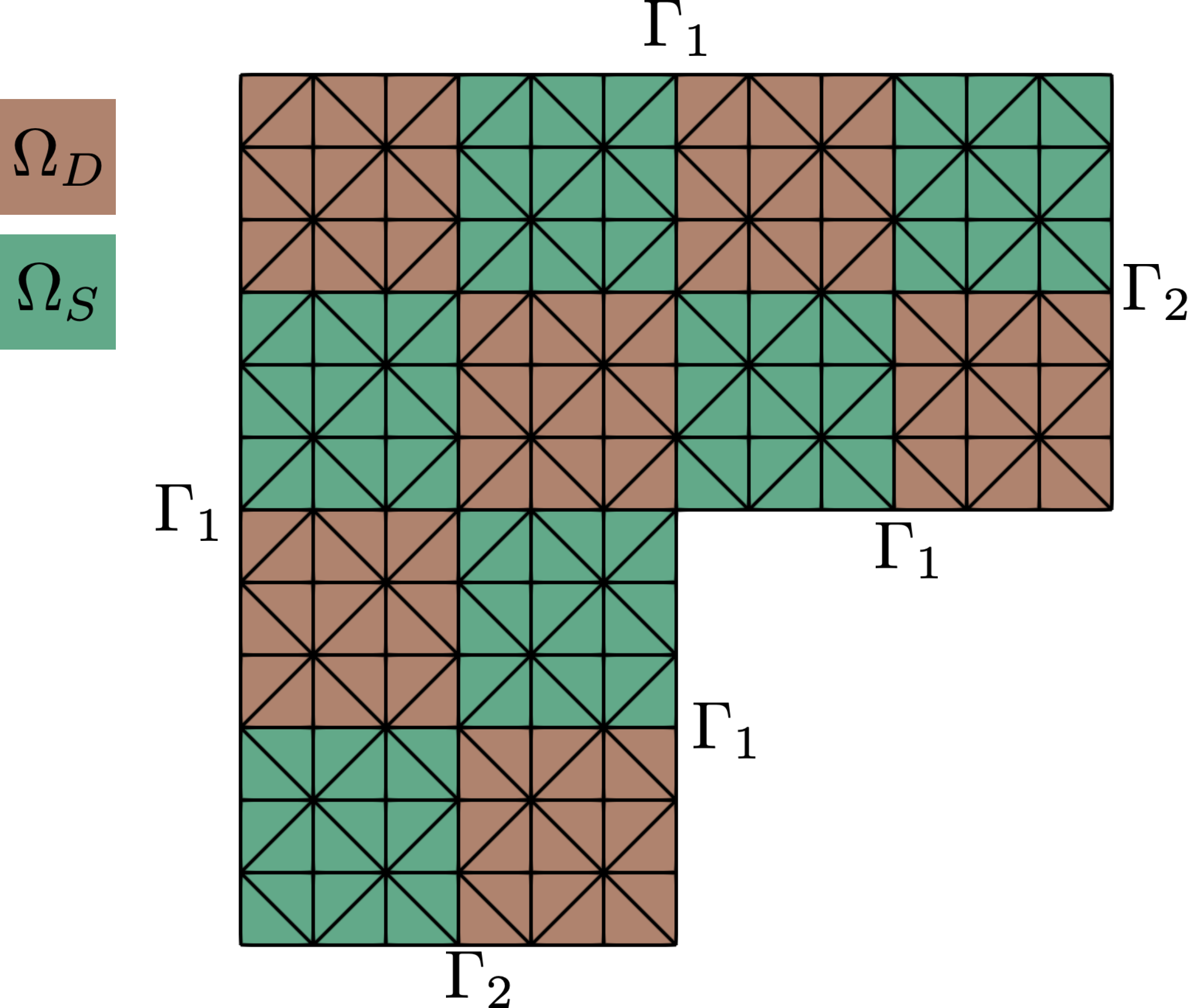}
	\caption{Test \ref{subsec:lshape2D}. Sample geometry of an Lshape domain with alternated permeability region and $N=10$.}
	\label{fig:lshape-sample-domain}
\end{figure}

\begin{table}[hbt!]
	\centering 
	{\footnotesize
		\begin{center}
			\caption{Test \ref{subsec:lshape2D}. Convergence behavior of the first four computed eigenvalues on the 2D Lshaped geometry with mixed boundary conditions and $\mathbb{K}^{-1} := 10^{3} \mathbb{I}$ in $\Omega_D$. }
			\begin{tabular}{|c c c c |c| c|}
				\hline
				\hline
				$N=20$             &  $N=40$         &   $N=60$         & $N=80$ & Order & $\lambda_{\text{extr}}$ \\
				\hline
				\multicolumn{6}{|c|}{mini-element}\\
				\hline
%
  309.8523  &   304.9781  &   303.8658  &   303.3933  & 1.79 &   302.7206  \\
401.8870  &   380.5371  &   376.4772  &   375.0585  & 2.15 &   373.3541  \\
414.5703  &   392.5575  &   388.3612  &   386.8954  & 2.15 &   385.1414  \\
489.8924  &   479.1417  &   476.4018  &   475.1874  & 1.59 &   473.1085  \\
				\hline
				\multicolumn{6}{|c|}{Taylor-Hood}\\
				\hline	
301.5767  &   302.3611  &   302.4922  &   302.5450  & 1.70 &   302.6652  \\
373.6313  &   373.2335  &   373.1915  &   373.1800  & 3.00 &   373.1708  \\
385.3491  &   385.0002  &   384.9620  &   384.9517  & 3.15 &   384.9452  \\
466.6914  &   470.7840  &   471.6521  &   472.0178  & 1.53 &   472.8589  \\
				\hline
				\hline             
			\end{tabular}
	\end{center}}
	\smallskip
	\label{table-lshape2D-TH-MINI}
\end{table}

\begin{figure}[!hbt]\centering
\begin{minipage}{0.24\linewidth}\centering
	{\footnotesize $\bu_{h,1}$}\\
	\includegraphics[scale=0.08,trim=20.2cm 2cm 20.2cm 2cm,clip]{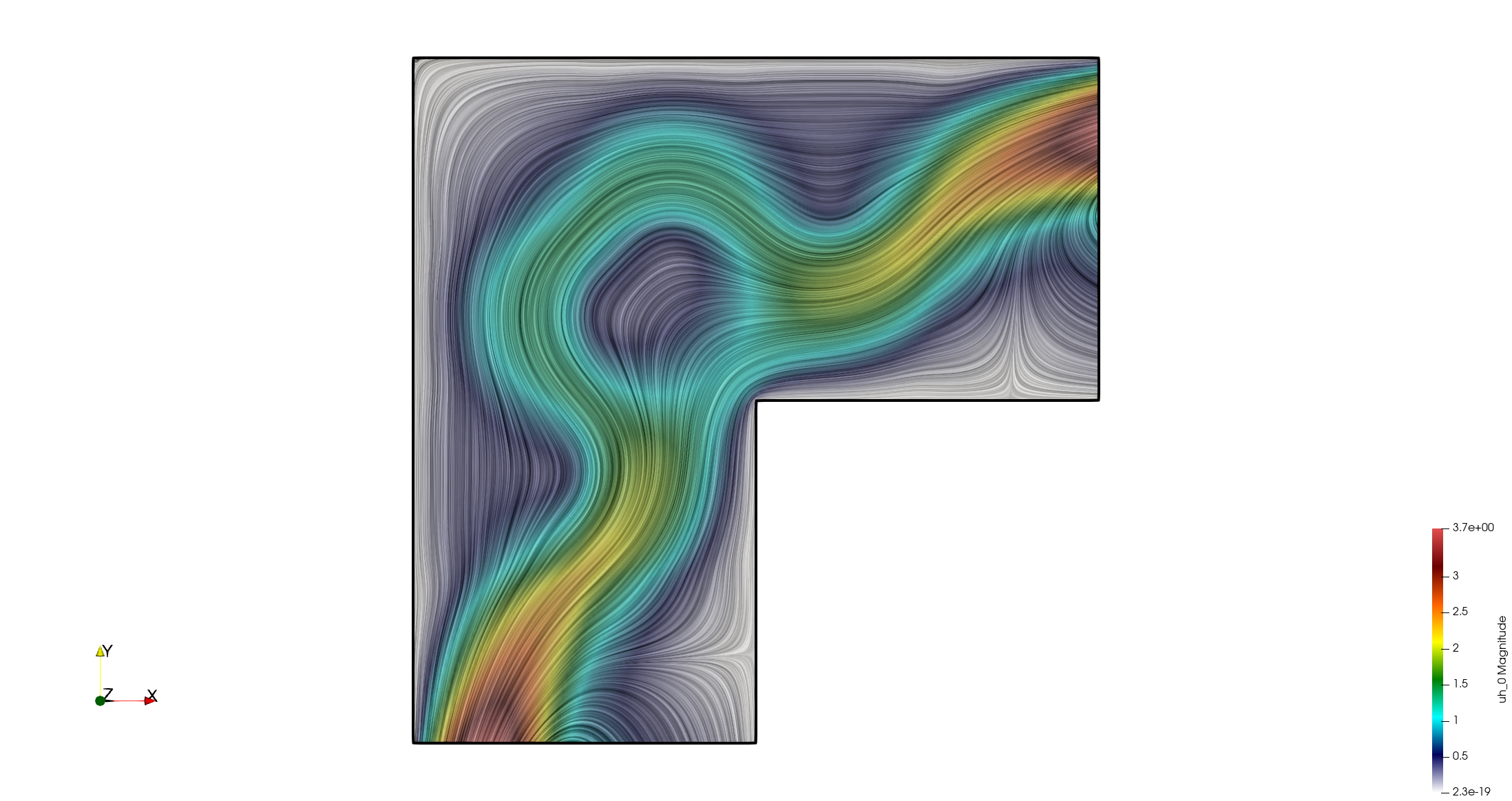}
\end{minipage}
\begin{minipage}{0.24\linewidth}\centering
	{\footnotesize $\bu_{h,2}$}\\
	\includegraphics[scale=0.08,trim=20.2cm 2cm 20.2cm 2cm,clip]{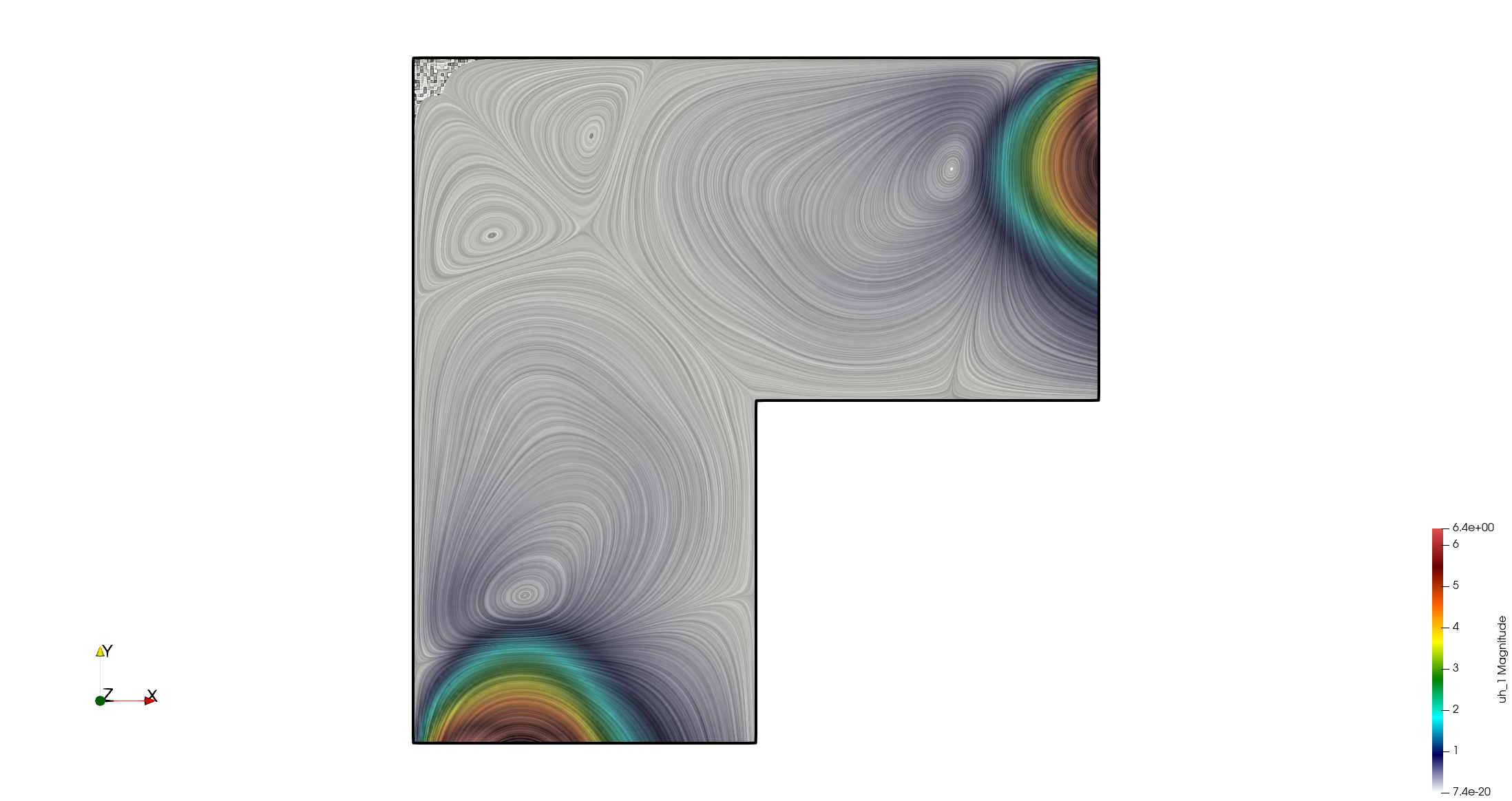}
\end{minipage}
\begin{minipage}{0.24\linewidth}\centering
	{\footnotesize $\bu_{h,3}$}\\
	\includegraphics[scale=0.08,trim=20.2cm 2cm 20.2cm 2cm,clip]{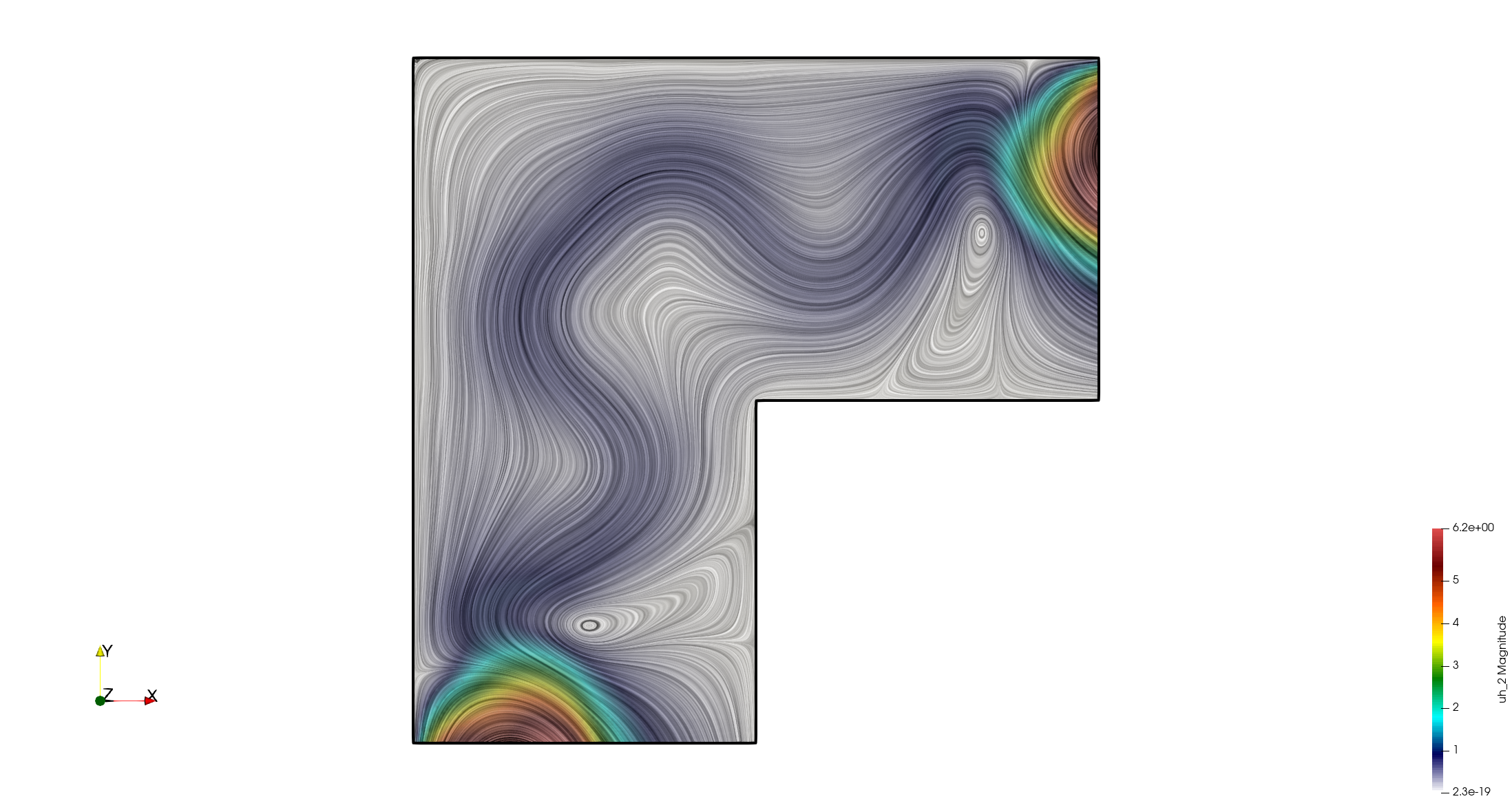}
\end{minipage}
\begin{minipage}{0.24\linewidth}\centering
	{\footnotesize $\bu_{h,4}$}\\
	\includegraphics[scale=0.08,trim=20.2cm 2cm 20.2cm 2cm,clip]{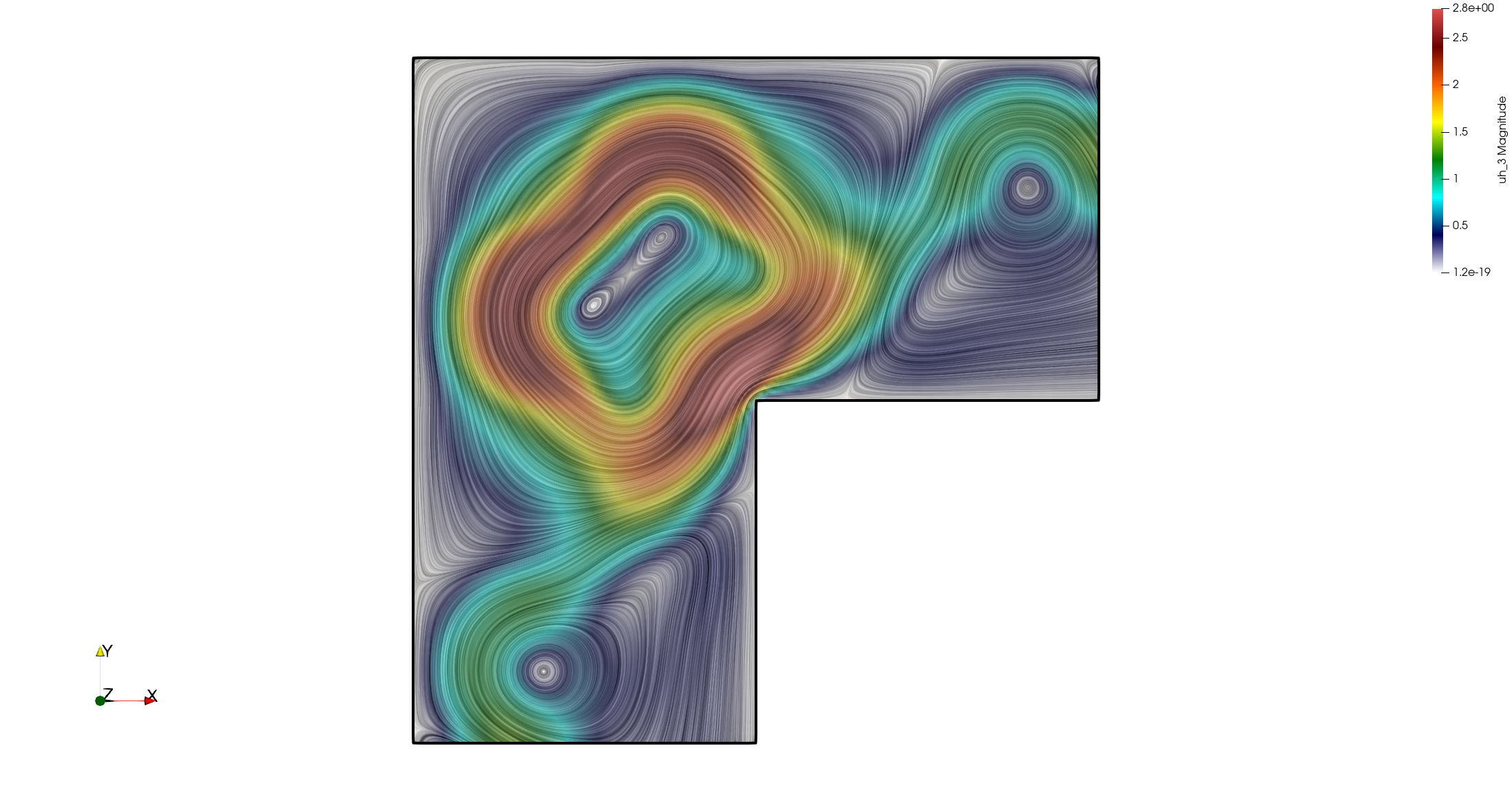}
\end{minipage}\\
\begin{minipage}{0.24\linewidth}\centering
	{\footnotesize $p_{h,1}$}\\
	\includegraphics[scale=0.08,trim=20.2cm 0cm 20.2cm 2cm,clip]{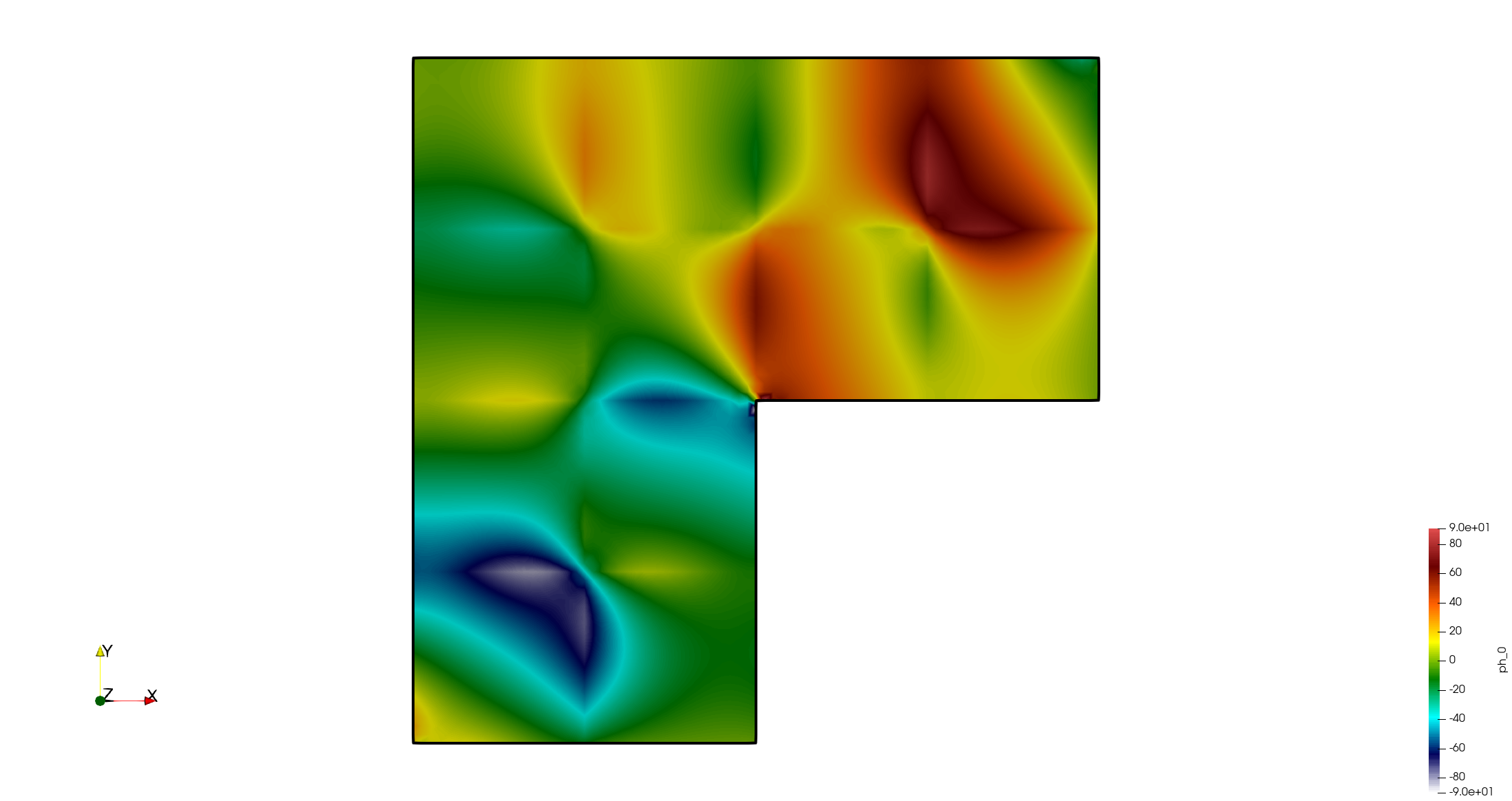}
\end{minipage}
\begin{minipage}{0.24\linewidth}\centering
	{\footnotesize $p_{h,2}$}\\
	\includegraphics[scale=0.08,trim=20.2cm 0cm 20.2cm 2cm,clip]{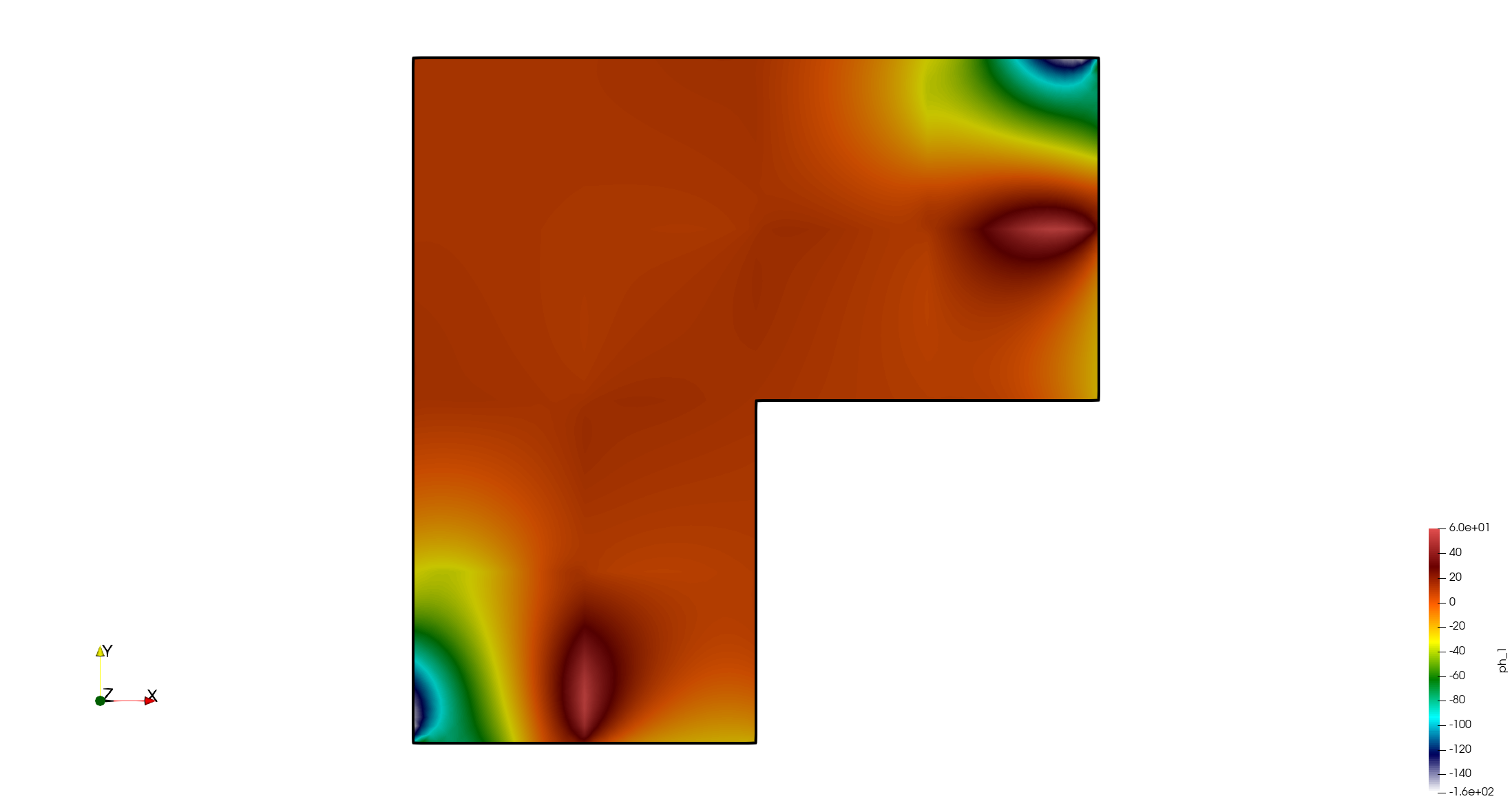}
\end{minipage}
\begin{minipage}{0.24\linewidth}\centering
	{\footnotesize $p_{h,3}$}\\
	\includegraphics[scale=0.08,trim=20.2cm 0cm 20.2cm 2cm,clip]{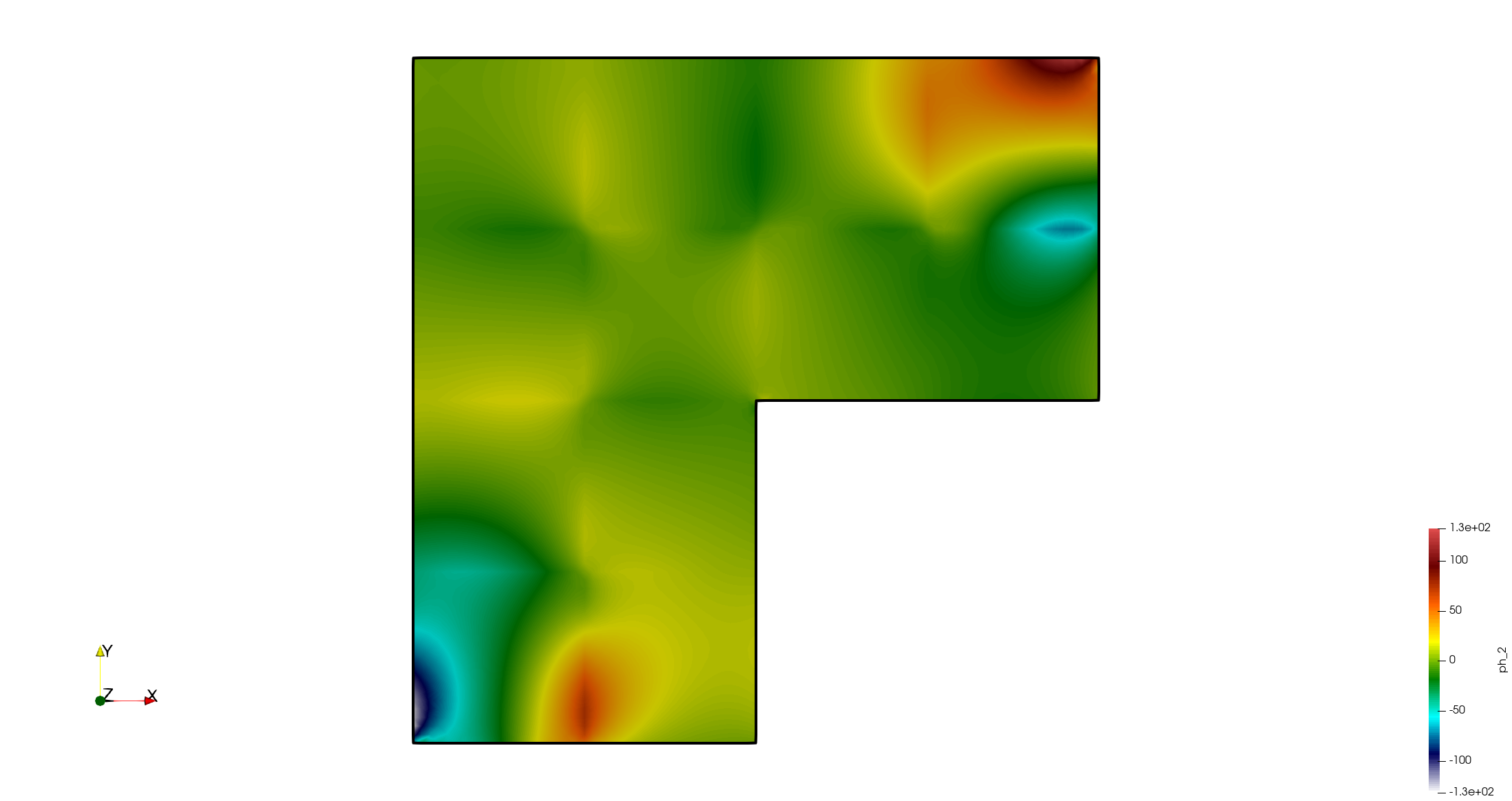}
\end{minipage}
\begin{minipage}{0.24\linewidth}\centering
	{\footnotesize $p_{h,4}$}\\
	\includegraphics[scale=0.08,trim=20.2cm 0cm 20.2cm 2cm,clip]{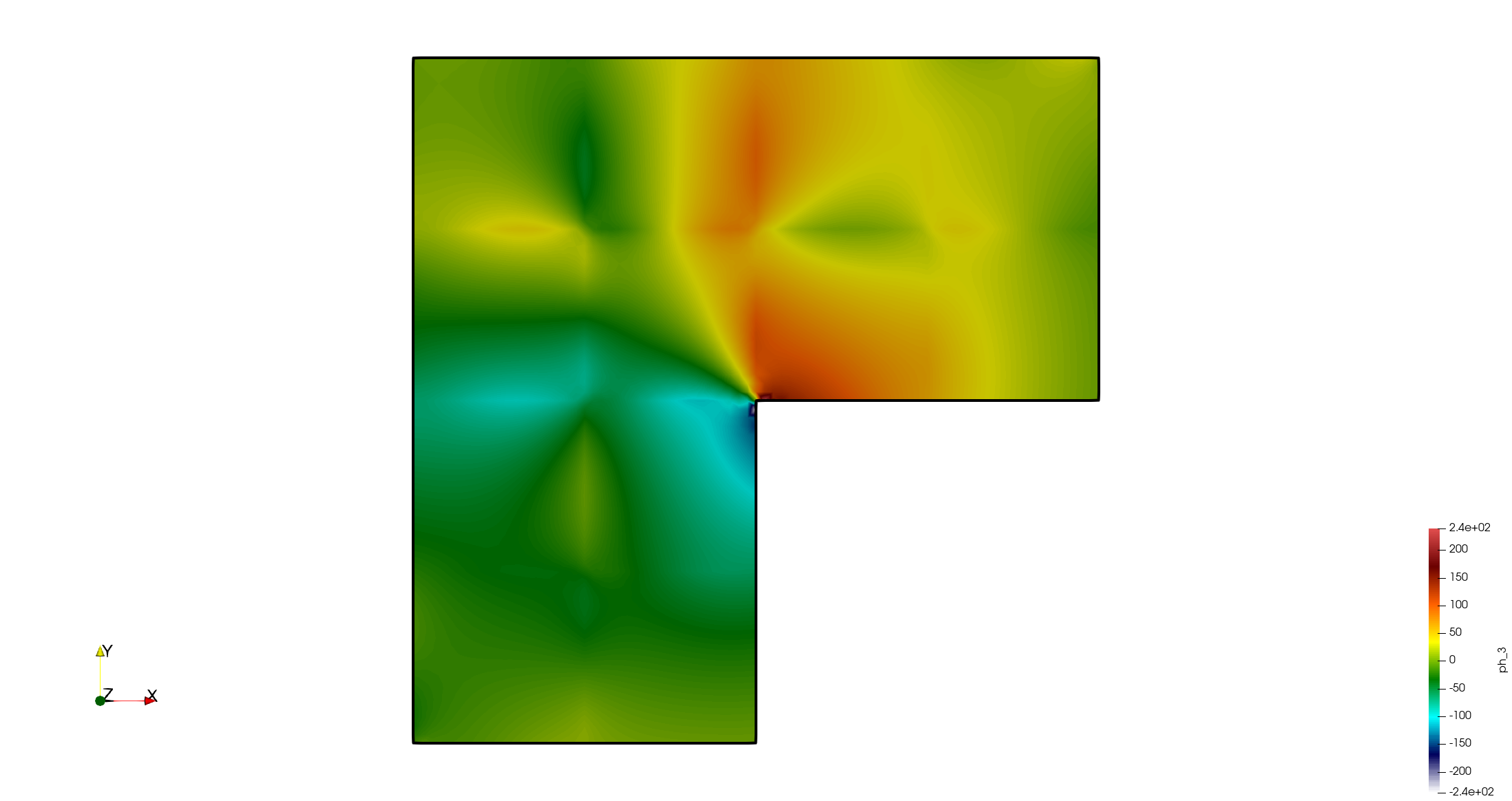}
\end{minipage}\\
\caption{Test \ref{subsec:lshape2D}. Eigenmodes of the first fourth computed eigenvalues on the Lshaped geometry with mixed boundary conditions and $\mathbb{K}^{-1}:=10^{3}\mathbb{I}$ on $\Omega_D$.  }
\label{fig:lshape2d:eigenmodes}
\end{figure}

\begin{figure}[!hbt]\centering
	\begin{minipage}{0.59\linewidth}\centering
		\includegraphics[scale=0.37,trim=0cm 0cm 2cm 2cm,clip]{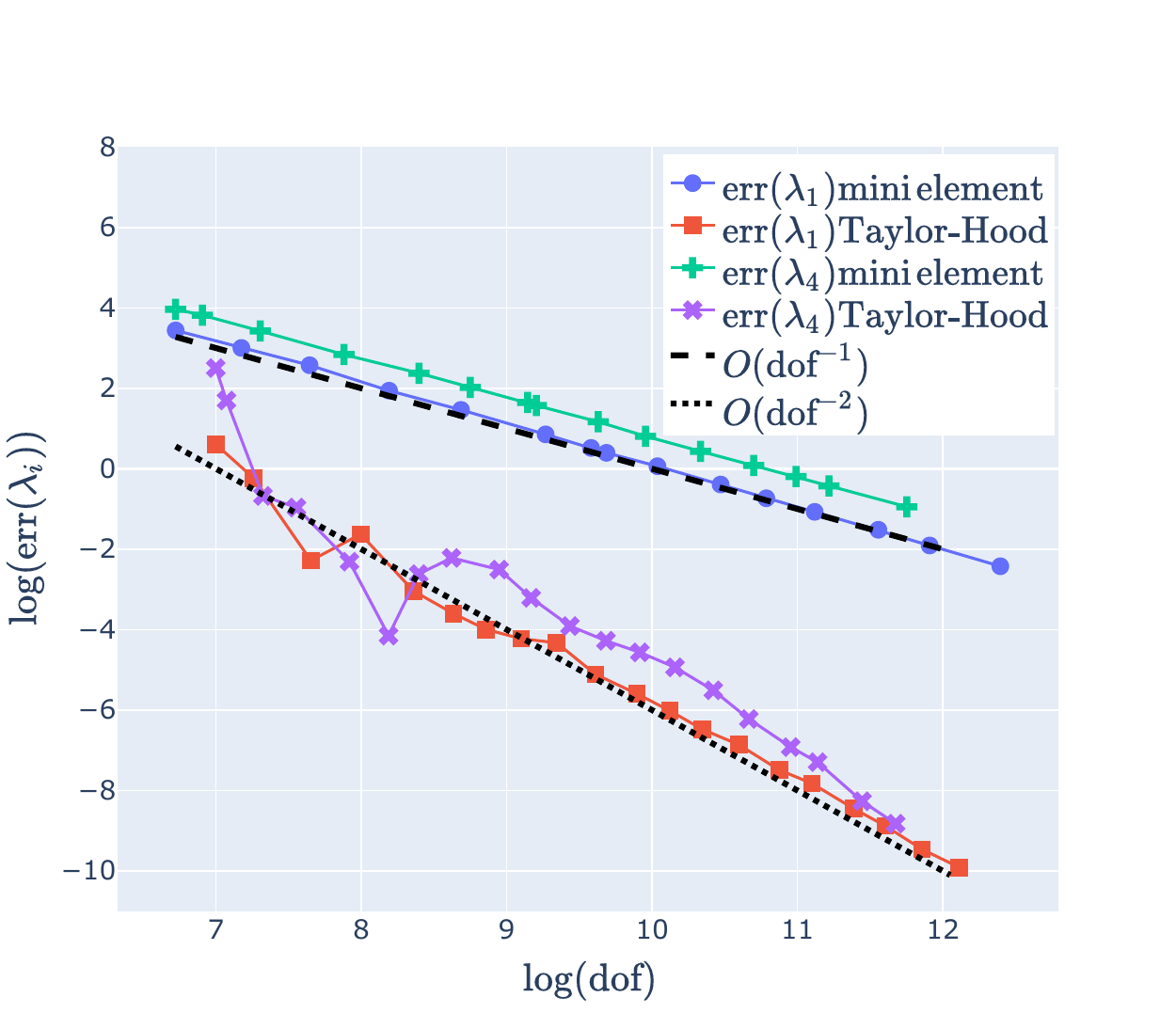}
	\end{minipage}
	\begin{minipage}{0.40\linewidth}\centering
		\includegraphics[scale=0.37,trim=0cm 0cm 2cm 2cm,clip]{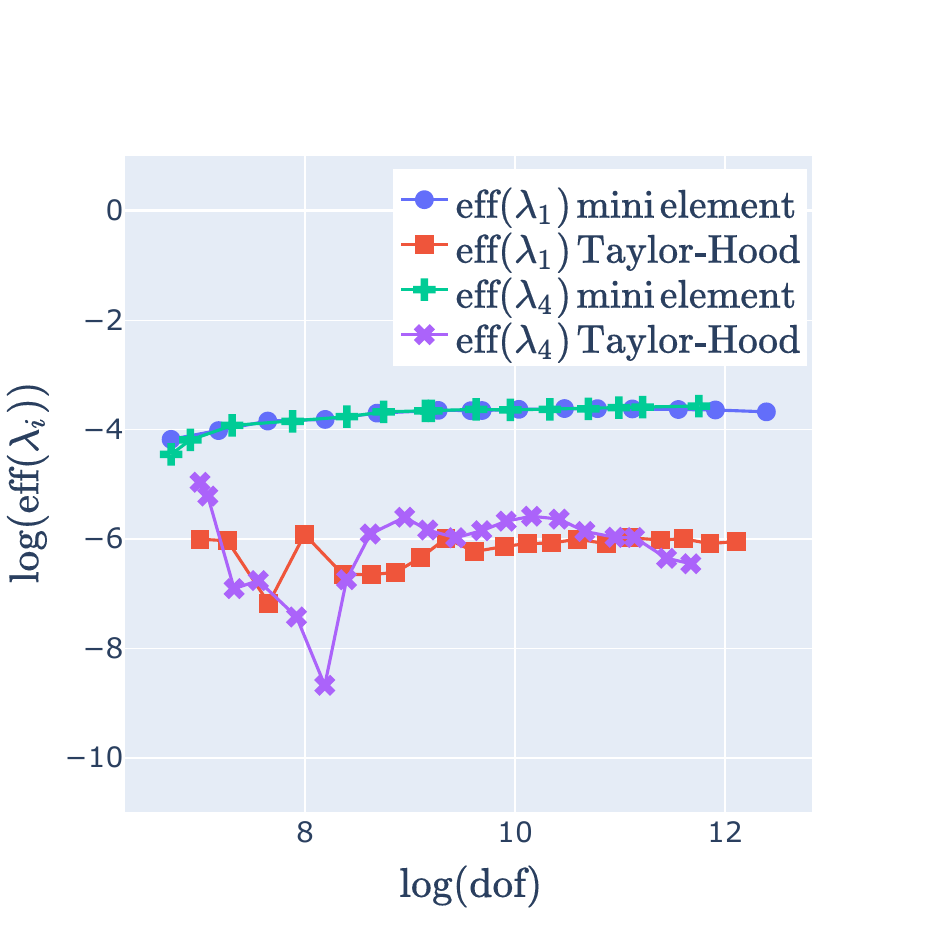}
	\end{minipage}
	\caption{Test \ref{subsec:lshape2D}. Error history for the uniform and adaptive refinements for the first and fourth eigenvalue (left) together with their corresponding effectivity indexes (right).}
	\label{fig:error-eff-lshape2d}
\end{figure}

\begin{figure}[!hbt]\centering
	\begin{minipage}{0.32\linewidth}\centering
	{\footnotesize $\lambda_{h,1}, \texttt{dof}=16089$ iter=8}\\
	\includegraphics[scale=0.1,trim=20.2cm 0cm 20.2cm 2cm,clip]{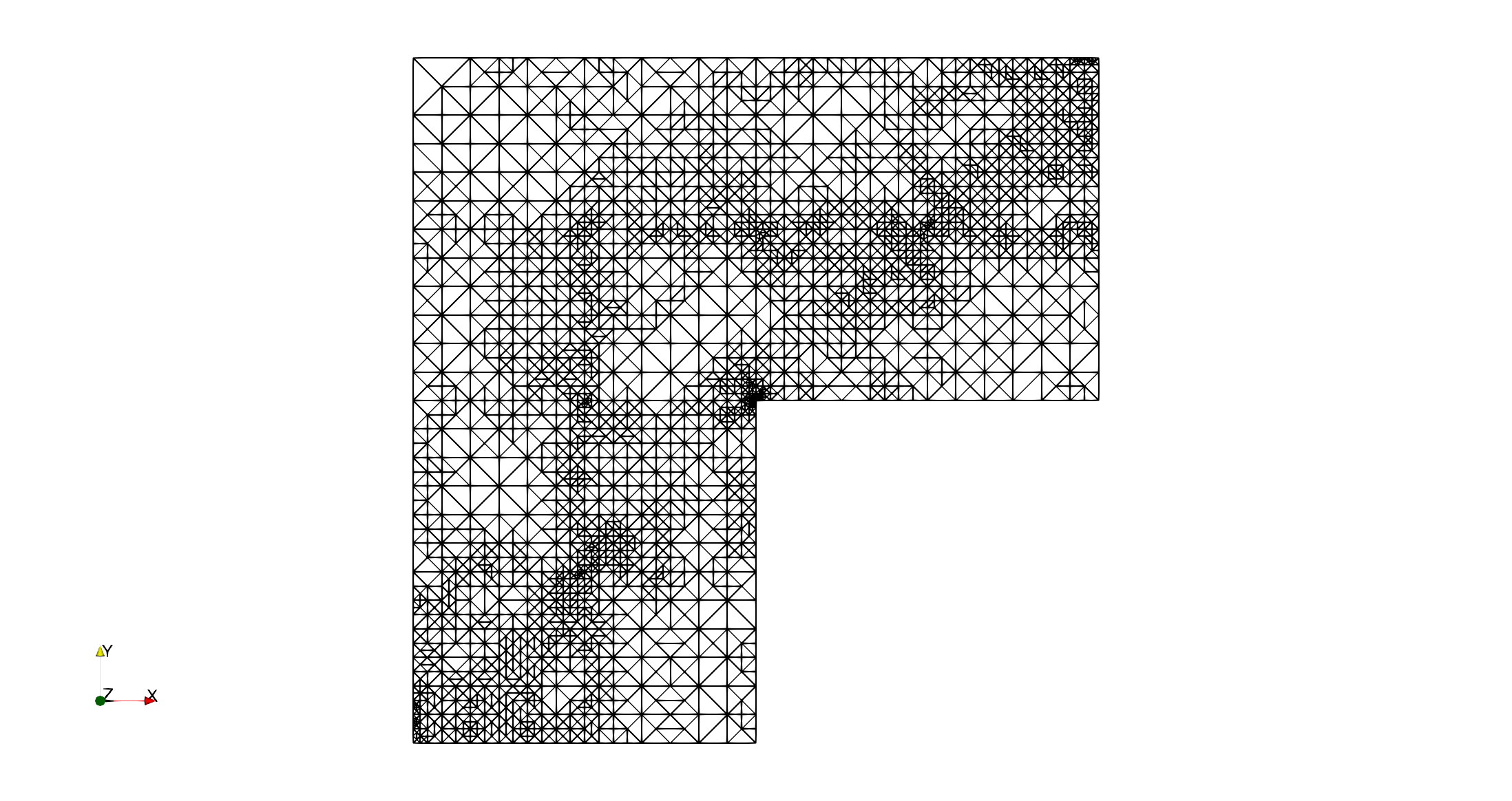}
	\end{minipage}
	\begin{minipage}{0.32\linewidth}\centering
		{\footnotesize $\lambda_{h,1}, \texttt{dof}=35289$ iter=10}\\
		\includegraphics[scale=0.1,trim=20.2cm 0cm 20.2cm 2cm,clip]{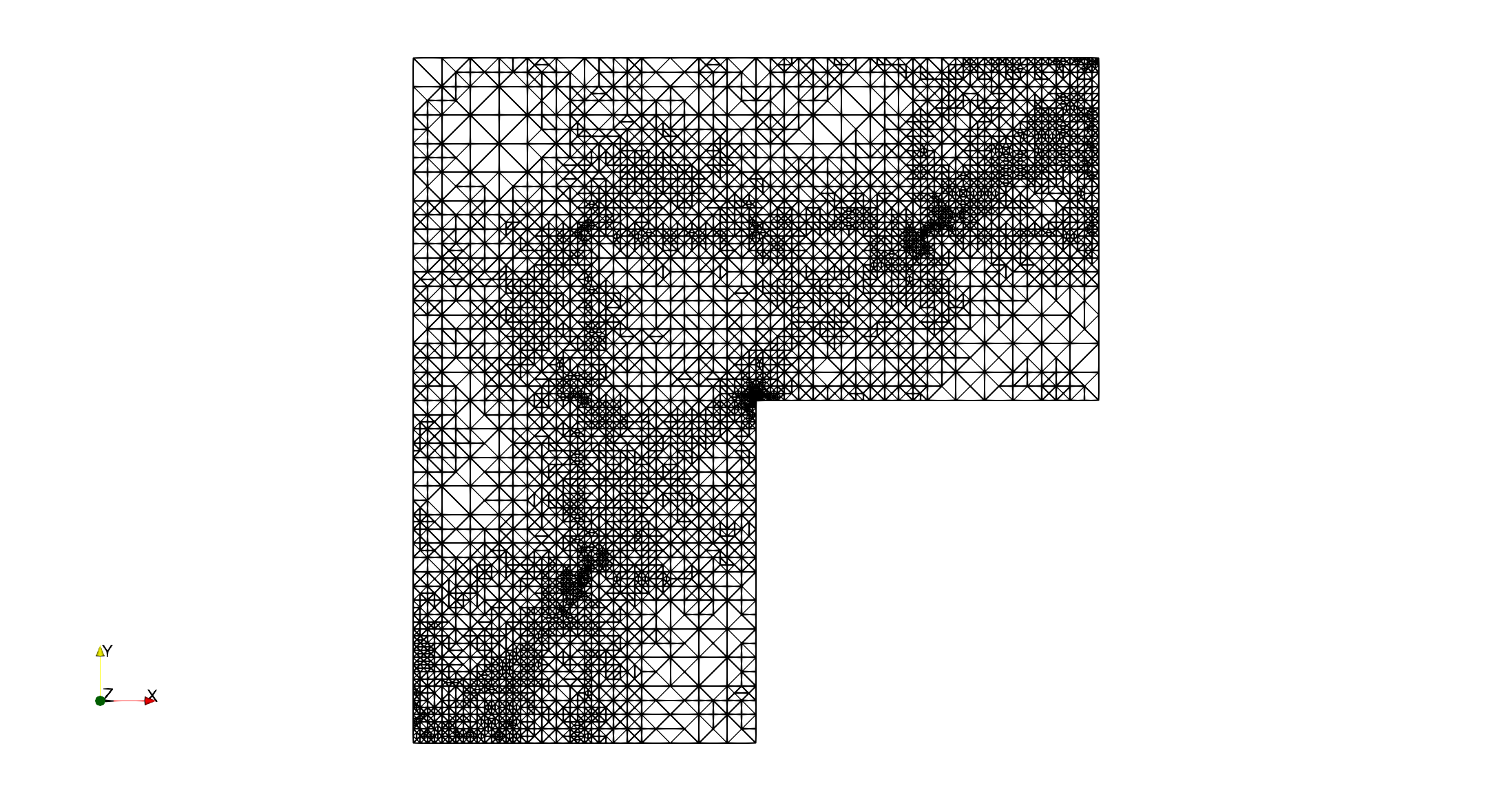}
	\end{minipage}
	\begin{minipage}{0.32\linewidth}\centering
		{\footnotesize $\lambda_{h,1}, \texttt{dof}=67391$ iter=12}\\
		\includegraphics[scale=0.1,trim=20.2cm 0cm 20.2cm 2cm,clip]{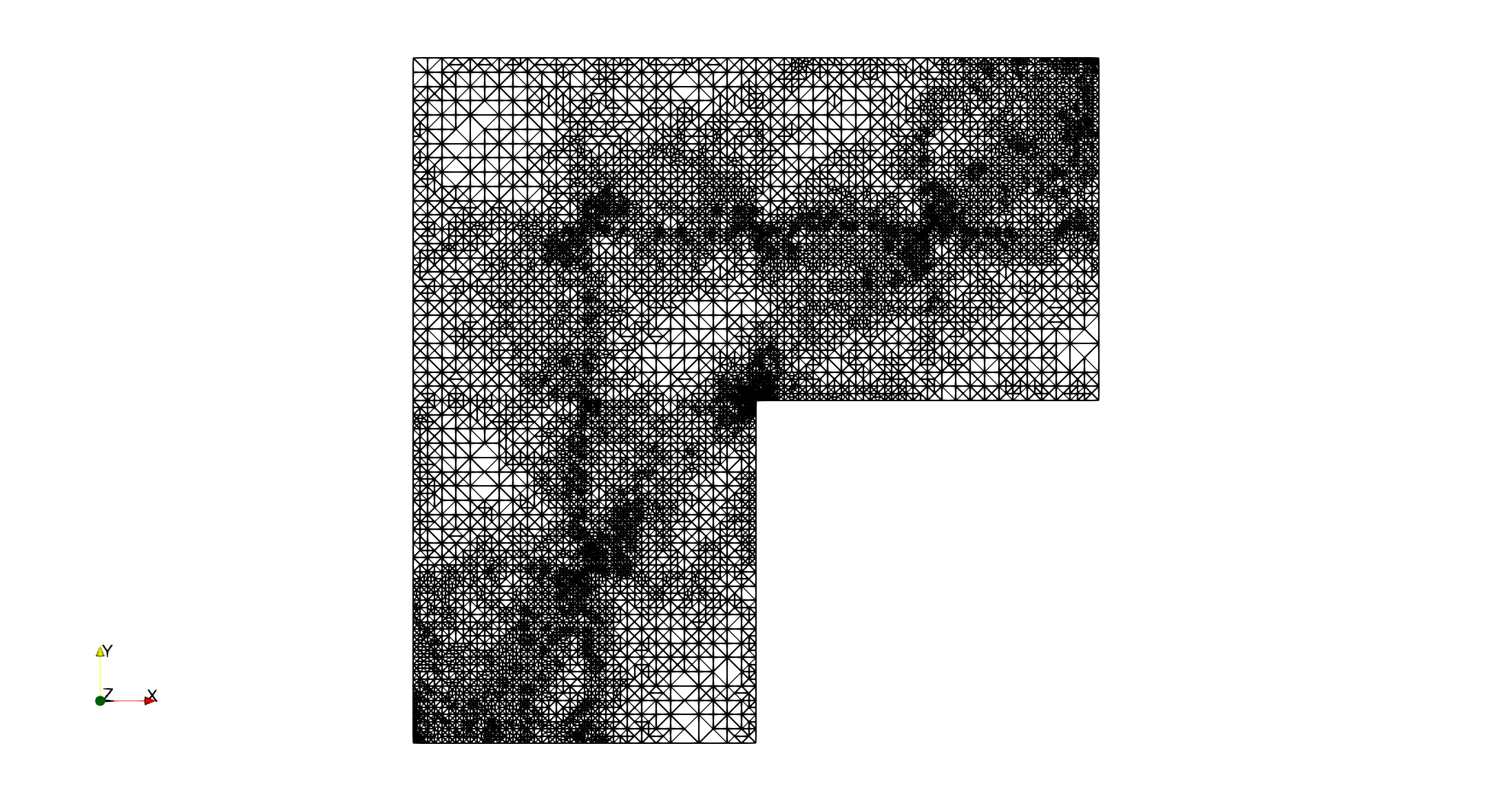}
	\end{minipage}\\
	\begin{minipage}{0.32\linewidth}\centering
		{\footnotesize $\lambda_{h,4}, \texttt{dof}=9937$ iter=8}\\
		\includegraphics[scale=0.1,trim=20.2cm 0cm 20.2cm 2cm,clip]{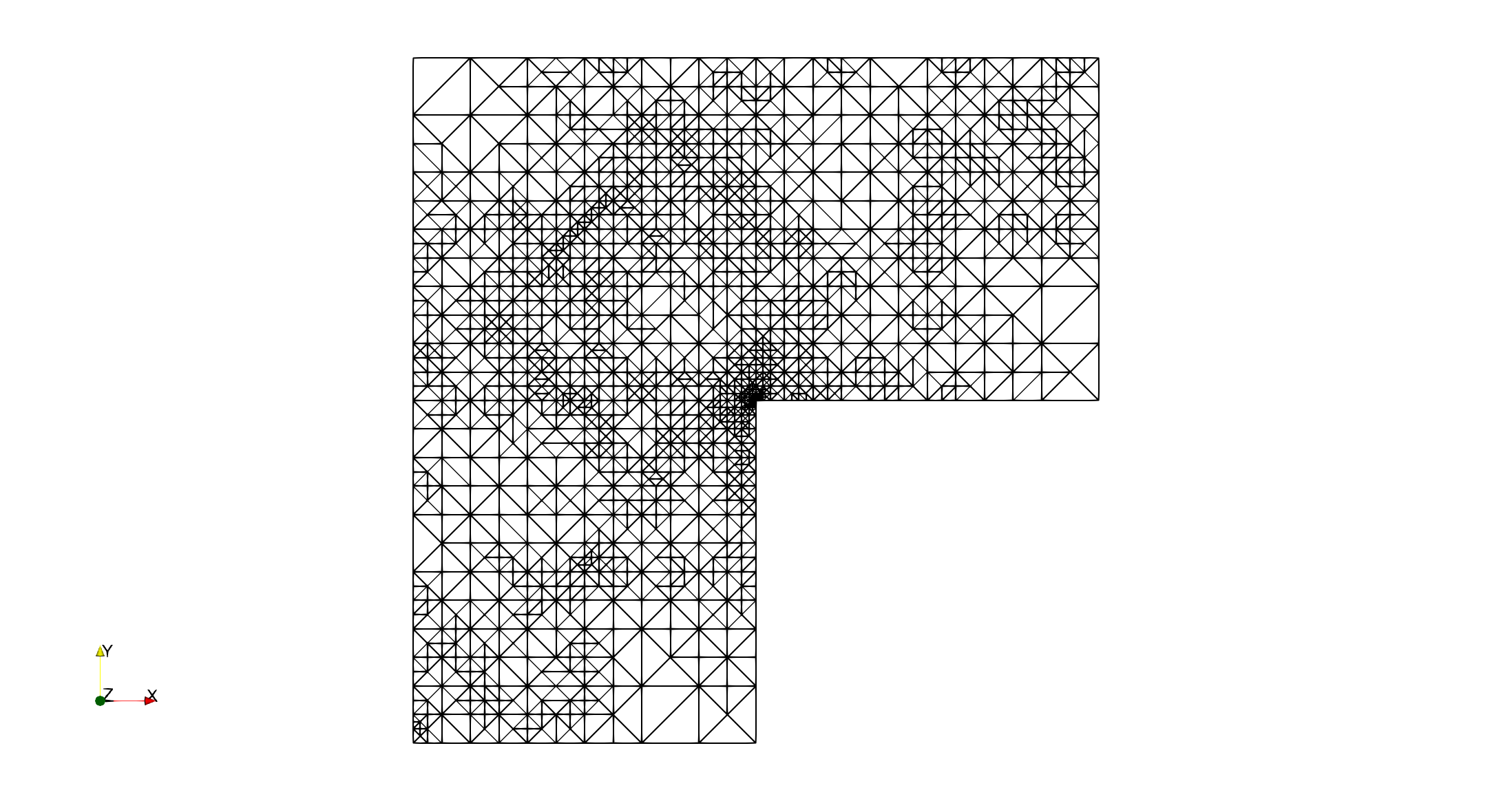}
	\end{minipage}
	\begin{minipage}{0.32\linewidth}\centering
		{\footnotesize $\lambda_{h,4}, \texttt{dof}=21065$ iter=10}\\
		\includegraphics[scale=0.1,trim=20.2cm 0cm 20.2cm 2cm,clip]{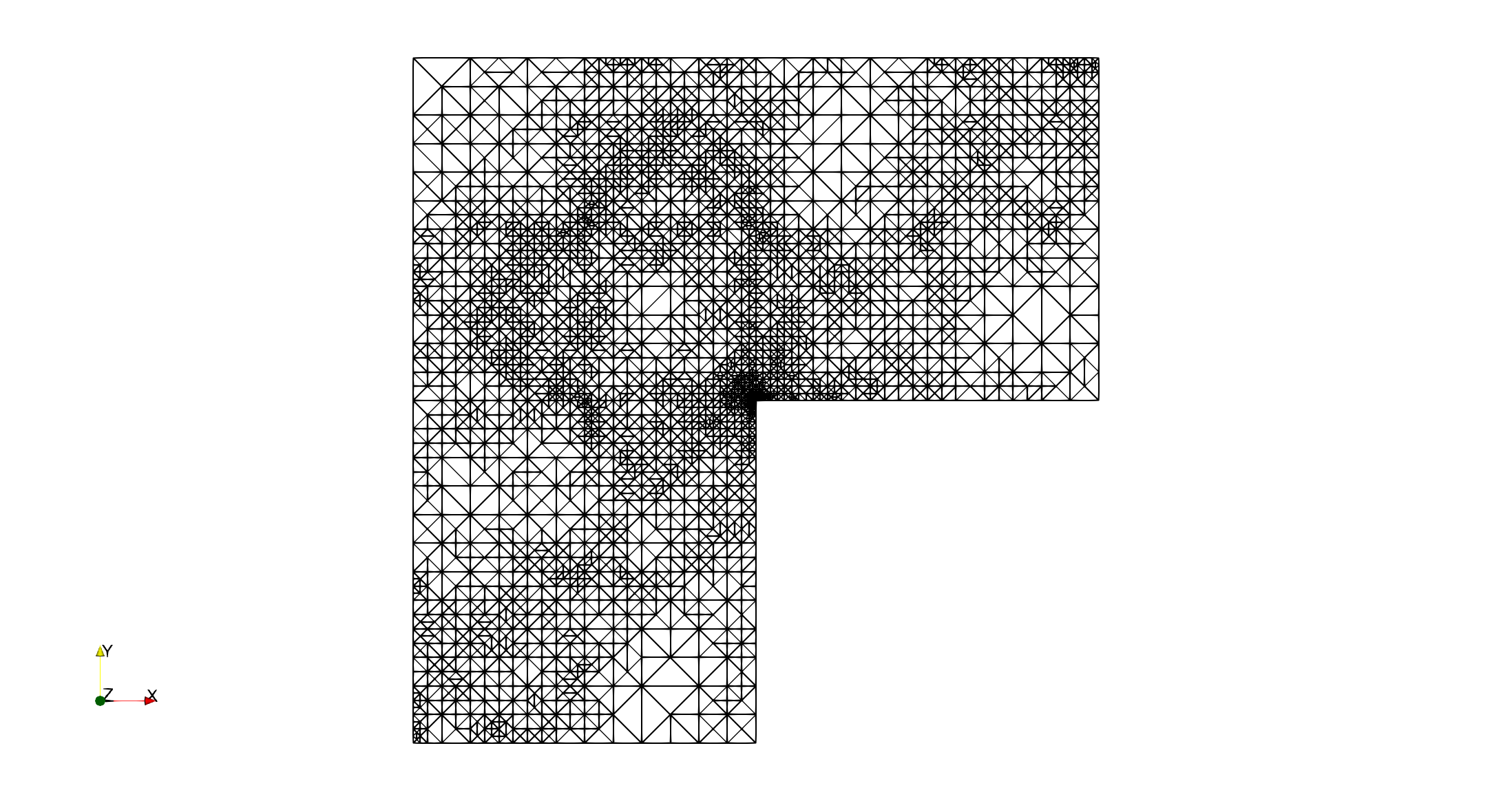}
	\end{minipage}
	\begin{minipage}{0.32\linewidth}\centering
		{\footnotesize $\lambda_{h,4}, \texttt{dof}=44331$ iter=12}\\
		\includegraphics[scale=0.1,trim=20.2cm 0cm 20.2cm 2cm,clip]{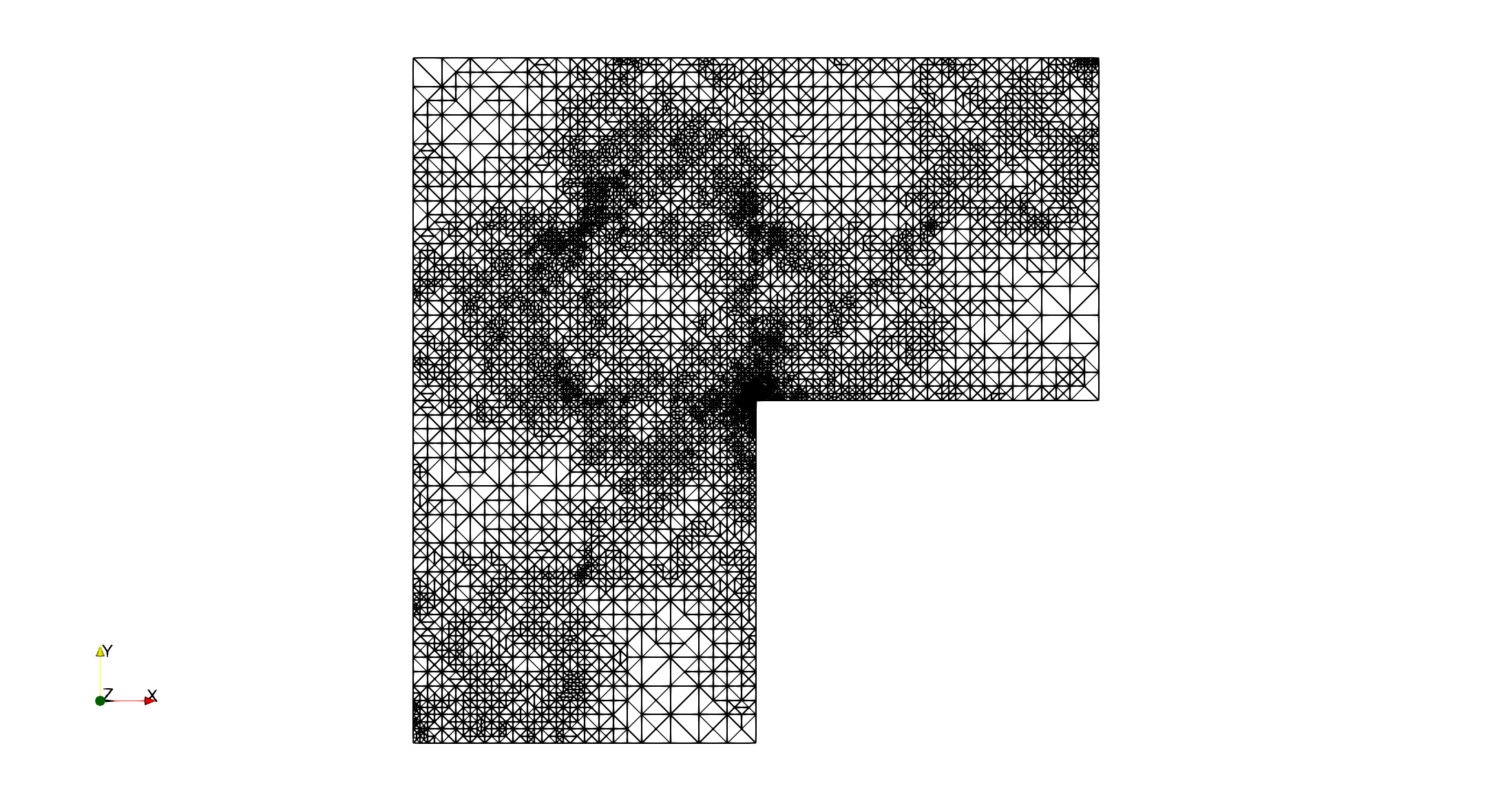}
	\end{minipage}\\
	\caption{Test \ref{subsec:lshape2D}. Intermediate meshes for the first and fourth computed eigenvalue with mini elements on the Lshaped geometry with mixed boundary conditions and $\mathbb{K}^{-1}:=10^{3}\mathbb{I}$ on $\Omega_D$.}
	\label{fig:lshape2d-meshes}
\end{figure}

\begin{figure}[!hbt]\centering
	\begin{minipage}{0.32\linewidth}\centering
		{\footnotesize $\lambda_{h,1}, \texttt{dof}=8952,$ iter=8}\\
		\includegraphics[scale=0.1,trim=20.2cm 0cm 20.2cm 2cm,clip]{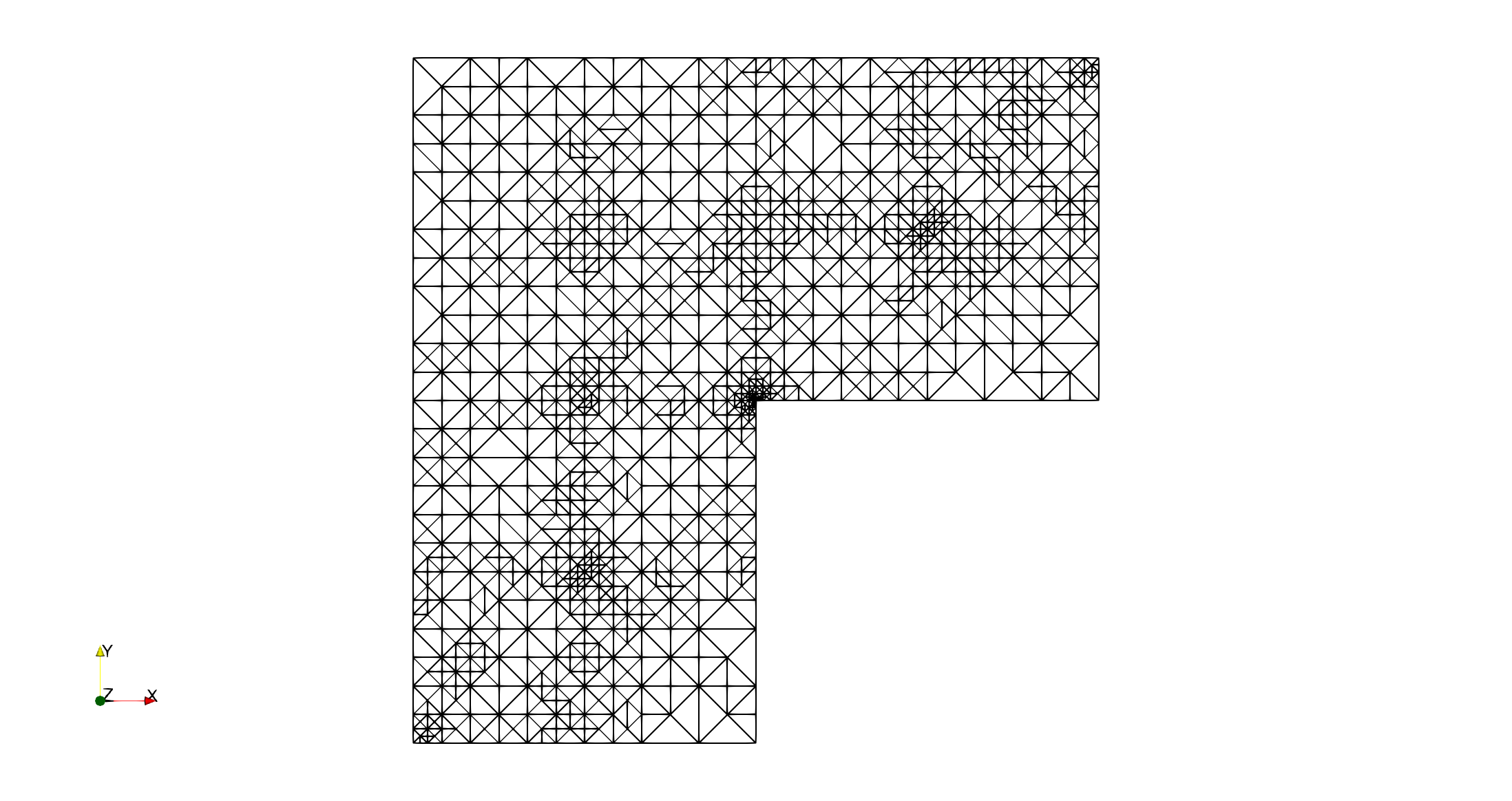}
	\end{minipage}
	\begin{minipage}{0.32\linewidth}\centering
		{\footnotesize $\lambda_{h,1}, \texttt{dof}=14930$ iter=10}\\
		\includegraphics[scale=0.1,trim=20.2cm 0cm 20.2cm 2cm,clip]{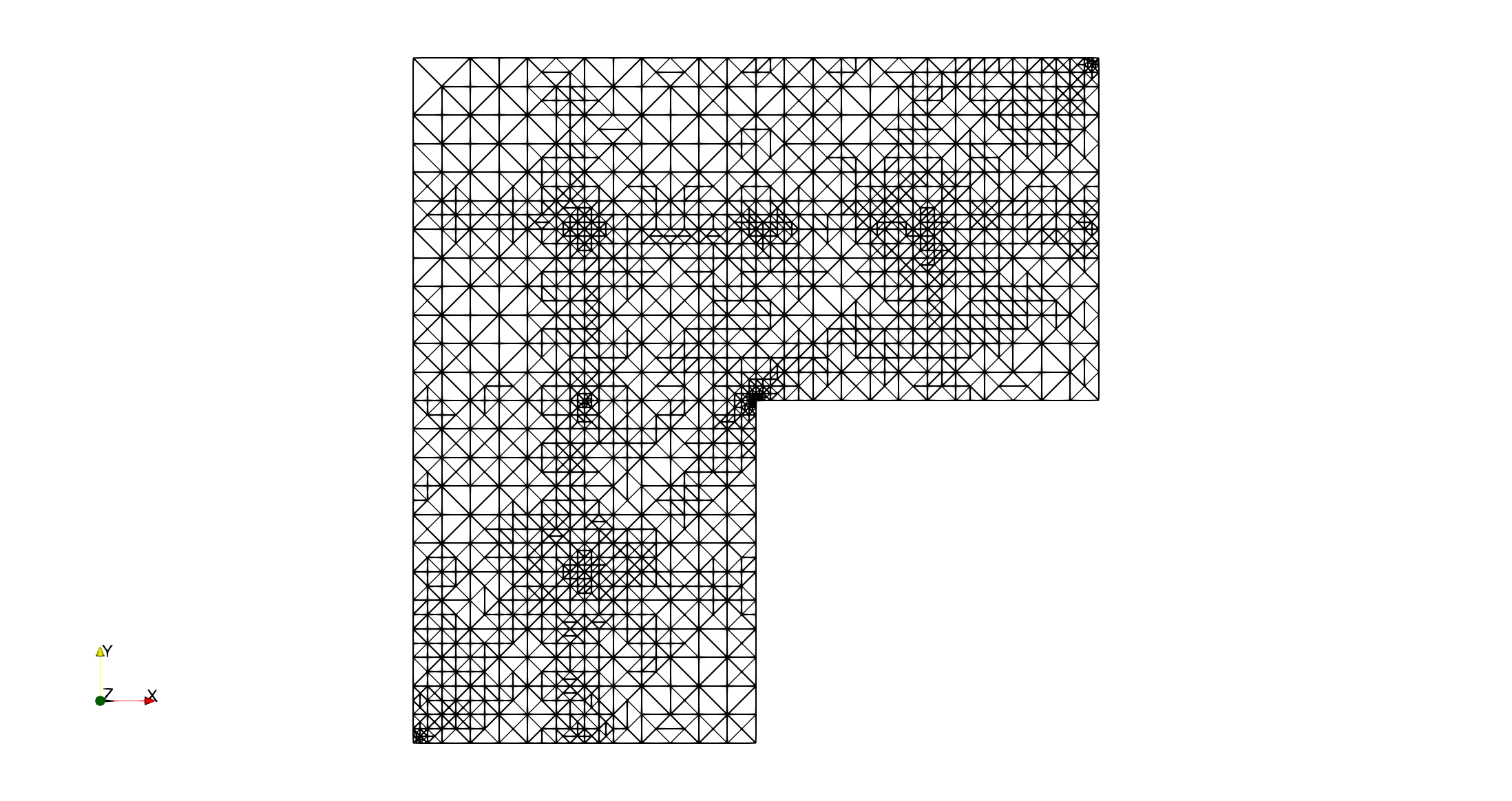}
	\end{minipage}
	\begin{minipage}{0.32\linewidth}\centering
		{\footnotesize $\lambda_{h,1}, \texttt{dof}=24876$ iter=12}\\
		\includegraphics[scale=0.1,trim=20.2cm 0cm 20.2cm 2cm,clip]{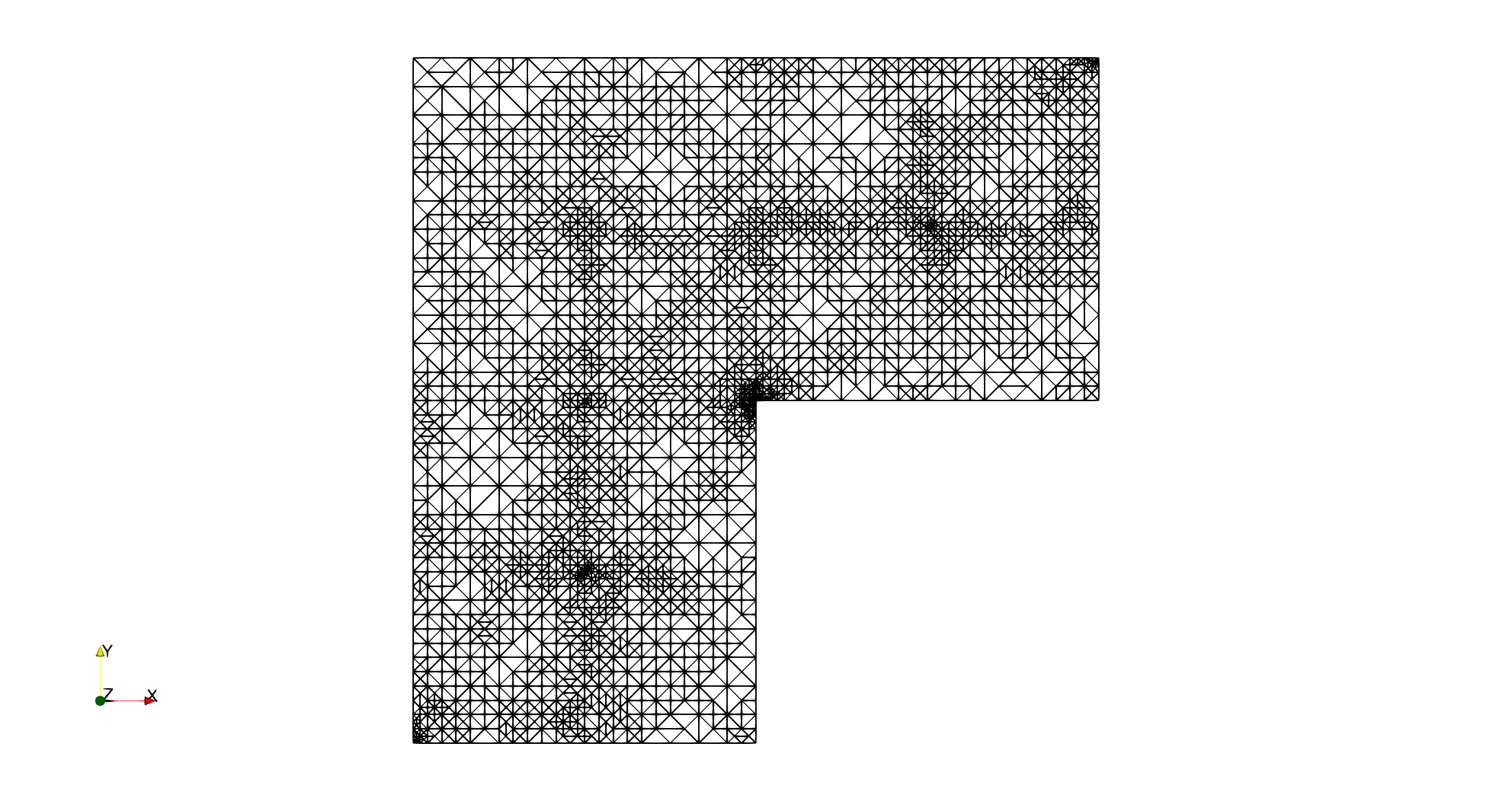}
	\end{minipage}\\
	\begin{minipage}{0.32\linewidth}\centering
		{\footnotesize $\lambda_{h,4}, \texttt{dof}=5544$ iter=8}\\
		\includegraphics[scale=0.1,trim=20.2cm 0cm 20.2cm 2cm,clip]{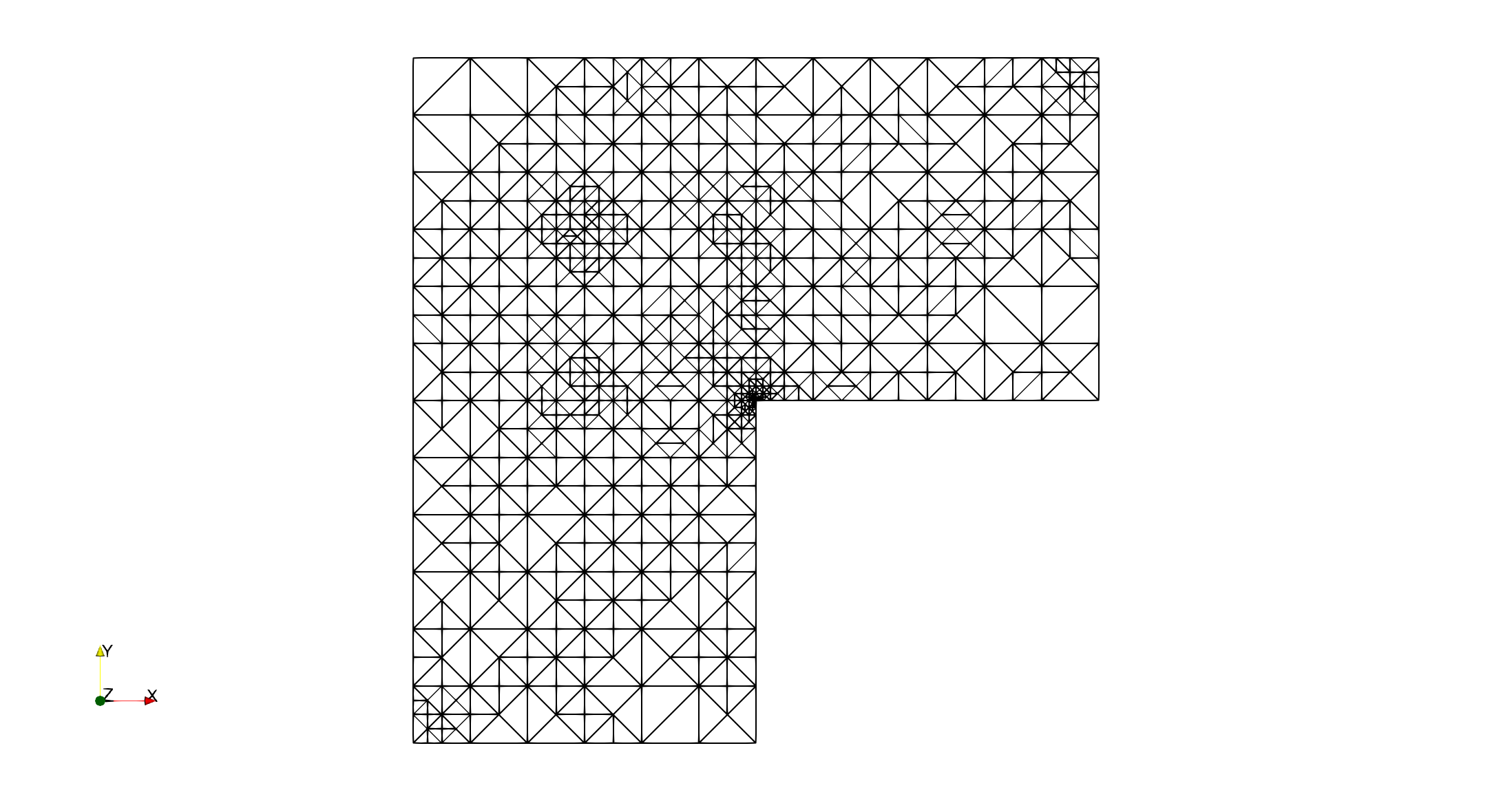}
	\end{minipage}
	\begin{minipage}{0.32\linewidth}\centering
		{\footnotesize $\lambda_{h,4}, \texttt{dof}=9585$ iter=10}\\
		\includegraphics[scale=0.1,trim=20.2cm 0cm 20.2cm 2cm,clip]{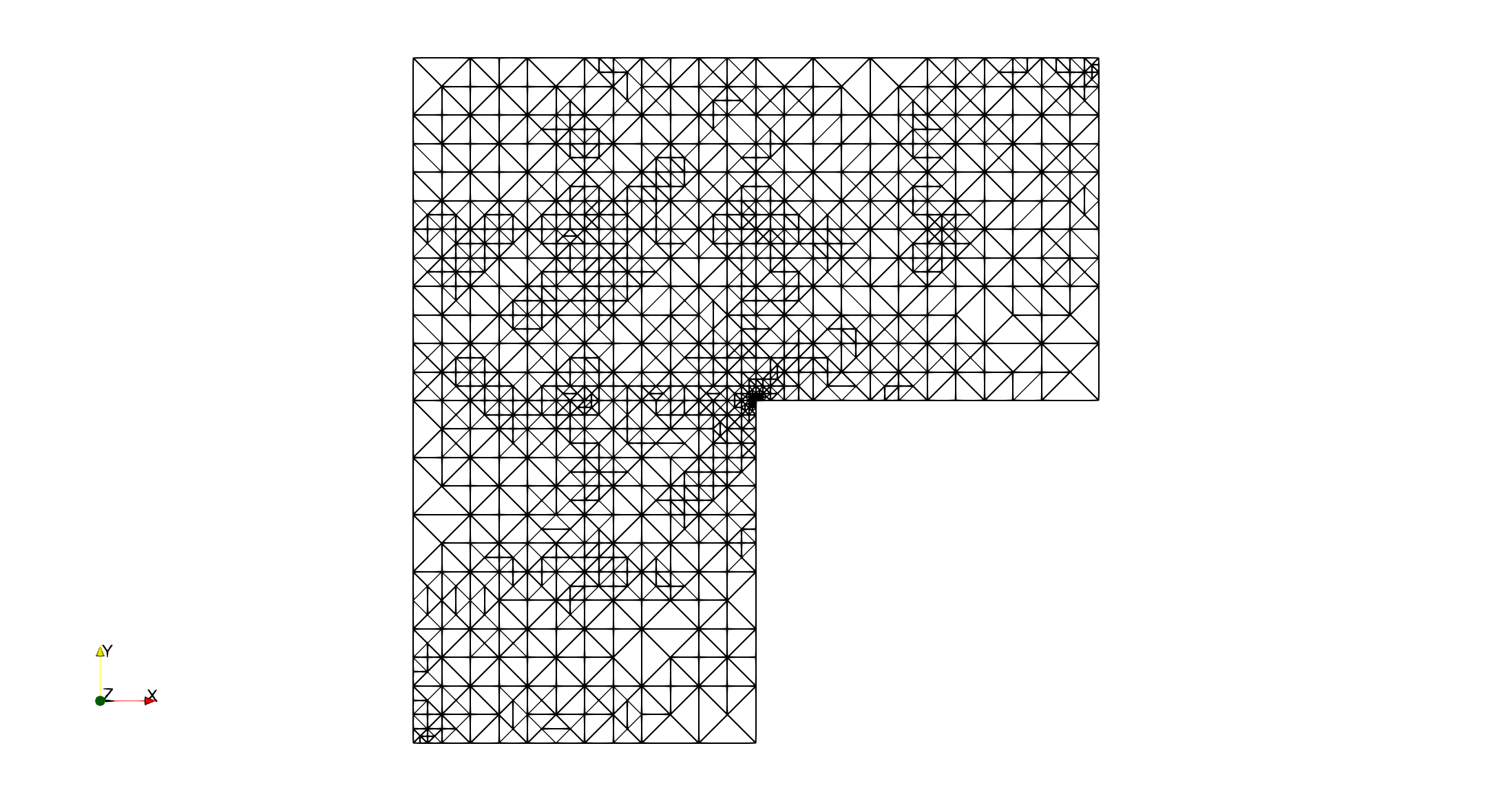}
	\end{minipage}
	\begin{minipage}{0.32\linewidth}\centering
		{\footnotesize $\lambda_{h,4}, \texttt{dof}=16036$ iter=12}\\
		\includegraphics[scale=0.1,trim=20.2cm 0cm 20.2cm 2cm,clip]{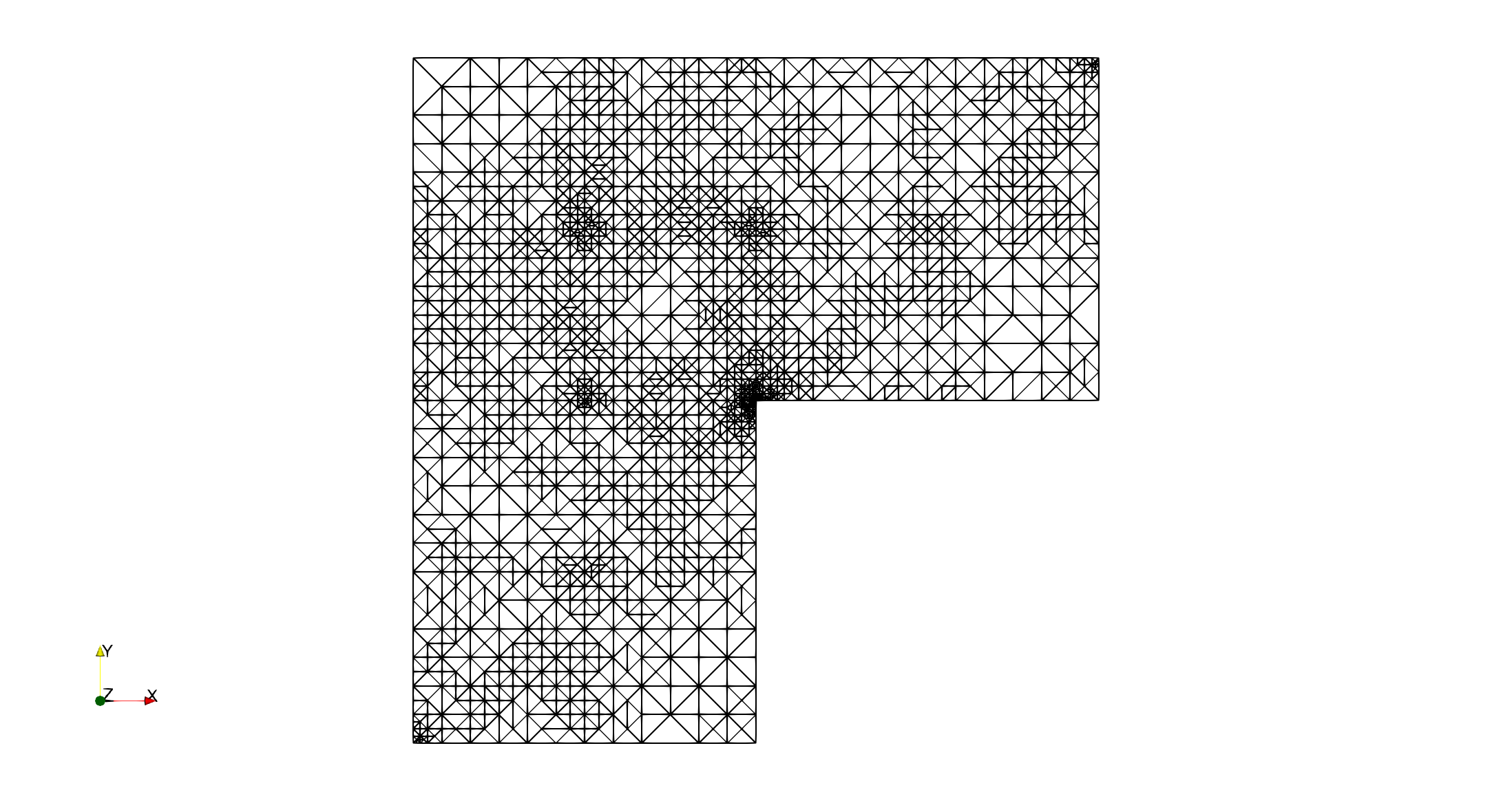}
	\end{minipage}\\
	\caption{Test \ref{subsec:lshape2D}. Intermediate meshes for the first and fourth computed eigenvalue with Taylor-Hood elements on the Lshaped geometry with mixed boundary conditions and $\mathbb{K}^{-1}:=10^{3}\mathbb{I}$ on $\Omega_D$.}
	\label{fig:lshape2d-meshes_TH}
\end{figure}

\subsection{The 3D Lshaped domain}\label{subsec:3D-Lshape}
For this final experiment test we have considered a 3D geometry with heterogeneous media  with singularities. The domain is a 3D Lshaped domain defined by
$$
\Omega:=(-1,1)\times(-1,1)\times(-1,0)\backslash\bigg((-1,0)\times(-1,0)\times(-1,0) \bigg).
$$

The domain contains a dihedral singularity along the line $(0,0,z)$, for $z\in[-1,0]$. The subdomain division as well as the initial mesh for uniform and adaptive refinement is depicted in Figure \ref{fig:3D-lshape-sample}. For simplicity, homogeneous boundary conditions are considered on the whole boundary, thus $\Gamma_2=\emptyset$. Because of the nature of the geometry, we focus our attention on the behavior of the first eigenvalue.

Convergence results for almost permeable ($\mathbb{K}^{-1}\vert_{\Omega_D}=10^{-8}\mathbb{I}$) up to mild porous region ($\mathbb{K}^{-1}\vert_{\Omega_D}=10^{2}\mathbb{I}$) are presented in Table \ref{table:lshape3d-convergence}.  It is observed that there is a suboptimal convergence rate ranging between $\mathcal{O}(h^{1.4})$ and $\mathcal{O}(h^{1.6})$ for the first eigenvalue, independent of the element used. This behavior is expected because of the dihedral singularity. A visual description of the eigenfunctions is depicted in Figure \ref{fig:3D-lshape-uh-ph}. Here we note that for $\mathbb{K}^{-1}\vert_{\Omega_D}=10^{2}\mathbb{I}$ the eigenmode start to get confined in the cube-shaped Stokes region of the domain, however, the pressure gradients are still high across the singularity. Also, we note that the pressure gradients are more pronounced when we increase the permeability of $\Omega_D$, which is expected since we are approaching to the first eigenvalue in a Stokes eigenvalue problem with similar geometry (see for example \cite[Section 5]{lepe2023posteriori}).


When studying the a posteriori estimator on the three singular modes from Table \ref{table:lshape3d-convergence}, we observed that the estimator detects the geometrical singularity, which also contains high pressure gradients, and refines accordingly. With mini elements, an optimal rate $\mathcal{O}(h^2)\approx\mathcal{O}(\texttt{dof}^{-0.66})$ is attained, while $\mathcal{O}^(h^4)\approx\mathcal{O}(\texttt{dof}^{-1.33}$) is obtained with Taylor-Hood elements.  By observing Figures \ref{fig:3D-lshape-meshes}--\ref{fig:3D-lshape-meshes_TH} we note that for $\mathbb{K}^{-1}\vert_{\Omega_D}=10^{2}\mathbb{I}$ there are less refinements across the domain and more on the singularity. Subtle differences between choosing $10^{-8}\mathbb{I}$ and $10^{1}\mathbb{I}$ for $\mathbb{K}^{-1}$ are observed since the estimator still gets most of the error values from the singularity. Also, similar to the 2D case, Taylor-Hood adaptive refinements mark less elements outside the singularity. The associated error curves and efficiency for the aforementioned results are presented on Figure \ref{fig:error-eff-lshape3d}. It is clear that optimal rates are attained for the two families of elements and the three cases under study. The efficiency of the estimator is also evidenced, with more oscillations when Taylor-Hood elements are used.
\begin{figure}[!hbt]\centering
	\includegraphics[scale=0.17]{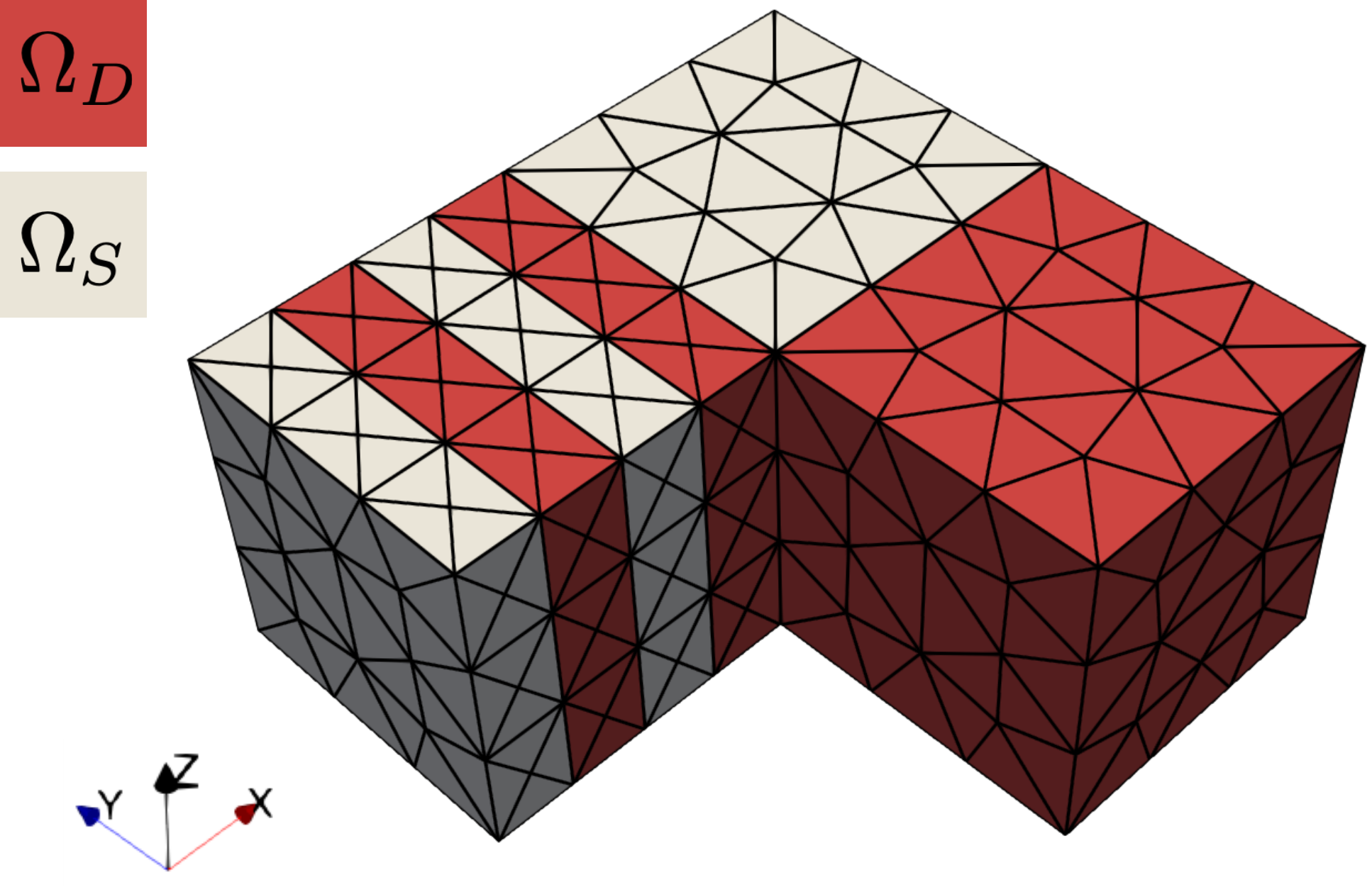}
	\caption{Test \ref{subsec:3D-Lshape}. Sample meshed geometry of the 3D Lshaped domain with heterogeneous porosity. The orientation axes for the current geometry visualization are included.}
	\label{fig:3D-lshape-sample}
\end{figure}

\begin{table}[hbt!]
	\centering 
	{\footnotesize
		\caption{Test \ref{subsec:3D-Lshape}. Convergence history for the first computed eigenvalue using uniform refinements with mini- and Taylor-Hood elements when different permeability parameters on $\Omega_D$ are considered. The convergence order is given for $\mathcal{O}(h^r)\approx\mathcal{O}(\texttt{dof}^{-r/3})$.}
		\label{table:lshape3d-convergence}
		\begin{tabular}{|c|c c c c |c| c|}
			\hline
			\hline
						\multicolumn{7}{|c|}{mini elements}\\
			\hline
			$\mathbb{K}^{-1}$&$\texttt{dof}=5442$             &  $\texttt{dof}=40956$         &   $\texttt{dof}=318900$         & $\texttt{dof}=2519268$ & Order & $\lambda_{1}$\\
			\hline
			$10^{-8}\mathbb{I}$&52.8392  &    48.7989  &    43.8184  &    42.4277  & 1.48 &    41.8418   \\
			 $10^{1}\mathbb{I}$&56.8641  &    50.6683  &    45.7282  &    44.3610  & 1.58 &    43.8090  \\
			$10^{2}\mathbb{I}$&73.1243  &    54.1100  &    60.0979  &    52.4376  & 1.42 &    51.8884  \\
			\hline
			\multicolumn{7}{|c|}{Taylor-Hood elements}\\
			\hline
			$\mathbb{K}^{-1}$&$\texttt{dof}=7215$             &  $\texttt{dof}=50514$         &   $\texttt{dof}=377976$         & $\texttt{dof}=2924556$ & Order & $\lambda_{1}$\\
			\hline
			$10^{-8}\mathbb{I}$&440.8226  &    41.7259  &    41.7618  &    41.8038  & 1.53 &    41.8418  \\
			$10^{1}\mathbb{I}$&43.0893  &    43.7377  &    43.7446  &    43.7774  & 1.41 &    43.8090  \\
			$10^{2}\mathbb{I}$&52.8033  &    51.8876  &    52.0338  &    51.8825  & 1.46 &    51.8884  \\
			
			\hline
			\hline             
	\end{tabular}}
\end{table}

\begin{figure}[!hbt]\centering
	\begin{minipage}{0.3275\linewidth}\centering
			{\footnotesize $\bu_{h,1},\mathbb{K}^{-1}\vert_{\Omega_D}=10^{-8}\mathbb{I}$}\\
		\includegraphics[scale=0.108,trim=19cm 9cm 19cm 4cm,clip]{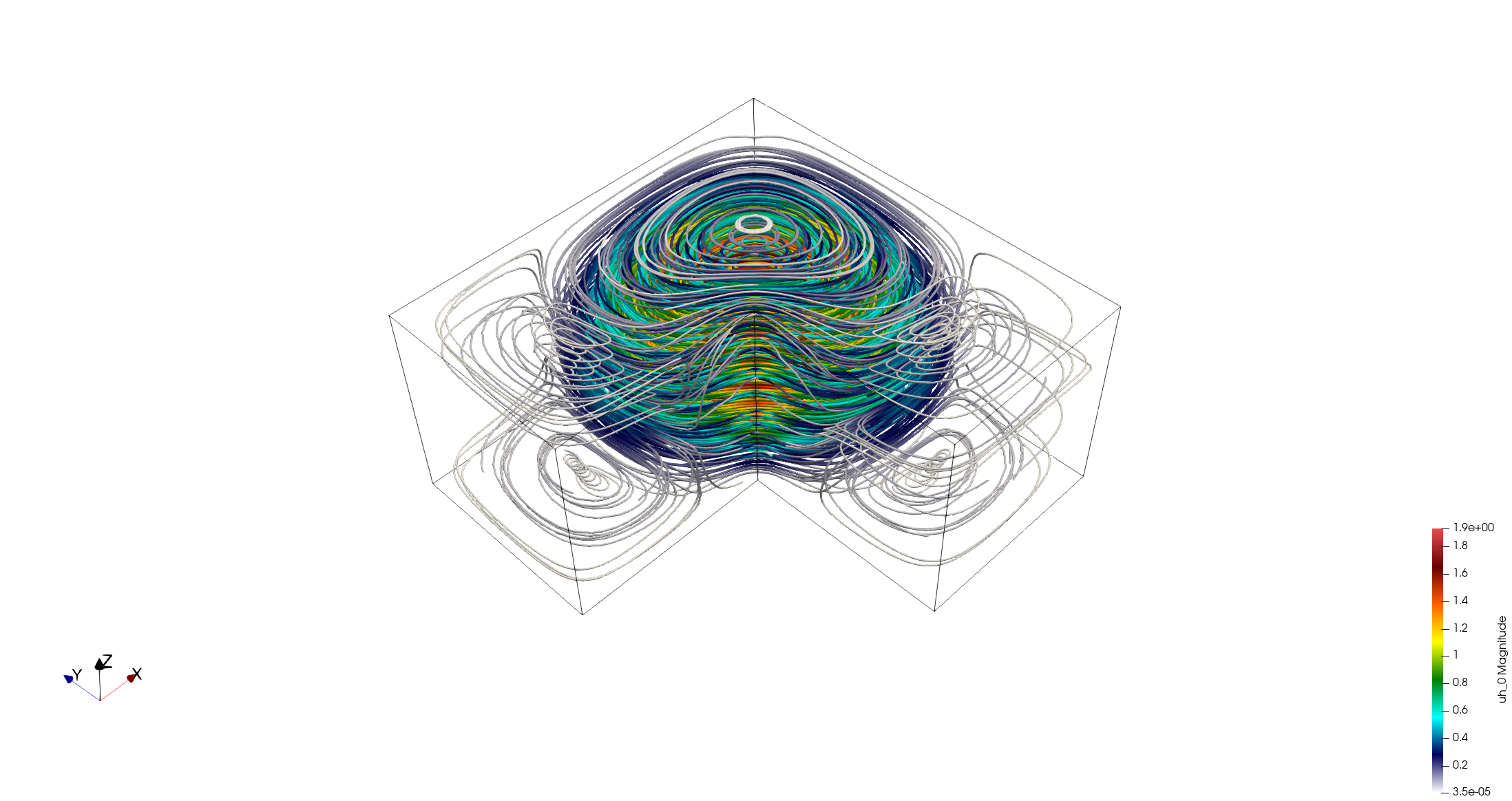}
	\end{minipage}
	\begin{minipage}{0.3275\linewidth}\centering
			{\footnotesize $\bu_{h,1},\mathbb{K}^{-1}\vert_{\Omega_D}=10^{1}\mathbb{I}$}\\
		\includegraphics[scale=0.1082,trim=19cm 9cm 19cm 4cm,clip]{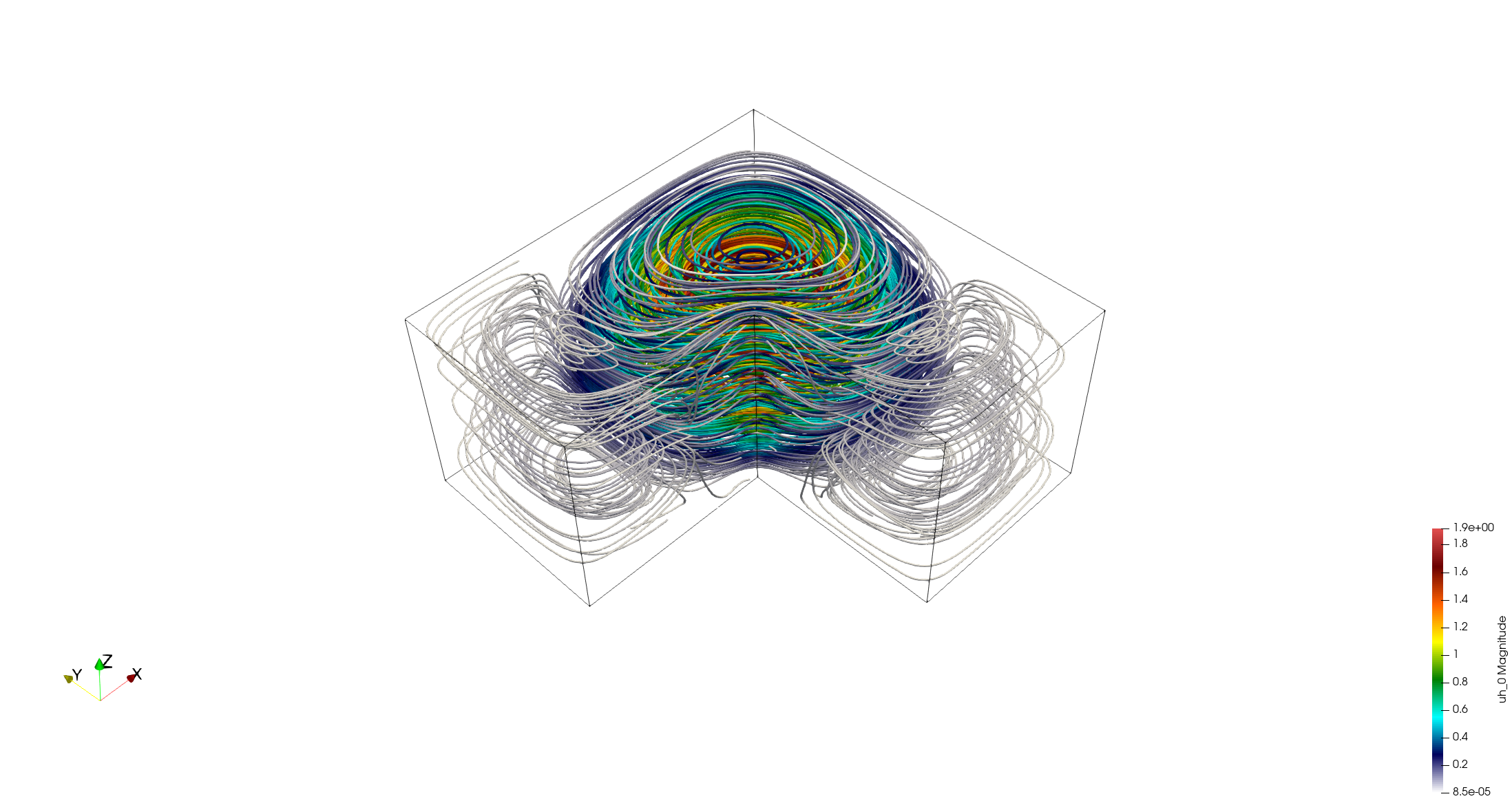}
	\end{minipage}
	\begin{minipage}{0.3275\linewidth}\centering
			{\footnotesize $\bu_{h,1},\mathbb{K}^{-1}\vert_{\Omega_D}=10^{2}\mathbb{I}$}\\
		\includegraphics[scale=0.108,trim=19cm 9cm 19cm 4cm,clip]{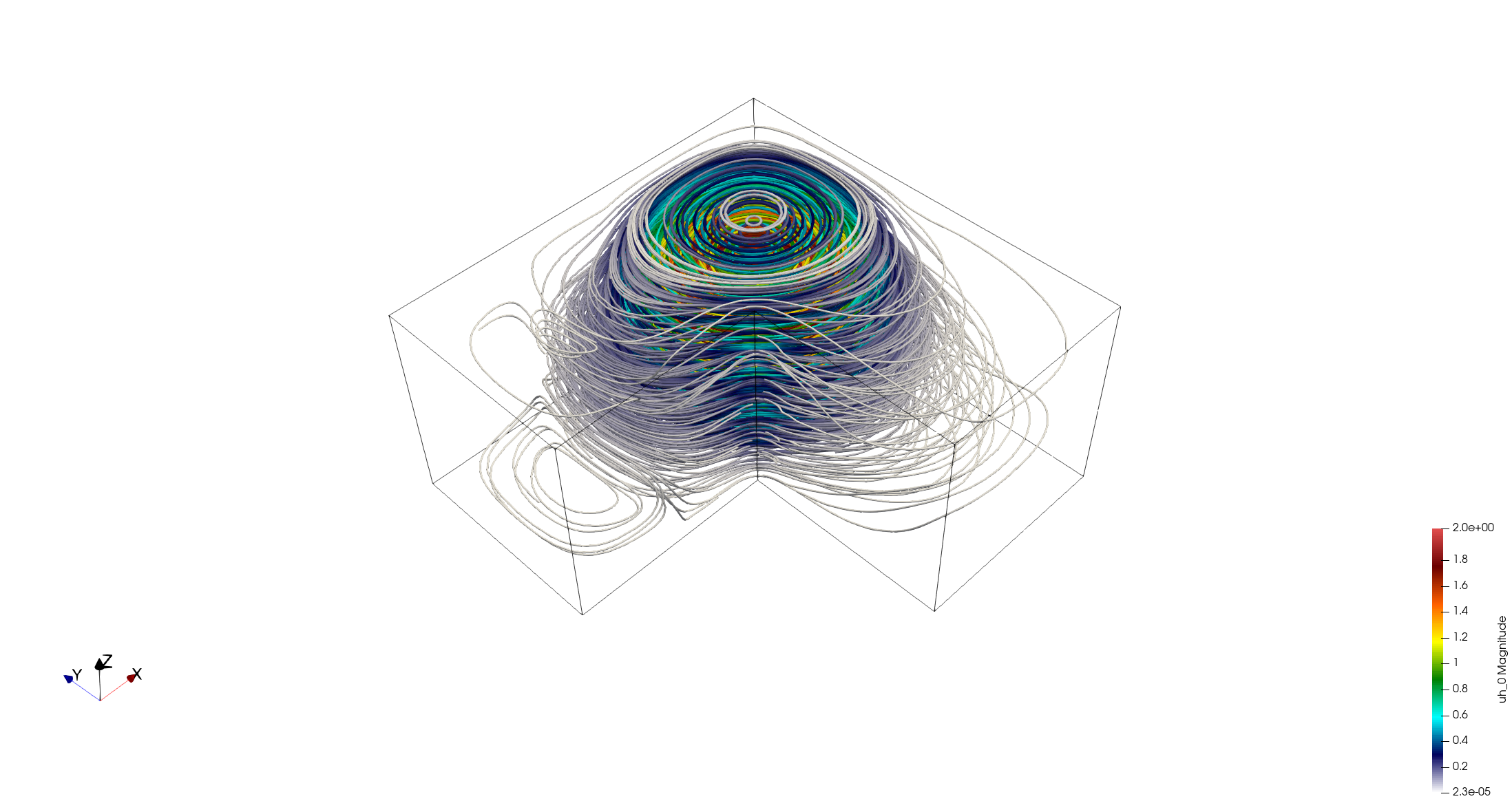}
	\end{minipage}\\
		\begin{minipage}{0.3275\linewidth}\centering
		{\footnotesize $p_{h,1},\mathbb{K}^{-1}\vert_{\Omega_D}=10^{-8}\mathbb{I}$}\\
		\includegraphics[scale=0.108,trim=19cm 9cm 19cm 4cm,clip]{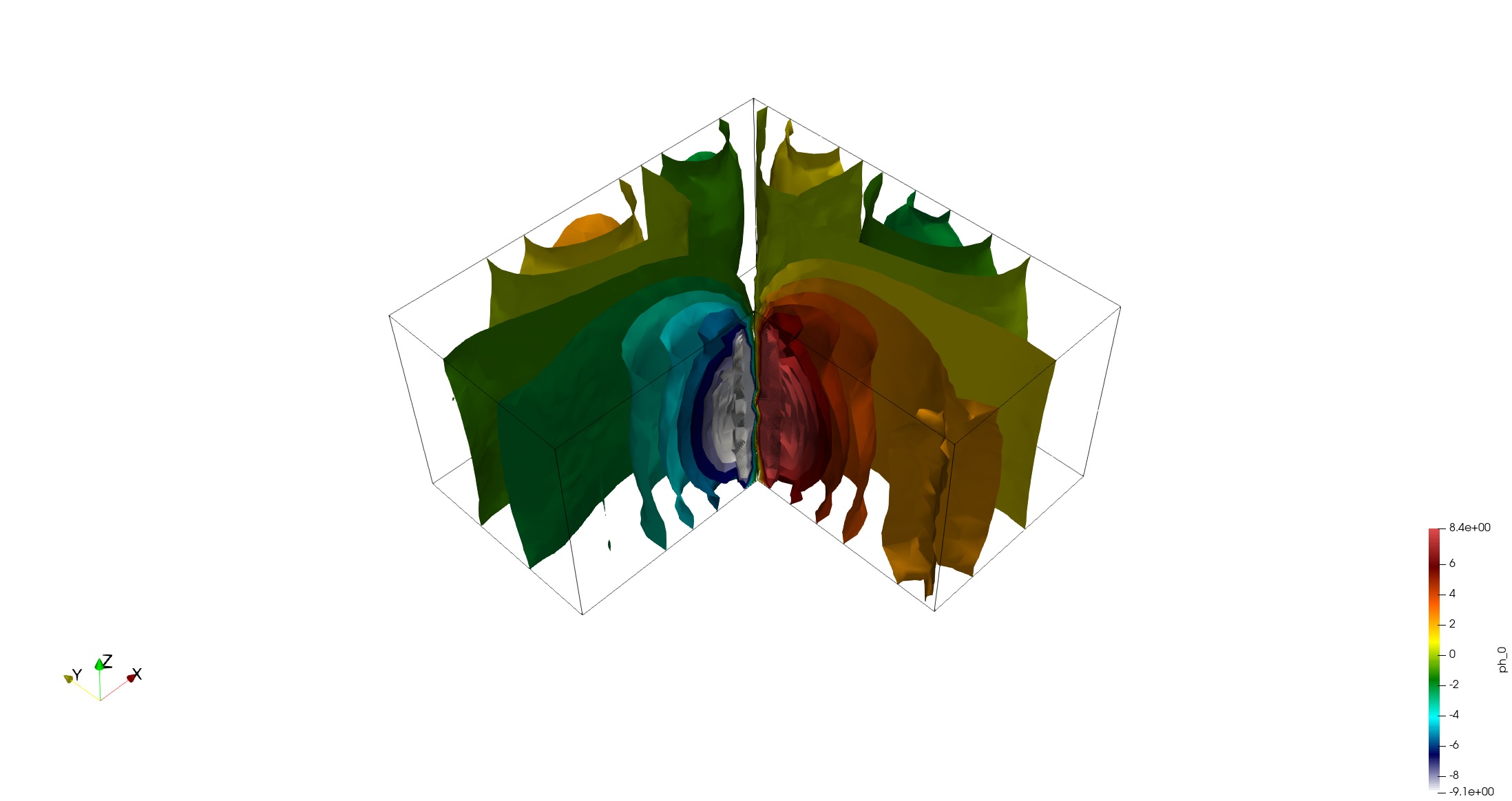}
	\end{minipage}
	\begin{minipage}{0.3275\linewidth}\centering
		{\footnotesize $p_{h,1},\mathbb{K}^{-1}\vert_{\Omega_D}=10^{1}\mathbb{I}$}\\
		\includegraphics[scale=0.108,trim=19cm 9cm 19cm 4cm,clip]{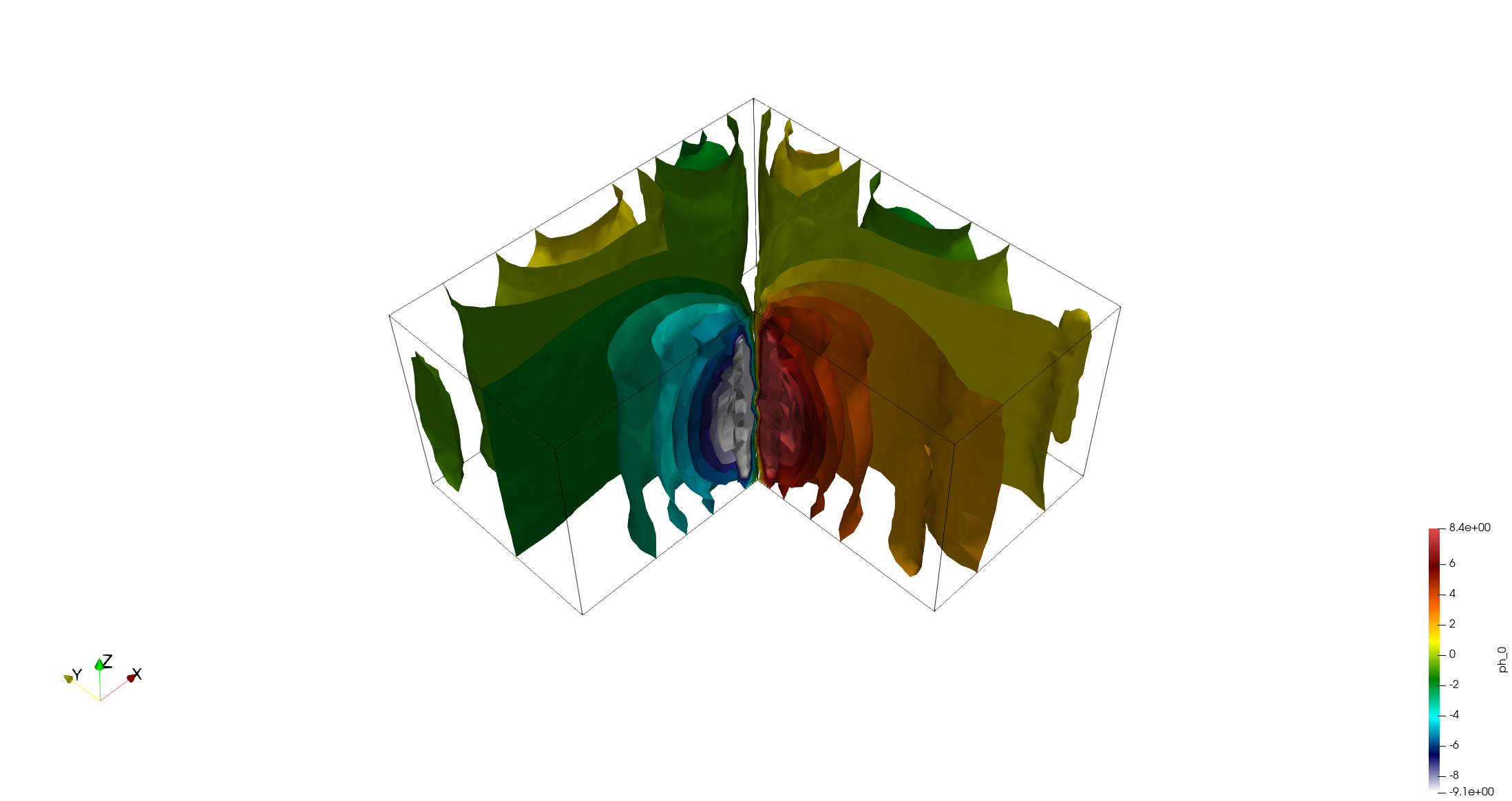}
	\end{minipage}
	\begin{minipage}{0.3275\linewidth}\centering
		{\footnotesize $p_{h,1},\mathbb{K}^{-1}\vert_{\Omega_D}=10^{2}\mathbb{I}$}\\
		\includegraphics[scale=0.108,trim=19cm 9cm 19cm 4cm,clip]{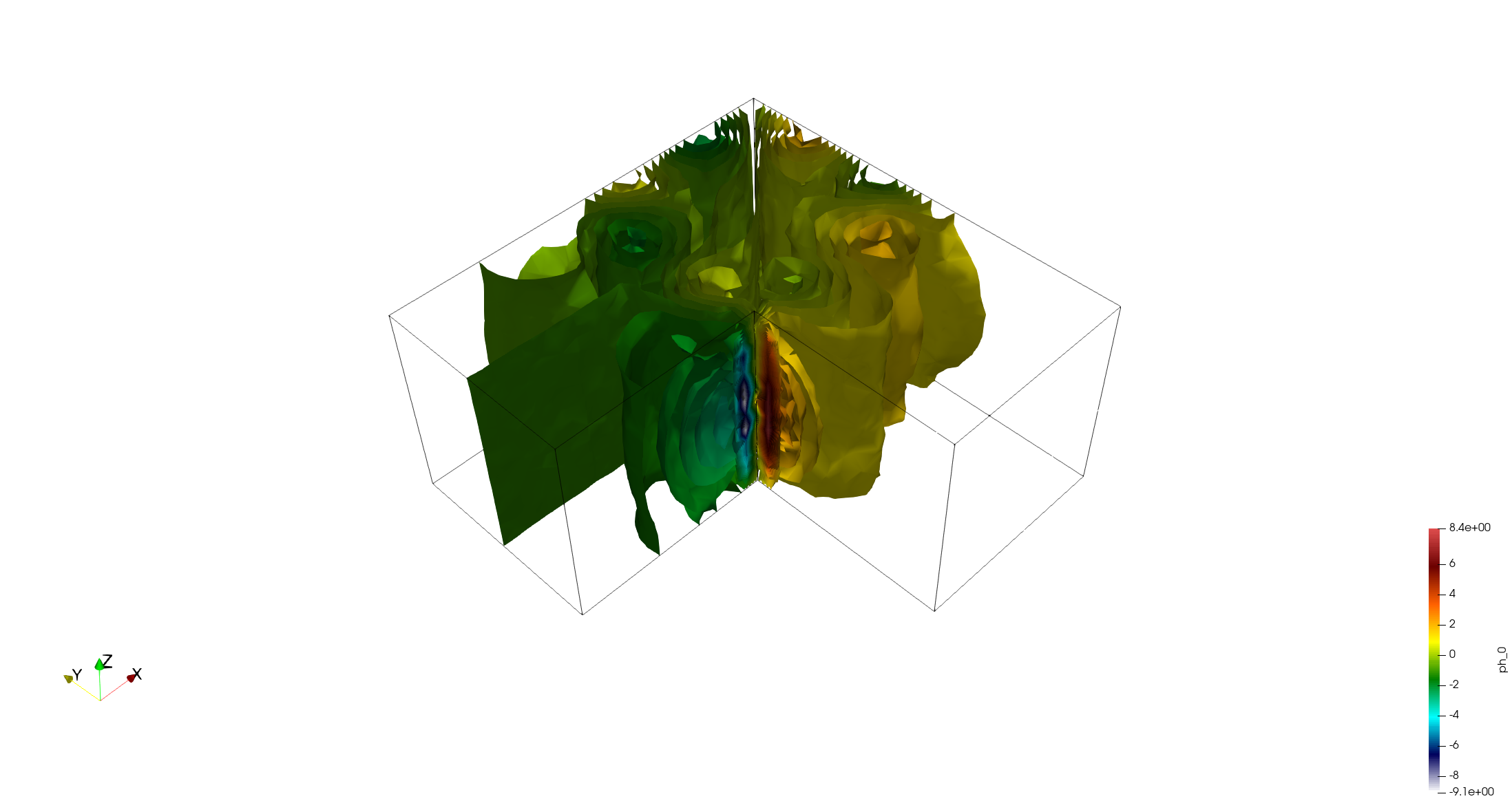}
	\end{minipage}
	\caption{Test \ref{subsec:3D-Lshape}. Velocity streamlines and pressure contour plots on the 3D Lshaped domain for the first eigenvalue on each permeability configuration.}
	\label{fig:3D-lshape-uh-ph}
\end{figure}

\begin{figure}[!hbt]\centering
	\begin{minipage}{0.3275\linewidth}\centering
		{\footnotesize $\mathbb{K}^{-1}\vert_{\Omega_D}=10^{-8}\mathbb{I}, \texttt{dof}=311044$}\\
		\includegraphics[scale=0.108,trim=19cm 9cm 19cm 4cm,clip]{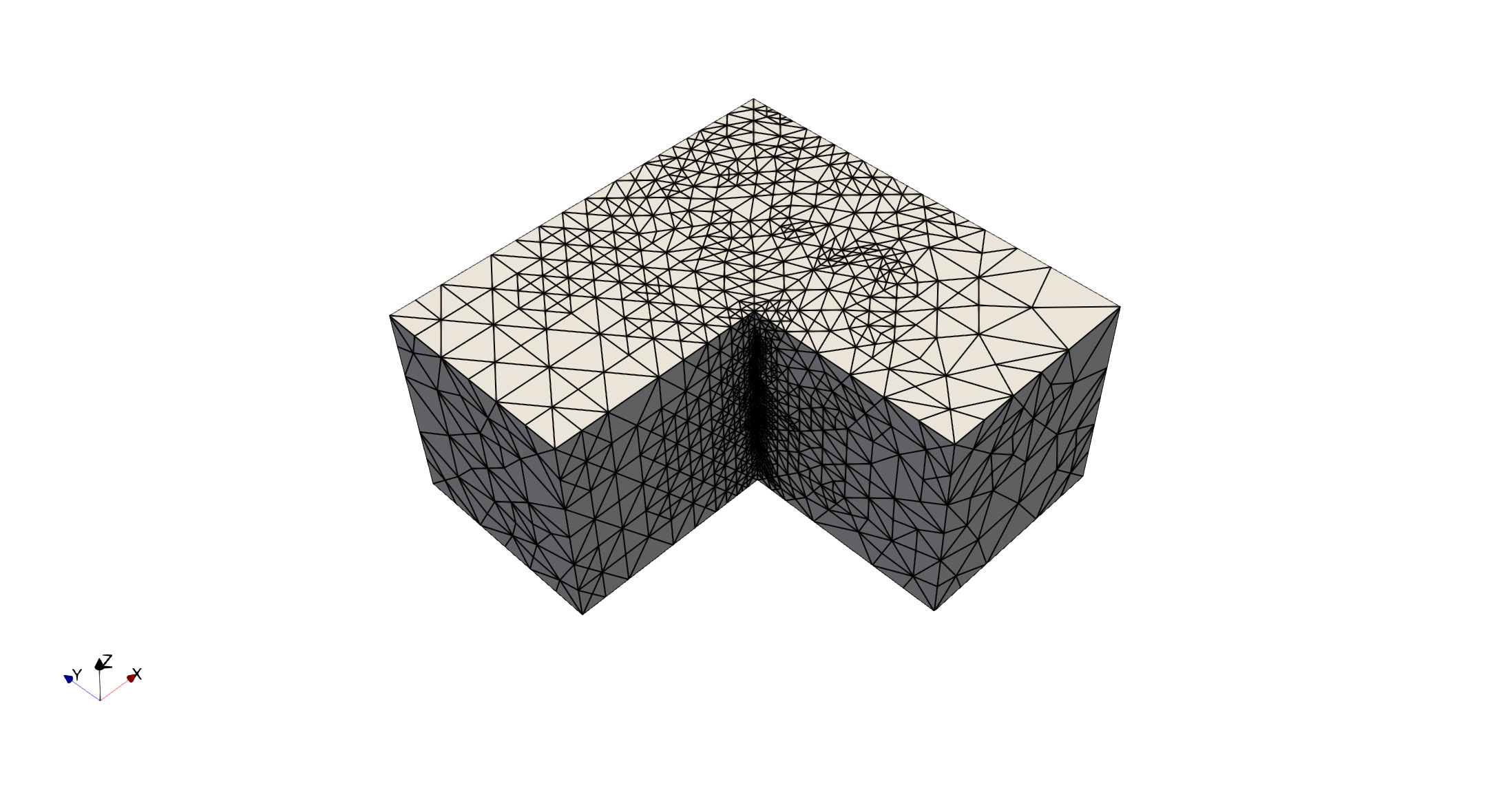}
	\end{minipage}
	\begin{minipage}{0.3275\linewidth}\centering
		{\footnotesize $\mathbb{K}^{-1}\vert_{\Omega_D}=10^{-8}\mathbb{I}, \texttt{dof}=730481$}\\
		\includegraphics[scale=0.108,trim=19cm 9cm 19cm 4cm,clip]{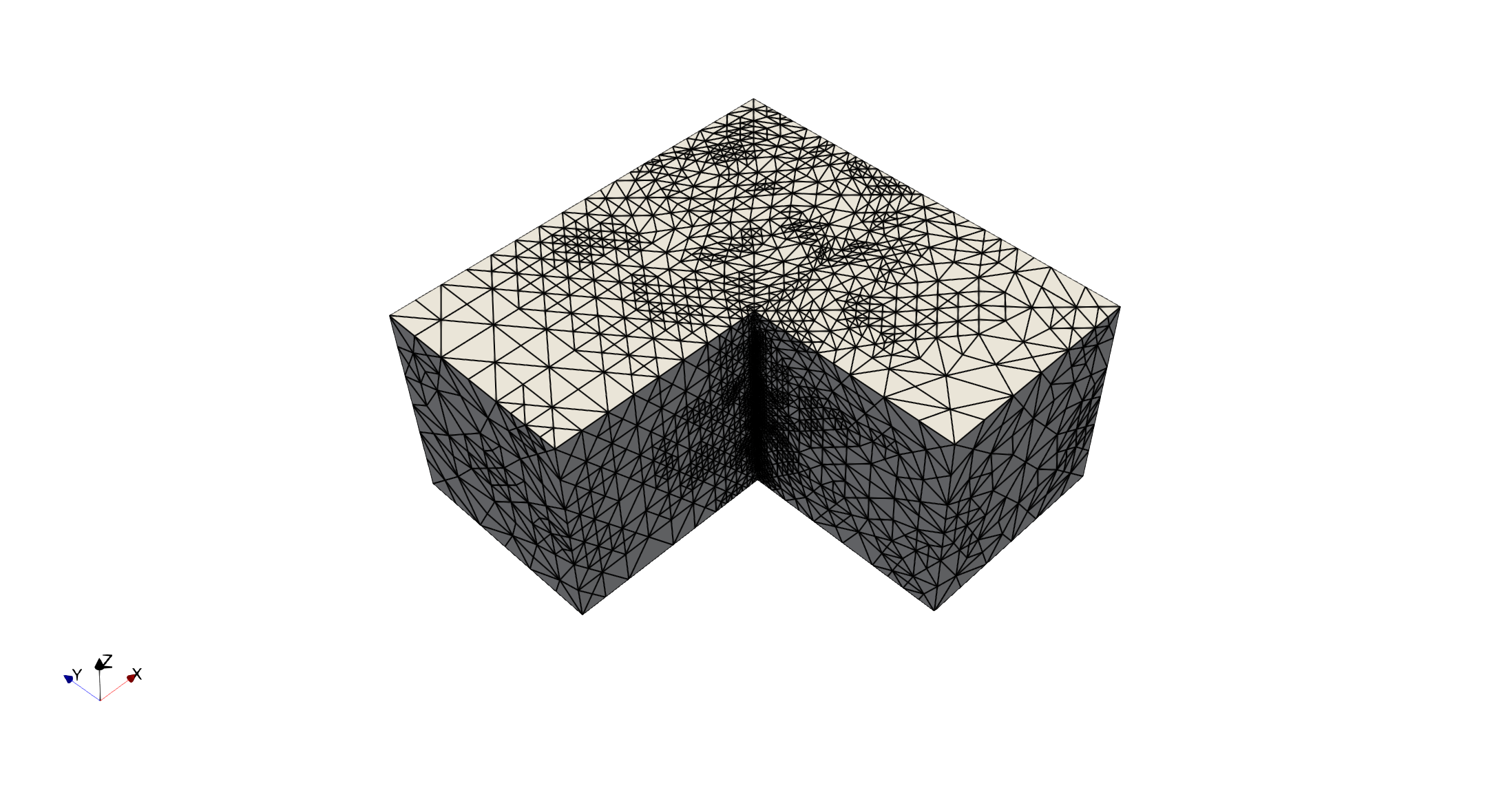}
	\end{minipage}
	\begin{minipage}{0.3275\linewidth}\centering
		{\footnotesize $\mathbb{K}^{-1}\vert_{\Omega_D}=10^{-8}\mathbb{I}, \texttt{dof}=1221041$}\\
		\includegraphics[scale=0.108,trim=19cm 9cm 19cm 4cm,clip]{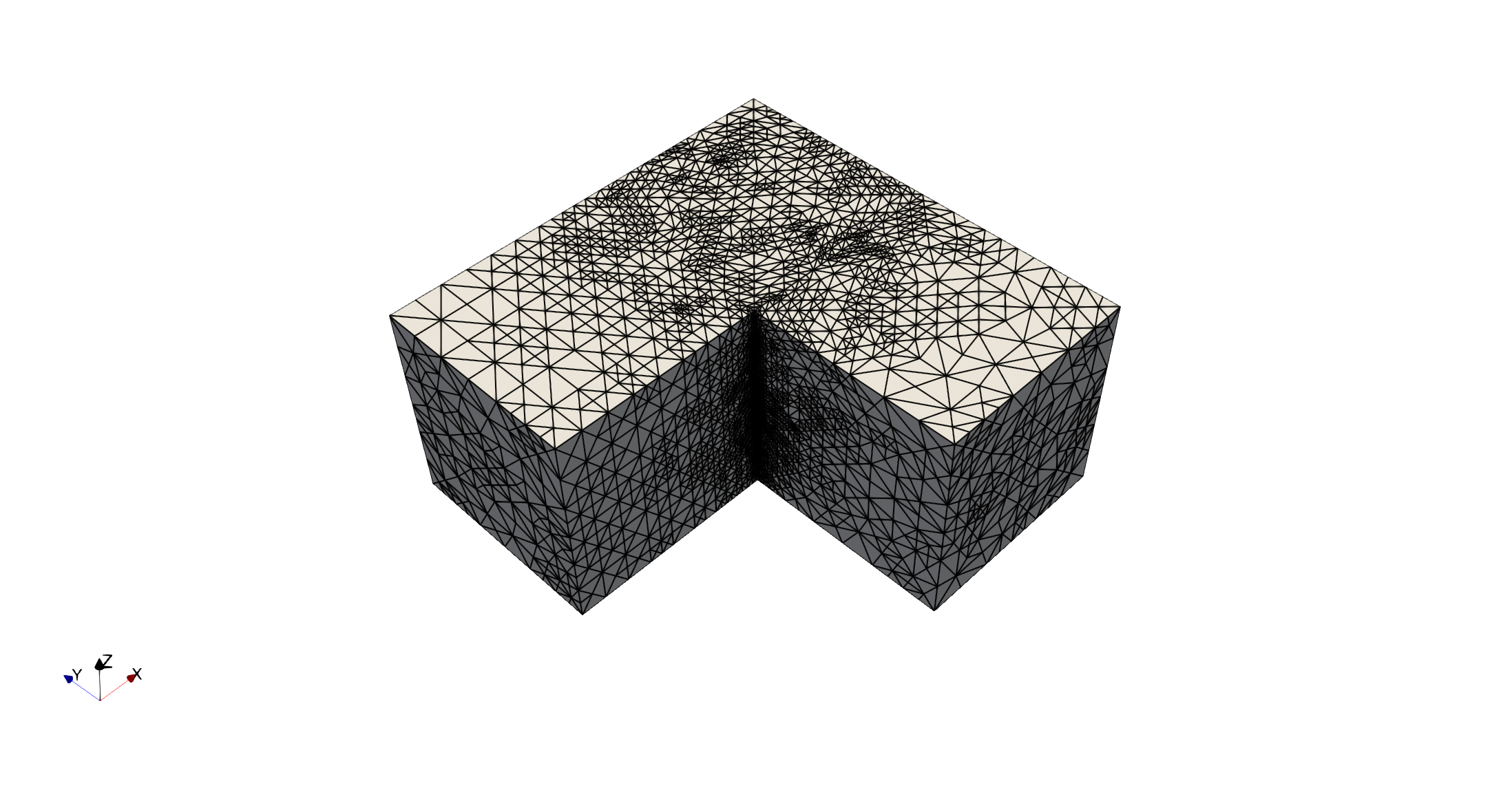}
	\end{minipage}\\
	\begin{minipage}{0.3275\linewidth}\centering
		{\footnotesize $\mathbb{K}^{-1}\vert_{\Omega_D}=10^{1}\mathbb{I}, \texttt{dof}=357946$}\\
		\includegraphics[scale=0.108,trim=19cm 9cm 19cm 4cm,clip]{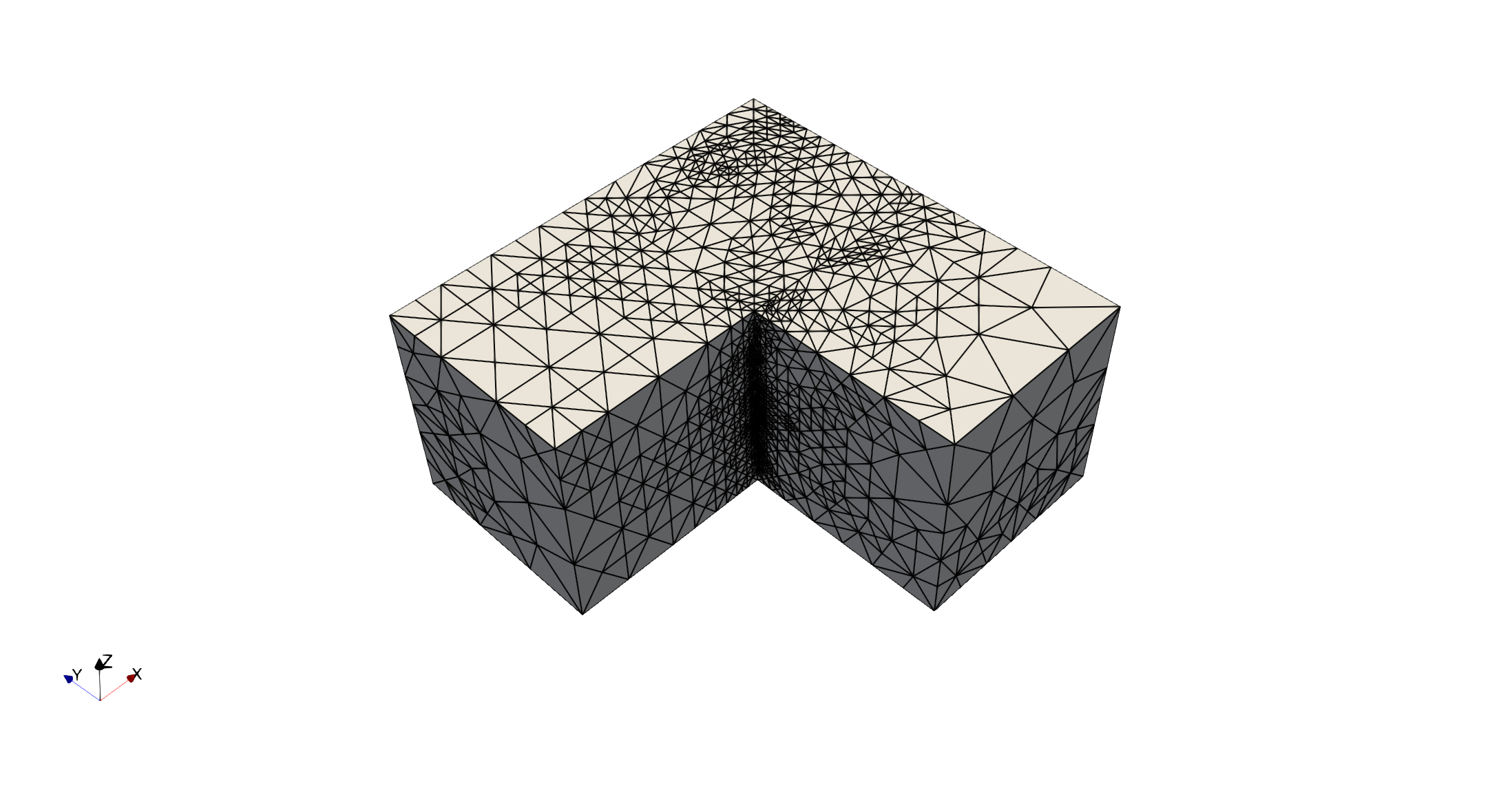}
	\end{minipage}
	\begin{minipage}{0.3275\linewidth}\centering
		{\footnotesize $\mathbb{K}^{-1}\vert_{\Omega_D}=10^{1}\mathbb{I}, \texttt{dof}=652658$}\\
		\includegraphics[scale=0.108,trim=19cm 9cm 19cm 4cm,clip]{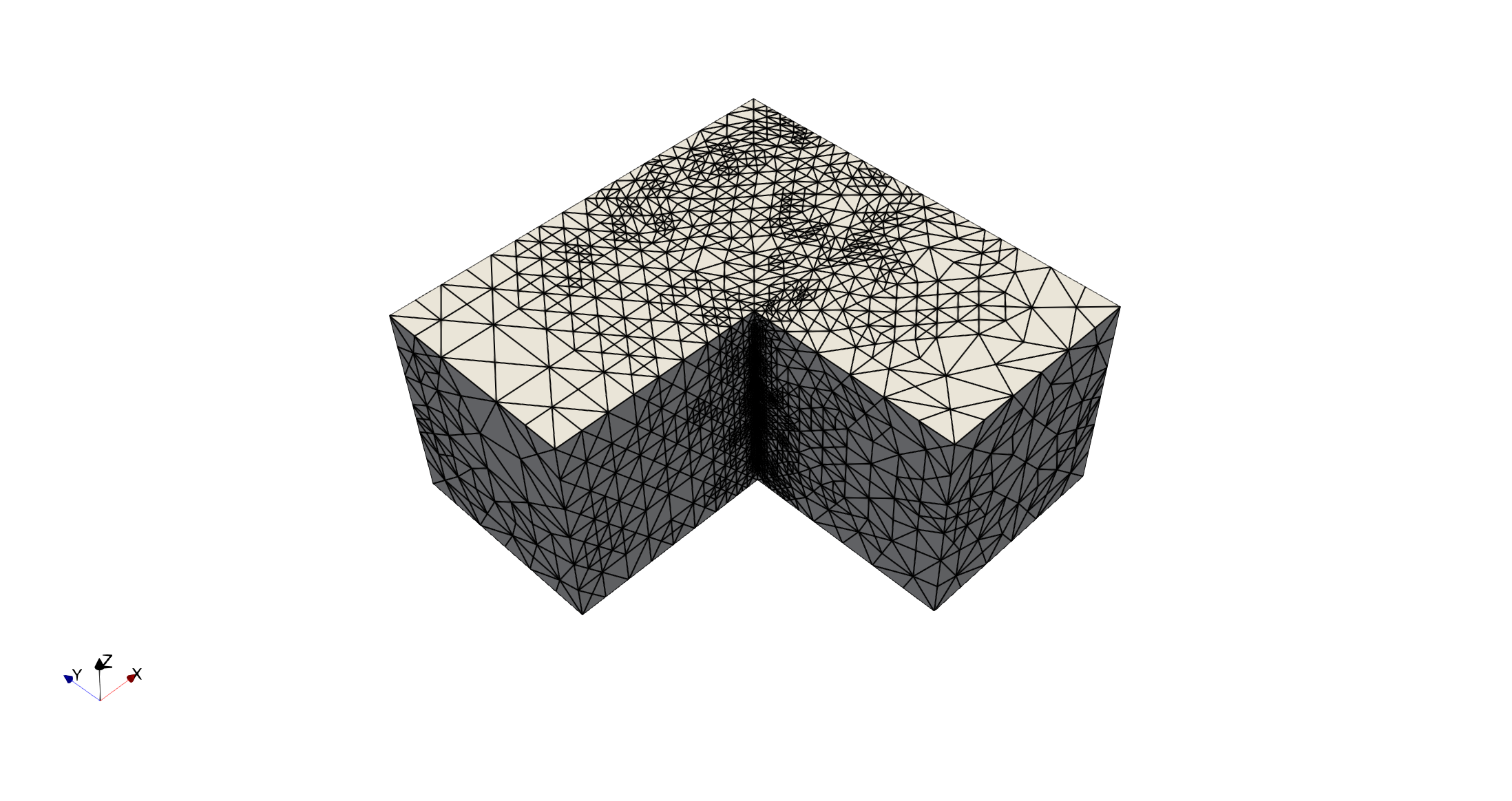}
	\end{minipage}
	\begin{minipage}{0.3275\linewidth}\centering
		{\footnotesize $\mathbb{K}^{-1}\vert_{\Omega_D}=10^{1}\mathbb{I}, \texttt{dof}=1229971$}\\
		\includegraphics[scale=0.108,trim=19cm 9cm 19cm 4cm,clip]{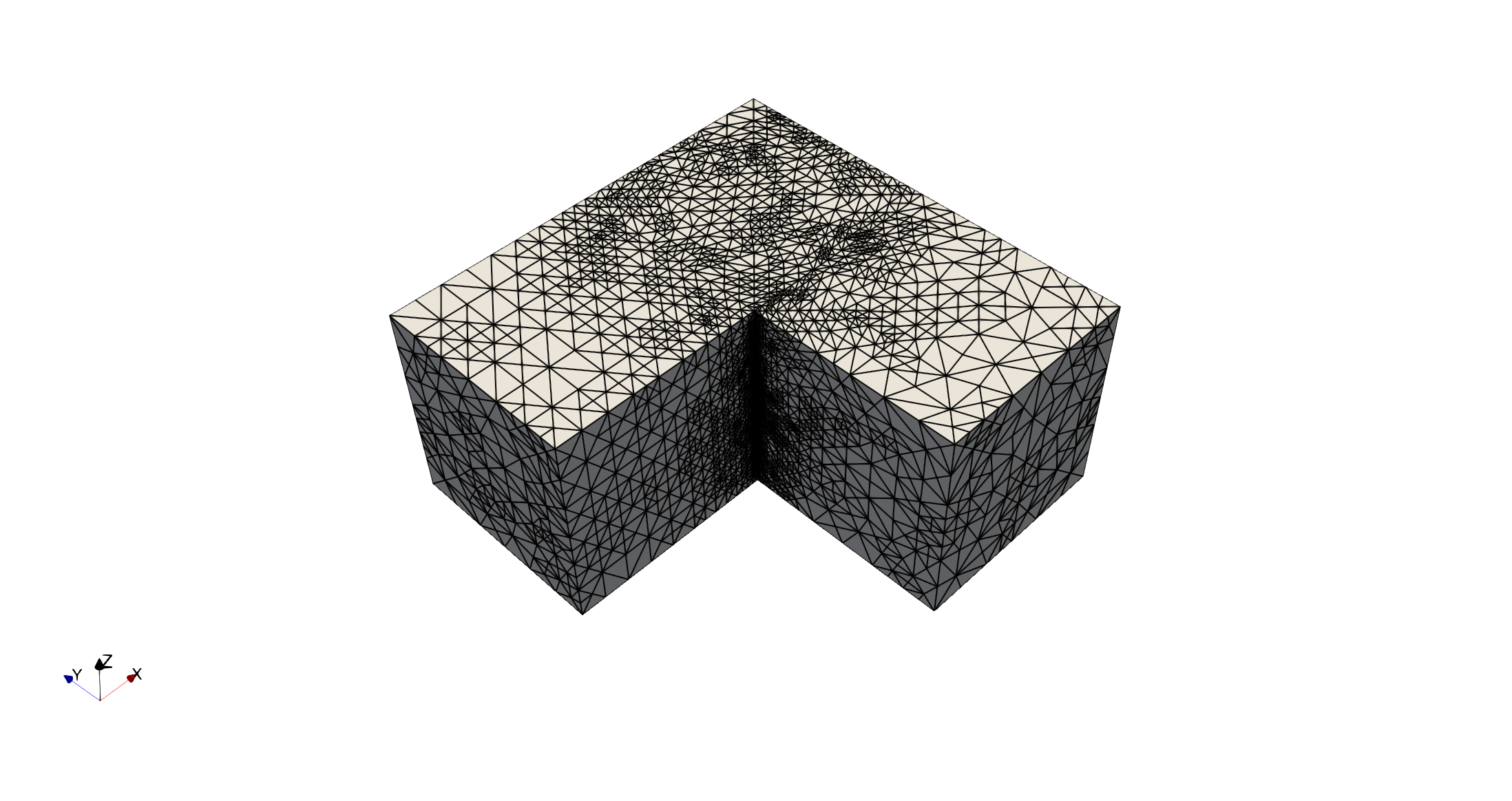}
	\end{minipage}\\
	\begin{minipage}{0.3275\linewidth}\centering
		{\footnotesize $\mathbb{K}^{-1}\vert_{\Omega_D}=10^{2}\mathbb{I}, \texttt{dof}=222581$}\\
		\includegraphics[scale=0.108,trim=19cm 9cm 19cm 4cm,clip]{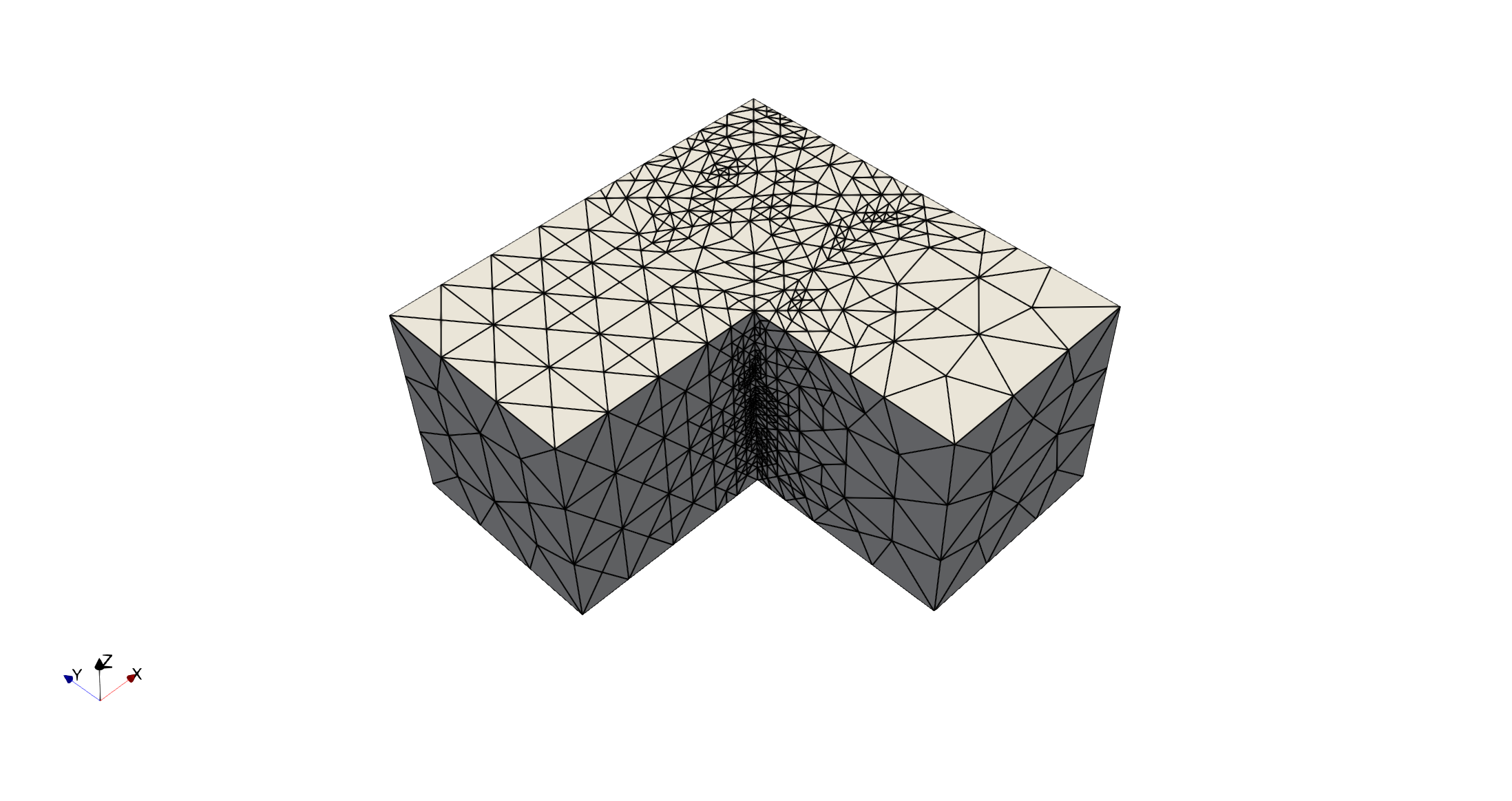}
	\end{minipage}
	\begin{minipage}{0.3275\linewidth}\centering
		{\footnotesize $\mathbb{K}^{-1}\vert_{\Omega_D}=10^{2}\mathbb{I}, \texttt{dof}=532889$}\\
		\includegraphics[scale=0.108,trim=19cm 9cm 19cm 4cm,clip]{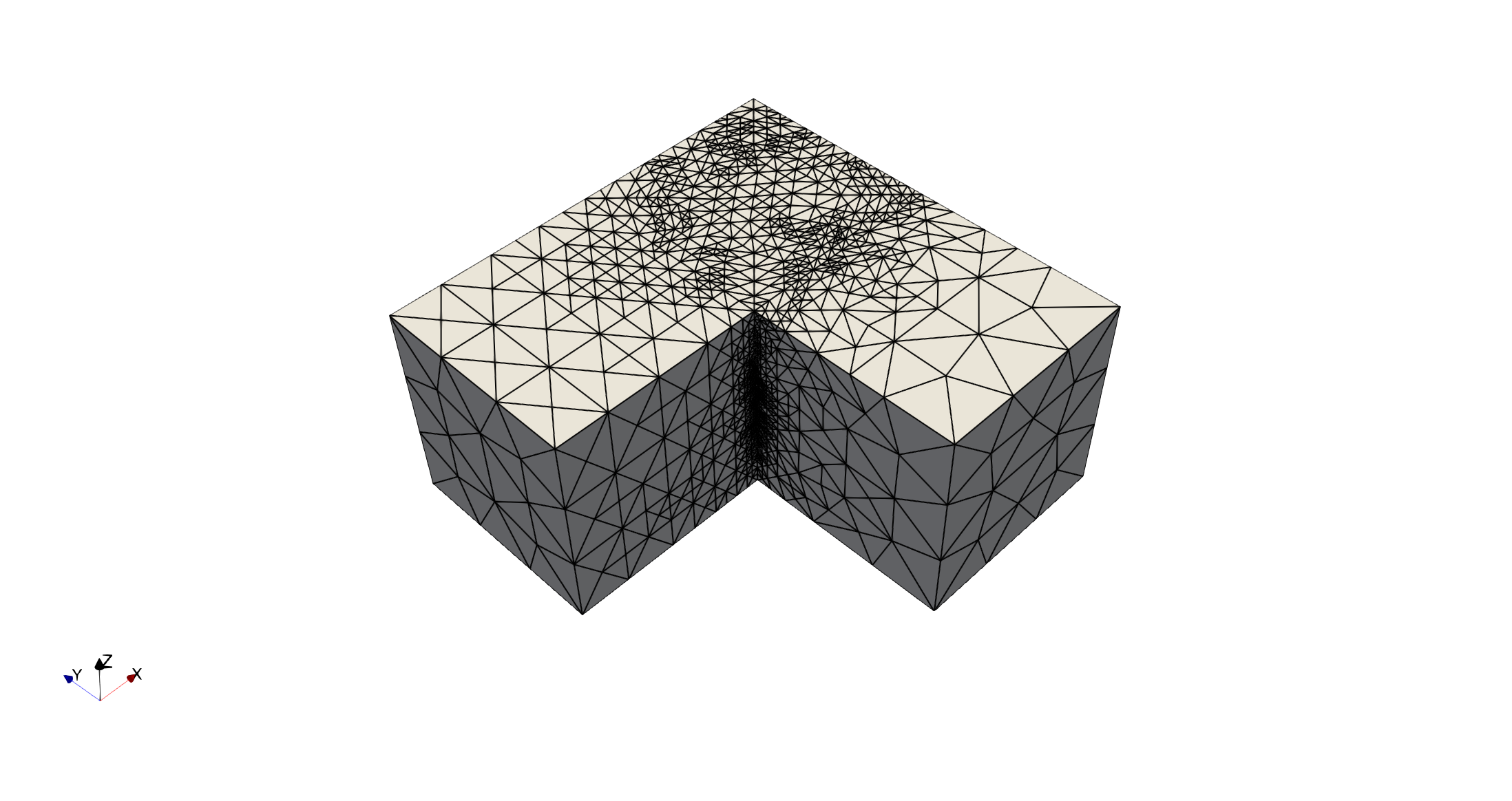}
	\end{minipage}
	\begin{minipage}{0.3275\linewidth}\centering
		{\footnotesize $\mathbb{K}^{-1}\vert_{\Omega_D}=10^{2}\mathbb{I}, \texttt{dof}=805199$}\\
		\includegraphics[scale=0.108,trim=19cm 9cm 19cm 4cm,clip]{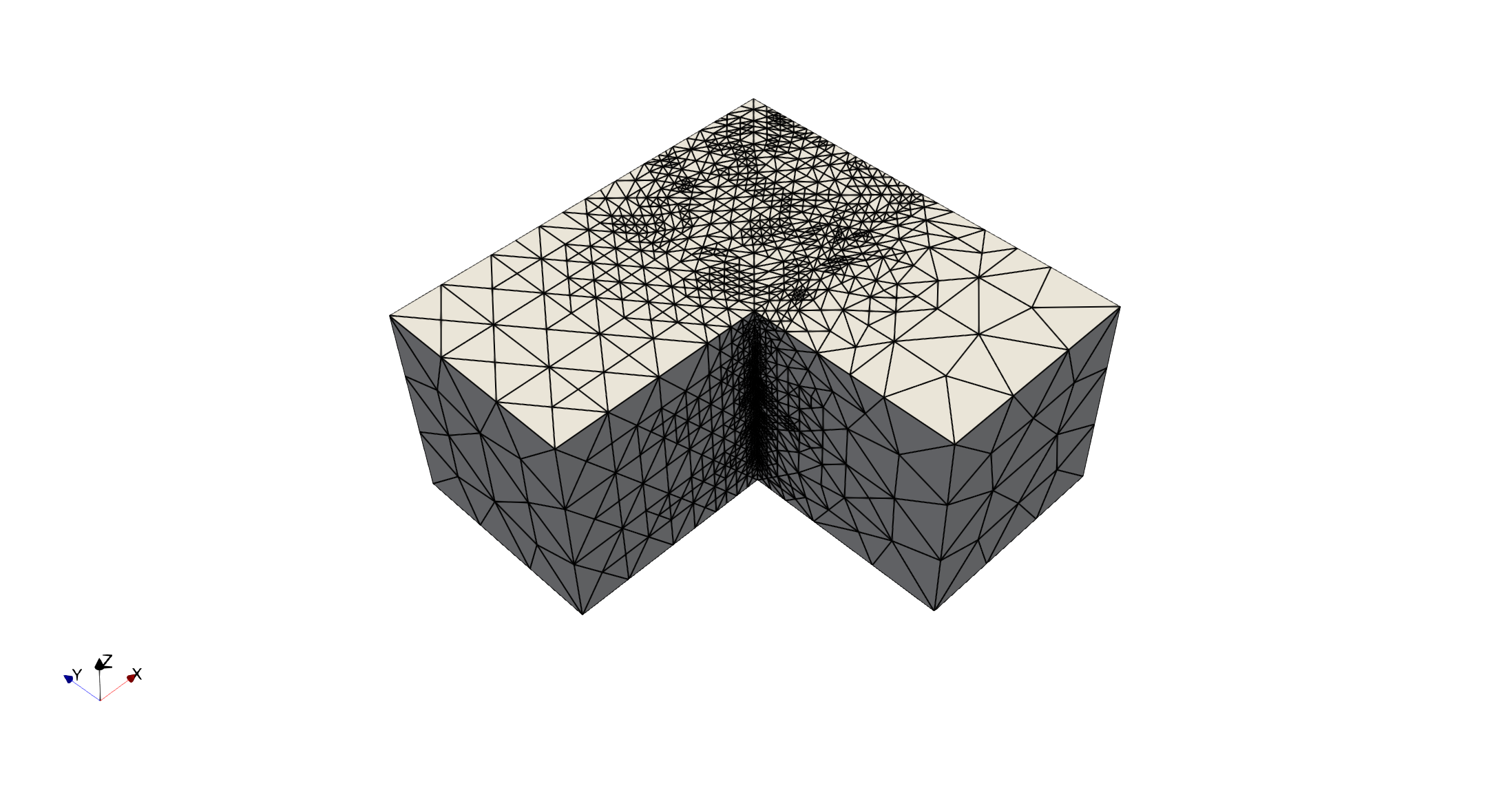}
	\end{minipage}
	\caption{Test \ref{subsec:3D-Lshape}.  Comparison of intermediate adaptive meshes with mini elements for the 3D Lshaped domain with different permeability parameters.}
	\label{fig:3D-lshape-meshes}
\end{figure}

\begin{figure}[!hbt]\centering
	\begin{minipage}{0.3275\linewidth}\centering
		{\footnotesize $\mathbb{K}^{-1}\vert_{\Omega_D}=10^{-8}\mathbb{I}, \texttt{dof}=126494$}\\
		\includegraphics[scale=0.102,trim=18cm 9cm 18cm 4cm,clip]{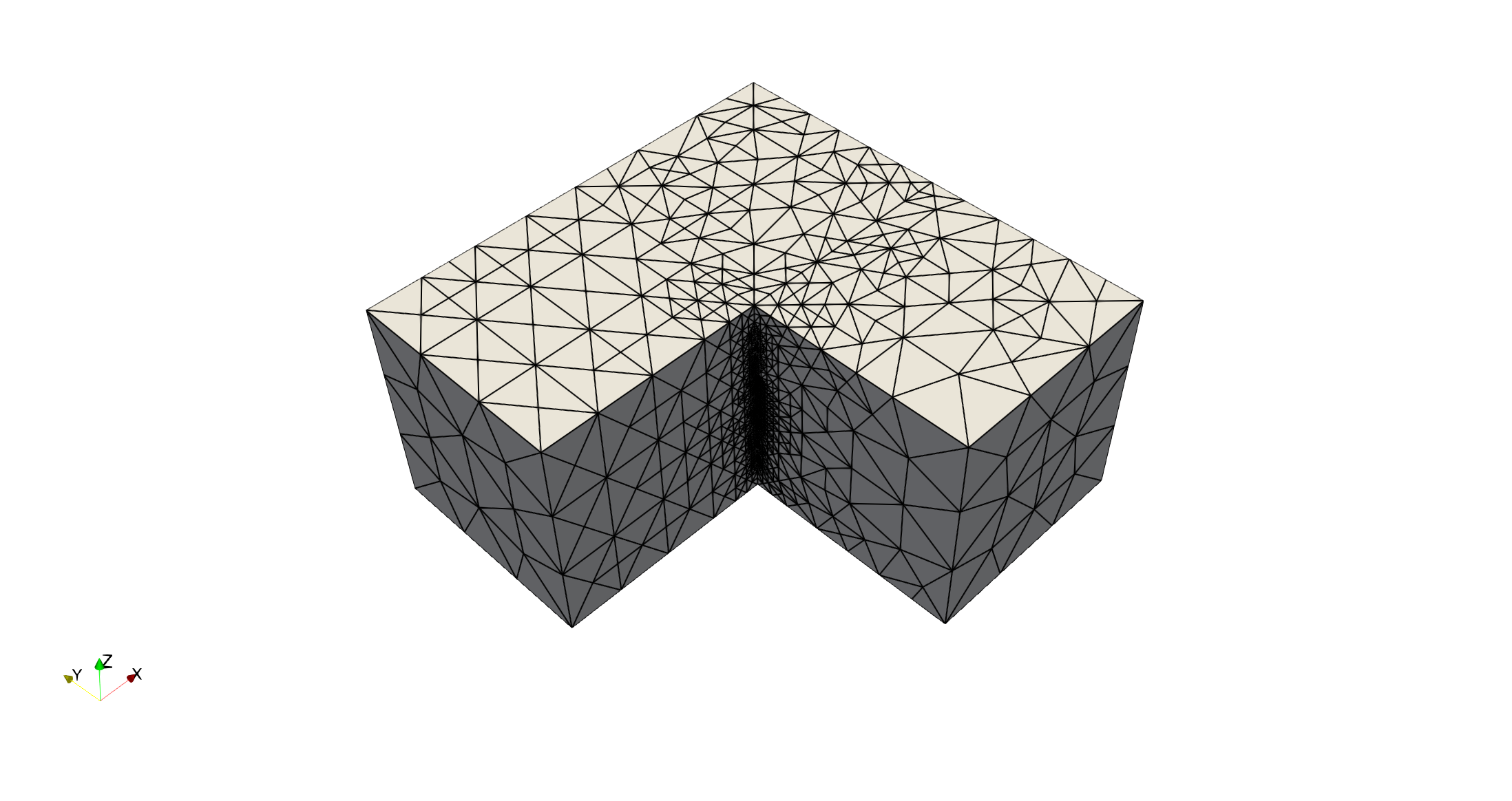}
	\end{minipage}
	\begin{minipage}{0.3275\linewidth}\centering
		{\footnotesize $\mathbb{K}^{-1}\vert_{\Omega_D}=10^{-8}\mathbb{I}, \texttt{dof}=322426$}\\
		\includegraphics[scale=0.102,trim=18cm 9cm 18cm 4cm,clip]{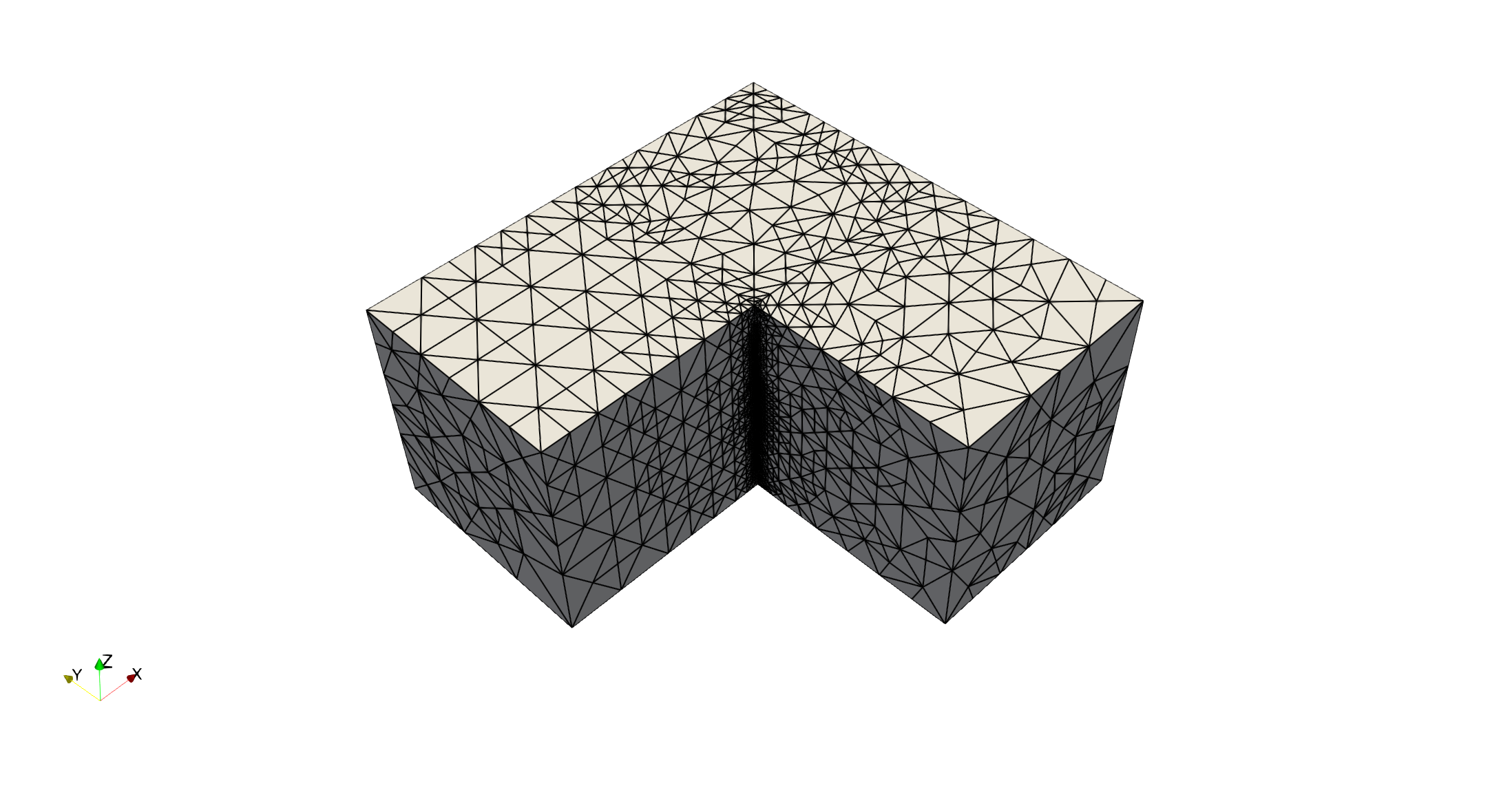}
	\end{minipage}
	\begin{minipage}{0.3275\linewidth}\centering
		{\footnotesize $\mathbb{K}^{-1}\vert_{\Omega_D}=10^{-8}\mathbb{I}, \texttt{dof}=533586$}\\
		\includegraphics[scale=0.102,trim=18cm 9cm 18cm 4cm,clip]{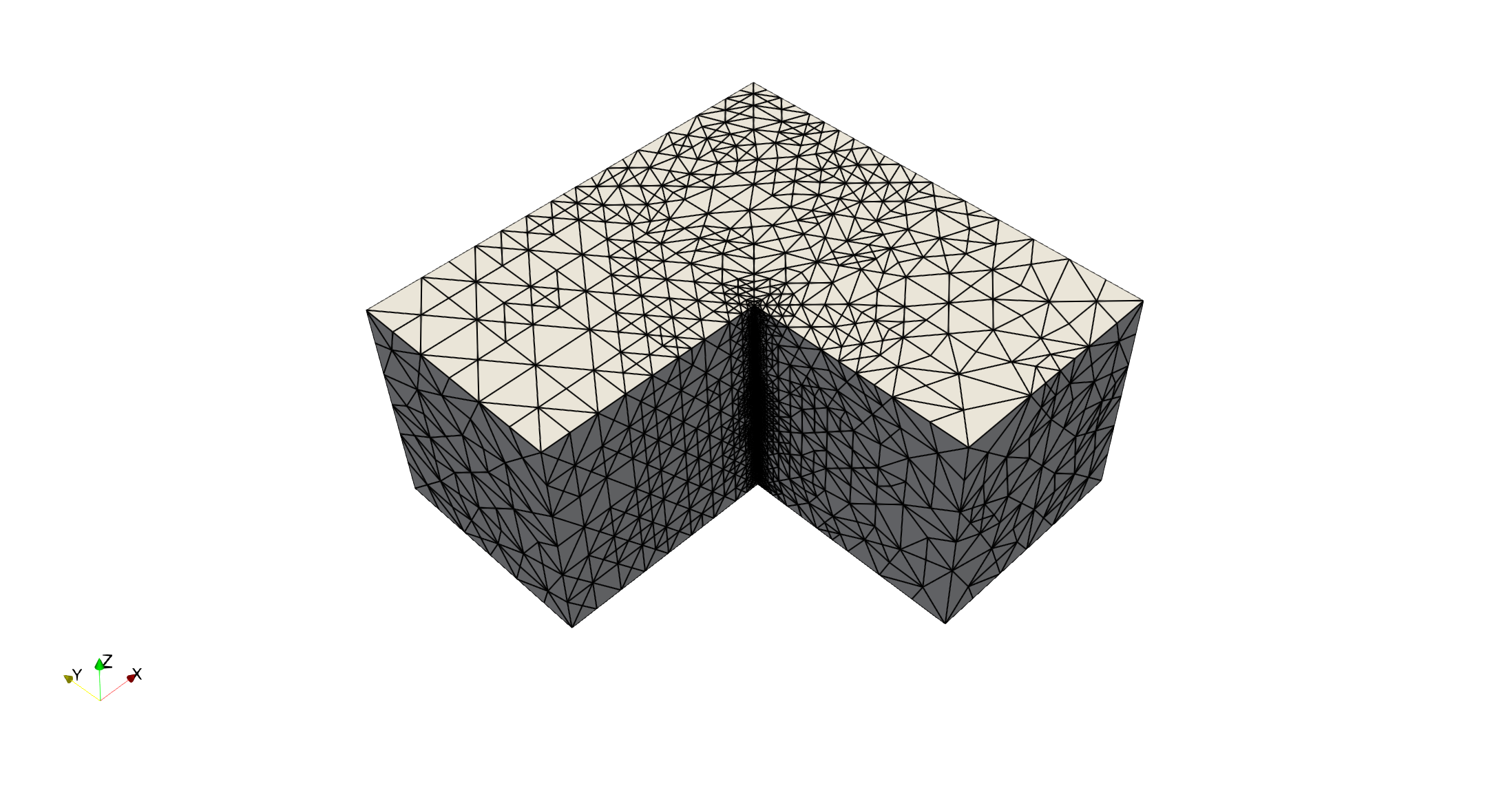}
	\end{minipage}\\
	\begin{minipage}{0.3275\linewidth}\centering
		{\footnotesize $\mathbb{K}^{-1}\vert_{\Omega_D}=10^{1}\mathbb{I}, \texttt{dof}=126479$}\\
		\includegraphics[scale=0.102,trim=18cm 9cm 18cm 4cm,clip]{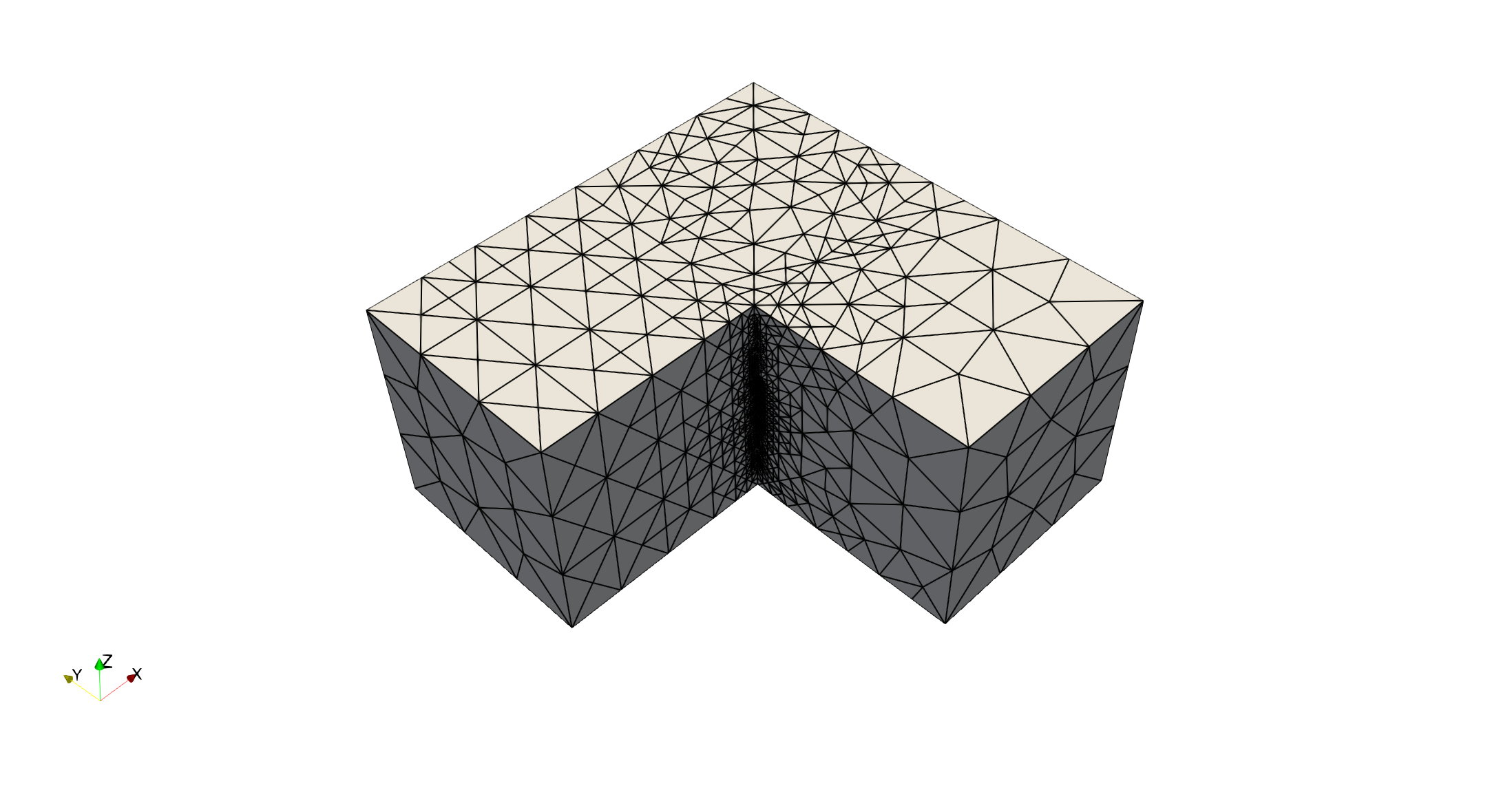}
	\end{minipage}
	\begin{minipage}{0.3275\linewidth}\centering
		{\footnotesize $\mathbb{K}^{-1}\vert_{\Omega_D}=10^{1}\mathbb{I}, \texttt{dof}=325259$}\\
		\includegraphics[scale=0.102,trim=18cm 9cm 18cm 4cm,clip]{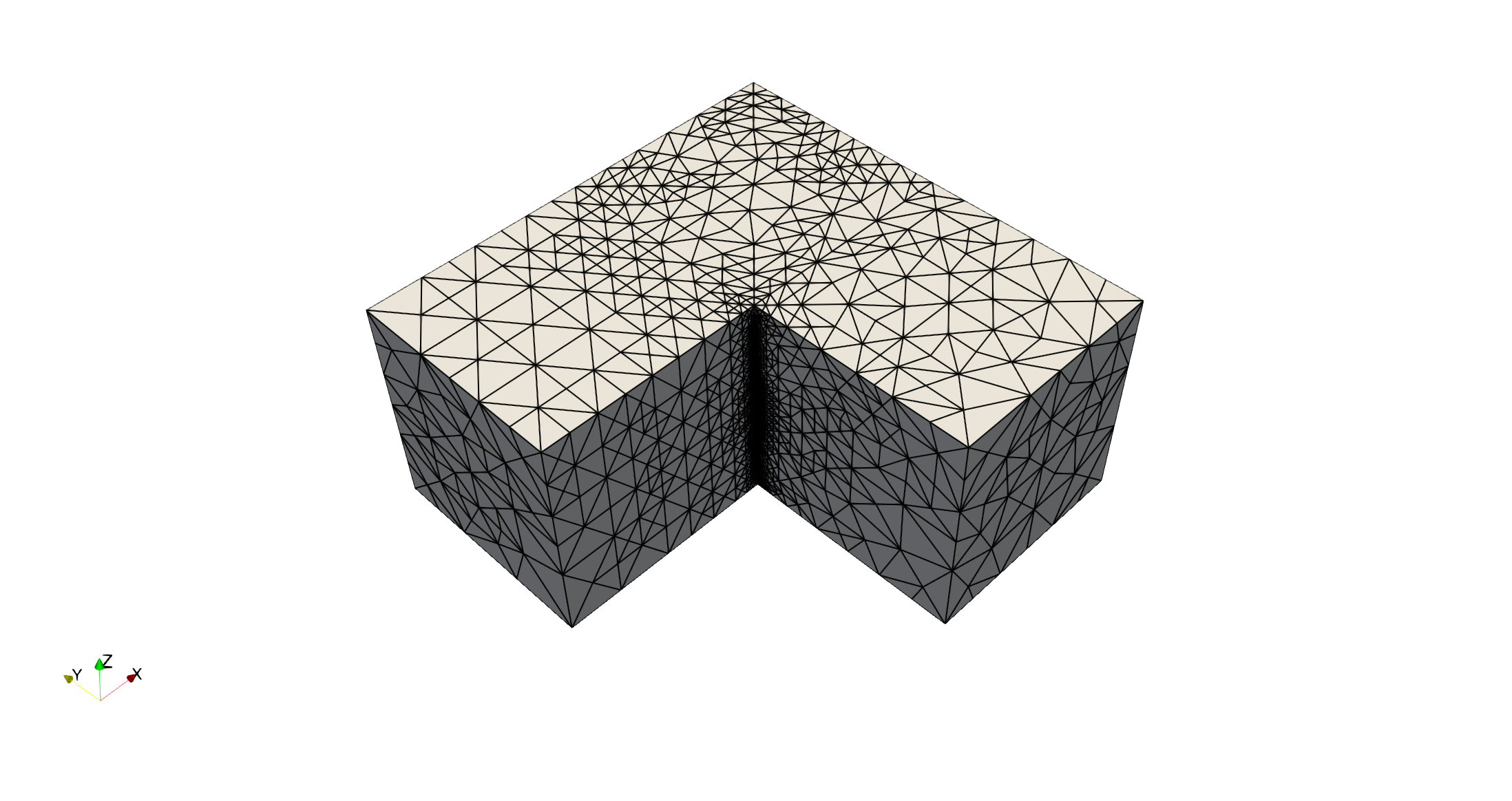}
	\end{minipage}
	\begin{minipage}{0.3275\linewidth}\centering
		{\footnotesize $\mathbb{K}^{-1}\vert_{\Omega_D}=10^{1}\mathbb{I}, \texttt{dof}=539384$}\\
		\includegraphics[scale=0.102,trim=18cm 9cm 18cm 4cm,clip]{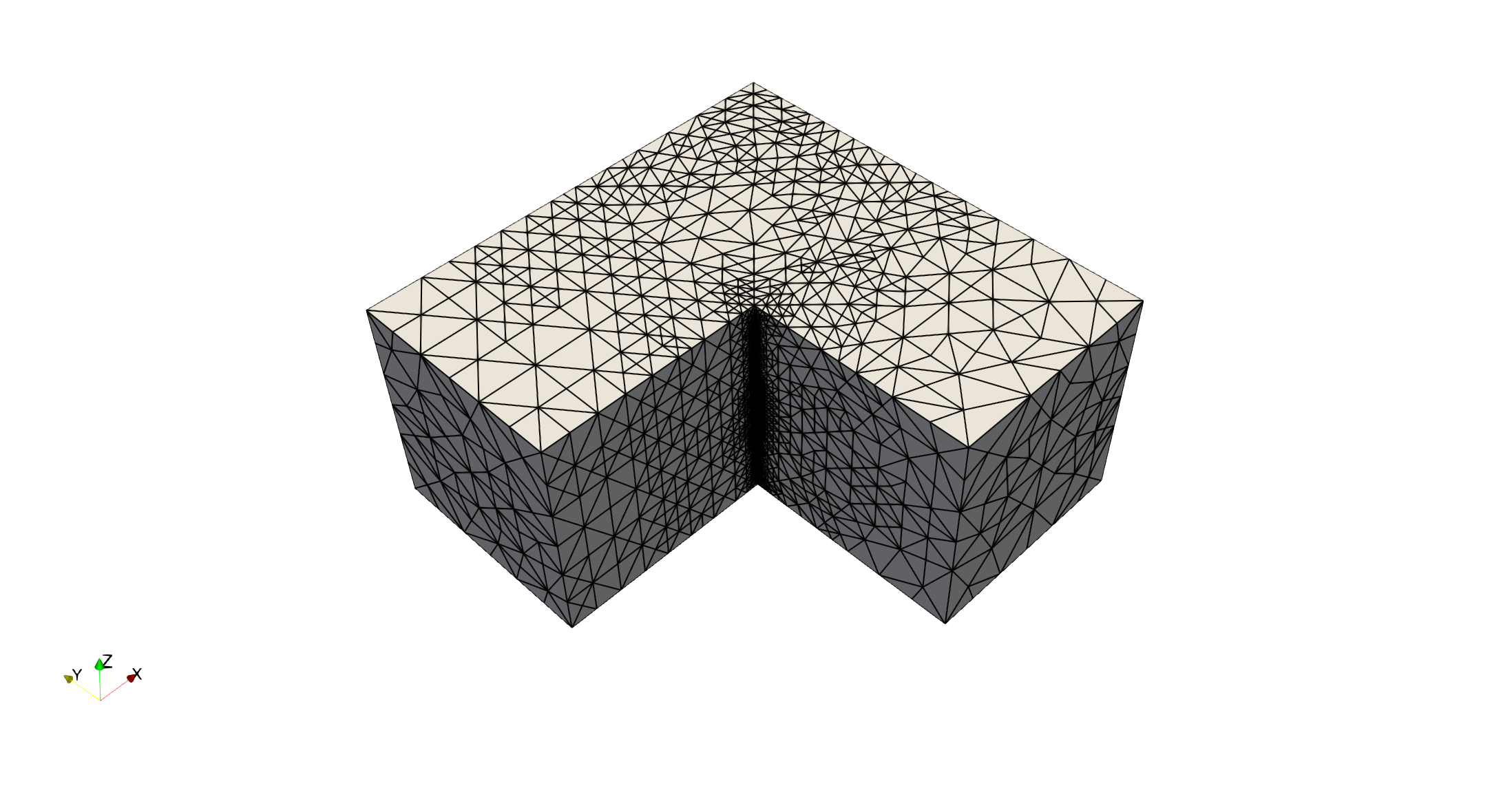}
	\end{minipage}\\
	\begin{minipage}{0.3275\linewidth}\centering
		{\footnotesize $\mathbb{K}^{-1}\vert_{\Omega_D}=10^{2}\mathbb{I}, \texttt{dof}=169884$}\\
		\includegraphics[scale=0.102,trim=18cm 9cm 18cm 4cm,clip]{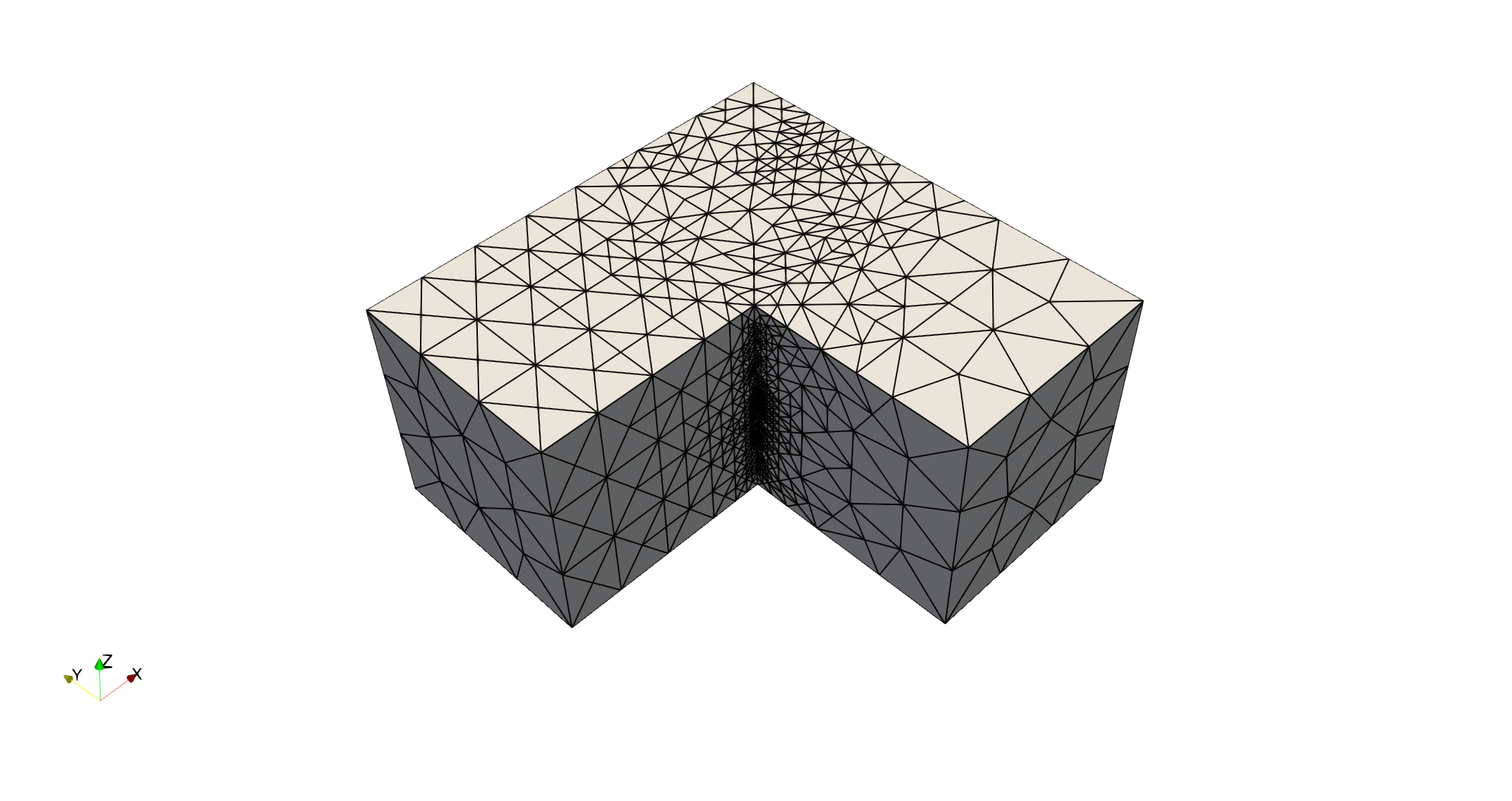}
	\end{minipage}
	\begin{minipage}{0.3275\linewidth}\centering
		{\footnotesize $\mathbb{K}^{-1}\vert_{\Omega_D}=10^{2}\mathbb{I}, \texttt{dof}=271239$}\\
		\includegraphics[scale=0.102,trim=18cm 9cm 18cm 4cm,clip]{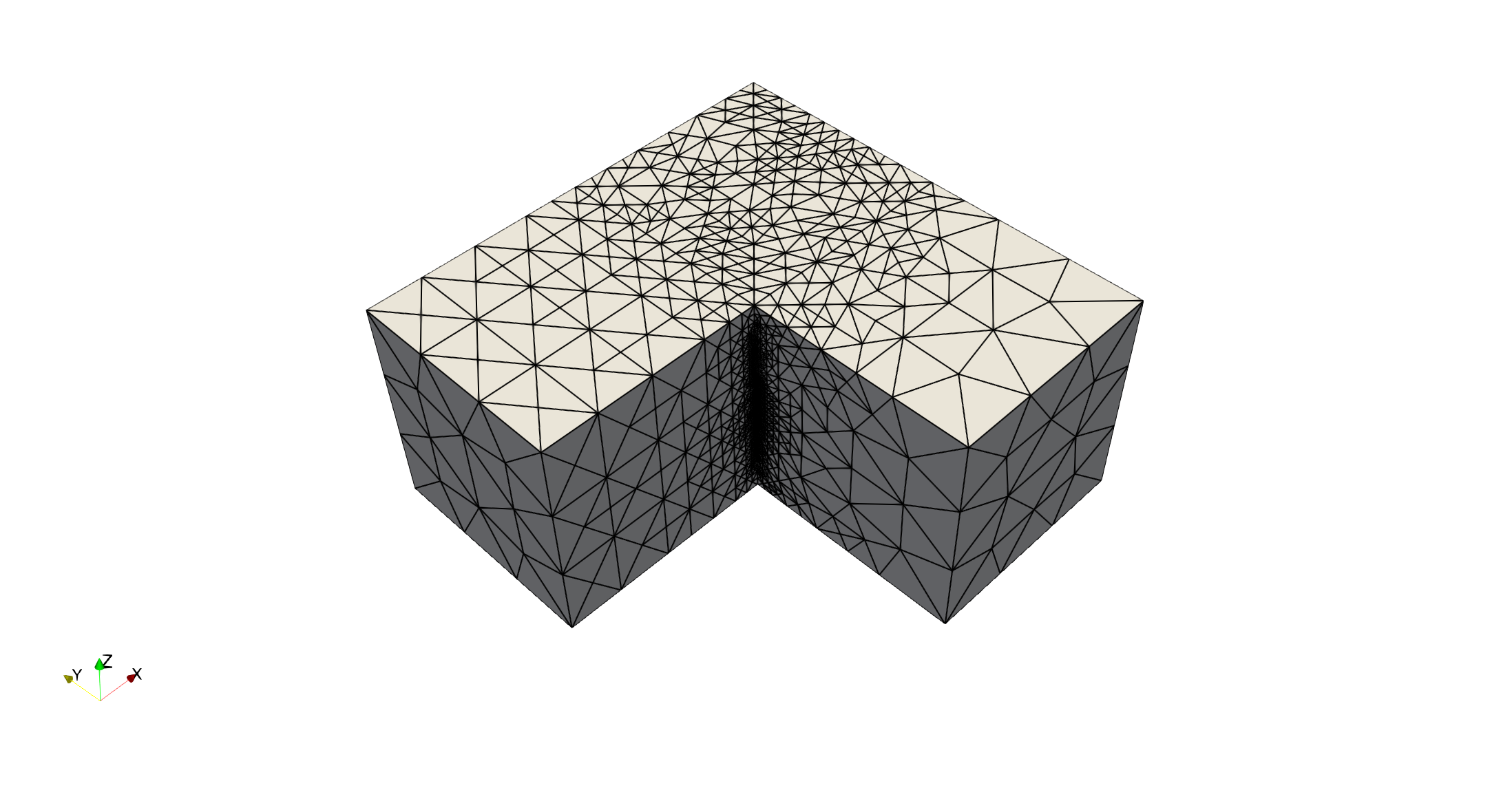}
	\end{minipage}
	\begin{minipage}{0.3275\linewidth}\centering
		{\footnotesize $\mathbb{K}^{-1}\vert_{\Omega_D}=10^{2}\mathbb{I}, \texttt{dof}=598861$}\\
		\includegraphics[scale=0.102,trim=18cm 9cm 18cm 4cm,clip]{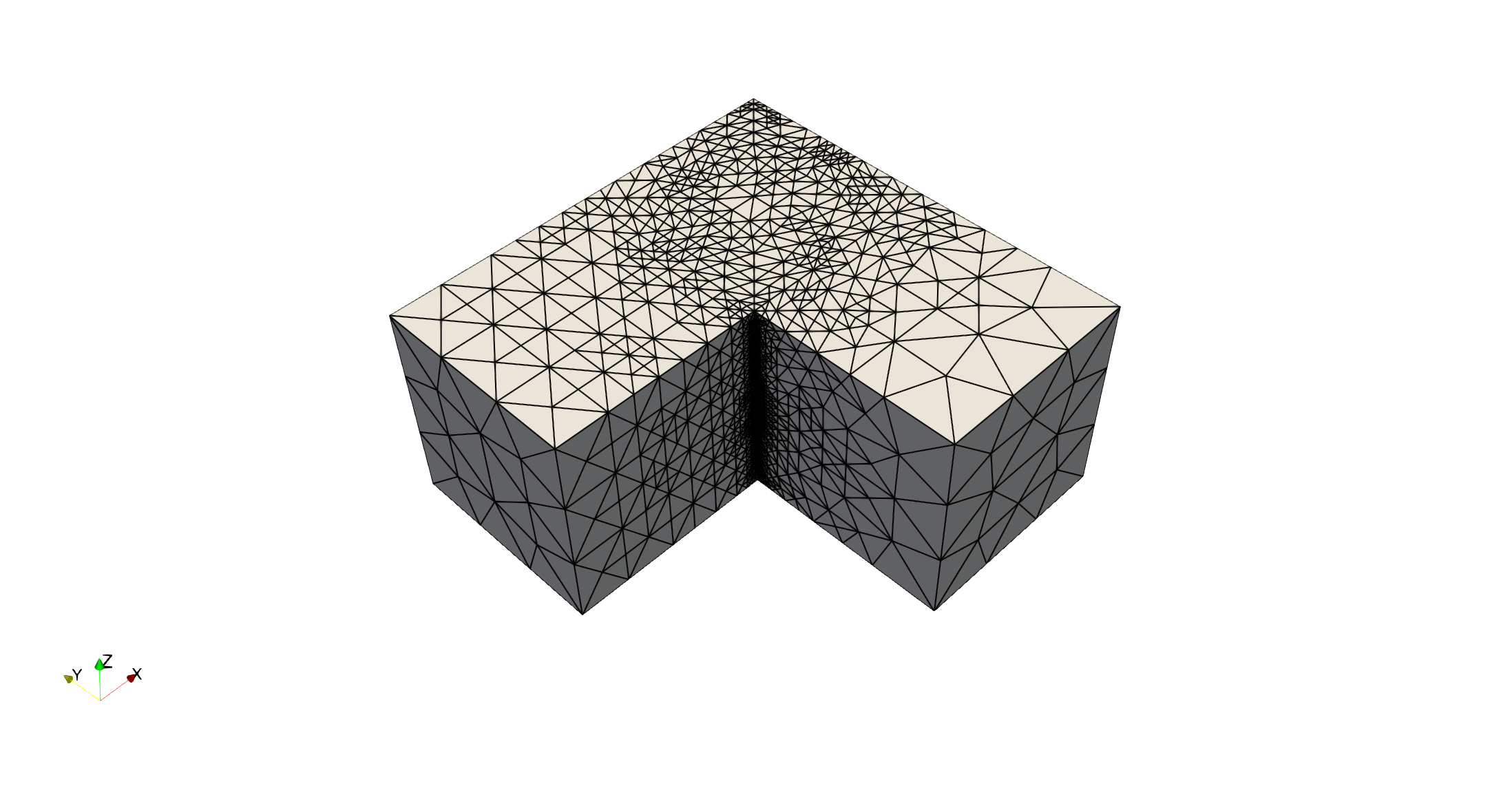}
	\end{minipage}
	\caption{Test \ref{subsec:3D-Lshape}.  Comparison of intermediate adaptive meshes with Taylor-Hood elements for the 3D Lshaped domain with different permeability parameters.}
	\label{fig:3D-lshape-meshes_TH}
\end{figure}

\begin{figure}[!hbt]\centering
	\begin{minipage}{0.59\linewidth}\centering
		\includegraphics[scale=0.37,trim=0cm 0cm 2cm 2cm,clip]{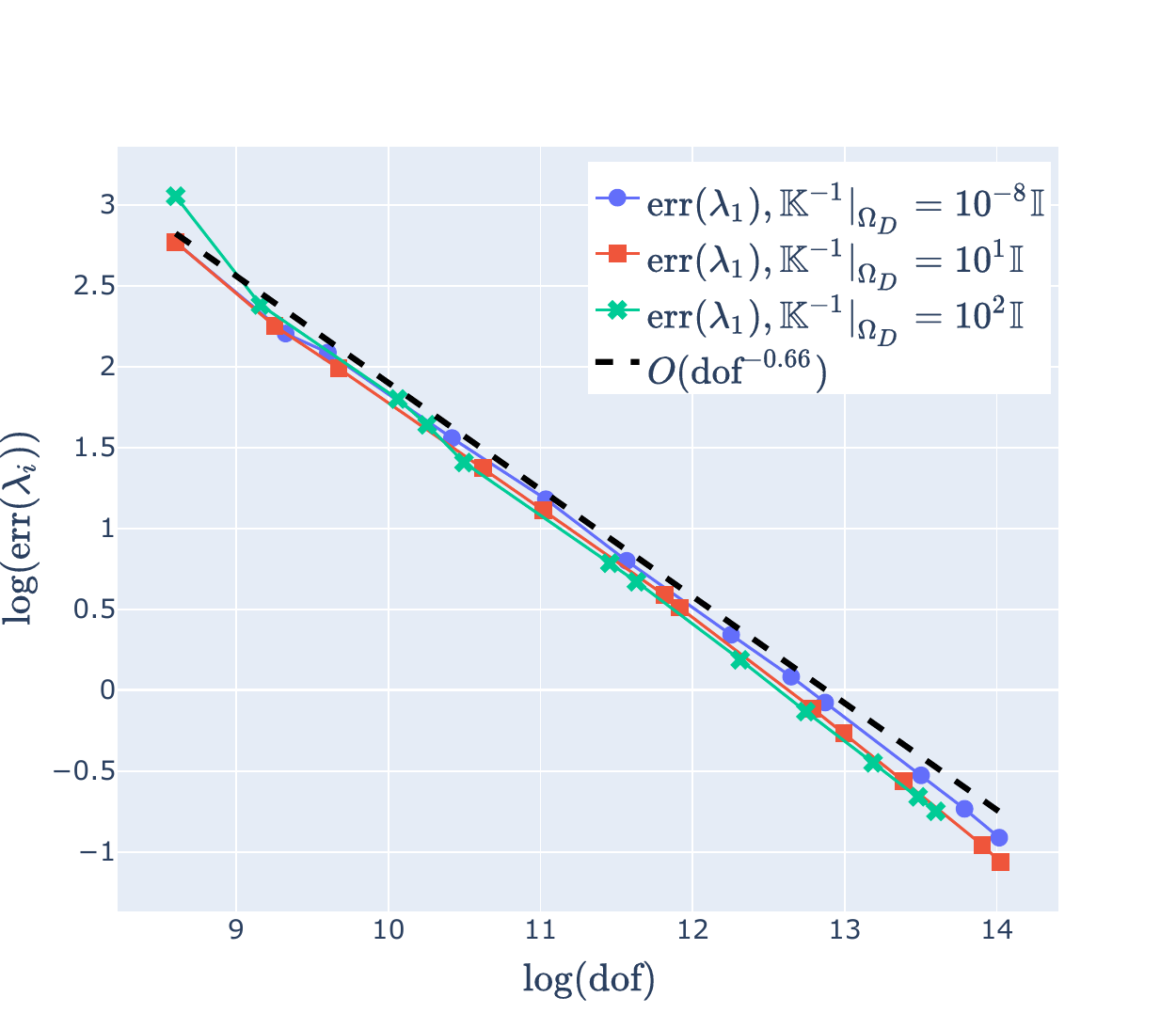}
	\end{minipage}
	\begin{minipage}{0.40\linewidth}\centering
		\includegraphics[scale=0.37,trim=0cm 0cm 2cm 2cm,clip]{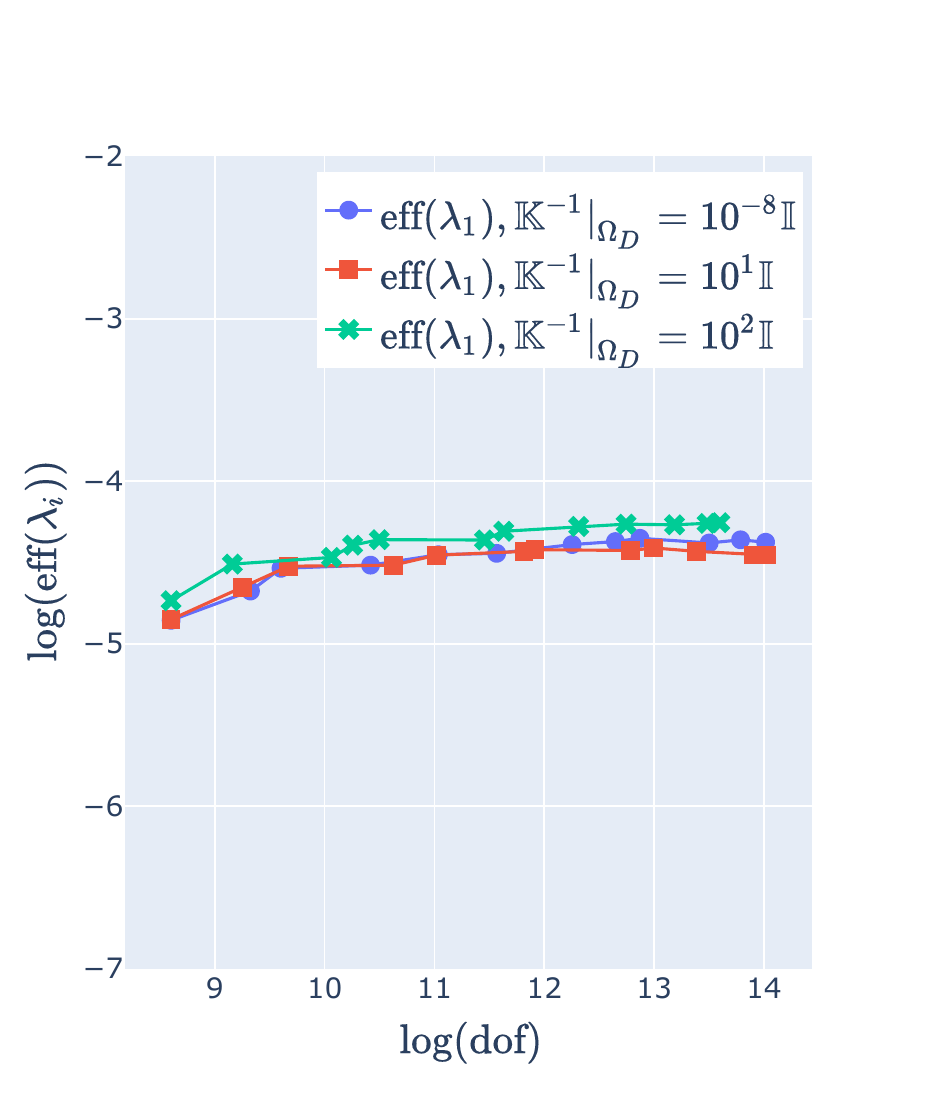}
	\end{minipage}\\
	\begin{minipage}{0.59\linewidth}\centering
		\includegraphics[scale=0.37,trim=0cm 0cm 2cm 2cm,clip]{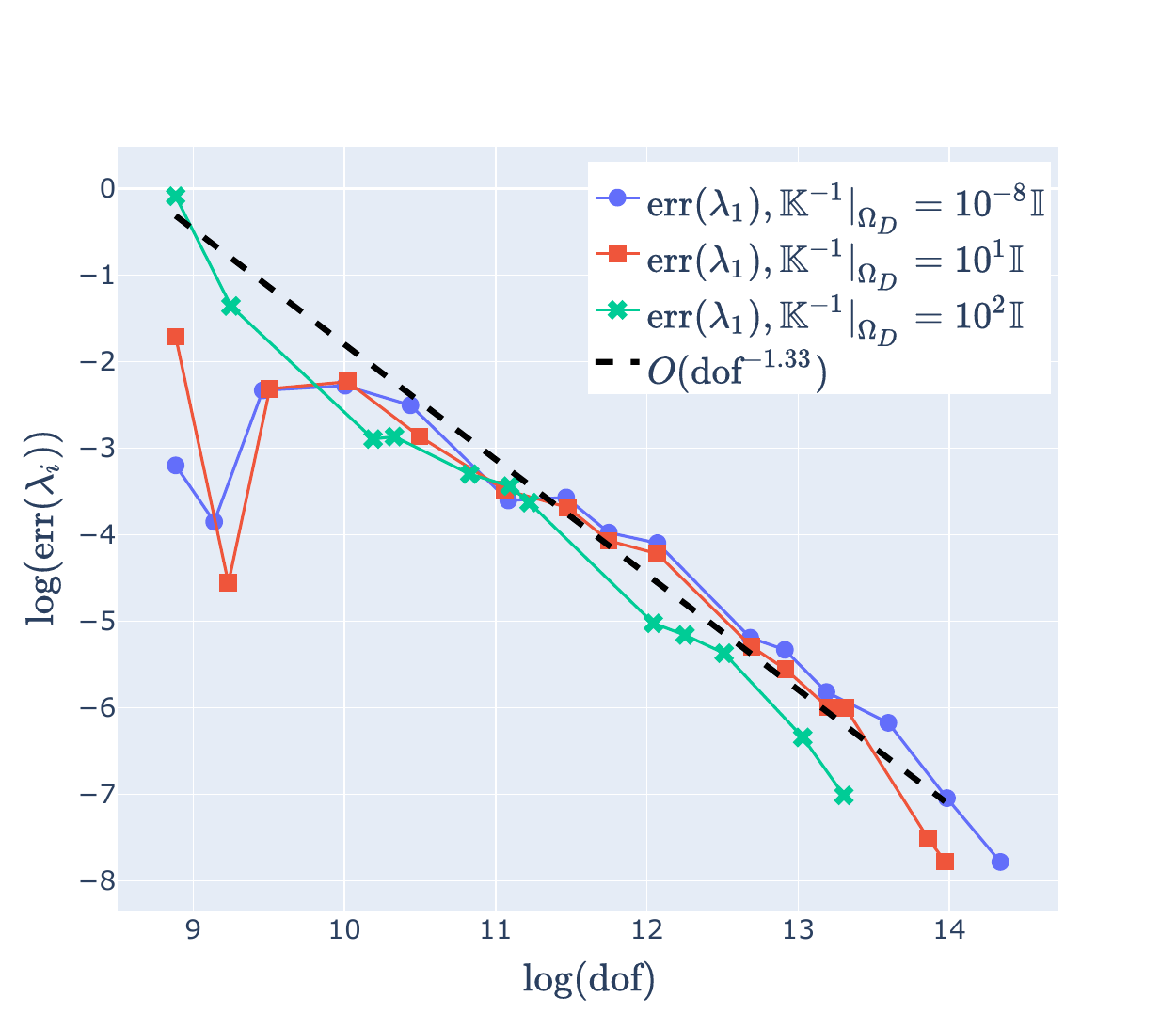}
	\end{minipage}
	\begin{minipage}{0.40\linewidth}\centering
		\includegraphics[scale=0.37,trim=0cm 0cm 2cm 2cm,clip]{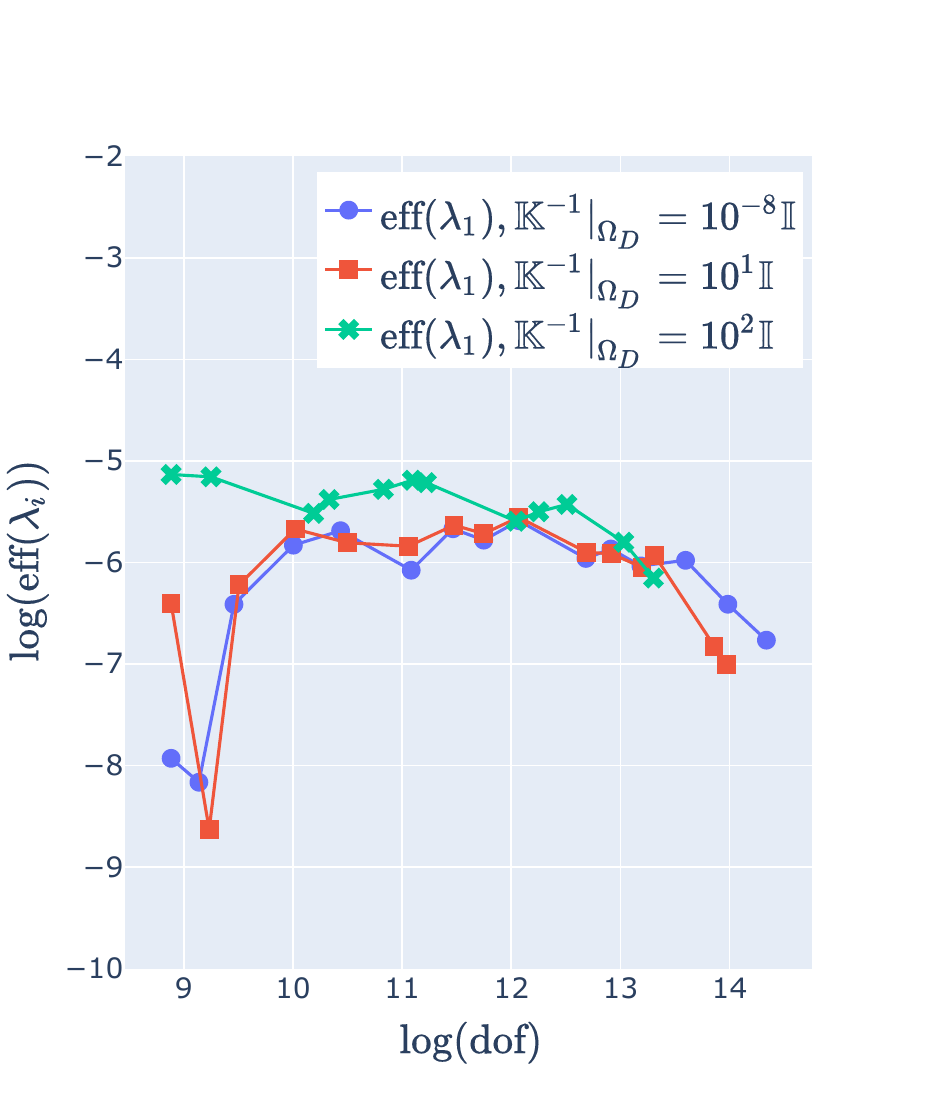}
	\end{minipage}
	\caption{Test \ref{subsec:3D-Lshape}. Error history for the uniform and adaptive refinements for the first eigenvalue on different permeability configurations (left) together with their corresponding effectivity indexes (right). Top row: Mini elements. Bottom row: Taylor-Hood elements.}
	\label{fig:error-eff-lshape3d}
\end{figure}
\section{Conclusions}
In this work, we have formulated an eigenvalue problem for fluid flow over a domain with potentially heterogeneous permeability. The model formulation emphasizes on the variability of the permeability parameter $\mathbb{K}$ across the domain. This approach has allowed us to show that an eigenvalue problem based on the Brinkman equations can yield valuable insights into the behavior of confined or free fluids within heterogeneous media. While this study does not address all possible configurations and limitations related to the choice of permeability in the spectral analysis, the numerical experiments consistently indicate that fluid flow tends to concentrate within Stokes regions when permeability-limited zones are present, suggesting a natural energy minimization tendency within the system. This observation invites further investigation, as this work represents the first step in that direction. Future studies will expand on this framework by considering aspects such as variable viscosity and the  analysis of the coupled Stokes-Darcy eigenvalue problem.

\subsection*{Availability of Data and Materials} The datasets generated and/or analyzed during the current study are available from the corresponding author on reasonable request.

\subsection*{Conflict of interest} The authors have no competing interests to declare that are relevant to the content of this article. Data sharing not applicable to this article as no datasets were generated or analysed during the current study.

\bibliographystyle{siamplain}
\bibliography{LRV_brinkmannEV}

\begin{thebibliography}{10}

\bibitem{anaya2016priori}
{\sc V.~Anaya, D.~Mora, R.~Oyarz{\'u}a, and R.~Ruiz-Baier}, {\em A priori and a
  posteriori error analysis of a mixed scheme for the {B}rinkman problem},
  Numerische Mathematik, 133 (2016), pp.~781--817.

\bibitem{MR1115235}
{\sc I.~Babu\v{s}ka and J.~Osborn}, {\em Handbook of numerical analysis. {V}ol.
  {II}},  (1991), pp.~x+928.
\newblock Finite element methods. Part 1.

\bibitem{barrata2023dolfinx}
{\sc I.~A. Barrata, J.~P. Dean, J.~S. Dokken, M.~Habera, J.~Hale,
  C.~Richardson, M.~E. Rognes, M.~W. Scroggs, N.~Sime, and G.~N. Wells}, {\em
  {DOLFINx}: The next generation fenics problem solving environment},  (2023),
  \url{https://doi.org/10.5281/zenodo.10447666}.

\bibitem{beavers1967boundary}
{\sc G.~S. Beavers and D.~D. Joseph}, {\em Boundary conditions at a naturally
  permeable wall}, Journal of fluid mechanics, 30 (1967), pp.~197--207.

\bibitem{MR2652780}
{\sc D.~Boffi}, {\em Finite element approximation of eigenvalue problems}, Acta
  Numer., 19 (2010), pp.~1--120,
  \url{https://doi.org/10.1017/S0962492910000012}.

\bibitem{cesmelioglu2024strongly}
{\sc A.~Cesmelioglu, J.~J. Lee, and S.~Rhebergen}, {\em A strongly conservative
  hybridizable discontinuous galerkin method for the coupled time-dependent
  navier--stokes and darcy problem}, ESAIM: Mathematical Modelling and
  Numerical Analysis, 58 (2024), pp.~273--302.

\bibitem{elman2014finite}
{\sc H.~C. Elman, D.~J. Silvester, and A.~J. Wathen}, {\em Finite elements and
  fast iterative solvers: with applications in incompressible fluid dynamics},
  Oxford university press, 2014.

\bibitem{MR2050138}
{\sc A.~Ern and J.-L. Guermond}, {\em Theory and practice of finite elements},
  vol.~159 of Applied Mathematical Sciences, Springer-Verlag, New York, 2004,
  \url{https://doi.org/10.1007/978-1-4757-4355-5}.

\bibitem{MR975121}
{\sc E.~B. Fabes, C.~E. Kenig, and G.~C. Verchota}, {\em The {D}irichlet
  problem for the {S}tokes system on {L}ipschitz domains}, Duke Math. J., 57
  (1988), pp.~769--793, \url{https://doi.org/10.1215/S0012-7094-88-05734-1}.

\bibitem{Gatica2018}
{\sc G.~N. Gatica, M.~Munar, and F.~A. Sequeira}, {\em A mixed virtual element
  method for a nonlinear brinkman model of porous media flow}, Calcolo, 55
  (2018), \url{https://doi.org/10.1007/s10092-018-0262-7}.

\bibitem{MR4071826}
{\sc J.~Gedicke and A.~Khan}, {\em Divergence-conforming discontinuous
  {G}alerkin finite elements for {S}tokes eigenvalue problems}, Numer. Math.,
  144 (2020), pp.~585--614, \url{https://doi.org/10.1007/s00211-019-01095-x}.

\bibitem{geuzaine2009gmsh}
{\sc C.~Geuzaine and J.-F. Remacle}, {\em Gmsh: A 3-{D} finite element mesh
  generator with built-in pre-and post-processing facilities}, International
  journal for numerical methods in engineering, 79 (2009), pp.~1309--1331.

\bibitem{hannukainen2011computations}
{\sc A.~Hannukainen, M.~Juntunen, and R.~Stenberg}, {\em Computations with
  finite element methods for the {B}rinkman problem}, Computational
  Geosciences, 15 (2011), pp.~155--166.

\bibitem{hernandez2005slepc}
{\sc V.~Hernandez, J.~E. Roman, and V.~Vidal}, {\em {SLEPc}: A scalable and
  flexible toolkit for the solution of eigenvalue problems}, ACM Transactions
  on Mathematical Software (TOMS), 31 (2005), pp.~351--362.

\bibitem{juntunen2010analysis}
{\sc M.~Juntunen and R.~Stenberg}, {\em Analysis of finite element methods for
  the {B}rinkman problem}, Calcolo, 47 (2010), pp.~129--147.

\bibitem{MR0203473}
{\sc T.~Kato}, {\em Perturbation theory for linear operators}, Die Grundlehren
  der mathematischen Wissenschaften, Band 132, Springer-Verlag New York, Inc.,
  New York, 1966.

\bibitem{konno2011h}
{\sc J.~K{\"o}nn{\"o} and R.~Stenberg}, {\em H (div)-conforming finite elements
  for the {B}rinkman problem}, Mathematical Models and Methods in Applied
  Sciences, 21 (2011), pp.~2227--2248.

\bibitem{MR4077220}
{\sc F.~Lepe and D.~Mora}, {\em Symmetric and nonsymmetric discontinuous
  {G}alerkin methods for a pseudostress formulation of the {S}tokes spectral
  problem}, SIAM J. Sci. Comput., 42 (2020), pp.~A698--A722,
  \url{https://doi.org/10.1137/19M1259535}.

\bibitem{Lepe2021}
{\sc F.~Lepe and G.~Rivera}, {\em A virtual element approximation for the
  pseudostress formulation of the {S}tokes eigenvalue problem}, Computer
  Methods in Applied Mechanics and Engineering, 379 (2021),
  \url{https://doi.org/10.1016/j.cma.2021.113753}.

\bibitem{LRVSISC}
{\sc F.~Lepe, G.~Rivera, and J.~Vellojin}, {\em Mixed methods for the
  velocity-pressure-pseudostress formulation of the {S}tokes eigenvalue
  problem}, SIAM Journal on Scientific Computing, 44 (2022), pp.~A1358--A1380,
  \url{https://doi.org/10.1137/21M1402959}.

\bibitem{MR4728079}
{\sc F.~Lepe, G.~Rivera, and J.~Vellojin}, {\em Finite element analysis of the
  {O}seen eigenvalue problem}, Comput. Methods Appl. Mech. Engrg., 425 (2024),
  pp.~Paper No. 116959, 23, \url{https://doi.org/10.1016/j.cma.2024.116959}.

\bibitem{lepe2023posteriori}
{\sc F.~Lepe and J.~Vellojin}, {\em A posteriori analysis for a mixed
  formulation of the stokes spectral problem}, Calcolo, 60 (2023), p.~52.

\bibitem{MR3095260}
{\sc H.~Liu, W.~Gong, S.~Wang, and N.~Yan}, {\em Superconvergence and a
  posteriori error estimates for the {S}tokes eigenvalue problems}, BIT, 53
  (2013), pp.~665--687, \url{https://doi.org/10.1007/s10543-013-0422-8}.

\bibitem{MR2473688}
{\sc C.~Lovadina, M.~Lyly, and R.~Stenberg}, {\em A posteriori estimates for
  the {S}tokes eigenvalue problem}, Numer. Methods Partial Differential
  Equations, 25 (2009), pp.~244--257, \url{https://doi.org/10.1002/num.20342}.

\bibitem{mardal2002robust}
{\sc K.~A. Mardal, X.-C. Tai, and R.~Winther}, {\em A robust finite element
  method for {D}arcy--{S}tokes flow}, SIAM Journal on Numerical Analysis, 40
  (2002), pp.~1605--1631.

\bibitem{Paul2005347}
{\sc H.~Paul, D.~Sch{\"o}tzau, and T.~P. Wihler}, {\em Energy norm shape a
  posteriori error estimation for mixed discontinuous {G}alerkin approximations
  of the {S}tokes problem}, Journal of Scientific Computing, 22 (2005),
  pp.~347--370, \url{https://doi.org/10.1007/s10915-004-4143-7}.

\bibitem{popov2007}
{\sc P.~Popov, L.~Bi, Y.~Efendiev, R.~E. Ewing, G.~Qin, J.~Li, and Y.~Ren},
  {\em Multiphysics and multiscale methods for modeling fluid flow through
  naturally fractured vuggy carbonate reservoirs}, vol.~All Days of SPE Middle
  East Oil and Gas Show and Conference, 03 2007, pp.~SPE--105378--MS,
  \url{https://doi.org/10.2118/105378-MS}.

\bibitem{MR1600081}
{\sc G.~Savar\'{e}}, {\em Regularity results for elliptic equations in
  {L}ipschitz domains}, J. Funct. Anal., 152 (1998), pp.~176--201,
  \url{https://doi.org/10.1006/jfan.1997.3158}.

\bibitem{scroggs2022basix}
{\sc M.~W. Scroggs, I.~A. Baratta, C.~N. Richardson, and G.~N. Wells}, {\em
  Basix: a runtime finite element basis evaluation library}, Journal of Open
  Source Software, 7 (2022), p.~3982.

\bibitem{turk2016stabilized}
{\sc {\"O}.~T{\"u}rk, D.~Boffi, and R.~Codina}, {\em A stabilized finite
  element method for the two-field and three-field {S}tokes eigenvalue
  problems}, Computer Methods in Applied Mechanics and Engineering, 310 (2016),
  pp.~886--905.

\bibitem{urquiza2008coupling}
{\sc J.~Urquiza, D.~N'dri, A.~Garon, and M.~Delfour}, {\em Coupling {S}tokes
  and {D}arcy equations}, Applied Numerical Mathematics, 58 (2008),
  pp.~525--538.

\bibitem{MR3059294}
{\sc R.~Verf\"{u}rth}, {\em A posteriori error estimation techniques for finite
  element methods}, Numerical Mathematics and Scientific Computation, Oxford
  University Press, Oxford, 2013,
  \url{https://doi.org/10.1093/acprof:oso/9780199679423.001.0001}.

\bibitem{verfuhrt1996}
{\sc R.~Verführt}, {\em A review of a posteriori error estimation and adaptive
  mesh-refinement techniques}, Advances in numerical mathematics, Wiley, 1996.

\bibitem{williamson2019posteriori}
{\sc K.~Williamson, P.~Burda, and B.~Soused{\'\i}k}, {\em A posteriori error
  estimates and adaptive mesh refinement for the {S}tokes--{B}rinkman problem},
  Mathematics and Computers in Simulation, 166 (2019), pp.~266--282.

\end{thebibliography}
\end{document}